\crefname{appsec}{Appendix}{Appendices}
\newlist{assumplist}{enumerate}{1}
\setlist[assumplist]{label=(\textbf{\Alph*})}
\Crefname{assumplisti}{Assumption}{Assumptions}
\newlist{assumplist2}{enumerate}{1}
\setlist[assumplist2]{label=(\textbf{\Roman*})}
\Crefname{assumplist2i}{Assumption}{Assumptions}
\newcommand{\R}{\mathbb R}
\newcommand{\ASID}{AmIGO}
\newcommand{\x}{\mathcal X}
\newcommand{\y}{\mathcal Y}
\newcommand{\Alg}[1]{  }
\newcounter{numrellocal}%
\renewcommand{\thenumrellocal}{\roman{numrellocal}}%
\newcounter{numrelglobal}%
\newcommand{\numrel}[2]{%
  \stepcounter{numrellocal}%
  \refstepcounter{numrelglobal}%
  \ltx@label{#2}
 \overset{(\thenumrellocal)}{#1}
}
\newcommand\given{\@ifstar{\mathrel{}\middle|\mathrel{}}{\mid}}
\DeclareRobustCommand{\abs}{\@ifstar\@abs\@@abs}
\DeclareRobustCommand{\norm}{\@ifstar\@norm\@@norm}
\DeclareRobustCommand{\inner}{\@ifstar\@inner\@@inner}
\DeclarePairedDelimiter\floor{\lfloor}{\rfloor}
\newtheorem{lem}{Lemma}
\newtheorem{thm}{Theorem}
\newtheorem{prop}{Proposition}
\newtheorem{cor}{Corollary}
\newtheorem{assump}{Assumption}  
\crefname{lem}{Lemma}{Lemmas}
\Crefname{lem}{Lemma}{Lemmas}
\crefname{thm}{Theorem}{Theorems}
\Crefname{thm}{Theorem}{Theorems}
\crefname{prop}{Proposition}{Propositions}
\Crefname{prop}{Proposition}{Propositions}
\Crefname{cor}{Corollary}{Corrolaries}
\crefname{deff}{Definition}{Definitions}
\Crefname{deff}{Definition}{Definitions}
\Crefname{assump}{Assumption}{Assumptions}
\title{\center Amortized Implicit Differentiation for Stochastic Bilevel Optimization}
\author{Michael Arbel  \&  Julien Mairal   \\
	Univ. Grenoble Alpes, Inria, CNRS, Grenoble INP,
LJK, 38000 Grenoble, France.
 }
\begin{document}

\maketitle 
\begin{abstract}
We study a class of algorithms for solving bilevel optimization problems in both \emph{stochastic} and \emph{deterministic} settings when the inner-level objective is strongly convex. Specifically, we consider  algorithms based on inexact implicit differentiation and we exploit a \emph{warm-start strategy} to amortize the estimation of the exact gradient. We then introduce a unified theoretical framework inspired by the study of \emph{singularly perturbed systems} \citep{Habets:2010} to analyze such \emph{amortized} algorithms. By using this framework, our analysis shows these algorithms to match the computational complexity of oracle methods that have access to an unbiased estimate of the gradient, thus outperforming many existing results for bilevel optimization.
We illustrate these findings on synthetic experiments and demonstrate the efficiency of these algorithms on hyper-parameter optimization experiments involving several thousands of variables.

\end{abstract}

\section{Introduction}
Bilevel optimization refers to a class of algorithms for solving problems with a hierarchical structure involving two levels: an \emph{inner} and an \emph{outer} level. The \emph{inner}-level problem seeks a solution $y^{\star}(x)$  minimizing a cost $g(x,y)$ over a set $\y$ given a fixed outer variable $x$ in a set $\x$. The \emph{outer}-level problem  minimizes an objective of the form $\mathcal{L}(x) {=} f(x,y^{\star}(x))$  over $\x$  for some  \emph{upper}-level cost $f$. 
When the solution $y^{\star}(x)$ is unique, the bilevel optimization problem takes the following form:
\begin{align}\label{eq:bilevel}
	\min_{x\in\x}~ &\mathcal{L}(x):=  f(x,y^{\star}(x)),\qquad \text{such that }    y^{\star}(x)=\arg\min_{y\in \y }~ g(x,y).
\end{align}
First introduced in the field of economic game theory by \cite{Stackelberg:1934} and long studied in optimization  \citep{Ye:1995,Ye:1997,Ye:1997a}, this problem has  recently received increasing attention in the machine learning community \citep{Domke:2012,Gould:2016,Liao:2018,Blondel:2021,Liu:2021,Shaban:2019,Ablin:2020}. Indeed, many machine learning applications can be reduced to \cref{eq:bilevel} including hyper-parameter optimization \citep{Feurer:2019}, meta-learning \citep{Bertinetto:2018},  
reinforcement learning \citep{Hong:2020a,Liu:2021} or  dictionary learning \citep{Mairal:2011,Lecouat:2020a,Lecouat:2020b}.

The hierarchical nature of \cref{eq:bilevel} introduces additional challenges compared to  standard optimization problems, such as finding a suitable trade-off between the computational budget for approximating the inner and outer level problems \citep{Ghadimi:2018,Dempe:2020}.
These considerations are exacerbated in machine learning applications, where the costs $f$ and $g$ often come as an average of functions over a \emph{large} or \emph{infinite} number of data points \citep{Franceschi:2018}. 
All these challenges highlight the need for methods that are able to control the computational costs inherent to \cref{eq:bilevel} while dealing with the large-scale setting encountered in machine learning. 

Gradient-based bilevel optimization methods appear to be viable approaches for solving \cref{eq:bilevel} in large-scale settings \citep{Lorraine:2020}. They can be divided into two categories: Iterative differentiation (ITD) and  Approximate implicit differentiation (AID). 
 \emph{ITD approaches} approximate the map $y^{\star}(x)$  by a differentiable optimization algorithm $\mathcal{A}(x)$ viewed as a function of $x$. The resulting surrogate loss $\tilde{\mathcal{L}}(x) = f(x,\mathcal{A}(x))$ is optimized instead of $\mathcal{L}(x)$ using reverse-mode automatic differentiation  \citep[see][]{Baydin:2018}.
 \emph{AID approaches} \citep{Pedregosa:2016} rely on an expression of the gradient $\nabla\mathcal{L}$
 resulting from the \emph{implicit function theorem} \cite[Theorem 5.9]{Lang:2012}.
Unlike ITD, AID avoids differentiating the algorithm approximating $y^{\star}(x)$ and, instead, approximately solves  a linear system using only Hessian and Jacobian-vector products to estimate the gradient $\nabla\mathcal{L}$ \citep{Rajeswaran:2019}.
These methods can also rely on stochastic approximation to increase scalability \citep{Franceschi:2018,Grazzi:2020,Grazzi:2021}.

In the context of machine-learning, \cite{Ghadimi:2018} provided one of the first comprehensive studies of the computational complexity for a class of bilevel algorithms based on AID approaches. Subsequently, \cite{Hong:2020a,Ji:2021a,Ji:2021,Yang:2021} proposed different algorithms for solving \cref{eq:bilevel} and obtained improved overall complexity by achieving a better trade-off between the cost of the inner and outer level problems. Still, the question of whether these complexities can be improved by better exploiting the structure of \cref{eq:bilevel}  through heuristics such as \emph{warm-start} remains open \citep{Grazzi:2020}. Moreover, these studies proposed separate analysis of their algorithms depending on the convexity of the loss $\mathcal{L}$ and whether a \emph{stochastic} or \emph{deterministic} setting is considered. This points out to a lack of unified and systematic theoretical framework for analyzing bilevel problems, which is what the present work addresses.

We consider the {\bf Am}ortized  {\bf I}mplicit {\bf G}radient {\bf O}ptimization (\ASID{}) algorithm, a bilevel optimization algorithm based on Approximate Implicit Differentiation (AID) approaches that exploits a warm-start  strategy when estimating the gradient of $\mathcal{L}$.
We then propose a unified theoretical framework for analyzing the convergence of \ASID{} when the inner-level problem is strongly convex in both stochastic and deterministic settings. 
The proposed framework is inspired from the early work of \cite{Habets:2010} on \textit{singularly perturbed systems} and analyzes the effect of warm start 
by viewing the iterates of \ASID{} algorithm as a dynamical system. The evolution of such system is described by a total energy function which allows to recover the convergence rates of \emph{unbiased oracle methods} which have access to an unbiased estimate of $\nabla\mathcal{L}$ (c.f. \cref{table:complexities}). 
To the best of our knowledge, this is the first time a bilevel optimization algorithm based on a warm-start strategy provably recovers the rates of \emph{unbiased oracle methods}  across a wide range of settings including the stochastic ones.
\section{Related work} 
{\bf Singularly perturbed systems (SPS)} are continuous-time deterministic dynamical systems of coupled variables $(x(t),y(t))$ with two time-scales where  $y(t)$ evolves much faster than $x(t)$. As such, they exhibit a hierarchical structure similar to \cref{eq:bilevel}. 
The early work of \cite{Habets:2010,Saberi:1984} provided convergence rates for SPS towards equilibria by studying the evolution of a single scalar energy function summarizing these systems. 
The present work takes inspiration from these works to analyze the convergence of \ASID{} which involves 
three time-scales.   
  
{\bf Two time-scale Stochastic Approximation (TTSA)} can be viewed as a discrete-time stochastic version of SPS. \citep{Kaledin:2020a} showed that TTSA achieves a finite-time complexity of $O\parens{\epsilon^{-1}}$ for linear systems while \cite{Doan:2020} obtained a complexity of $O\parens{\epsilon^{-3/2}}$ for general non-linear systems by extending the analysis for SPS. \cite{Hong:2020a} further adapted the non-linear TTSA for solving \cref{eq:bilevel}. 
In the present work, we obtain faster rates by taking into account the dynamics of a third variable $z_k$ appearing in \ASID{}, thus resulting in a three time-scale dynamics.

{\bf Warm-start in bilevel optimization.} 
\cite{Ji:2021a,Ji:2021} used a warm-start for the inner-level algorithm to obtain an improved  computational complexity over algorithms without warm-start. 
In the deterministic non-convex setting, \cite{Ji:2021a} used a warm-start strategy when solving the linear system appearing in AID approaches to obtain improved convergence rates. 
However, it remained open whether using a warm-start 
when solving both inner-level problem and linear system arising in AID approaches can yield faster algorithms in the more challenging stochastic setting \citep{Grazzi:2020}. In the present work, we provide a positive answer to this question. 
\begin{table}[]
\begin{tabular}{|c|c|c|c|}
\hline
\rowcolor[HTML]{D3D3D3} 
Geometries                        & Setting                         & Algorithms                                                                  & Complexity                                                                       \\ \hline
                                  &                                 & BA \citep{Ghadimi:2018}                                                     & $O(\kappa_{\mathcal{L}}^2\vee\kappa_g^2\log^2\epsilon^{-1})$                            \\ \cline{3-4} 
                                  &                                 & AccBio \citep{Ji:2021}                                                      & \cellcolor[HTML]{FFFFFF}$O(\kappa_{\mathcal{L}}^{1/2}\kappa_g^{1/2}\log^2\epsilon^{-1})$ \\ \cline{3-4} 
                                  & \multirow{-3}{*}{{\bf (D)}} & \cellcolor[HTML]{DAE8FC}\ASID{} (\cref{prop:rate_deterministic_strong_conv})  & \cellcolor[HTML]{DAE8FC}$O(\kappa_{\mathcal{L}}\kappa_g\log\epsilon^{-1})$                 \\ \hhline{|~|===} 
                                  &                                 & BSA \citep{Ghadimi:2018}                                                    & $O( \kappa_{\mathcal{L}}^4\epsilon^{-2})$                                                               \\ \cline{3-4} 
                                  &                                 & TTSA \citep{Hong:2020a}                                                     & $O(\kappa_{\mathcal{L}}^{0.5}(\kappa_g^{8.5}  + \kappa_{\mathcal{L}}^{3})\epsilon^{-3/2}\log\epsilon^{-1})$                                  \\ \cline{3-4} 
\multirow{-6}{*}{{\bf (SC)}} & \multirow{-3}{*}{{\bf (S)}}    & \cellcolor[HTML]{DAE8FC}\ASID{} (\cref{prop:rate_sto_strong_conv_const_main}) & \cellcolor[HTML]{DAE8FC}$O(\kappa_{\mathcal{L}}^2\kappa_g^3 \epsilon^{-1}\log\epsilon^{-1})$                    \\ \hline\hline
                                  &                                 & BA  \citep{Ghadimi:2018}                                                    & $O(\kappa_{g}^5\epsilon^{-5/4})$                                             \\ \cline{3-4} 
                                  &                                 & AID-BiO \citep{Ji:2021a}                                                    & $O(\kappa_{g}^4\epsilon^{-1})$                                                       \\ \cline{3-4} 
                                  & \multirow{-3}{*}{{\bf (D)}} & \cellcolor[HTML]{DAE8FC}\ASID{} (\cref{prop:rate_deterministic_non_conv})     & \cellcolor[HTML]{DAE8FC}$O(\kappa_{g}^4\epsilon^{-1})$                               \\ \hhline{|~|===} 
                                  &                                 & BSA \citep{Ghadimi:2018}                                                    & $O(\kappa_g^9\epsilon^{-3} +\kappa_g^6\epsilon^{-2}  )$                                                               \\ \cline{3-4} 
                                  &                                 & TTSA \citep{Hong:2020a}                                                     & $O(\kappa_g^{16}\epsilon^{-5/2}\log\epsilon^{-1})$                                          \\ \cline{3-4} 
                                  &                                 & stocBiO  \citep{Ji:2021a}                                                   & $O(\kappa_g^9\epsilon^{-2}{+}\kappa_g^6\epsilon^{-2}\log\epsilon^{-1})$                                            \\ \cline{3-4} 
                                  &                                 & MRBO/VRBO$^\star$  \citep{Yang:2021}                                             & $O(\text{poly}(\kappa_g)\epsilon^{-3/2}\log\epsilon^{-1})$                                  \\ \cline{3-4} 
\multirow{-8}{*}{{\bf (NC)}}      & \multirow{-5}{*}{{\bf (S)}}    & \cellcolor[HTML]{DAE8FC}\ASID{} (\cref{prop:rate_sto_non_conv_constant})      & \cellcolor[HTML]{DAE8FC}$O(\kappa_g^9\epsilon^{-2})$                                       \\ \hline
\end{tabular}
\caption{Cost of finding an $\epsilon$-accurate solution as measured by $\mathbb{E}\brackets{\mathcal{L}(x_k){-}\mathcal{L}^{\star}} {\wedge}2^{-1}\mu \mathbb{E}\brackets{\Verts{x_k{-}x^{\star}}^2}$  when $\mathcal{L}$ is $\mu$-strongly-convex {\bf (SC)} and $\frac{1}{k}\sum_{i=1}^k\mathbb{E}\brackets{\Verts{\nabla\mathcal{L}(x_i)}^2}$ when $\mathcal{L}$ is non-convex {\bf (NC)}. The settings {\bf (D)} and {\bf (S)} stand for the \emph{deterministic} and \emph{stochastic} settings.  The cost corresponds to the total number of gradients, Jacobian and Hessian-vector products used by the algorithm. $\kappa_{\mathcal{L}}$ and $\kappa_g$ are the conditioning numbers of $\mathcal{L}$  and $g$ whenever applicable. The dependence on $\kappa_{L}$ and $\kappa_g$ for TTSA and AccBio are derived in \cref{prop:complexity_TTSA} of \cref{sec:complexities_appendix}. The rate of MRBO/VRBO is obtained under the additional \emph{mean-squared smoothness} assumption \citep{Arjevani:2019}.}
\label{table:complexities}
\end{table} 
\section{Amortized Implicit Gradient Optimization}\label{sec:algorithm}
\subsection{General setting and main assumptions}\label{sec:general_setting}
{\bf Notations.} In all what follows, $\x$ and $\y$ are Euclidean spaces. For a differentiable function $h(x,y): \x\times \y \rightarrow \R $, we denote by $\nabla h$ its gradient  w.r.t. $(x,y)$, by $\partial_{x}h$ and $\partial_{y}h$ its partial derivatives w.r.t. $x$ and $y$ and by $\partial_{xy}h$ and $\partial_{yy}h$ the partial derivatives of $\partial_y h$ w.r.t $x$ and $y$, respectively.

To ensure that \cref{eq:bilevel} is well-defined, 
we consider the setting where the inner-level problem is strongly convex so that the solution $y^{\star}(x)$ is unique as stated by the following assumption:
\begin{assump}\label{assump:lsmooth_g_strongly_convex_g}
For any $x\in \x$, the function $y\mapsto g(x,y)$ is $L_g$-smooth and $\mu_g$-strongly convex. 
\end{assump}
\cref{assump:lsmooth_g_strongly_convex_g} holds in the context of hyper-parameter selection when the inner-level is a kernel regression problem \citep{Franceschi:2018}, or when the variable $y$ represents the last \emph{linear} layer of a neural network as in many meta-learning tasks \citep{Ji:2021a}. 
Under \cref{assump:lsmooth_g_strongly_convex_g} and  additional smoothness assumptions on $f$ and $g$, the next proposition shows that $\mathcal{L}$ is differentiable: 
\begin{prop}\label{prop:expression_hypergrad}
	Let $g$ be a twice differentiable function satisfying \cref{assump:lsmooth_g_strongly_convex_g}. Assume that  $f$ is differentiable and consider the quadratic problem: 
\begin{align}\label{eq:linear_b}
  \min_{z\in \R^{d_y}} Q(x,y,z):= \frac{1}{2}z^{\top}\parens{\partial_{yy} g(x,y)}z +  z^{\top}\partial_y f(x,y).
\end{align}
Then, \cref{eq:linear_b} admits a unique minimizer $z^{\star}(x,y)$ for any $(x,y)$ in $\x\times\y$. Moreover, $y^{\star}(x)$ is unique and well-defined for any $x$ in $\x$ and $\mathcal{L}$ is differentiable with gradient given by:
\begin{align}\label{eq:hyper_grad}
    \nabla\mathcal{L}(x) = \partial_x f(x,y^{\star}(x)) + \partial_{xy} g\parens{x,y^{\star}(x)} z^{\star}(x,y^{\star}(x)).
\end{align}
\end{prop}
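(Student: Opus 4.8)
The plan is to verify the three assertions in sequence --- the quadratic problem \eqref{eq:linear_b} has a unique solution, $y^{\star}(x)$ is well-defined, and $\mathcal{L}$ is differentiable with gradient \eqref{eq:hyper_grad} --- the last step being an application of the implicit function theorem.

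\emph{Step 1 (the linear system).} Since $g(x,\cdot)$ is twice differentiable, \cref{assump:lsmooth_g_strongly_convex_g} is equivalent to the operator bounds $\mu_g I \preceq \partial_{yy} g(x,y) \preceq L_g I$ for all $(x,y)\in\x\times\y$. In particular $\partial_{yy} g(x,y)$ is symmetric positive definite, hence invertible, so $z\mapsto Q(x,y,z)$ is a $\mu_g$-strongly convex quadratic and its unique minimizer is the solution of $\partial_z Q = \partial_{yy} g(x,y)\, z + \partial_y f(x,y) = 0$, i.e. $z^{\star}(x,y) = -(\partial_{yy} g(x,y))^{-1}\partial_y f(x,y)$.

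\emph{Step 2 (the inner problem).} For each fixed $x$, $g(x,\cdot)$ is strictly convex (hence has at most one minimizer) and, by strong convexity, has bounded sublevel sets (hence, being continuous, attains its infimum); so $y^{\star}(x)$ exists, is unique, and is characterized by the stationarity equation $\partial_y g(x, y^{\star}(x)) = 0$.

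\emph{Step 3 (differentiability of $\mathcal{L}$).} I would apply the implicit function theorem \citep[Theorem 5.9]{Lang:2012} to $F(x,y):=\partial_y g(x,y)$, which is $C^1$ and has $\partial_y F(x,y) = \partial_{yy} g(x,y)$ invertible at every point by Step 1. This gives that $x\mapsto y^{\star}(x)$ is differentiable (locally near each $x$, hence everywhere by uniqueness of the minimizer) with $\partial_x y^{\star}(x) = -(\partial_{yy} g(x,y^{\star}(x)))^{-1}(\partial_{xy} g(x,y^{\star}(x)))^{\top}$, the transpose being dictated by the $d_x\times d_y$ convention for $\partial_{xy} g$. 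Differentiating $\mathcal{L}(x) = f(x,y^{\star}(x))$ by the chain rule (legitimate since $f$ is differentiable and $y^{\star}$ is) yields $\nabla\mathcal{L}(x) = \partial_x f(x,y^{\star}(x)) + (\partial_x y^{\star}(x))^{\top}\partial_y f(x,y^{\star}(x))$; substituting the expression for $\partial_x y^{\star}(x)$, using symmetry of $(\partial_{yy} g)^{-1}$, and recognizing $-(\partial_{yy} g(x,y^{\star}(x)))^{-1}\partial_y f(x,y^{\star}(x)) = z^{\star}(x,y^{\star}(x))$ from Step 1 collapses the identity to exactly \eqref{eq:hyper_grad}.

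The computations themselves are routine; the only delicate point is ensuring enough regularity to invoke the implicit function theorem --- reading ``twice differentiable'' as ``continuously twice differentiable'', or using the form of the theorem valid under mere total differentiability of $F$ --- together with careful bookkeeping of transposes so that $\partial_{xy} g\in\R^{d_x\times d_y}$ multiplies the $d_y$-vector $z^{\star}$ consistently. Apart from that there is no real obstacle.
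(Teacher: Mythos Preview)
Your proposal is correct and follows essentially the same route as the paper's own proof: invoke the implicit function theorem on the stationarity condition $\partial_y g(x,y^{\star}(x))=0$ (using that $\partial_{yy}g$ is invertible by strong convexity), apply the chain rule to $\mathcal{L}(x)=f(x,y^{\star}(x))$, and identify $-(\partial_{yy}g)^{-1}\partial_y f$ as the minimizer $z^{\star}$ of the quadratic. The only differences are cosmetic --- you verify the existence of the quadratic minimizer first rather than last, and you are a bit more explicit about the existence argument for $y^{\star}$ and the transpose conventions.
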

\cref{prop:expression_hypergrad} follows by application of the \emph{implicit function theorem} \cite[Theorem 5.9]{Lang:2012} and provides an expression for $\nabla\mathcal{L}$ solely in terms of partial derivatives of $f$ and $g$ evaluated at $(x,y^{\star}(x))$. Following \cite{Ghadimi:2018}, we further make two smoothness assumptions on $f$ and $g$:
\begin{assump}
	\label{assump:lsmooth_fg} There exist positive constants $L_f$ and $B$ such that for all $x,x'\in \x$ and $y,y'\in \y$: 
\begin{align}
\Verts{ \nabla f(x,y) - \nabla f(x',y') }&\leq L_f \Verts{(x,y)-(x',y')},\qquad \Verts{\partial_y f(x,y)}\leq B.
\end{align}
\end{assump}
\begin{assump}
	\label{assump:smooth_jac} There exit positive constants $L_g'$, $M_g$ such that for any $x,x'\in \x$ and $y,y'\in \y$: 
\begin{gather}
	\max\braces{\Verts{\partial_{xy} g(x,y)- \partial_{xy} g(x',y')}, \Verts{\partial_{yy} g(x,y)- \partial_{yy} g(x',y')}}\leq M_g \Verts{(x,y)-(x',y')}\\
		\Verts{ \partial_y g(x,y) - \partial_y  g(x',y) }\leq L_g' \Verts{x-x'}.
\end{gather}
\end{assump}
\cref{assump:lsmooth_g_strongly_convex_g,assump:lsmooth_fg,assump:smooth_jac} allow a control of the variations of $y^{\star}$ and $z^{\star}$ and ensure $\mathcal{L}$ is $L$-smooth for some positive constant $L$ as shown in \cref{prop:smoothness} of \cref{sec:smoothness}. 
As an $L$-smooth function, $\mathcal{L}$ is necessarily weakly convex \citep{Davis:2018a}, meaning that  $\mathcal{L}$ satisfies the inequality $\mathcal{L}(x)-\mathcal{L}(y)\leq \nabla \mathcal{L}(x)^{\top}\parens{x-y}-\frac{\mu}{2}\Verts{x-y}^2
$ for some fixed $\mu\in \R$ with $\verts{\mu}\leq L$. 
In particular, $\mathcal{L}$ is convex when $\mu\geq 0$, strongly convex when $\mu>0$ and generally non-convex when $\mu<0$.  
We thus consider two cases for $\mathcal{L}$, the \emph{strongly convex} case $(\mu>0)$ and the \emph{non-convex} case $(\mu<0)$. When $\mathcal{L}$ is convex, we denote by $\mathcal{L}^{\star}$ its minimum value achieved at a point $x^{\star}$ and define $\kappa_{\mathcal{L}} {=} L/\mu$ when $\mu>0$.

{\bf Stochastic/deterministic settings.}
We consider the general setting where $f(x,y)$ and $g(x,y)$ are expressed as an expectation of stochastic functions $\hat{f}(x,y,\xi)$ and $\hat{g}(x,y,\xi)$ over a noise variable $\xi$. We recover the deterministic setting as a particular case when the variable $\xi$ has zero variance, thus allowing us to treat both \emph{stochastic} {\bf(S)} and \emph{deterministic} {\bf(D)} settings in a unified framework. As often in machine-learning, we assume we can always draw a new batch $\mathcal{D}$ of i.i.d. samples of the noise variable $\xi$ with size $\verts{\mathcal{D}}\geq 1$ and 
use it to compute stochastic approximations of $f$ and $g$ defined by abuse of notation as $\hat{f}(x,y,\mathcal{D}) := \frac{1}{\verts{\mathcal{D}}}\sum_{\xi\in \mathcal{D}} \hat{f}(x,y,\xi)$ and $\hat{g}(x,y,\mathcal{D}) :=\frac{1}{\verts{\mathcal{D}}} \sum_{\xi\in \mathcal{D}} \hat{g}(x,y,\xi)$.
We make the following noise assumptions which are implied by those in \cite{Ghadimi:2018}: 
\begin{assump}
	\label{assump:bounded_var_grad_f}
	 For any batch $\mathcal{D}$, $\nabla \hat{f}(x,y,\mathcal{D})$ and $\partial_y \hat{g}(x,y,\mathcal{D})$ are unbiased estimator of  $\nabla f(x,y)$ and $\partial_y g(x,y)$ with a uniformly bounded variance, i.e.  for all  $x,y \in \x \times \y$:
\begin{align}
	\mathbb{E}\brackets{\Verts{\nabla \hat{f}(x,y,\mathcal{D})- \nabla f(x,y)}^2}\leq \tilde{\sigma}_f^2\verts{\mathcal{D}}^{-1},\qquad 	\mathbb{E}\brakets{\Verts{ \partial_y \hat{g}(x,y,\mathcal{D})-\partial_y g(x,y)  }^2  }\leq \tilde{\sigma}_{g}^2\verts{\mathcal{D}}^{-1}.
\end{align}
\end{assump}
\begin{assump}
	\label{assump:bounded_var_hess_g} For any batch $\mathcal{D}$, the matrices  
	$F_1(x,y,\mathcal{D}) := \partial_{xy} \hat{g}(x,y,\mathcal{D})- \partial_{xy} g(x,y)$ and $F_2(x,y,\mathcal{D}):= \partial_{yy}\hat{g}(x,y,\mathcal{D})- \partial_{yy}g(x,y)$ have zero mean and satisfy for all $x,y\in \x\times \y$: 
\begin{align}
	\Verts{\mathbb{E}\brakets{F_1(x,y,\mathcal{D})^{\top}F_1(x,y,\mathcal{D})}}_{op}\leq \tilde{\sigma}_{g_{xy}}^2\verts{\mathcal{D}}^{-1},\quad 	\Verts{\mathbb{E}\brakets{F_2(x,y,\mathcal{D})^{\top}F_2(x,y,\mathcal{D})}}_{op}\leq \tilde{\sigma}_{g_{yy}}^2\verts{\mathcal{D}}^{-1}.
\end{align}
\end{assump}
For conciseness, we will use the notations $\sigma_f^2 {:=} \tilde{\sigma}_f^2\verts{\mathcal{D}}^{-1}$, $\sigma_g^2 {:=} \tilde{\sigma}_{g}^2\verts{\mathcal{D}}^{-1}$, $\sigma_{g_{xy}}^2{:=} \tilde{\sigma}_{g_{xy}}^2\verts{\mathcal{D}}^{-1}$ and $\sigma_{g_{yy}}^2{:=}\tilde{\sigma}_{g_{yy}}^2\verts{\mathcal{D}}^{-1}$, without explicit reference to the batch $\mathcal{D}$. Next, we describe the algorithm.
\subsection{Algorithms}\label{sec:algorithm}
Amortized Implicit Gradient Optimization (\ASID{}) is an iterative algorithm for solving \cref{eq:bilevel}. 
It constructs iterates $x_k$, $y_k$ and $z_k$ such that $x_k$ approaches a stationary point of $\mathcal{L}$ while $y_k$ and $z_k$ track the quantities $y^{\star}(x_k)$ and $z^{\star}(x_k,y_k)$.
\ASID{} computes the iterate $x_{k+1}$ using an update equation $x_{k+1} = x_{k} - \gamma_k \hat{\psi}_k$ for some given step-size $\gamma_k$ and a stochastic estimate $\hat{\psi}_k$ of $\nabla\mathcal{L}(x_k)$ based on \cref{eq:hyper_grad} and defined according to \cref{eq:approx_implicit_grad_sto} below for some new batches of samples $\mathcal{D}_f$ and $\mathcal{D}_{g_{xy}}$.
 \begin{align}\label{eq:approx_implicit_grad_sto}
  \hat{\psi}_k := \partial_x \hat{f}(x_k, y_k,\mathcal{D}_f) + \partial_{x,y} \hat{g}(x_k,y_k,\mathcal{D}_{g_{xy}})^{\top}z_k.
\end{align}
 \begin{minipage}{.4\linewidth}
\begin{algorithm}[H]
\caption{\ASID{}}\label{alg:bilevel}
	\begin{algorithmic}[1]
		\STATE Inputs: $x_0$, $y_{-1}$, $z_{-1}$.
		\STATE Parameters: $\gamma_k$, $K$.
		\FOR{ $k \in \{0,...,K\}$ }
			\STATE  $ y_k \leftarrow \mathcal{A}_k(x_k, y_{k-1})$
			\STATE Sample batches $\mathcal{D}_{f}$, $\mathcal{D}_{g}$.
			\STATE $(u_k,v_k) \leftarrow \nabla \hat{f}(x_k,y_k,\mathcal{D}_f)$.
			\STATE 	$ z_k \leftarrow \mathcal{B}_k(x_k, y_{k},v_k,z_{k-1})$
			\STATE $w_k \leftarrow \partial_{xy} \hat{g}(x_k,y_k,\mathcal{D}_{g_{xy}})z_k$
			\STATE  $\hat{\psi}_{k-1} \leftarrow u_k +  w_k$
			\STATE $x_k \leftarrow  x_{k-1} - \gamma_k \hat{\psi}_{k-1} $
		\ENDFOR
		\STATE Return $x_K$.
	\end{algorithmic}
\end{algorithm}
\end{minipage}%
\hspace{.01\linewidth}
\begin{minipage}{.57\linewidth}
\vspace{0.3cm}
\ASID{} computes $\hat{\psi}_k$ in 4 steps given iterates $x_k$, $y_{k-1}$ and $z_{k-1}$. A first step computes an approximation $y_k$ to $y^{\star}(x_k)$ using a stochastic algorithm $\mathcal{A}_k$ \emph{initialized} at $y_{k-1}$.
A second step computes unbiased estimates $u_k=\partial_x \hat{f}(x_k,y_k,\mathcal{D}_{f})$ and $v_k=\partial_y \hat{f}(x_k,y_k,\mathcal{D}_{f})$ of the partial derivatives of $f$ w.r.t. $x$ and $y$.
 A third step computes an approximation $z_k$ to $z^{\star}(x_k,y_k)$ 
 using a second stochastic algorithm $\mathcal{B}_k$ for solving \cref{eq:linear_b}  \emph{initialized} at $z_{k-1}$. 
 To increase efficiency, algorithm $\mathcal{B}_k$ uses the pre-computed vector $v_k$ for approximating the partial derivative $\partial_y f$ in \cref{eq:linear_b}.
Finally, the stochastic estimate $\hat{\psi}_k$ is computed using \cref{eq:approx_implicit_grad_sto} by summing the pre-computed vector $u_k$ with the jacobian-vector product $w_k= \partial_{xy}\hat{g}\parens{x_k,y_k,\mathcal{D}_{g_{xy}}}z_k$.
\ASID{} is summarized in \cref{alg:bilevel}.
\end{minipage} 
\paragraph{Algorithms $\mathcal{A}_k$ and $\mathcal{B}_k$.} While various choices for $\mathcal{A}_k$ and $\mathcal{B}_k$ are possible, such as adaptive algorithms \citep{Kingma:2014}, or accelerated stochastic algorithms \citep{Ghadimi:2012}, we focus on simple stochastic gradient descent algorithms with a pre-defined number of iterations $T$ and $N$. These algorithms compute intermediate iterates $y^t$ and $z^n$ optimizing the functions $ y\mapsto g(x_k,y)$ and $ z \mapsto Q(x_k,y_k,z)$ starting from some initial values $y^{0}$ and $z^{0}$ and returning the last iterates $y^T$ and $z^N$ as described in \cref{alg:agd_inner,alg:agd_implicit}.
Algorithm $\mathcal{A}_k$ updates the current iterate $y^{t-1}$ using a stochastic gradient $\partial_y \hat{g}(x_k,y^{t-1},\mathcal{D}_g)$ for some new batch of samples $\mathcal{D}_g$ and a fixed step-size $\alpha_k$. Algorithm $\mathcal{B}_k$ 
updates the current iterate $z^{t-1}$ using a stochastic estimate of $\partial_z Q(x_k,y_k,z^{t-1})$ with step-size $\beta_k$.  The stochastic gradient is computed by evaluating the Hessian-vector product $\partial_{yy} \hat{g}(x_k,y_k,\mathcal{D}_{g_{yy}})z^{t-1}$  for some new batch of samples $\mathcal{D}_{g_{yy}}$ and summing it with a vector $v_k$ approximating $\partial_y f(x_k,y_k)$ provided as input to algorithm $\mathcal{B}_k$.

{\bf Warm-start for $y^0$ and $z^{0}$.}
Following the intuition that $y^{\star}(x_{k})$ remains close to $y^{\star}(x_{k-1})$ when $x_k \simeq x_{k-1}$, and assuming that $y_{k-1}$ is an accurate approximation to $y^{\star}(x_{k-1})$, it is natural to initialize $\mathcal{A}_k$ with the iterate $y_{k-1}$. The same intuition applies when initializing $\mathcal{B}_k$ with $z_{k-1}$. Next, we introduce a framework for analyzing the effect of warm-start on the convergence speed of \ASID{}.
\begin{minipage}{.44\linewidth}
\begin{algorithm}[H]
\caption{$\mathcal{A}_k(x,y^{0})$}\label{alg:agd_inner}
	\begin{algorithmic}[1]
		\STATE Parameters: $\alpha_k$, $T$
		\FOR{ $t \in \{1,...,T\}$ }
			\vspace{.1cm}
			\STATE Sample batch $\mathcal{D}_{t,k}^g$.
			\STATE $y^{t} \leftarrow  y^{t-1}- \alpha_k \partial_y \hat{g}\parens{x,y^{t-1},\mathcal{D}_{t,k}^{g}} $.
		\ENDFOR
		\STATE Return $y^T$.
	\end{algorithmic} 
\end{algorithm}
\end{minipage}%
\hspace{.005\linewidth}
\begin{minipage}{.55\linewidth}
\begin{algorithm}[H]
\caption{$\mathcal{B}_k(x,y,v,z^{0})$}\label{alg:agd_implicit}
	\begin{algorithmic}[1]
		\STATE Parameters: $\beta_k$, $N$.
		\FOR{ $n \in \{1,...,N\}$ }
			\STATE Sample batch $\mathcal{D}_{n,k}^{g_{yy}}$.
			\STATE $z^{n} \leftarrow  z^{n-1} - \beta_k\parens{ \partial_{yy} \hat{g}\parens{x,y,\mathcal{D}_{n,k}^{g_{yy}}}z^{n-1} +v}$.
		\ENDFOR
		\STATE Return $z^N$.
		\vspace{.15cm}
	\end{algorithmic}
\end{algorithm}
\end{minipage}
\section{Analysis of Amortized Implicit Gradient Optimization}\label{sec:ASID}
\subsection{General approach and main result}\label{sec:general_error}
The proposed approach consists in three main steps: (1) Analysis of the outer-level problem  , (2) Analysis of the inner-level problem and (3) Analysis of the joint dynamics of both levels.

{\bf Outer-level problem.}
We consider a quantity $E_k^x$ describing the evolution of $x_k$  defined as follows: 
\begin{align}\label{eq:def_E_x}
	E_k^x := \begin{cases}
		\frac{\delta_k}{2\gamma_k}\mathbb{E}\brakets{\Verts{x_k - x^{\star}}^2}  + (1-u)\mathbb{E}\brakets{\mathcal{L}(x_k)-\mathcal{L}^{\star}},\qquad & \mu \geq 0\\
		\frac{\delta_k}{2\gamma_k L^2}\mathbb{E}\brackets{\Verts{\nabla \mathcal{L}(x_k) }^2}, \qquad &\mu<0.
	\end{cases}  
\end{align}
where $u\in \{0,1\}$ is set to $1$ in the \emph{stochastic} setting and to $0$ in the  \emph{deterministic} one and $\delta_k$ is a positive sequence that determines the convergence rate of the outer-level problem and is defined by:
\begin{align}\label{eq:def_eta}
	\delta_k := \eta_k \gamma_k,\qquad
	\eta_{k+1} := \eta_{k}\parens{1 +  \delta_{k+1}\parens{\mu - \eta_k} }\mathds{1}_{\mu \geq 0}+ L\mathds{1}_{\mu < 0}.
\end{align}
with $\eta_0$ such that  $ \gamma_0^{-1} {\geq}  \eta_0> \mu$ if $\mu{\geq} 0$ and $\eta_0{=}L$ if $\mu{<}0$ and where we choose the step-size $\gamma_k$ to be a non-increasing sequence with $\gamma_0\leq \frac{1}{L}$. 
With this choice for $\delta_k$ and by setting $u=1$ in \cref{eq:def_E_x},  $E_k^x$ recovers the quantity considered in the \emph{stochastic estimate sequences} framework of \cite{Kulunchakov:2020} to analyze the convergence of stochastic optimization algorithms when $\mathcal{L}$ is convex. When $\mathcal{L}$ is non-convex, $E_k^x$ recovers a standard measure of stationarity \citep{Davis:2018}.
In \cref{sec:main_outline}, we 
control $E_k^x$ using bias and variance error $E_{k-1}^{\psi}$ and $V_{k-1}^{\psi}$ of $\hat{\psi}_k$ given by \cref{eq:bias_variance} below where $\mathbb{E}_k$ denotes expectation conditioned on $(x_k,y_k,z_{k-1})$.
\begin{align}\label{eq:bias_variance}
		E_{k}^{\psi}:=  \mathbb{E}\brakets{ \Verts{\mathbb{E}_k\brackets{\hat{\psi}_{k}} - \nabla \mathcal{L}(x_{k}) }^2},\qquad V_{k}^{\psi} := \mathbb{E}\brakets{\Verts{\hat{\psi}_{k}-\mathbb{E}_k\brackets{\hat{\psi}_{k}}}^2}.
\end{align}
{\bf Inner-level problems.}
We consider the mean-squared errors $E_k^y$ and $E_k^z$ between initializations ($y^0{=}y_{k-1}$ and $z^{0}{=}z_{k-1}$) and stationary values ($y^{\star}(x_k)$ and $z^{\star}(x_k,y_k)$)  of algorithms $\mathcal{A}_k$ and $\mathcal{B}_k$:
\begin{align}\label{eq:error_warm_start}
	E_k^{y} := \mathbb{E}\brakets{\Verts{y_{k}^{0}-y^{\star}(x_k)}^2},\qquad
	E_k^{z} := \mathbb{E}\brakets{\Verts{z_{k}^{0}-z^{\star}(x_k,y_k)}^2}. 
\end{align}
In \cref{sec:main_outline},  we show  that the \emph{warm-start strategy} allows to control $E_k^{y}$ and $E_k^{z}$ in terms of previous iterates $E_{k-1}^{y}$ and $E_{k-1}^{z}$ as well as the bias and variance errors in \cref{eq:bias_variance}. We further prove that such bias and variance errors are, in turn, controlled by $E_k^y$ and $E_k^z$.

{\bf Joint dynamics.}
Following \cite{Habets:2010}, we consider an aggregate error $E_k^{tot}$ defined as a linear combination of $E_k^x$, $E_k^y$ and $E_k^z$ with carefully selected coefficients  $a_k$ and $b_k$:
\begin{align}\label{eq:aggregate_error}
	E^{tot}_k = E_k^x + a_k E_k^y  +  b_k E_k^z.
\end{align}
As such $E_k^{tot}$ represents the dynamics of the whole system.  
The following theorem provides an error bound for $E_k^{tot}$ in both convex and non-convex settings for a suitable choice of the coefficients $a_k$ and $b_k$ provided that $T$ and $N$ are large enough:
\begin{thm}\label{prop:general_error_bound_convex}
 Choose a batch-size $\verts{\mathcal{D}_{g_{yy}}} {\geq} 1{\vee}\frac{\tilde{\sigma}_{g_{yy}}^2}{\mu_gL_g}$ and the step-sizes $\alpha_k{=}L_g^{-1}$, $\beta_k{=} (2L_g)^{-1}$, $\gamma_k{=} L^{-1}$. Set the coefficients $a_k$ and $b_k$ to be $a_k {:=} \delta_0 \parens{1{-}\alpha_k\mu_g}^{1/2}$ and $b_k {:=} \delta_0 \parens{1{-}\frac{1}{2}\beta_k\mu_g}^{1/2}$ and   set the number of iterations  $T$ and $N$ of \cref{alg:agd_inner,alg:agd_implicit} to be of order $T {=} O(\kappa_g)$ and $N {=} O(\kappa_g)$ up to a logarithmic dependence on $\kappa_g$. Let $\hat{x}_k {=} u(1{-}\delta_k)\hat{x}_{k{-}1}{+}(1{-}u(1{-}\delta_k)) x_k$, with $\hat{x}_0{=}x_0$.
Then, under \Cref{assump:lsmooth_g_strongly_convex_g,assump:lsmooth_fg,assump:smooth_jac,assump:bounded_var_grad_f,assump:bounded_var_hess_g,assump:bounded_var_grad_f},  $E_k^{tot}$  satisfies: 
\begin{align}
	\begin{dcases}
		\hfill \mathbb{E}\brakets{\mathcal{L}(\hat{x}_k)-\mathcal{L}^{\star}} + E_{k}^{tot}  \leq  \parens{1-\parens{2\kappa_{\mathcal{L}}}^{-1}}^k \brackets{E_{0}^{tot}+ \mathbb{E}\brackets{\mathcal{L}(x_0)-\mathcal{L}^{\star}}} + \frac{2\mathcal{W}^2}{L}, \qquad &\mu\geq 0 \\ 
		\hfill\hspace{0.35cm} \frac{1}{k}\sum_{t=1}^k E_{t}^{tot}  \leq \hspace{0.1cm} \frac{2}{k}  \parens{\mathbb{E}\brakets{\mathcal{L}(x_0)-\mathcal{L}^{\star}} + E_0^{y}+E_0^{z}} + \frac{2\mathcal{W}^2}{L},\hfill\qquad &\mu<0,
	\end{dcases}
	\end{align}
where $\mathcal{W}^2$, defined in \cref{eq:definition_W} of \cref{sec:proof_main_thm}, is the {\bf effective variance} of the problem with $\mathcal{W}^2{=}0$ in the deterministic setting and, in the stochastic setting, $\mathcal{W}^2{>}0$ is of the following order:
\begin{align}\label{eq:total_variance}
	\mathcal{W}^2 = &  O\parens{ \delta_0^{-1} \kappa_g^5\verts{\mathcal{D}_g}^{-1}\tilde{\sigma}_g^2 + \kappa_g^3\verts{\mathcal{D}_{g_{yy}}}^{-1}\tilde{\sigma}_{g_{yy}}^2 + \kappa_g^2\verts{\mathcal{D}_{g_{xy}}}^{-1}\tilde{\sigma}_{g_{xy}}^2  + \kappa_g^2\verts{\mathcal{D}_f}^{-1}\tilde{\sigma}_f^2},
\end{align}
\end{thm}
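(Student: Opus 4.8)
The plan is to establish three coupled recursive inequalities---one for each of $E_k^x$, $E_k^y$, $E_k^z$---and then combine them via the aggregate energy $E_k^{tot}$ so that the cross-terms cancel, yielding a single scalar contraction. First I would handle the \textbf{outer-level recursion}: starting from the update $x_{k+1}=x_k-\gamma_k\hat\psi_k$ and the weak convexity / strong convexity of $\mathcal{L}$ (from \cref{prop:smoothness}), expand $\Verts{x_{k+1}-x^\star}^2$ (resp.\ use the descent lemma on $\mathcal{L}$ in the non-convex case), take conditional expectations, and bound the resulting error terms by the bias $E_{k-1}^\psi$ and variance $V_{k-1}^\psi$ of $\hat\psi_k$ defined in \cref{eq:bias_variance}. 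With $\gamma_k=L^{-1}$ and the sequences $\delta_k,\eta_k$ from \cref{eq:def_eta}, this should give $E_k^x \le (1-(2\kappa_{\mathcal{L}})^{-1})E_{k-1}^x + (\text{coeff})\,(E_{k-1}^\psi + V_{k-1}^\psi) + (\text{stuff})$, mirroring the stochastic estimate-sequence bound of \cite{Kulunchakov:2020}.

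Next I would derive the \textbf{inner-level recursions}. Since $\mathcal{A}_k$ and $\mathcal{B}_k$ are $T$ and $N$ steps of SGD on strongly convex objectives (the map $y\mapsto g(x_k,y)$ is $\mu_g$-strongly convex and $L_g$-smooth; the quadratic $Q(x_k,y_k,\cdot)$ likewise after bounding its curvature via \cref{assump:lsmooth_g_strongly_convex_g}), standard SGD analysis with step-sizes $\alpha_k=L_g^{-1}$, $\beta_k=(2L_g)^{-1}$ gives geometric contraction: $\mathbb{E}\Verts{y^T-y^\star(x_k)}^2 \le (1-\alpha_k\mu_g)^T E_k^y + (\text{variance})$, and similarly for $z^N$ using the batch-size condition $\verts{\mathcal{D}_{g_{yy}}}\ge 1\vee\tilde\sigma_{g_{yy}}^2/(\mu_gL_g)$ to keep the iteration contractive in mean-square. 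The warm-start now enters: $E_{k+1}^y = \mathbb{E}\Verts{y_k^0 - y^\star(x_{k+1})}^2 = \mathbb{E}\Verts{y_k^T - y^\star(x_{k+1})}^2$, and by triangle inequality plus Lipschitzness of $y^\star$ (\cref{prop:smoothness}), this is controlled by the contracted $E_k^y$ plus $\Verts{x_{k+1}-x_k}^2 = \gamma_k^2\Verts{\hat\psi_k}^2$, which feeds back into $E_{k-1}^\psi + V_{k-1}^\psi + E_k^x$. The error $z^\star(x_k,y_k)$ also depends on $y_k$, so $E_k^z$ picks up an extra $E_k^y$ contribution; I must also express $E_{k}^\psi$ and $V_k^\psi$ back in terms of $E_k^y$, $E_k^z$ and the noise variances using \cref{eq:approx_implicit_grad_sto} and the smoothness/boundedness constants from \Cref{assump:lsmooth_fg,assump:smooth_jac,assump:bounded_var_grad_f,assump:bounded_var_hess_g}.

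Then comes the \textbf{aggregation} step, following \cite{Habets:2010}: form $E_k^{tot}=E_k^x + a_kE_k^y + b_kE_k^z$ with $a_k=\delta_0(1-\alpha_k\mu_g)^{1/2}$, $b_k=\delta_0(1-\tfrac12\beta_k\mu_g)^{1/2}$. Substituting the three recursions, the choice of $T=O(\kappa_g)$, $N=O(\kappa_g)$ (up to a log) makes the geometric factors $(1-\alpha_k\mu_g)^T$, $(1-\tfrac12\beta_k\mu_g)^N$ small enough that the feedback terms $E_k^y$, $E_k^z$ get absorbed with room to spare, while the off-diagonal contributions (the $\gamma_k^2\Verts{\hat\psi_k}^2$ coupling and the $E_k^y\to E_k^z$ coupling) are dominated by the diagonal decay. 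The outcome should be $E_k^{tot}\le(1-(2\kappa_{\mathcal{L}})^{-1})E_{k-1}^{tot} + (\text{const})\,\mathcal{W}^2/L$ in the convex case, unrolled to the stated geometric bound with the averaged iterate $\hat x_k$; in the non-convex case a telescoping/averaging argument over $t=1,\dots,k$ gives the $O(1/k)$ bound. The residual constant collects all the noise variances with the $\kappa_g$ powers in \cref{eq:total_variance}: the $\kappa_g^5$ on $\tilde\sigma_g^2$ arises because the inner SGD noise is amplified by $T=O(\kappa_g)$ and then propagated through the $z$-system and the $z^\star$-Lipschitz constant, each contributing further $\kappa_g$ factors.

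The main obstacle I anticipate is the \textbf{bookkeeping of the coupling constants} so that the three recursions close simultaneously: one needs the amplification of the outer step $\Verts{x_{k+1}-x_k}^2$ into $E_{k+1}^y$ and thence $E_{k+1}^z$ to be strictly controlled by the contraction budget, which forces the precise $O(\kappa_g)$ (with the right log) choice of $T,N$ and the specific form of $a_k,b_k$ making the cross-terms telescope. Getting the constant in front of $\mathcal{W}^2$ to match \cref{eq:total_variance} exactly---tracking which variance is hit by which power of $\kappa_g$ as it flows through $\mathcal{A}_k\to\mathcal{B}_k\to\hat\psi_k$---is the delicate part; the contraction rate itself, $1-(2\kappa_{\mathcal{L}})^{-1}$, comes essentially for free from the outer recursion once the inner errors are shown to be higher-order.
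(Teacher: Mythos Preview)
Your plan matches the paper's architecture exactly: separate recursions for $E_k^x$, $E_k^y$, $E_k^z$; bounds on $E_k^\psi$, $V_k^\psi$ in terms of the inner errors; aggregation into $E_k^{tot}$ with the stated $a_k,b_k$; choice of $T,N=O(\kappa_g)$ so the inner contractions dominate the coupling; then unrolling (convex case, via an averaging lemma for $\hat x_k$) or telescoping (non-convex case).

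One genuine technical point you are glossing over: the phrase ``standard SGD analysis'' does \emph{not} apply to $\mathcal{B}_k$ as written. Algorithm~$\mathcal{B}_k$ draws a fresh Hessian sample $\partial_{yy}\hat g(\cdot,\mathcal{D}^{g_{yy}}_{n,k})$ at each step, but reuses the \emph{same} vector $v_k=\partial_y\hat f(x_k,y_k,\mathcal{D}_f)$ for all $N$ iterations. The additive noise $v_k-\partial_y f(x_k,y_k)$ is therefore perfectly correlated across the inner loop, and the residual variance after $N$ steps is of order $\sigma_f^2/\mu_g^2$ rather than the $\beta\sigma_f^2/\mu_g$ you would get from i.i.d.\ SGD. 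The paper treats this by analyzing the linear recursion $z^n=(I-\beta A_n)z^{n-1}-\beta\hat b$ with random $A_n$ but fixed $\hat b$ directly, splitting $z^N$ into its conditional mean $\bar z^N=\mathbb{E}[z^N\mid x_k,y_k,z_{k-1},v_k]$ and a fluctuation. This same correlation reappears when you bound $V_k^\psi$: because $\partial_x\hat f(x_k,y_k,\mathcal{D}_f)$ and $z_k$ share the batch $\mathcal{D}_f$, the cross-term $\mathbb{E}\bigl[(\partial_x\hat f-\partial_x f)^\top\partial_{xy}g\,(z_k-\bar z_k)\bigr]$ is nonzero and must be controlled explicitly (it contributes the $L_g'\mu_g^{-1}\sigma_f^2$ piece of $w_x^2$). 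If you treat $z_k$ as conditionally unbiased given $(x_k,y_k,z_{k-1})$ alone, you will both understate $R_k^z$ and miss this cross-term, and the constants in $\mathcal{W}^2$ will not come out right. The fix is not hard---it is a closed-form computation on a linear system---but it is not ``standard SGD'' and you should plan for it.
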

We describe the strategy of the proof in \cref{sec:main_outline} and provide a proof outline in \cref{sec:proof_outline_main_thm} with exact expressions for all variables including the expressions of $T$,  $N$ and $\mathcal{W}^2$. The full proof is provided in \cref{sec:proof_main_thm}. 
The choice of $a_k$ and $b_k$ ensures that $E_k^y$ and $E_k^z$ contribute less to $E_{k}^{tot}$ as the algorithms $\mathcal{A}_k$ and $\mathcal{B}_k$ become more accurate. 
The \emph{effective variance} $\mathcal{W}^2$ accounts for  interactions between both levels in the presence of noise and becomes proportional to the outer-level variance $\sigma_f^2$ when the inner-level problem is solved exactly. In the deterministic setting, all variances $\tilde{\sigma}_f^2$, $\tilde{\sigma}_g^2$, $\tilde{\sigma}_{g_{xy}}^2$ and $\tilde{\sigma}_{g_{yy}}^2$ vanish so that $\mathcal{W}^2{=}0$. Hence, we characterize such  setting by $\mathcal{W}^2{=}0$ and the stochastic one by $\mathcal{W}^2{>}0$.
 Next, we apply \cref{prop:general_error_bound_convex}
 to obtain the complexity of \ASID{}. 

\subsection{Complexity analysis}\label{sec:complexities} 
 We define the \emph{complexity} $\mathcal{C}(\epsilon)$ of a bilevel algorithm to be the total number of queries to the gradients of $f$ and $g$, Jacobian/hessian-vector products needed by the algorithm to achieve an error $\epsilon$ according to some pre-defined criterion. Let the number of iterations $k$, $T$ and $N$ and sizes of the batches $\verts{\mathcal{D}_{g}}$, $\verts{\mathcal{D}_{f}}$, $\verts{\mathcal{D}_{g_{xy}}}$ and $\verts{\mathcal{D}_{g_{yy}}}$, be such that \ASID{}  achieves a precision $\epsilon$. Then $\mathcal{C}(\epsilon)$ is given by: \begin{align}\label{eq:computational_complexity}
	\mathcal{C}(\epsilon) = k\parens{ T \verts{\mathcal{D}_g} + N \verts{\mathcal{D}_{g_{yy}}} + \verts{\mathcal{D}_{g_{xy}}} + \verts{\mathcal{D}_f}  },
\end{align}
We provide the complexity of \ASID{} in the 4 settings of \cref{table:complexities} in the form of \cref{prop:rate_deterministic_strong_conv,prop:rate_sto_strong_conv_const_main,prop:rate_deterministic_non_conv,prop:rate_sto_non_conv_constant}
.
\begin{cor}[Case $\mu{>}0$ and $\mathcal{W}^2{=}0$]\label{prop:rate_deterministic_strong_conv}
Use batches of size $1$. 
Achieving $\mathcal{L}(x_k){-}\mathcal{L}^{\star} {+}\frac{\mu}{2}\Verts{x_k-x^{\star}}^2 {\leq} \epsilon$ requires $ \mathcal{C}(\epsilon){=} O\parens{\kappa_{\mathcal{L}} \kappa_g\log\parens{\frac{ E_0^{tot}}{\epsilon}}}$.
\end{cor}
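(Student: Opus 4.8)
The plan is to instantiate \cref{prop:general_error_bound_convex} in the strongly-convex deterministic regime ($\mu>0$, $\mathcal{W}^2=0$) and then simply count the per-iteration cost dictated by the prescribed parameters. First I would note that with batches of size $1$ all the variance terms $\tilde\sigma_f^2,\tilde\sigma_g^2,\tilde\sigma_{g_{xy}}^2,\tilde\sigma_{g_{yy}}^2$ are irrelevant because the deterministic setting forces them (and hence $\mathcal{W}^2$) to vanish; in particular the batch-size condition $\verts{\mathcal{D}_{g_{yy}}}\geq 1\vee \tilde\sigma_{g_{yy}}^2/(\mu_g L_g)$ is met trivially. So the hypotheses of \cref{prop:general_error_bound_convex} hold with the stated choices $\alpha_k=L_g^{-1}$, $\beta_k=(2L_g)^{-1}$, $\gamma_k=L^{-1}$, $T=O(\kappa_g)$, $N=O(\kappa_g)$ (up to a $\log\kappa_g$ factor), and the convex branch of the bound gives
\begin{align*}
	\mathbb{E}\brakets{\mathcal{L}(\hat x_k)-\mathcal{L}^{\star}} + E_k^{tot} \;\leq\; \parens{1-(2\kappa_{\mathcal{L}})^{-1}}^k\brackets{E_0^{tot}+\mathbb{E}\brakets{\mathcal{L}(x_0)-\mathcal{L}^{\star}}}.
\end{align*}

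Next I would translate this into the accuracy criterion of the corollary. Since $\mathcal{L}$ is $\mu$-strongly convex, $\frac{\mu}{2}\Verts{x_k-x^{\star}}^2\leq \mathcal{L}(x_k)-\mathcal{L}^{\star}$, and because $E_k^{tot}\geq E_k^x\geq \frac{\delta_k}{2\gamma_k}\mathbb{E}\brakets{\Verts{x_k-x^\star}^2}$ with $u=0$ in the deterministic setting (so $\hat x_k=x_k$), one sees that the left-hand side of the displayed bound dominates $\mathcal{L}(x_k)-\mathcal{L}^{\star}+\frac{\mu}{2}\Verts{x_k-x^\star}^2$ up to absolute constants — here I would check that $\delta_k/\gamma_k=\eta_k$ stays bounded below by a constant multiple of $\mu$, which follows from the recursion \cref{eq:def_eta} and the initialization $\eta_0\geq\mu$. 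Hence it suffices to drive the geometric factor $(1-(2\kappa_{\mathcal{L}})^{-1})^k$ below $\epsilon/(C(E_0^{tot}+\mathbb{E}\brakets{\mathcal{L}(x_0)-\mathcal{L}^{\star}}))$, and absorbing the initial suboptimality into $E_0^{tot}$ (they are of the same order), this needs $k = O\parens{\kappa_{\mathcal{L}}\log(E_0^{tot}/\epsilon)}$ outer iterations.

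Finally I would plug this $k$ into the complexity formula \cref{eq:computational_complexity}: with all batch sizes equal to $1$, $\mathcal{C}(\epsilon)=k\parens{T+N+1+1}=k\cdot O(\kappa_g)=O\parens{\kappa_{\mathcal{L}}\kappa_g\log(E_0^{tot}/\epsilon)}$, which is the claimed bound (the $\log\kappa_g$ factor hidden in $T,N$ is swallowed by the $O(\cdot)$). I do not expect any serious obstacle: this is a bookkeeping corollary of \cref{prop:general_error_bound_convex}. The only point requiring a little care is the first one — verifying that the criterion $\mathcal{L}(x_k)-\mathcal{L}^{\star}+\frac{\mu}{2}\Verts{x_k-x^\star}^2\leq\epsilon$ is genuinely controlled by $\mathbb{E}\brakets{\mathcal{L}(\hat x_k)-\mathcal{L}^{\star}}+E_k^{tot}$ up to a constant factor, i.e. that the coefficient $\delta_k/(2\gamma_k)$ multiplying $\Verts{x_k-x^\star}^2$ in $E_k^x$ is $\Theta(\mu)$, which amounts to tracking the sequence $\eta_k$ through \cref{eq:def_eta}; everything else is a direct substitution.
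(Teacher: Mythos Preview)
Your proposal is correct and follows essentially the same route as the paper. The paper's proof is slightly more direct: it sets $u=0$ and (implicitly) $\eta_0=\mu$, so that the recursion \cref{eq:def_eta} gives $\eta_k=\mu$ for all $k$ and hence $E_k^x$ equals $\tfrac{\mu}{2}\Verts{x_k-x^\star}^2+\mathcal{L}(x_k)-\mathcal{L}^\star$ \emph{exactly}; thus the target criterion is bounded by $E_k^{tot}$ without needing the strong-convexity inequality or a separate argument that $\delta_k/\gamma_k=\Theta(\mu)$. Your more roundabout verification reaches the same conclusion, and the complexity count is identical.
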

\cref{prop:rate_deterministic_strong_conv} outperforms the complexities in \cref{table:complexities} in terms of the dependence on $\epsilon$. It is possible to improve the dependence on $\kappa_g$ to $\kappa_g^{1/2}$  using acceleration in $\mathcal{A}_k$ and $\mathcal{B}_k$ as discussed in \cref{sec:acceleration_inner}, or using generic acceleration methods such as Catalyst \citep{Lin:2018a}.
\begin{cor}[Case $\mu\mathcal{W}^2{>}0$]\label{prop:rate_sto_strong_conv_const_main}
Choose $ \verts{\mathcal{D}_g}{=}\Theta\parens{\epsilon^{-1}\kappa_{\mathcal{L}}\kappa_g^{2}\tilde{\sigma}_g^2}$, $\verts{\mathcal{D}_{g_{xy}}}{=}\Theta\parens{\epsilon^{-1}\tilde{\sigma}_{g_{xy}}^2}$,  $\verts{\mathcal{D}_f}{=}\Theta\parens{\frac{\tilde{\sigma}_f^2}{\epsilon}}$ and $
		\verts{\mathcal{D}_{g_{yy}}}{=}\Theta\parens{\tilde{\sigma}_{g_{yy}}^2\parens{\frac{1}{\epsilon}\vee\kappa_g}}$.
Achieving $\mathbb{E}\brackets{\mathcal{L}(\hat{x}_k){-}\mathcal{L}^{\star}}{+} \frac{\mu}{2}\mathbb{E}\brackets{\Verts{x_k-x^{\star}}^2}{\leq} \epsilon$ requires:
\begin{align}
\mathcal{C}(\epsilon)= O\parens{\kappa_{\mathcal{L}}\parens{\kappa_{\mathcal{L}}\kappa_g^3 \tilde{\sigma}_g^2 + \kappa_g \parens{1\vee \epsilon\kappa_g}\tilde{\sigma}_{g_{yy}}^2 + \tilde{\sigma}_{g_{xy}}^2 + \tilde{\sigma}_f^2}\frac{1}{\epsilon}\log\parens{\frac{E_{0}^{tot} + \mathbb{E}\brackets{\mathcal{L}(x_0)-\mathcal{L}^{\star}}}{\epsilon}}}.
\end{align} 
\end{cor}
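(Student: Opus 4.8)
The plan is to combine the convergence bound of \cref{prop:general_error_bound_convex} in the case $\mu>0$ with the cost formula \cref{eq:computational_complexity}, choosing the batch sizes so that the \emph{effective variance} term $\frac{2\mathcal{W}^2}{L}$ is of order $\epsilon$, and the number of outer iterations $k$ is just large enough to drive the exponentially decaying term below $\epsilon$ as well. First I would invoke \cref{prop:general_error_bound_convex}: since $\mathcal{L}(x_k)-\mathcal{L}^\star + \tfrac{\mu}{2}\Verts{x_k-x^\star}^2 \le \mathcal{L}(\hat x_k)-\mathcal{L}^\star + E_k^{tot}$ (the left side is dominated because $E_k^{tot}$ contains the term $\frac{\delta_k}{2\gamma_k}\mathbb E\Verts{x_k-x^\star}^2$ with $\delta_k/\gamma_k = \eta_k \ge \mu$ and the convexity gap is in $\hat x_k$), it suffices to make both terms on the right-hand side of the $\mu\ge 0$ bound at most $\epsilon/2$.

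Next I would handle the two terms separately. For the exponentially decaying term $\parens{1-(2\kappa_{\mathcal{L}})^{-1}}^k\brackets{E_0^{tot}+\mathbb E[\mathcal{L}(x_0)-\mathcal{L}^\star]}$, taking $k = O\parens{\kappa_{\mathcal{L}}\log\frac{E_0^{tot}+\mathbb E[\mathcal{L}(x_0)-\mathcal{L}^\star]}{\epsilon}}$ makes it $\le \epsilon/2$. For the variance term, I would substitute the chosen batch sizes into the expression \cref{eq:total_variance} for $\mathcal{W}^2$. With $\delta_0 = \eta_0\gamma_0 = \Theta(\mu/L) = \Theta(\kappa_{\mathcal{L}}^{-1})$ (using $\eta_0=\mu$, $\gamma_0=1/L$), the first summand becomes $\delta_0^{-1}\kappa_g^5 |\mathcal{D}_g|^{-1}\tilde\sigma_g^2 = \Theta\parens{\kappa_{\mathcal{L}}\kappa_g^5 \tilde\sigma_g^2 / (\epsilon^{-1}\kappa_{\mathcal{L}}\kappa_g^2\tilde\sigma_g^2)} = \Theta(\kappa_g^3\epsilon)$; the second becomes $\kappa_g^3 |\mathcal{D}_{g_{yy}}|^{-1}\tilde\sigma_{g_{yy}}^2 = \Theta\parens{\kappa_g^3/(\epsilon^{-1}\vee \kappa_g)} = O(\kappa_g^3\epsilon)$; the third and fourth become $\Theta(\epsilon)$ each. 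Hence $\mathcal{W}^2 = O(\kappa_g^3\epsilon)$ — this is a slight subtlety: one must check that the $\kappa_g^3$ factor here is absorbed, i.e. that we really want $|\mathcal{D}_g|$ scaled by the extra $\kappa_{\mathcal{L}}\kappa_g^2$ and the others without it, so that after dividing by $L$ and plugging back, the dominant contribution to the final complexity carries the advertised powers of $\kappa_g$. Dividing by $L$ does not change the $\epsilon$-order, so $\frac{2\mathcal{W}^2}{L}\le \epsilon/2$ holds for $\epsilon$ small enough (or one tunes the $\Theta$-constants); strictly one should rescale all four batch sizes by a common constant to guarantee the inequality for all $\epsilon$.

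Finally I would assemble the cost. Plugging $k$, $T=O(\kappa_g)$, $N=O(\kappa_g)$ (from \cref{prop:general_error_bound_convex}, up to $\log\kappa_g$) and the four batch sizes into \cref{eq:computational_complexity} gives
\begin{align*}
\mathcal{C}(\epsilon) = k\parens{T|\mathcal{D}_g| + N|\mathcal{D}_{g_{yy}}| + |\mathcal{D}_{g_{xy}}| + |\mathcal{D}_f|}
= O\parens{\kappa_{\mathcal{L}}\log(\tfrac{\cdot}{\epsilon})}\cdot O\parens{\kappa_g\cdot\tfrac{\kappa_{\mathcal{L}}\kappa_g^2\tilde\sigma_g^2}{\epsilon} + \kappa_g\tilde\sigma_{g_{yy}}^2(\tfrac1\epsilon\vee\kappa_g) + \tfrac{\tilde\sigma_{g_{xy}}^2}{\epsilon} + \tfrac{\tilde\sigma_f^2}{\epsilon}},
\end{align*}
which rearranges to the stated bound $\mathcal{C}(\epsilon)=O\parens{\kappa_{\mathcal{L}}\parens{\kappa_{\mathcal{L}}\kappa_g^3\tilde\sigma_g^2 + \kappa_g(1\vee\epsilon\kappa_g)\tilde\sigma_{g_{yy}}^2 + \tilde\sigma_{g_{xy}}^2 + \tilde\sigma_f^2}\epsilon^{-1}\log(\tfrac{E_0^{tot}+\mathbb E[\mathcal{L}(x_0)-\mathcal{L}^\star]}{\epsilon})}$. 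The main obstacle is bookkeeping: verifying that the chosen batch sizes are \emph{simultaneously} compatible with the hypotheses of \cref{prop:general_error_bound_convex} (in particular $|\mathcal{D}_{g_{yy}}|\ge 1\vee\frac{\tilde\sigma_{g_{yy}}^2}{\mu_g L_g}$, which is why the $\kappa_g$ branch appears in the $\max$), and that every term in \cref{eq:total_variance} genuinely lands at order $\epsilon$ (not larger) after substitution — the $\kappa_{\mathcal{L}}$ hidden in $\delta_0^{-1}$ is the easy place to slip. Everything else is substitution and collecting $\log$ factors.
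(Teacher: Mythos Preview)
Your proposal is correct and follows essentially the same approach as the paper's proof: apply \cref{prop:general_error_bound_convex} with $u=1$, choose $k=O(\kappa_{\mathcal{L}}\log(\cdot/\epsilon))$ to kill the geometric term, select batch sizes so that $2\mathcal{W}^2/L=O(\epsilon)$, and substitute into \cref{eq:computational_complexity}. One small clarification: your remark ``dividing by $L$ does not change the $\epsilon$-order'' undersells the cancellation---the paper explicitly uses $L^{-1}=O(\kappa_g^{-3})$ (from \cref{prop:smoothness}) to absorb the $\kappa_g^3$ factor you found in $\mathcal{W}^2$, and your third and fourth summands are actually $\Theta(\kappa_g^2\epsilon)$ rather than $\Theta(\epsilon)$, though this does not affect the final bound.
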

\cref{prop:rate_sto_strong_conv_const_main} improves over the results in \cref{table:complexities} in the stochastic strongly-convex setting and recovers the dependence on $\epsilon$ of stochastic gradient descent for smooth and strongly convex functions up to a logarithmic factor.
\begin{cor}[Case $\mu{<}0$ and $\mathcal{W}^2{=}0$]\label{prop:rate_deterministic_non_conv}
Choose batches of size $1$. Achieving $\frac{1}{k}\sum_{i=1}^k\Verts{\nabla\mathcal{L}(x_i)}^2\leq \epsilon$ requires $\mathcal{C}(\epsilon)= \mathcal{O}\parens{\frac{\kappa_g^4}{\epsilon}\parens{\parens{\mathcal{L}(x_0)-\mathcal{L}^{\star}}+E_0^y + E_0^z} }$. 
\end{cor}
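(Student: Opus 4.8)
The plan is to specialize the non-convex branch of \cref{prop:general_error_bound_convex} and then substitute the resulting iteration count into the complexity identity \cref{eq:computational_complexity}. In the deterministic setting every noise variance vanishes, so $\mathcal{W}^2=0$ and the batch-size requirement $\verts{\mathcal{D}_{g_{yy}}}\geq 1\vee\tilde{\sigma}_{g_{yy}}^2/(\mu_gL_g)$ reduces to $\verts{\mathcal{D}_{g_{yy}}}\geq 1$, so batches of size $1$ are admissible and \cref{prop:general_error_bound_convex} yields $\frac1k\sum_{t=1}^k E_t^{tot}\leq \frac2k\parens{\mathcal{L}(x_0)-\mathcal{L}^{\star}+E_0^{y}+E_0^{z}}$. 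The first move is to discard the inner-level contributions: since the coefficients $a_t,b_t$ are positive and $E_t^{y},E_t^{z}\geq 0$, we have $E_t^{tot}\geq E_t^x$; moreover \cref{eq:def_eta} forces $\eta_k\equiv L$ in the non-convex regime, hence $\delta_k=\eta_k\gamma_k=L\gamma_k$, and reading off \cref{eq:def_E_x} gives $E_t^x=\tfrac{\delta_t}{2\gamma_tL^2}\Verts{\nabla\mathcal{L}(x_t)}^2=\tfrac{1}{2L}\Verts{\nabla\mathcal{L}(x_t)}^2$ (the expectation being vacuous in the deterministic case). Combining the two displays produces $\frac1k\sum_{t=1}^k\Verts{\nabla\mathcal{L}(x_t)}^2\leq \tfrac{4L}{k}\parens{\mathcal{L}(x_0)-\mathcal{L}^{\star}+E_0^{y}+E_0^{z}}$.

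The remaining step is budget accounting. Driving this quantity below $\epsilon$ requires $k=\Theta\parens{L\epsilon^{-1}\parens{\mathcal{L}(x_0)-\mathcal{L}^{\star}+E_0^{y}+E_0^{z}}}$ outer iterations. With all batch-sizes equal to $1$, one outer iteration of \cref{alg:bilevel} costs $T$ gradients of $g$ inside $\mathcal{A}_k$, $N$ Hessian-vector products inside $\mathcal{B}_k$, one Jacobian-vector product and one gradient of $f$, i.e. $T+N+2$ oracle calls; since \cref{prop:general_error_bound_convex} is invoked with $T=O(\kappa_g)$ and $N=O(\kappa_g)$ up to a $\log\kappa_g$ factor, \cref{eq:computational_complexity} gives $\mathcal{C}(\epsilon)=k\cdot O(\kappa_g)=O\parens{L\kappa_g\,\epsilon^{-1}\parens{\mathcal{L}(x_0)-\mathcal{L}^{\star}+E_0^{y}+E_0^{z}}}$. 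Finally I would invoke the smoothness estimate of \cref{prop:smoothness} (in \cref{sec:smoothness}): tracking its $\kappa_g$-dependence, exactly as in the bilevel smoothness bounds of \cite{Ghadimi:2018}, gives $L=O(\kappa_g^3)$ once $L_f,B,M_g,L_g'$ are treated as $O(1)$, and substituting this yields the advertised $\mathcal{C}(\epsilon)=O\parens{\kappa_g^4\epsilon^{-1}\parens{\mathcal{L}(x_0)-\mathcal{L}^{\star}+E_0^{y}+E_0^{z}}}$.

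I expect no genuine difficulty beyond bookkeeping once \cref{prop:general_error_bound_convex} is available. The two points that need care are (i) checking that the non-convex choice of $\delta_k$ makes $E_t^x$ precisely the fixed multiple $\tfrac1{2L}\Verts{\nabla\mathcal{L}(x_t)}^2$ of the standard stationarity measure, so the averaged gradient-norm bound carries the correct constant, and (ii) correctly propagating the hidden $\kappa_g$-dependence of $L$ together with the logarithmic factors absorbed into $T$ and $N$, so that the $\kappa_g^4$ exponent — matching AID-BiO of \cite{Ji:2021a} in \cref{table:complexities} — is reproduced exactly.
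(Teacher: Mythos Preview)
Your proposal is correct and follows essentially the same route as the paper's own proof: specialize the non-convex branch of \cref{prop:general_error_bound_convex} with $\mathcal{W}^2=0$, use $E_t^{tot}\geq E_t^x$ to pass to the averaged squared gradient norm, read off $k=O(L\epsilon^{-1}(\cdots))$, and then combine with $T,N=O(\kappa_g)$ and $L=O(\kappa_g^3)$ from \cref{prop:smoothness}. Your constant $\tfrac{1}{2L}$ in $E_t^x$ is in fact what the definition \cref{eq:def_E_x} gives (the paper's appendix proof writes $\tfrac{1}{L}$, a harmless discrepancy), so your factor $4L$ versus the paper's $2L$ is immaterial for the big-$O$ conclusion.
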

\cref{prop:rate_deterministic_non_conv} recovers the complexity of AID-BiO \citep{Ji:2021a} in the deterministic non-convex setting. This is expected since AID-BiO also exploits warm-start for both $\mathcal{A}_k$ and $\mathcal{B}_k$.
\begin{cor}[Case $\mu{<}0$ and $\mathcal{W}{>}0$]\label{prop:rate_sto_non_conv_constant}
Choose  $\verts{\mathcal{D}_{g_{xy}}}{=}\Theta\parens{\frac{\kappa_g^2}{\epsilon}\tilde{\sigma}_{g_{xy}}^2}$, $\verts{\mathcal{D}_{g_{yy}}}{=}\Theta\parens{\frac{\kappa_g^3}{\epsilon}\tilde{\sigma}_{g_{yy}}^2\parens{1\vee \epsilon\mu_g^2}}$,  $\verts{\mathcal{D}_f}{=}\Theta\parens{\frac{\kappa_g^2\tilde{\sigma}_f^2}{\epsilon}}$ and  $\verts{\mathcal{D}_g}{=}\Theta\parens{\frac{\kappa_g^5 \tilde{\sigma}_{g}^2}{\epsilon}}$.
Achieving an error $\frac{1}{k}\sum_{i=1}^k\mathbb{E}\brackets{\Verts{\nabla\mathcal{L}(x_i)}^2}\leq \epsilon$ requires:
\begin{align}
\mathcal{C}(\epsilon)= O\parens{ \frac{\kappa_g^5}{\epsilon^2}\parens{\kappa_g^4 \tilde{\sigma}_g^2 + \kappa_g^2 \parens{1\vee \epsilon\mu_g^2}\tilde{\sigma}_{g_{yy}}^2 + \tilde{\sigma}_{g_{xy}}^2+ \tilde{\sigma}_f^2}\parens{\mathbb{E}\brackets{\mathcal{L}(x_k)-\mathcal{L}^{\star}} + E_0^y + E_{0}^z}  }
\end{align}
\end{cor}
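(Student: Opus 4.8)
The plan is to instantiate the non-convex branch ($\mu<0$, $\mathcal{W}^2>0$) of Theorem \ref{prop:general_error_bound_convex} and then tune the batch sizes so that the effective variance term $\frac{2\mathcal{W}^2}{L}$ matches the target accuracy $\epsilon$, before finally counting the total number of oracle queries via the cost formula \eqref{eq:computational_complexity}. First I would recall that for $\mu<0$ the theorem gives
\begin{align}
\frac{1}{k}\sum_{t=1}^k E_t^{tot} \leq \frac{2}{k}\parens{\mathbb{E}\brakets{\mathcal{L}(x_0)-\mathcal{L}^{\star}} + E_0^y + E_0^z} + \frac{2\mathcal{W}^2}{L},
\end{align}
and since $E_t^{tot} \geq E_t^x = \frac{\delta_t}{2\gamma_t L^2}\mathbb{E}\brakets{\Verts{\nabla\mathcal{L}(x_t)}^2}$ with $\gamma_t = L^{-1}$ and $\delta_t = L$ in the non-convex case (so $\frac{\delta_t}{2\gamma_t L^2} = \frac{1}{2L}$), controlling $\frac1k\sum_t E_t^{tot}$ by $\epsilon'$ controls $\frac1k\sum_t \mathbb{E}\brakets{\Verts{\nabla\mathcal{L}(x_t)}^2}$ by $2L^2\epsilon' / \ldots$ — I would track the precise constant but it only affects hidden factors. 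The upshot is that it suffices to (a) pick $k = \Theta\parens{\epsilon^{-1}(\mathbb{E}\brakets{\mathcal{L}(x_0)-\mathcal{L}^{\star}}+E_0^y+E_0^z)}$ to kill the first term, and (b) choose batch sizes making $\mathcal{W}^2 = O(\epsilon)$.

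For step (b) I would read off the four contributions in \eqref{eq:total_variance}. In the non-convex regime $\delta_0 = L$, so the $\tilde{\sigma}_g^2$ term is $O(\kappa_g^5 L^{-1}\verts{\mathcal{D}_g}^{-1}\tilde{\sigma}_g^2)$; demanding this be $O(\epsilon)$ (absorbing $L$ into constants, since conditioning numbers rather than raw smoothness constants are tracked) forces $\verts{\mathcal{D}_g} = \Theta\parens{\kappa_g^5\tilde{\sigma}_g^2/\epsilon}$, which is exactly the stated choice. Similarly the $\tilde{\sigma}_{g_{yy}}^2$ term $O(\kappa_g^3 \verts{\mathcal{D}_{g_{yy}}}^{-1}\tilde{\sigma}_{g_{yy}}^2)$ requires $\verts{\mathcal{D}_{g_{yy}}} = \Theta\parens{\kappa_g^3\tilde{\sigma}_{g_{yy}}^2/\epsilon}$ — the extra factor $(1\vee\epsilon\mu_g^2)$ in the corollary comes from the batch-size lower bound $\verts{\mathcal{D}_{g_{yy}}}\geq 1\vee\frac{\tilde{\sigma}_{g_{yy}}^2}{\mu_g L_g}$ imposed in Theorem \ref{prop:general_error_bound_convex}, which I would reconcile by taking the max of the two requirements and rewriting $\frac{\tilde{\sigma}_{g_{yy}}^2}{\mu_g L_g}$ in terms of $\kappa_g$ and $\mu_g$. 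The $\tilde{\sigma}_{g_{xy}}^2$ and $\tilde{\sigma}_f^2$ terms give $\verts{\mathcal{D}_{g_{xy}}} = \Theta\parens{\kappa_g^2\tilde{\sigma}_{g_{xy}}^2/\epsilon}$ and $\verts{\mathcal{D}_f} = \Theta\parens{\kappa_g^2\tilde{\sigma}_f^2/\epsilon}$ respectively, again matching the statement.

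For the final count I would substitute into \eqref{eq:computational_complexity}, using $T = O(\kappa_g)$ and $N = O(\kappa_g)$ (up to logs) from Theorem \ref{prop:general_error_bound_convex}. The per-iteration cost is $T\verts{\mathcal{D}_g} + N\verts{\mathcal{D}_{g_{yy}}} + \verts{\mathcal{D}_{g_{xy}}} + \verts{\mathcal{D}_f} = O\parens{\kappa_g\cdot\kappa_g^5\tilde{\sigma}_g^2/\epsilon + \kappa_g\cdot\kappa_g^3(1\vee\epsilon\mu_g^2)\tilde{\sigma}_{g_{yy}}^2/\epsilon + \kappa_g^2\tilde{\sigma}_{g_{xy}}^2/\epsilon + \kappa_g^2\tilde{\sigma}_f^2/\epsilon}$, which after factoring $\kappa_g^2/\epsilon$ out and recognizing $\kappa_g^4$ as the dominant power inside reads $O\parens{\frac{\kappa_g^2}{\epsilon}\parens{\kappa_g^4\tilde{\sigma}_g^2 + \kappa_g^2(1\vee\epsilon\mu_g^2)\tilde{\sigma}_{g_{yy}}^2 + \tilde{\sigma}_{g_{xy}}^2 + \tilde{\sigma}_f^2}}$. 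Multiplying by $k = \Theta\parens{\epsilon^{-1}\parens{\mathbb{E}\brakets{\mathcal{L}(x_0)-\mathcal{L}^\star}+E_0^y+E_0^z}}$ yields the claimed $\mathcal{C}(\epsilon)$. The main obstacle — really a bookkeeping hazard rather than a conceptual one — is keeping the hidden $\kappa_g$ and $\mu_g$ powers consistent across the substitution: one must be careful that the $(1\vee\epsilon\mu_g^2)$ factor is routed correctly from the batch-size floor, that $\delta_0 = L$ rather than a free parameter here, and that the logarithmic factors in $T,N$ do not interact with the $\log$-free statement (they are absorbed, since the corollary's bound carries no log, unlike the strongly convex case). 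I would also double-check that the dominant term among the four is indeed the $\kappa_g^5\tilde{\sigma}_g^2/\epsilon$ one after multiplication by $T=O(\kappa_g)$, justifying the $\kappa_g^5/\epsilon^2$ prefactor displayed in the corollary.
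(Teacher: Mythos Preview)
Your overall strategy matches the paper's: apply the non-convex branch of Theorem~\ref{prop:general_error_bound_convex}, convert the bound on $\frac{1}{k}\sum_t E_t^{tot}$ into one on $\frac{1}{k}\sum_t\mathbb{E}\brackets{\Verts{\nabla\mathcal{L}(x_t)}^2}$, choose batch sizes so that $\mathcal{W}^2=O(\epsilon)$, and count oracle calls via \eqref{eq:computational_complexity}. However, there is a real gap in your accounting, not just a cosmetic one.

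The conversion factor between $E_t^x$ and $\mathbb{E}\brackets{\Verts{\nabla\mathcal{L}(x_t)}^2}$ is $L$, and $L$ is \emph{not} a hidden absolute constant: by Proposition~\ref{prop:smoothness} one has $L=O(\kappa_g^3)$. Concretely, multiplying the theorem's inequality by $L$ gives
\[
\frac{1}{k}\sum_{t=1}^k\mathbb{E}\brackets{\Verts{\nabla\mathcal{L}(x_t)}^2}\leq \frac{2L}{k}\parens{\mathbb{E}\brackets{\mathcal{L}(x_0)-\mathcal{L}^{\star}}+E_0^y+E_0^z}+2\mathcal{W}^2,
\]
so making the first term $O(\epsilon)$ requires $k=\Theta\parens{\frac{\kappa_g^3}{\epsilon}\parens{\mathbb{E}\brackets{\mathcal{L}(x_0)-\mathcal{L}^{\star}}+E_0^y+E_0^z}}$, not $k=\Theta(\epsilon^{-1}(\ldots))$ as you wrote. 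Your per-iteration cost is correctly $O\parens{\frac{\kappa_g^2}{\epsilon}\parens{\kappa_g^4\tilde{\sigma}_g^2+\kappa_g^2(1\vee\epsilon\mu_g^2)\tilde{\sigma}_{g_{yy}}^2+\tilde{\sigma}_{g_{xy}}^2+\tilde{\sigma}_f^2}}$, but multiplying by your $k$ yields only a $\kappa_g^2/\epsilon^2$ prefactor; it is precisely the missing $\kappa_g^3$ from $k$ that produces the $\kappa_g^5/\epsilon^2$ in the corollary. Your last sentence claims the displayed prefactor is recovered, but your own arithmetic does not deliver it.

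A smaller slip: in the non-convex case $\eta_k=L$ and $\gamma_k=L^{-1}$, so $\delta_k=\eta_k\gamma_k=1$, not $\delta_k=L$. This is why the $\delta_0^{-1}$ in \eqref{eq:total_variance} contributes no extra $\kappa_g$ powers here, and your batch-size choices (which are correct) follow directly from $\mathcal{W}^2=O(\epsilon)$ without any ``absorbing $L$ into constants''.
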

\cref{prop:rate_sto_non_conv_constant} recovers the optimal dependence on $\epsilon$ of $O(\frac{1}{\epsilon^2})$ achieved by stochastic gradient descent in the smooth non-convex case  \citep[Theorem 1]{Arjevani:2019}. It also improves over the results in \citep{Ji:2021a} which involve an additional logarithmic factor $\log(\epsilon^{-1})$ as $N$ is required to be $O(\kappa_g\log(\epsilon^{-1}))$. In our case, $N$ remains constant since $\mathcal{B}_k$ benefits from warm-start. The faster rates of MRBO/VRBO$^{\star}$  \citep{Yang:2021} are obtained under the additional \emph{mean-squared smoothness} assumption \citep{Arjevani:2019}, which we do not investigate in the present work. Such assumption allows to achieve the improved complexity of $O(\epsilon^{-3/2}\log(\epsilon^{-1}))$. However, these algorithms still require $N{=}O(\log(\epsilon^{-1}))$, indicating that the use of warm-start in $\mathcal{B}_k$ could further reduce the complexity to $O(\epsilon^{-3/2})$ which would be an interesting direction for future work.

\subsection{Outline of the proof}\label{sec:main_outline}
The proof of \cref{prop:general_error_bound_convex} proceeds by deriving a \emph{recursion} for both outer-level error $E_k^x$ and inner-level errors $E_k^y$ and $E_k^z$ and then combining those to obtain an error bound on the total error $E_k^{tot}$. 

{\bf Outer-level recursion.} To allow a unified analysis of the behavior of $E_k^x$ in both convex and non-convex settings, we define $F_k$ as follows:
\begin{subequations}\label{eq:def_F_s}
\begin{align}
	F_k := & u\delta_k\mathbb{E}\brakets{\mathcal{L}(x_k)-\mathcal{L}^{\star}}\mathds{1}_{\mu > 0} + 
		\parens{\mathbb{E}\brakets{\mathcal{L}(x_k)  -\mathcal{L}(x_{k-1}) } + E_{k-1}^x -E_{k}^x}\mathds{1}_{\mu<0}\label{eq:def_F_k}.
\end{align}
\end{subequations}
The following proposition, with a proof in \cref{sec:proof_outer_loop},  provides a recursive inequality on $E_k^x$ involving the errors in \cref{eq:bias_variance} due to the inexact gradient $\hat{\psi}_k$:
\begin{prop}
\label{prop:error_outer_convex}
Let $\rho_k$ be a non-increasing sequence with $0{<}\rho_k{<}2$.
\cref{assump:lsmooth_g_strongly_convex_g,assump:lsmooth_fg,assump:smooth_jac} ensure that:
\begin{align}\label{eq:error_outer_convex}
	F_k + E_{k}^x \leq & \parens{1-\parens{1-2^{-1}\rho_k}\delta_k} E_{k-1}^x + \gamma_k s_kV_{k-1}^{\psi} + \gamma_k\parens{s_k + \rho_k^{-1}}E_{k-1}^{\psi},
\end{align}
with $s_k$ defined as $s_k := \frac{1}{2}\delta_k + \parens{\frac{u}{2}\delta_k + (1-u)}\mathds{1}_{\mu > 0}$.
\end{prop}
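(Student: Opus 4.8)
The plan is to start from the update $x_k = x_{k-1} - \gamma_k\hat\psi_{k-1}$ and expand the relevant potential — $\Verts{x_k - x^\star}^2$ in the convex case ($\mu\geq 0$) and the descent lemma applied to $\mathcal{L}(x_k)$ in the non-convex case ($\mu<0$) — in a way that keeps the two cases parallel so they can be folded into the single inequality~\cref{eq:error_outer_convex}. In the convex case I would write $\Verts{x_k - x^\star}^2 = \Verts{x_{k-1}-x^\star}^2 - 2\gamma_k \inner{\hat\psi_{k-1}}{x_{k-1}-x^\star} + \gamma_k^2\Verts{\hat\psi_{k-1}}^2$, take $\mathbb{E}_{k-1}$, and split $\hat\psi_{k-1}$ into its conditional mean plus noise so that the cross term involves $\nabla\mathcal{L}(x_{k-1})$ plus a bias term $b_{k-1}:=\mathbb{E}_{k-1}[\hat\psi_{k-1}]-\nabla\mathcal{L}(x_{k-1})$ and the quadratic term produces $V_{k-1}^\psi$ plus $\Verts{\mathbb{E}_{k-1}[\hat\psi_{k-1}]}^2$. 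Then use $\mu$-strong convexity (or weak convexity when $\mu<0$) of $\mathcal{L}$ to convert $\inner{\nabla\mathcal{L}(x_{k-1})}{x_{k-1}-x^\star}$ into a combination of $\mathcal{L}(x_{k-1})-\mathcal{L}^\star$ and $\tfrac{\mu}{2}\Verts{x_{k-1}-x^\star}^2$, and use $L$-smoothness to bound $\Verts{\nabla\mathcal{L}(x_{k-1})}^2 \leq 2L(\mathcal{L}(x_{k-1})-\mathcal{L}^\star)$; the non-convex case instead uses the descent lemma $\mathcal{L}(x_k)\leq \mathcal{L}(x_{k-1}) - \gamma_k\inner{\nabla\mathcal{L}(x_{k-1})}{\hat\psi_{k-1}} + \tfrac{L}{2}\gamma_k^2\Verts{\hat\psi_{k-1}}^2$, again splitting into mean and noise.

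Next I would handle the bias cross term via Young's inequality with the free parameter $\rho_k$: $-2\gamma_k\inner{b_{k-1}}{x_{k-1}-x^\star} \leq \rho_k\delta_k\mathbb{E}[\Verts{x_{k-1}-x^\star}^2] + (\gamma_k^2/(\rho_k\delta_k))\mathbb{E}[\Verts{b_{k-1}}^2]$ up to the right normalization matching the definition $\delta_k=\eta_k\gamma_k$; the $\rho_k\delta_k\Verts{x_{k-1}-x^\star}^2$ piece is exactly what must be absorbed to leave the factor $(1-(1-\rho_k/2)\delta_k)$ in front of $E_{k-1}^x$, and the remaining piece is the $\gamma_k\rho_k^{-1}E_{k-1}^\psi$ term. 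The cross term $2\gamma_k\inner{\mathbb{E}_{k-1}[\hat\psi_{k-1}]}{b_{k-1}}$-type contributions coming from $\Verts{\mathbb{E}_{k-1}[\hat\psi_{k-1}]}^2 = \Verts{\nabla\mathcal{L}(x_{k-1})}^2 + 2\inner{\nabla\mathcal{L}}{b_{k-1}} + \Verts{b_{k-1}}^2$ feed the $s_k E_{k-1}^\psi$ term after another Young step and using $\Verts{\nabla\mathcal{L}}^2\leq 2L(\mathcal{L}-\mathcal{L}^\star)$; the quadratic-in-$\hat\psi$ terms feed $s_k V_{k-1}^\psi$. The precise form of $s_k = \tfrac12\delta_k + (\tfrac{u}{2}\delta_k+(1-u))\mathds{1}_{\mu>0}$ should emerge from carefully tracking the coefficients $\gamma_k^2$ versus $\delta_k$ in the $\mu>0$ convex case (where the $(1-u)\mathbb{E}[\mathcal{L}(x_k)-\mathcal{L}^\star]$ term in $E_k^x$ contributes) versus the non-convex case; I would keep $u$ symbolic throughout and verify the two specializations at the end.

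The bookkeeping obstacle — and I expect this to be the genuinely delicate part — is ensuring the coefficient in front of $E_{k-1}^x$ comes out exactly as $1-(1-\rho_k/2)\delta_k$ rather than merely $1-\delta_k+O(\delta_k^2)$. This requires exploiting the precise recursion~\cref{eq:def_eta} for $\eta_k$: the term $\eta_k(1+\gamma_{k+1}(\eta_k-\mu))$ is designed so that $\delta_k/\gamma_k = \eta_k$ interacts with the $\gamma_k^2\Verts{\nabla\mathcal{L}}^2$ and strong-convexity terms to telescope correctly, and with the quantity $F_k$ (which in the convex case is $u\delta_k\mathbb{E}[\mathcal{L}(x_k)-\mathcal{L}^\star]$ and in the non-convex case a telescoping difference) chosen exactly to absorb the residual. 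So the strategy is: carry the expansion with all constants explicit, collect the coefficient of $\mathbb{E}[\Verts{x_{k-1}-x^\star}^2]$, substitute $\eta_{k}$ via its recursion and the step-size constraint $\gamma_0\leq 1/L$ with $\gamma_k$ non-increasing and $\eta_0\geq\mu$, and check that what remains on the "good" side of the inequality is precisely $F_k + E_k^x$. The $\mathds{1}$-indicator gymnastics between the $\mu>0$, $\mu=0$, and $\mu<0$ regimes is where sign errors are most likely, so I would do the $\mu>0$ stochastic case in full detail first, then argue the $\mu=0$ and deterministic reductions, and finally the $\mu<0$ case separately using the descent-lemma route, confirming each time that the claimed uniform inequality holds.
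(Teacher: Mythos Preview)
Your strategy matches the paper's proof closely: both split into the cases $\mu\geq 0$ and $\mu<0$, expand $\Verts{x_k-x^\star}^2$ (resp.\ apply the descent lemma) from the update $x_k=x_{k-1}-\gamma_k\hat\psi_{k-1}$, separate $\hat\psi_{k-1}$ into conditional mean plus noise to isolate $V_{k-1}^\psi$ and the bias $b_{k-1}=\mathbb{E}_{k-1}[\hat\psi_{k-1}]-\nabla\mathcal{L}(x_{k-1})$, and finally convert the bias cross term into $\tfrac12\rho_k\delta_k E_{k-1}^x+\rho_k^{-1}\gamma_k E_{k-1}^\psi$ via Young's inequality.

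Two tactical differences are worth flagging. First, in the convex case the paper does not bound $\Verts{\nabla\mathcal{L}(x_{k-1})}^2\leq 2L(\mathcal{L}(x_{k-1})-\mathcal{L}^\star)$ as you suggest; instead, after regrouping, the term $-\tfrac{\gamma_k}{2}(\epsilon_k-\delta_k)\Verts{\nabla\mathcal{L}(x_{k-1})}^2$ (with $\epsilon_k=u\delta_k+(1-u)$) has the correct sign and is simply dropped. This is cleaner and avoids introducing an extra $L$ that would complicate matching $s_k$. Second, the bias cross term in the paper appears as $-\delta_k\, b_{k-1}^\top\parens{x_{k-1}-\gamma_k\nabla\mathcal{L}(x_{k-1})-x^\star}$ rather than against $x_{k-1}-x^\star$; the paper then applies Cauchy--Schwarz together with the contraction $\Verts{x_{k-1}-\gamma_k\nabla\mathcal{L}(x_{k-1})-x^\star}^2\leq\Verts{x_{k-1}-x^\star}^2$ (valid for convex $L$-smooth $\mathcal{L}$ with $\gamma_k\leq 1/L$) before Young. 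Your direct Young on $\inner{b_{k-1}}{x_{k-1}-x^\star}$ would also work, but you should be aware that the gradient-shifted point arises naturally from combining the $\gamma_k^2\Verts{\psi_{k-1}}^2$ and $-\gamma_k\nabla\mathcal{L}^\top\psi_{k-1}$ pieces, so you will want to handle that shift rather than the raw $x_{k-1}-x^\star$. Otherwise your plan is sound and the bookkeeping you anticipate is exactly where the work lies.
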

In the ideal case where $y_k=y^{\star}(x_k)$ and $z_k=z^{\star}(x_k,y_k)$, the bias $E_k^{\psi}$ vanishes and \cref{eq:error_outer_convex} simplifies to \cite[Proposition 1]{Kulunchakov:2019a} which recovers the convergence rates for stochastic gradient methods in the convex case. However, $y_k$ and $z_k$ are generally inexact solutions and introduce a positive bias $E_k^{\psi}$. Therefore, controlling the inner-level iterates is required to control the bias $E_k^{\psi}$ which, in turn, impacts the convergence of the outer-level as we discuss next.

{\bf Controlling the inner-level iterates $y_k$ and $z_k$.}
\cref{prop:error_yz} below controls the expected mean squared errors between iterates $y_k$ and $z_k$ and their limiting values $y^{\star}(x_k)$ and $z^{\star}(x_k,y_k)$:
\begin{prop}\label{prop:error_yz}
	 Let  the step-sizes $\alpha_{k}$ and $\beta_k$  be such that $\alpha_k{\leq} L_g^{-1}$ and 
	 $\beta_k{\leq}\frac{1}{2L_g}{\wedge}\frac{\mu_g}{\mu_g^2 + \sigma_{g_{yy}}^2}$. Let $\Lambda_k {:=} \parens{1{-}\alpha_k\mu_g}^T$  and $\Pi_k {:=} \parens{1{-}\frac{\beta_k\mu_g}{2}}^N$. Under \cref{assump:lsmooth_g_strongly_convex_g,assump:bounded_var_grad_f,assump:bounded_var_hess_g}, it holds that:
	  \begin{align}\label{eq:error_yz}
	\mathbb{E}\brakets{\Verts{y_k- y^{\star}(x_k) }^2} \leq  \Lambda_{k}E_{k}^y  + R_{k}^y, \qquad
		\mathbb{E}\brakets{\Verts{z_k- z^{\star}(x_k,y_k) }^2} \leq \Pi_k E_{k}^z + R_{k}^z,
\end{align}
 where $R_{k}^y {=} O\parens{ \kappa_g\sigma_g^2}$ and $R_k^{z}{=}O\parens{\kappa_g^{3}\sigma_{g_{yy}}^{2} {+} \kappa_g^{2}\sigma_f^2}$ are defined in \cref{eq:variance_iterates_yz} of \cref{sec:proof_main_thm}.
\end{prop}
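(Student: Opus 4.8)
The plan is to treat the two inner solvers separately: for each fixed outer index $k$ I would freeze the outer randomness and analyze $\mathcal{A}_k$ (\cref{alg:agd_inner}) and $\mathcal{B}_k$ (\cref{alg:agd_implicit}) as two runs of stochastic gradient descent on strongly convex, smooth objectives, initialized at the warm starts $y^{0}=y_{k-1}$ and $z^{0}=z_{k-1}$. In each case I establish a one-step contraction in mean square, unroll it over the $T$ (respectively $N$) inner steps, bound the resulting geometric sum of per-step noise contributions, and finally take the outer expectation; the leading geometric factors give $\Lambda_k$ and $\Pi_k$, the tails give $R_k^y$ and $R_k^z$. Because the targets $y^\star(x_k)$ and $z^\star(x_k,y_k)$ are deterministic functions of the frozen quantities (cf. \cref{prop:expression_hypergrad}), every inequality can be derived pointwise in $(x_k,y_k)$ and integrated only at the end, so no extra regularity of the maps $y^\star$, $z^\star$ is needed here.

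For $\mathcal{A}_k$: the map $y\mapsto g(x_k,y)$ is $\mu_g$-strongly convex and $L_g$-smooth by \cref{assump:lsmooth_g_strongly_convex_g}, and $\partial_y\hat g(x_k,\cdot,\mathcal{D}_{t,k}^{g})$ is unbiased with variance at most $\sigma_g^2$ by \cref{assump:bounded_var_grad_f}. Expanding $\Verts{y^{t}-y^\star(x_k)}^2$, taking the conditional expectation, and invoking the standard co-coercivity inequality for $\mu_g$-strongly convex $L_g$-smooth functions together with $\alpha_k\le L_g^{-1}$ yields
\begin{align*}
\mathbb{E}_{t}\brakets{\Verts{y^{t}-y^\star(x_k)}^2}\ \le\ (1-\alpha_k\mu_g)\Verts{y^{t-1}-y^\star(x_k)}^2+\alpha_k^2\sigma_g^2 .
\end{align*}
Iterating from $y^{0}=y_{k-1}$, using $\sum_{j=0}^{T-1}(1-\alpha_k\mu_g)^{j}\le(\alpha_k\mu_g)^{-1}$, and taking the total expectation gives the first bound in \cref{eq:error_yz} with $\Lambda_k=(1-\alpha_k\mu_g)^{T}$ and $R_k^{y}$ of order $\alpha_k\sigma_g^2/\mu_g=O(\kappa_g\sigma_g^2)$.

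For $\mathcal{B}_k$: here $z\mapsto Q(x_k,y_k,z)$ from \cref{eq:linear_b} is a quadratic with Hessian $H:=\partial_{yy}g(x_k,y_k)$ (again $\mu_g$-strongly convex, $L_g$-smooth) and minimizer $z^\star(x_k,y_k)=-H^{-1}\partial_y f(x_k,y_k)$, which satisfies $\Verts{z^\star(x_k,y_k)}\le B/\mu_g$ by \cref{assump:lsmooth_fg}. Writing one inner step as $z^{n}-z^\star=(I-\beta_k H)(z^{n-1}-z^\star)-\beta_k\brakets{(\partial_{yy}\hat g-H)z^{n-1}+(v_k-\partial_y f)}$ exposes the two features absent from $\mathcal{A}_k$: the Hessian noise is multiplicative in $z^{n-1}$, and the estimate $v_k$ is reused across all $N$ steps so it acts as a fixed bias. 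The multiplicative term is handled with \cref{assump:bounded_var_hess_g}, which bounds $\mathbb{E}\brakets{\Verts{(\partial_{yy}\hat g-H)z^{n-1}}^2}$ by $\sigma_{g_{yy}}^2\Verts{z^{n-1}}^2$, followed by $\Verts{z^{n-1}}^2\le(1+\nu)\Verts{z^{n-1}-z^\star}^2+(1+\nu^{-1})\Verts{z^\star}^2$; the bias $v_k-\partial_y f$, which is zero-mean and $\sigma_f^2$-bounded by \cref{assump:bounded_var_grad_f}, is split off with a second Young inequality and, after being amplified by $\Verts{H^{-1}}\le\mu_g^{-1}$, contributes a term of order $\sigma_f^2/\mu_g^2=O(\kappa_g^2\sigma_f^2)$. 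The step-size restriction $\beta_k\le\tfrac{1}{2L_g}\wedge\tfrac{\mu_g}{\mu_g^2+\sigma_{g_{yy}}^2}$ is exactly what gives $\Verts{I-\beta_k H}_{op}\le 1-\beta_k\mu_g$ and $(1-\beta_k\mu_g)^2+\beta_k^2\sigma_{g_{yy}}^2\le 1-\beta_k\mu_g$, leaving a margin $\tfrac12\beta_k\mu_g$ that absorbs the two Young inflations, so the per-step contraction factor stays $1-\tfrac12\beta_k\mu_g$. Unrolling from $z^{0}=z_{k-1}$, bounding $\sum_{j=0}^{N-1}(1-\tfrac12\beta_k\mu_g)^{j}\le 2(\beta_k\mu_g)^{-1}$, and taking the total expectation gives the second bound in \cref{eq:error_yz} with $\Pi_k=(1-\tfrac12\beta_k\mu_g)^{N}$ and $R_k^{z}$ of order $\beta_k\mu_g^{-1}\parens{\sigma_{g_{yy}}^2\Verts{z^\star}^2+\mu_g^{-1}\sigma_f^2}=O(\kappa_g^{3}\sigma_{g_{yy}}^2+\kappa_g^{2}\sigma_f^2)$, matching the definitions in \cref{eq:variance_iterates_yz}.

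The main obstacle is the $z$-recursion: because of the multiplicative Hessian noise one cannot simply quote a textbook SGD bound, and the Young-inequality parameters must be chosen carefully so that (i) the contraction rate comes out as $1-\tfrac12\beta_k\mu_g$ rather than a worse fraction of $\beta_k\mu_g$ — this is precisely where the second term of the $\beta_k$ condition is consumed — and (ii) the coefficient in front of the warm-start error is exactly $\Pi_k$, with the entire effect of the reused $v_k$ charged to $R_k^{z}$. The remaining subtlety, that $z^\star(x_k,y_k)$ depends on the random iterate $y_k$, is innocuous: the whole chain of inequalities is valid conditionally on $(x_k,y_k)$, and the expectation defining $E_k^{z}$ in \cref{eq:error_warm_start} is taken only at the very end.
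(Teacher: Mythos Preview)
Your argument is correct and complete for the statement as written. For the $y_k$-bound your one-step contraction is exactly what the paper does (\cref{sec:proof_convergence_inner_iterates}): expand, condition, use strong convexity and $\alpha_k\le L_g^{-1}$, unroll, take the outer expectation.

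For the $z_k$-bound you take a genuinely different route. The paper does \emph{not} run a one-step-plus-Young argument; instead it invokes a standalone result on stochastic linear recursions with correlated noise (\cref{prop:error_linear_sto} in \cref{sec:sto_linear_correlated}). There the linearity of the update is exploited explicitly: the conditional mean $\mathbb{E}[z^n-\bar z^n\mid \hat b]$ is computed in closed form (\cref{prop:bias_linear_sto}), so the cross term between $z^{n-1}-\bar z^{n-1}$ and the reused bias $\hat b-b$ is evaluated exactly rather than bounded by Young. This yields the sharper contraction $(1-\beta_k\mu_g)^N$, which the paper then relaxes to $\Pi_k=(1-\tfrac{\beta_k\mu_g}{2})^N$ only for notational uniformity. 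Your approach instead spends the $\tfrac12\beta_k\mu_g$ margin built into the step-size condition to absorb the two Young inflations, so you land directly on $\Pi_k$; the resulting $R_k^z$ has the same order $\beta_k B^2\mu_g^{-3}\sigma_{g_{yy}}^2+\mu_g^{-2}\sigma_f^2$, just with slightly larger absolute constants. What the paper's route buys, besides the tighter rate, is the finer decomposition in the extended version (\cref{prop:error_yz_extended}): separate control of $\mathbb{E}\Verts{\bar z_k-z^\star}^2$ and $\mathbb{E}\Verts{z_k-\bar z_k}^2$, which is later needed to bound $V_k^\psi$ in \cref{prop:error_inexact_grad}. Your more elementary argument proves \cref{prop:error_yz} as stated but would not directly deliver those two pieces.
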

While \cref{prop:error_yz} is specific to the choice of the algorithms $\mathcal{A}_k$ and $\mathcal{B}_k$ in \cref{alg:agd_inner,alg:agd_implicit}, our analysis directly extends to other algorithms satisfying inequalities similar to \cref{eq:error_yz} such as \emph{accelerated} or \emph{variance reduced} algorithms  discussed in \cref{sec:acceleration_inner,sec:variance_reduced}.
\cref{prop:error_inexact_grad} below controls the bias and variance terms $V_k^{\psi}$ and  $E_k^{\psi}$ in terms of the warm-start error $E_k^y$ and $E_k^z$.
\begin{prop}\label{prop:error_inexact_grad} 
Under \cref{assump:lsmooth_g_strongly_convex_g,assump:lsmooth_fg,assump:smooth_jac,assump:bounded_var_grad_f,assump:bounded_var_hess_g}, the following inequalities hold:
	\begin{align}
		E_k^{\psi} \leq  2L_{\psi}^2 \parens{\Lambda_k E_k^y + \Pi_k E_k^z + R_{k}^y}, \qquad
		V_k^{\psi} \leq w_x^2 + \sigma_x^2 \Pi_k E_{k}^z,
	\label{eq:error_inexact_grad}
	\end{align}	 
where $w_x^2 {=} O\parens{\kappa_g^{2}\parens{\sigma_f^2+\sigma_{g_{xy}}^2} + \kappa_g^3\sigma_{g_{yy}}^2}$,  $\sigma_x^2 {=} O\parens{\sigma_{g_{xy}}^2 + \kappa_g^2\sigma_{g_{yy}}^2}$  and $L_{\psi}{=}O(\kappa_g^{2})$ are positive constants defined in \cref{eq:variance_x,eq:smoothness_constants} of \cref{sec:proof_main_thm} with $L_{\psi}$ controlling the variations of $\mathbb{E}_k[\hat{\psi}_k]$.
\end{prop}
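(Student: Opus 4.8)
The plan is to prove the bias and the variance bounds separately, in each case reducing the estimate to the warm-start errors $E_k^y,E_k^z$ and the variance floors $R_k^y,R_k^z$ supplied by \cref{prop:error_yz}, together with the ``inexact-$y$'' hypergradient Lipschitz estimate obtained along the proof of $L$-smoothness of $\mathcal{L}$ (\cref{prop:smoothness} of \cref{sec:smoothness}). Throughout, $\mathbb{E}_k$ conditions on $\mathcal{F}_k:=\sigma(x_k,y_k,z_{k-1})$, and the structural fact used repeatedly is that, given $\mathcal{F}_k$, the batches $\mathcal{D}_f$, $\mathcal{D}_{g_{xy}}$ and those internal to $\mathcal{A}_k$ and $\mathcal{B}_k$ are mutually independent.

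\textbf{Bias.} The first step is to compute $\mathbb{E}_k[\hat{\psi}_k]$ in closed form. Since $\mathcal{D}_{g_{xy}}$ is fresh and independent of $z_k$ (which depends only on $\mathcal{D}_f$ and the batches inside $\mathcal{B}_k$), \cref{assump:bounded_var_grad_f,assump:bounded_var_hess_g} give $\mathbb{E}_k[\hat{\psi}_k]=\partial_x f(x_k,y_k)+\partial_{xy}g(x_k,y_k)\,\mathbb{E}_k[z_k]$. I would then identify $\mathbb{E}_k[z_k]$ by unrolling $\mathcal{B}_k$, which is SGD on the $\mu_g$-strongly convex quadratic $z\mapsto Q(x_k,y_k,z)$ with unbiased Hessian samples and a linear term $v_k$ frozen along its inner loop: taking expectations in $z^n=(I-\beta_k\partial_{yy}\hat{g}(\cdot))z^{n-1}-\beta_k v_k$ (each Hessian sample being independent of the current iterate) yields $\mathbb{E}[z_k\mid v_k,\mathcal{F}_k]=\hat{z}_{v_k}+(I-\beta_k\partial_{yy}g(x_k,y_k))^N(z_{k-1}-\hat{z}_{v_k})$ with $\hat{z}_{v_k}=-(\partial_{yy}g(x_k,y_k))^{-1}v_k$; averaging over $v_k$ and using $\mathbb{E}_k[v_k]=\partial_y f(x_k,y_k)$ replaces $\hat{z}_{v_k}$ by $z^\star(x_k,y_k)$, whence $\|\mathbb{E}_k[z_k]-z^\star(x_k,y_k)\|\le(1-\beta_k\mu_g)^N\|z_{k-1}-z^\star(x_k,y_k)\|\le\Pi_k\|z_{k-1}-z^\star(x_k,y_k)\|$. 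Next I would split $\mathbb{E}_k[\hat{\psi}_k]-\nabla\mathcal{L}(x_k)$ into $\partial_{xy}g(x_k,y_k)(\mathbb{E}_k[z_k]-z^\star(x_k,y_k))$, bounded by $L_g'\Pi_k\|z_{k-1}-z^\star(x_k,y_k)\|$ using $\|\partial_{xy}g\|_{op}\le L_g'$ from \cref{assump:smooth_jac}, plus the inexact-$y$ hypergradient error $\partial_x f(x_k,y_k)+\partial_{xy}g(x_k,y_k)z^\star(x_k,y_k)-\nabla\mathcal{L}(x_k)$, bounded by $O(\kappa_g^2)\,\|y_k-y^\star(x_k)\|$ via the boundedness ($\|z^\star\|\le B/\mu_g$) and $y$-Lipschitzness of $z^\star$ under \cref{assump:lsmooth_fg,assump:smooth_jac}. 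Squaring with $(a+b)^2\le 2a^2+2b^2$, using $\Pi_k\le 1$, taking expectations and invoking the two inequalities of \cref{prop:error_yz} gives $E_k^{\psi}\le 2L_{\psi}^2(\Lambda_k E_k^y+\Pi_k E_k^z+R_k^y)$ with $L_{\psi}=O(\kappa_g^2)$.

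\textbf{Variance.} I would decompose $\hat{\psi}_k-\mathbb{E}_k[\hat{\psi}_k]$ as the sum of (i) the outer-gradient noise $\partial_x\hat{f}(x_k,y_k,\mathcal{D}_f)-\partial_x f(x_k,y_k)$, with second moment $\le\sigma_f^2$; (ii) the Jacobian noise $(\partial_{xy}\hat{g}(x_k,y_k,\mathcal{D}_{g_{xy}})-\partial_{xy}g(x_k,y_k))z_k$, which by independence of $\mathcal{D}_{g_{xy}}$ from $z_k$ and \cref{assump:bounded_var_hess_g} has second moment $\le\sigma_{g_{xy}}^2\,\mathbb{E}\|z_k\|^2\le\sigma_{g_{xy}}^2(2\Pi_k E_k^z+2R_k^z+2B^2\mu_g^{-2})$ by \cref{prop:error_yz} and $\|z^\star\|\le B/\mu_g$; and (iii) the propagated solver fluctuation $\partial_{xy}g(x_k,y_k)(z_k-\mathbb{E}_k[z_k])$, with second moment $\le(L_g')^2\,\mathbb{E}[\mathrm{Var}_k(z_k)]$. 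For (iii) I would run a second-moment recursion on the fluctuations $\delta^n:=z^n-\mathbb{E}[z^n\mid v_k,\mathcal{F}_k]$ of the $\mathcal{B}_k$ iterates: from $\delta^n=(I-\beta_k\partial_{yy}g(x_k,y_k))\delta^{n-1}-\beta_k(\partial_{yy}\hat{g}_n-\partial_{yy}g)z^{n-1}$ and independence of consecutive Hessian samples the cross terms vanish and the recursion contracts — this is where the hypothesis $\beta_k\le\mu_g/(\mu_g^2+\sigma_{g_{yy}}^2)$ is used to keep $\mathbb{E}\|z^n\|^2$ bounded — so summing the geometric series in $(1-\beta_k\mu_g)^{2(N-n)}$ and adding, via the law of total variance, the propagated $v_k$-noise ($\le\sigma_f^2\mu_g^{-2}$) bounds $\mathbb{E}[\mathrm{Var}_k(z_k)]$ by a variance floor plus a multiple of $\Pi_k E_k^z$. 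Substituting this and the bound on $\mathbb{E}\|z_k\|^2$ into (i)--(iii) and discarding products of two variances as lower order yields $V_k^{\psi}\le w_x^2+\sigma_x^2\,\Pi_k E_k^z$ with $w_x^2=O(\kappa_g^2(\sigma_f^2+\sigma_{g_{xy}}^2)+\kappa_g^3\sigma_{g_{yy}}^2)$ and $\sigma_x^2=O(\sigma_{g_{xy}}^2+\kappa_g^2\sigma_{g_{yy}}^2)$, where $O(\cdot)$ absorbs the absolute constants $L_f,B,M_g,L_g'$ and tracks only the $\kappa_g$-dependence.

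\textbf{Main obstacle.} The delicate step is item (iii) of the variance bound: the conditional variance of the stochastic linear solver $\mathcal{B}_k$ must be controlled so that it (a) vanishes when $\sigma_f$ and $\sigma_{g_{yy}}$ both vanish — so one cannot simply bound $\mathrm{Var}_k(z_k)$ by $\mathbb{E}_k\|z_k-z^\star(x_k,y_k)\|^2$, which would leak the warm-start term into the variance with a noise-free coefficient — and (b) carries the exact $\Pi_k E_k^z$ factor with the right power of $\kappa_g$, which forces keeping the decaying weights $(1-\beta_k\mu_g)^{2(N-n)}$ in the geometric sum rather than taking a crude uniform-in-$n$ bound on $\mathbb{E}\|z^{n-1}\|^2$. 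Once the closed form for $\mathbb{E}_k[z_k]$ is available, the rest is comparatively routine, the only bookkeeping subtlety being that $u_k$ and $v_k$ share the batch $\mathcal{D}_f$, which costs only constant factors through $(a+b)^2\le 2a^2+2b^2$.
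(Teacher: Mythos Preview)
Your proposal is correct and follows essentially the same decomposition as the paper: identify $\mathbb{E}_k[\hat{\psi}_k]=\Psi(x_k,y_k,\bar z_k)$ with $\bar z_k=\mathbb{E}_k[z_k]$, bound the bias via the Lipschitz estimate of \cref{prop:smoothness} together with the contraction $\mathbb{E}\brackets{\Verts{\bar z_k-z^\star(x_k,y_k)}^2}\le\Pi_kE_k^z$, and split the variance into the $\partial_x\hat f$ noise, the Jacobian noise acting on $z_k$, and the solver fluctuation $\partial_{xy}g(z_k-\bar z_k)$.

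The only methodological differences are in how item~(iii) is handled. The paper does not condition on $v_k$ and then apply the law of total variance; instead it compares $z^n$ to the \emph{fully deterministic} sequence $\bar z^n$ run with the true $A=\partial_{yy}g(x_k,y_k)$ and $b=\partial_yf(x_k,y_k)$, so that the multiplicative-noise term in the recursion carries the deterministic $\bar z^{n-1}$ rather than the stochastic $z^{n-1}$ you would need to bound separately. This makes the geometric summation immediate and yields the $\mu_g^{-2}\sigma_{g_{yy}}^2\Pi_kE_k^z$ coefficient via a one-line lemma of the form $n\beta^2\mu_g^2(1-\beta\mu_g)^{n-1}\le(1-\tfrac12\beta\mu_g)^{n-1}$. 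Second, the paper does not absorb the cross term between (i) and (iii) by $(a+b)^2\le2a^2+2b^2$: because $u_k$ and $v_k$ share the batch $\mathcal{D}_f$, it writes $z_k-\bar z_k$ conditional on $v_k$ in closed form (a geometric sum in $(I-\beta A)^{N-t}$ acting on $v_k-\partial_yf$), then bounds the resulting bilinear form by Cauchy--Schwarz, obtaining the clean contribution $L_g'\mu_g^{-1}\sigma_f^2$. Your route works and gives the same orders in $\kappa_g$; the paper's route just avoids the auxiliary bound on $\mathbb{E}\Verts{z^{n-1}}^2$ and gives slightly sharper constants.
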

\vspace{-0.2cm}
\cref{prop:error_inexact_grad} highlights the dependence of $E_k^{\psi}$ and $V_k^{\psi}$ on the inner-level errors.
It suggests analyzing the evolution of $E_k^{y}$ and $E_k^z$ to quantify how large the bias and variances can get:
\begin{prop}\label{prop:error_warm_start}
Let $\zeta_k>0$, a $2{\times} 2$ matrix $\boldsymbol{P_k}$, two vectors $\boldsymbol{U_k}$ and $\boldsymbol{V_k}$ in $\R^2$ all independent of $x_k$, $y_k$ and $z_k$ be as defined in \cref{prop:error_warm_start_bis} of \cref{sec:proof_main_thm}. Under \cref{assump:lsmooth_g_strongly_convex_g,assump:lsmooth_fg,assump:smooth_jac,assump:bounded_var_grad_f,assump:bounded_var_hess_g}, it holds that:  
\begin{align}\label{eq:ineq_warm_start}
	\begin{pmatrix}
		E_k^y\\
		E_k^z
	\end{pmatrix} \leq  \boldsymbol{P_k}	\begin{pmatrix}
		\Lambda_{k-1} E_{k-1}^y + R_{k-1}^y\\
		\Pi_{k-1} E_{k-1}^z + R_{k-1}^z
	\end{pmatrix} + 2\gamma_k\parens{ E_{k-1}^{\psi} + V_{k-1}^{\psi}+\zeta_k E_{k-1}^{x} }  \boldsymbol{U_k} + 
\boldsymbol{V_k}.
	\end{align}
\end{prop}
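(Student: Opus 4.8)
The plan is to establish the two rows of \cref{eq:ineq_warm_start} one at a time, in each case inserting the one-step displacement of $(x_k,y_k)$ into the Lipschitz dependence of the solution maps and then invoking the contraction estimates of \cref{prop:error_yz}. The geometric input is that $y^{\star}$ is Lipschitz in $x$ and $z^{\star}$ is Lipschitz in $(x,y)$; this follows from \cref{assump:lsmooth_g_strongly_convex_g,assump:lsmooth_fg,assump:smooth_jac} via the implicit function theorem (the same bounds that underlie \cref{prop:smoothness}): uniform invertibility of $\partial_{yy}g$ with spectrum in $[\mu_g,L_g]$, boundedness of $\partial_y f$, and Lipschitzness of $\partial_y g$, $\partial_{xy}g$, $\partial_{yy}g$, $\partial_y f$. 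Throughout I use $x_k=x_{k-1}-\gamma_k\hat{\psi}_{k-1}$, where $\hat{\psi}_{k-1}$ is the gradient surrogate at $x_{k-1}$ whose bias and variance relative to $\nabla\mathcal{L}(x_{k-1})$ are $E_{k-1}^{\psi}$ and $V_{k-1}^{\psi}$ per \cref{eq:bias_variance}.

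\textbf{Row 1 ($E_k^y$).} Since $E_k^y=\mathbb{E}[\Verts{y_{k-1}-y^{\star}(x_k)}^2]$ and $y_{k-1}$ is the output of $\mathcal{A}_{k-1}$ targeting $y^{\star}(x_{k-1})$, I would write $\Verts{y_{k-1}-y^{\star}(x_k)}\le\Verts{y_{k-1}-y^{\star}(x_{k-1})}+\Verts{y^{\star}(x_{k-1})-y^{\star}(x_k)}$, square using $(a+b)^2\le 2a^2+2b^2$, and take expectations. The first term is $\le\Lambda_{k-1}E_{k-1}^y+R_{k-1}^y$ by \cref{prop:error_yz} at index $k-1$; the second is $\le L_{y^{\star}}^2\gamma_k^2\,\mathbb{E}[\Verts{\hat{\psi}_{k-1}}^2]$ by Lipschitzness of $y^{\star}$. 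Splitting $\hat{\psi}_{k-1}$ into its conditional mean plus fluctuation (so \cref{eq:bias_variance} applies verbatim and the cross term vanishes by the tower property) gives $\mathbb{E}[\Verts{\hat{\psi}_{k-1}}^2]\le 2(V_{k-1}^{\psi}+E_{k-1}^{\psi}+\mathbb{E}[\Verts{\nabla\mathcal{L}(x_{k-1})}^2])$, and $L$-smoothness together with (strong) convexity — or, when $\mu<0$, directly the definition of $E_k^x$ — yields $\mathbb{E}[\Verts{\nabla\mathcal{L}(x_{k-1})}^2]\le\zeta_k E_{k-1}^x$ for a suitable $\zeta_k=O(L^2\gamma_{k-1}/\delta_{k-1})$. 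Collecting terms and using $\gamma_k^2\le\gamma_k/L$ produces Row 1 with $(\boldsymbol{P_k})_{11}=2$, $(\boldsymbol{P_k})_{12}=0$, $(\boldsymbol{U_k})_1=O(L_{y^{\star}}^2/L)$ and $(\boldsymbol{V_k})_1=0$.

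\textbf{Row 2 ($E_k^z$).} Here $E_k^z=\mathbb{E}[\Verts{z_{k-1}-z^{\star}(x_k,y_k)}^2]$ and $z_{k-1}$ is the output of $\mathcal{B}_{k-1}$ targeting $z^{\star}(x_{k-1},y_{k-1})$, so the same device gives $\Verts{z_{k-1}-z^{\star}(x_k,y_k)}\le\Verts{z_{k-1}-z^{\star}(x_{k-1},y_{k-1})}+L_{z^{\star}}\bigl(\Verts{x_k-x_{k-1}}+\Verts{y_k-y_{k-1}}\bigr)$. The first term is $\le\Pi_{k-1}E_{k-1}^z+R_{k-1}^z$ by \cref{prop:error_yz} at index $k-1$, and $\Verts{x_k-x_{k-1}}^2=\gamma_k^2\Verts{\hat{\psi}_{k-1}}^2$ is treated exactly as in Row 1. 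The genuinely new term is $\Verts{y_k-y_{k-1}}$: one further split $\Verts{y_k-y_{k-1}}\le\Verts{y_k-y^{\star}(x_k)}+\Verts{y^{\star}(x_k)-y_{k-1}}$ bounds it, in square-expectation, by $2(\Lambda_k E_k^y+R_k^y)+2E_k^y\le 4E_k^y+2R_k^y$, using \cref{prop:error_yz} at index $k$ and the definition of $E_k^y$. Substituting the Row-1 bound for $E_k^y$ re-expresses everything in step-$(k-1)$ quantities and yields Row 2, now with a nonzero off-diagonal $(\boldsymbol{P_k})_{21}=O(L_{z^{\star}}^2)$ — the channel by which $y$-error enters $E_k^z$ — $(\boldsymbol{P_k})_{22}=2$, $(\boldsymbol{U_k})_2=O(L_{z^{\star}}^2(\gamma_k+1/L))$, and a fresh residual $(\boldsymbol{V_k})_2=O(L_{z^{\star}}^2 R_k^y)$ that vanishes in the deterministic setting.

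\textbf{Main obstacle.} The only real difficulty is the circular coupling between $E_k^z$ and $E_k^y$ through the displacement $\Verts{y_k-y_{k-1}}$; it is exactly this that forces $\boldsymbol{P_k}$ to be a full (lower-triangular) $2\times2$ matrix rather than diagonal, and it is resolved by proving the self-contained Row 1 first and substituting. The rest is bookkeeping: keeping the residual-variance constants produced by \cref{prop:error_yz} at index $k-1$ inside $\boldsymbol{P_k}(\cdot)$ and those produced at index $k$ inside the additive $\boldsymbol{V_k}$, and checking that the Lipschitz and smoothness/convexity constants are independent of the iterates so that $\boldsymbol{P_k},\boldsymbol{U_k},\boldsymbol{V_k},\zeta_k$ depend on $k$ only through the step sizes, $T$ and $N$. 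The apparently unfavourable factors $(\boldsymbol{P_k})_{11}=(\boldsymbol{P_k})_{22}=2$ are harmless downstream since they multiply $\Lambda_{k-1},\Pi_{k-1}$, which are exponentially small for $T,N=O(\kappa_g\log\cdot)$ — this is what makes the combination with \cref{prop:error_outer_convex} into $E_k^{tot}$ contractive.
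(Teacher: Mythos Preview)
Your approach is essentially the paper's: decompose $y_{k-1}-y^{\star}(x_k)$ and $z_{k-1}-z^{\star}(x_k,y_k)$ through the previous optimal points, apply \cref{prop:error_yz} at index $k-1$ to the contraction pieces, bound the $x$-increment via the update rule and $\mathbb{E}[\Verts{\hat{\psi}_{k-1}}^2]\le V_{k-1}^{\psi}+2E_{k-1}^{\psi}+2\zeta_k E_{k-1}^x$, bound the $y$-increment by a second triangle inequality through $y^{\star}(x_k)$, and then substitute Row~1 into Row~2 to close the coupling. The one discrepancy is that the paper uses Young's inequality with free parameters $r_k,\theta_k\in(0,1]$ (yielding $(\boldsymbol{P_k})_{11}=1+r_k$, $(\boldsymbol{P_k})_{22}=1+\theta_k$, and the $r_k^{-1},\theta_k^{-1}$ factors in $\boldsymbol{U_k},\boldsymbol{V_k}$) and a free interpolation parameter $v\in[0,1]$ for the $y$-increment (\cref{prop:increment_xy}), whereas your $(a+b)^2\le 2a^2+2b^2$ and your triangle split correspond exactly to the instantiation $r_k=\theta_k=1$, $v=1$; to match $\boldsymbol{P_k},\boldsymbol{U_k},\boldsymbol{V_k}$ \emph{as defined in \cref{prop:error_warm_start_bis}} you should carry the general Young parameters, though your special case is precisely the one invoked in the proof of \cref{prop:general_error_bound_convex}.
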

\cref{prop:error_warm_start}  describes the evolution of the inner-level errors as the number of iterations $k$ increases.
The matrix $\boldsymbol{P_k}$ and vectors $\boldsymbol{U_k}$ and $\boldsymbol{V_k}$ arise from discretization errors and depend on the step-sizes and constants of the problem.
The second term of \cref{eq:ineq_warm_start} represents interactions with the outer-level through $E_{k-1}^x$, $V_{k-1}^{\psi}$ and $E_{k-1}^{\psi}$.
\cref{prop:error_warm_start,prop:error_inexact_grad,prop:error_outer_convex} describe the joint dynamics of $(E_k^x, E_k^y,E_k^z)$ from which the evolution of $E_k^{tot}$ can be deduced as shown in \cref{sec:proof_outline_main_thm,sec:proof_main_thm}. 
\section{Experiments}\label{sec:experiments}
We run three sets of experiments described in \cref{sec:synthetic_problem,sec:hyperparam_opt,sec:distillation}.
In all cases, we consider \ASID{} with either gradient descent  (\ASID-GD) or conjugate gradient (\ASID-CG) for algorithm $\mathcal{B}_k$. We \ASID{}  with AID methods without warm-start for $\mathcal{B}_k$ which we refer to as (AID-GD) and (AID-CG) and  with (AID-CG-WS) which uses warm-start for $\mathcal{B}_k$ but not for $\mathcal{A}_k$.
We also consider other variants using either a fixed-point algorithm (AID-FP) \citep{Grazzi:2020}  or Neumann series expansion (AID-N) \citep{Lorraine:2020} for $\mathcal{B}_k$. Finally, we consider two algorithms based on iterative differentiation which we refer to as (ITD)  \citep{Grazzi:2020} and (Reverse) \citep{Franceschi:2017}. 
For all methods except (AID-CG-WS), we use warm-start in algorithm $\mathcal{A}_k$, however only \ASID{}, \ASID-CG and AID-CG-WS  exploits warm-start in $\mathcal{B}_k$ the other AID based methods initializing $\mathcal{B}_k$ with $z^0{=}0$. In \cref{sec:hyperparam_opt,sec:distillation}, we also compare with BSA algorithm \citep{Ghadimi:2018}, TTSA algorithm \citep{Hong:2020}  and \verb+stocBiO+ \citep{Ji:2021a}. An implementation of \ASID{} is available in \url{https://github.com/MichaelArbel/AmIGO}. 
\subsection{Synthetic problem}\label{sec:synthetic_problem}
To study the behavior of \ASID{} in a controlled setting, we consider a synthetic problem where both inner and outer level losses are quadratic functions with thousands of variables as described in details in \cref{sec:synthetic_appendix}. 
\cref{fig:main_figures}(a) shows the complexity $\mathcal{C}(\epsilon)$ needed to reach $10^{-6}$ relative error amongst the best choice for $T$ and $M$ over a grid as the conditioning number $\kappa_g$ increases. \ASID{}-CG achieves the lowest time and is followed by AID-CG thus showing a favorable effect of warm-start for $\mathcal{B}_k$. The same conclusion holds for \ASID-GD compared to AID-GD. Note that AID-CG is still faster than \ASID-CG for larger values of $\kappa_g$ highlighting the advantage of using algorithms $\mathcal{B}_k$ with $O(\sqrt{\kappa_g})$ complexity such as (CG) instead non-accelerated ones with $O(\kappa_g^{-1})$ such as (GD). \cref{fig:main_figures}(b) shows the relative error after $10$s and maintains the same conclusions. For moderate values of $\kappa_g$, 
only \ASID{} and AID-CG reach an error of $10^{-20}$ as shown in  \cref{fig:main_figures}(c). We refer to \cref{fig:vary_T_N_low_cond,fig:ITD_vary_T_N_high_cond} of \cref{sec:appendix_experiments} for additional results on the effect of the choice of $T$ and $M$ showing that \ASID{} consistently performs well for a wide range of values of $T$ and $M$. 
\subsection{Hyper-parameter optimization}\label{sec:hyperparam_opt}
We consider a classification task on the \verb+20Newsgroup+ dataset using a logistic loss and a linear model. Each dimension of the linear model is regularized using a different hyper-parameter. The collection of those hyper-parameters form a vector $x$ of dimension $d{=}101631$ optimized using an un-regularized regression loss over the validation set while the model is learned using the training set.
We consider two evaluations settings: A \emph{default} setting based on \cite{Grazzi:2020,Ji:2021a} and a \emph{grid-seach} search setting near the default values of $\beta_k$, $T$ and $N$ as detailed in \cref{sec:details_logistic}. We also vary the batch-size from $10^{3}*\{0.1,1,2,4\}$ and report the best performing choices for each method. 
\cref{fig:main_figures}(d,e,f) show \ASID-CG to be the fastest, achieving the lowest error and  highest validation and test accuracies. The test accuracy  of \ASID-CG decreases after exceeding $80\%$ indicating a potential overfitting as also observed in \cite{Franceschi:2018}.
 Similarly, \ASID-GD outperformed all other methods that uses an algorithm $\mathcal{B}_k$ with $O(\kappa_g)$ complexity. Moreover, all remaining methods achieved comparable performance  matching those reported in \cite{Ji:2021a}, thus indicating that the warm-start in $\mathcal{B}_k$ and acceleration in  $\mathcal{B}_k$ were the determining factors for obtaining an improved performance. Additionally, \cref{fig:regression_vary_batch_size} of \cref{sec:appendix_experiments} report similar results for each choice of the batch-size indicating robustness to the choice of the batch-size.
\subsection{Dataset distillation}\label{sec:distillation}
Dataset distillation \citep{Wang:2018a} consists in learning a synthetic dataset so that a model trained on this dataset achieves a small error on the training set. \cref{fig:distill} of \cref{sec:dataset_distillation} shows the training loss (outer loss), the training and test accuracies of a model trained on MNIST by dataset distillation.  Similarly to \cref{fig:main_figures}, 
 \ASID{}-CG achieves the best performance followed by AID-CG. \ASID{} obtains the best performance by far among methods without acceleration for $\mathcal{B}_k$ while all the remaining ones fail to improve. This finding is indicative of an ill-conditioned inner-level problem as confirmed when computing the conditioning number of the hessian $\partial_{yy}g(x,y)$ which we found to be of order $7{\times} 10^4$. Indeed, 
when compared to the synthetic example for $\kappa_g{=}10^4$ as shown in \cref{fig:vary_T_N_low_cond}, we also observe that only \ASID{}-CG, \ASID{} and AID-CG could successfully optimize the loss. 
 Hence, these results confirm the importance of warm-start for an improved performance.
\begin{figure}
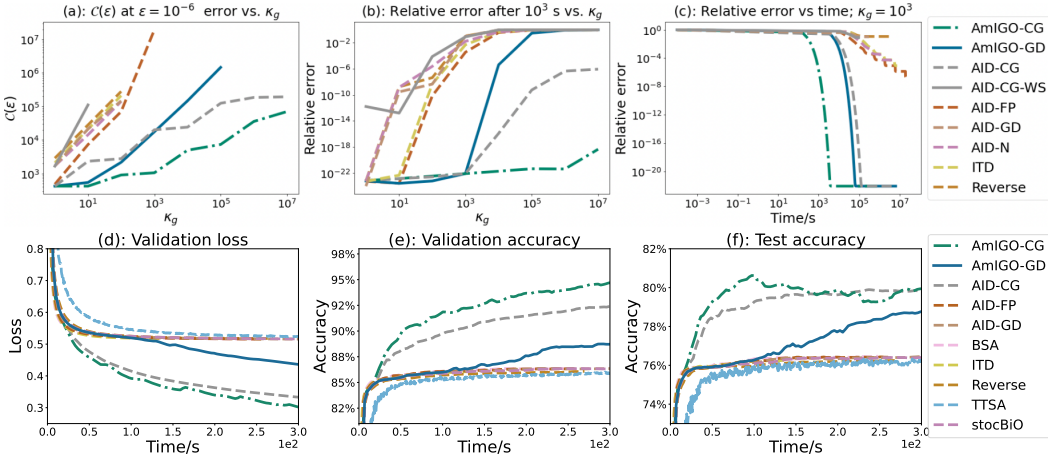
 
\center
	\includegraphics[width=1.\linewidth]{figures/idealized_time_toy.png}
	\includegraphics[width=1.\linewidth]{figures/regression_20Newsgroup_real_time}
\caption{Top row: performance on the synthetic task. The relative error is defined as a ratio between current and initial errors $\parens{\mathcal{L}(x_k)-\mathcal{L}^{\star}}/\parens{\mathcal{L}(x_0)-\mathcal{L}^{\star}}$. The complexity $\mathcal{C}(\epsilon)$ as defined in \cref{eq:computational_complexity}.  Bottom row: performance on the hyper-parameter optimization task.}
\label{fig:main_figures}  
\end{figure}
\section{Conclusion}
We studied \ASID{}, an algorithm for bilevel optimization based on amortized implicit differentiation and introduced a unified framework for analyzing its convergence. Our analysis showed that \ASID{} achieves the same complexity as \emph{unbiased oracle methods}, thus achieving improved rates compared to methods without warm-start in various settings. We then illustrated empirically such improved convergence in both synthetic and a hyper-optimization experiments. 
A future research direction consists in extending the proposed framework to non-smooth objectives and analyzing  acceleration in both inner and outer level problems as well as variance reduction techniques.
\section{Acknowledgments and Funding}
 This project was supported by the ERC grant number 714381 (SOLARIS project) and by ANR 3IA MIAI@Grenoble Alpes, (ANR19-P3IA-0003).

\bibliographystyle{plainnat}
\bibliography{extracted}

\begin{thebibliography}{48}
\providecommand{\natexlab}[1]{#1}
\providecommand{\url}[1]{\texttt{#1}}
\expandafter\ifx\csname urlstyle\endcsname\relax
  \providecommand{\doi}[1]{doi: #1}\else
  \providecommand{\doi}{doi: \begingroup \urlstyle{rm}\Url}\fi

\bibitem[Ablin et~al.(2020)Ablin, Peyr{\'e}, and Moreau]{Ablin:2020}
Pierre Ablin, Gabriel Peyr{\'e}, and Thomas Moreau.
\newblock Super-efficiency of automatic differentiation for functions defined
  as a minimum.
\newblock In \emph{International Conference on Machine Learning}, pages 32--41.
  PMLR, 2020.

\bibitem[Arjevani et~al.(2019)Arjevani, Carmon, Duchi, Foster, Srebro, and
  Woodworth]{Arjevani:2019}
Yossi Arjevani, Yair Carmon, John~C Duchi, Dylan~J Foster, Nathan Srebro, and
  Blake Woodworth.
\newblock Lower bounds for non-convex stochastic optimization.
\newblock \emph{arXiv preprint arXiv:1912.02365}, 2019.

\bibitem[Baydin et~al.(2018)Baydin, Pearlmutter, Radul, and
  Siskind]{Baydin:2018}
Atilim~Gunes Baydin, Barak~A Pearlmutter, Alexey~Andreyevich Radul, and
  Jeffrey~Mark Siskind.
\newblock Automatic differentiation in machine learning: a survey.
\newblock \emph{Journal of machine learning research}, 18, 2018.

\bibitem[Bertinetto et~al.(2018)Bertinetto, Henriques, Torr, and
  Vedaldi]{Bertinetto:2018}
Luca Bertinetto, Joao~F Henriques, Philip~HS Torr, and Andrea Vedaldi.
\newblock Meta-learning with differentiable closed-form solvers.
\newblock \emph{arXiv preprint arXiv:1805.08136}, 2018.

\bibitem[Blondel et~al.(2021)Blondel, Berthet, Cuturi, Frostig, Hoyer,
  Llinares-L{\'o}pez, Pedregosa, and Vert]{Blondel:2021}
Mathieu Blondel, Quentin Berthet, Marco Cuturi, Roy Frostig, Stephan Hoyer,
  Felipe Llinares-L{\'o}pez, Fabian Pedregosa, and Jean-Philippe Vert.
\newblock Efficient and modular implicit differentiation.
\newblock \emph{arXiv preprint arXiv:2105.15183}, 2021.

\bibitem[Davis and Drusvyatskiy(2018)]{Davis:2018}
Damek Davis and Dmitriy Drusvyatskiy.
\newblock Stochastic subgradient method converges at the rate $o(k^{-1/4})$ on
  weakly convex functions.
\newblock \emph{arXiv preprint arXiv:1802.02988}, 2018.

\bibitem[Davis et~al.(2018)Davis, Drusvyatskiy, MacPhee, and
  Paquette]{Davis:2018a}
Damek Davis, Dmitriy Drusvyatskiy, Kellie~J MacPhee, and Courtney Paquette.
\newblock Subgradient methods for sharp weakly convex functions.
\newblock \emph{Journal of Optimization Theory and Applications}, 179\penalty0
  (3):\penalty0 962--982, 2018.

\bibitem[Dempe and Zemkoho(2020)]{Dempe:2020}
Stephan Dempe and Alain Zemkoho.
\newblock \emph{Bilevel Optimization}.
\newblock Springer, 2020.

\bibitem[Doan(2020)]{Doan:2020}
Thinh~T Doan.
\newblock Nonlinear two-time-scale stochastic approximation: Convergence and
  finite-time performance.
\newblock \emph{arXiv preprint arXiv:2011.01868}, 2020.

\bibitem[Domke(2012)]{Domke:2012}
Justin Domke.
\newblock Generic methods for optimization-based modeling.
\newblock In \emph{Artificial Intelligence and Statistics}, pages 318--326.
  PMLR, 2012.

\bibitem[Feurer and Hutter(2019)]{Feurer:2019}
Matthias Feurer and Frank Hutter.
\newblock Hyperparameter optimization.
\newblock In \emph{Automated machine learning}, pages 3--33. Springer, Cham,
  2019.

\bibitem[Franceschi et~al.(2017)Franceschi, Donini, Frasconi, and
  Pontil]{Franceschi:2017}
Luca Franceschi, Michele Donini, Paolo Frasconi, and Massimiliano Pontil.
\newblock Forward and reverse gradient-based hyperparameter optimization.
\newblock In \emph{International Conference on Machine Learning}, pages
  1165--1173. PMLR, 2017.

\bibitem[Franceschi et~al.(2018)Franceschi, Frasconi, Salzo, Grazzi, and
  Pontil]{Franceschi:2018}
Luca Franceschi, Paolo Frasconi, Saverio Salzo, Riccardo Grazzi, and
  Massimiliano Pontil.
\newblock Bilevel programming for hyperparameter optimization and
  meta-learning.
\newblock In \emph{International Conference on Machine Learning}, pages
  1568--1577. PMLR, 2018.

\bibitem[Ghadimi and Lan(2012)]{Ghadimi:2012}
Saeed Ghadimi and Guanghui Lan.
\newblock Optimal stochastic approximation algorithms for strongly convex
  stochastic composite optimization i: A generic algorithmic framework.
\newblock \emph{SIAM Journal on Optimization}, 22\penalty0 (4):\penalty0
  1469--1492, 2012.

\bibitem[Ghadimi and Wang(2018)]{Ghadimi:2018}
Saeed Ghadimi and Mengdi Wang.
\newblock Approximation methods for bilevel programming.
\newblock \emph{arXiv preprint arXiv:1802.02246}, 2018.

\bibitem[Gould et~al.(2016)Gould, Fernando, Cherian, Anderson, Cruz, and
  Guo]{Gould:2016}
Stephen Gould, Basura Fernando, Anoop Cherian, Peter Anderson, Rodrigo~Santa
  Cruz, and Edison Guo.
\newblock On differentiating parameterized argmin and argmax problems with
  application to bi-level optimization.
\newblock \emph{arXiv preprint arXiv:1607.05447}, 2016.

\bibitem[Grazzi et~al.(2020)Grazzi, Franceschi, Pontil, and Salzo]{Grazzi:2020}
Riccardo Grazzi, Luca Franceschi, Massimiliano Pontil, and Saverio Salzo.
\newblock On the iteration complexity of hypergradient computation.
\newblock In \emph{International Conference on Machine Learning}, pages
  3748--3758. PMLR, 2020.

\bibitem[Grazzi et~al.(2021)Grazzi, Pontil, and Salzo]{Grazzi:2021}
Riccardo Grazzi, Massimiliano Pontil, and Saverio Salzo.
\newblock Convergence properties of stochastic hypergradients.
\newblock In \emph{International Conference on Artificial Intelligence and
  Statistics}, pages 3826--3834. PMLR, 2021.

\bibitem[Habets(1974)]{Habets:2010}
P~Habets.
\newblock Stabilite asyptotique pour des problemes de perturbations
  singulieres.
\newblock In \emph{Stability Problems}, pages 2--18. Springer, 1974.

\bibitem[Hong et~al.(2020{\natexlab{a}})Hong, Wai, Wang, and Yang]{Hong:2020}
Mingyi Hong, Hoi-To Wai, Zhaoran Wang, and Zhuoran Yang.
\newblock A two-timescale framework for bilevel optimization: Complexity
  analysis and application to actor-critic.
\newblock \emph{arXiv preprint arXiv:2007.05170}, 2020{\natexlab{a}}.

\bibitem[Hong et~al.(2020{\natexlab{b}})Hong, Wai, Wang, and Yang]{Hong:2020a}
Mingyi Hong, Hoi-To Wai, Zhaoran Wang, and Zhuoran Yang.
\newblock A two-timescale framework for bilevel optimization: Complexity
  analysis and application to actor-critic.
\newblock \emph{arXiv preprint arXiv:2007.05170}, 2020{\natexlab{b}}.

\bibitem[Ji and Liang(2021)]{Ji:2021}
Kaiyi Ji and Yingbin Liang.
\newblock Lower bounds and accelerated algorithms for bilevel optimization.
\newblock \emph{arXiv preprint arXiv:2102.03926}, 2021.

\bibitem[Ji et~al.(2021)Ji, Yang, and Liang]{Ji:2021a}
Kaiyi Ji, Junjie Yang, and Yingbin Liang.
\newblock Bilevel optimization: Convergence analysis and enhanced design.
\newblock In \emph{International Conference on Machine Learning}, pages
  4882--4892. PMLR, 2021.

\bibitem[Kaledin et~al.(2020)Kaledin, Moulines, Naumov, Tadic, and
  Wai]{Kaledin:2020a}
Maxim Kaledin, Eric Moulines, Alexey Naumov, Vladislav Tadic, and Hoi-To Wai.
\newblock Finite time analysis of linear two-timescale stochastic approximation
  with markovian noise.
\newblock In \emph{Conference on Learning Theory}, pages 2144--2203. PMLR,
  2020.

\bibitem[Kingma and Ba(2015)]{Kingma:2014}
Diederik~P. Kingma and Jimmy Ba.
\newblock Adam: {A} method for stochastic optimization.
\newblock In Yoshua Bengio and Yann LeCun, editors, \emph{3rd International
  Conference on Learning Representations, {ICLR} 2015, San Diego, CA, USA, May
  7-9, 2015, Conference Track Proceedings}, 2015.

\bibitem[Kulunchakov and Mairal(2019)]{Kulunchakov:2019a}
Andrei Kulunchakov and Julien Mairal.
\newblock Estimate sequences for variance-reduced stochastic composite
  optimization.
\newblock In \emph{International Conference on Machine Learning}, pages
  3541--3550. PMLR, 2019.

\bibitem[Kulunchakov and Mairal(2020)]{Kulunchakov:2020}
Andrei Kulunchakov and Julien Mairal.
\newblock Estimate sequences for stochastic composite optimization: Variance
  reduction, acceleration, and robustness to noise.
\newblock 2020.

\bibitem[Lang(2012)]{Lang:2012}
Serge Lang.
\newblock \emph{Fundamentals of differential geometry}, volume 191.
\newblock Springer Science \& Business Media, 2012.

\bibitem[Lecouat et~al.(2020{\natexlab{a}})Lecouat, Ponce, and
  Mairal]{Lecouat:2020a}
Bruno Lecouat, Jean Ponce, and Julien Mairal.
\newblock Designing and learning trainable priors with non-cooperative games.
\newblock \emph{arXiv preprint arXiv:2006.14859}, 2020{\natexlab{a}}.

\bibitem[Lecouat et~al.(2020{\natexlab{b}})Lecouat, Ponce, and
  Mairal]{Lecouat:2020b}
Bruno Lecouat, Jean Ponce, and Julien Mairal.
\newblock A flexible framework for designing trainable priors with adaptive
  smoothing and game encoding.
\newblock In \emph{Conference on Neural Information Processing Systems
  (NeurIPS)}, 2020{\natexlab{b}}.

\bibitem[Liao et~al.(2018)Liao, Xiong, Fetaya, Zhang, Yoon, Pitkow, Urtasun,
  and Zemel]{Liao:2018}
Renjie Liao, Yuwen Xiong, Ethan Fetaya, Lisa Zhang, KiJung Yoon, Xaq Pitkow,
  Raquel Urtasun, and Richard Zemel.
\newblock Reviving and improving recurrent back-propagation.
\newblock In \emph{International Conference on Machine Learning}, pages
  3082--3091. PMLR, 2018.

\bibitem[Lin et~al.(2018)Lin, Mairal, and Harchaoui]{Lin:2018a}
Hongzhou Lin, Julien Mairal, and Zaid Harchaoui.
\newblock Catalyst acceleration for first-order convex optimization: from
  theory to practice.
\newblock \emph{Journal of Machine Learning Research}, 18\penalty0
  (1):\penalty0 7854--7907, 2018.

\bibitem[Liu et~al.(2021)Liu, Gao, Zhang, Meng, and Lin]{Liu:2021}
Risheng Liu, Jiaxin Gao, Jin Zhang, Deyu Meng, and Zhouchen Lin.
\newblock Investigating bi-level optimization for learning and vision from a
  unified perspective: A survey and beyond.
\newblock \emph{arXiv preprint arXiv:2101.11517}, 2021.

\bibitem[Lorraine et~al.(2020)Lorraine, Vicol, and Duvenaud]{Lorraine:2020}
Jonathan Lorraine, Paul Vicol, and David Duvenaud.
\newblock Optimizing millions of hyperparameters by implicit differentiation.
\newblock In \emph{International Conference on Artificial Intelligence and
  Statistics}, pages 1540--1552. PMLR, 2020.

\bibitem[Mairal et~al.(2011)Mairal, Bach, and Ponce]{Mairal:2011}
Julien Mairal, Francis Bach, and Jean Ponce.
\newblock Task-driven dictionary learning.
\newblock \emph{IEEE transactions on pattern analysis and machine
  intelligence}, 34\penalty0 (4):\penalty0 791--804, 2011.

\bibitem[Nesterov(2003)]{Nesterov:2003}
Yurii Nesterov.
\newblock \emph{Introductory lectures on convex optimization: A basic course},
  volume~87.
\newblock Springer Science \& Business Media, 2003.

\bibitem[Pedregosa(2016)]{Pedregosa:2016}
Fabian Pedregosa.
\newblock Hyperparameter optimization with approximate gradient.
\newblock In \emph{International conference on machine learning}, pages
  737--746. PMLR, 2016.

\bibitem[Rajeswaran et~al.(2019)Rajeswaran, Finn, Kakade, and
  Levine]{Rajeswaran:2019}
Aravind Rajeswaran, Chelsea Finn, Sham~M Kakade, and Sergey Levine.
\newblock Meta-{Learning} with {Implicit} {Gradients}.
\newblock In H.~Wallach, H.~Larochelle, A.~Beygelzimer,
  F.~d{\textbackslash}textquotesingle Alch{\'e}-Buc, E.~Fox, and R.~Garnett,
  editors, \emph{Advances in {Neural} {Information} {Processing} {Systems} 32
  (NeurIPS)}. Curran Associates, Inc., 2019.

\bibitem[Saberi and Khalil(1984)]{Saberi:1984}
Ali Saberi and Hassan Khalil.
\newblock Quadratic-type lyapunov functions for singularly perturbed systems.
\newblock \emph{IEEE Transactions on Automatic Control}, 29\penalty0
  (6):\penalty0 542--550, 1984.

\bibitem[Schmidt et~al.(2017)Schmidt, Le~Roux, and Bach]{Schmidt:2017}
Mark Schmidt, Nicolas Le~Roux, and Francis Bach.
\newblock Minimizing finite sums with the stochastic average gradient.
\newblock \emph{Mathematical Programming}, 162\penalty0 (1-2):\penalty0
  83--112, 2017.

\bibitem[Shaban et~al.(2019)Shaban, Cheng, Hatch, and Boots]{Shaban:2019}
Amirreza Shaban, Ching-An Cheng, Nathan Hatch, and Byron Boots.
\newblock Truncated back-propagation for bilevel optimization.
\newblock In \emph{The 22nd International Conference on Artificial Intelligence
  and Statistics}, pages 1723--1732. PMLR, 2019.

\bibitem[Shewchuk et~al.(1994)]{Shewchuk:1994}
Jonathan~Richard Shewchuk et~al.
\newblock An introduction to the conjugate gradient method without the
  agonizing pain, 1994.

\bibitem[Stackelberg(1934)]{Stackelberg:1934}
H.F.~Von Stackelberg.
\newblock \emph{MarktformundGleichgewicht}.
\newblock Springer, 1934.

\bibitem[Wang et~al.(2018)Wang, Zhu, Torralba, and Efros]{Wang:2018a}
Tongzhou Wang, Jun-Yan Zhu, Antonio Torralba, and Alexei~A Efros.
\newblock Dataset distillation.
\newblock \emph{arXiv preprint arXiv:1811.10959}, 2018.

\bibitem[Yang et~al.(2021)Yang, Ji, and Liang]{Yang:2021}
Junjie Yang, Kaiyi Ji, and Yingbin Liang.
\newblock Provably faster algorithms for bilevel optimization.
\newblock \emph{arXiv preprint arXiv:2106.04692}, 2021.

\bibitem[Ye and Ye(1997)]{Ye:1997}
JJ~Ye and XY~Ye.
\newblock Necessary optimality conditions for optimization problems with
  variational inequality constraints.
\newblock \emph{Mathematics of Operations Research}, 22\penalty0 (4):\penalty0
  977--997, 1997.

\bibitem[Ye and Zhu(1995)]{Ye:1995}
JJ~Ye and DL~Zhu.
\newblock Optimality conditions for bilevel programming problems.
\newblock \emph{Optimization}, 33\penalty0 (1):\penalty0 9--27, 1995.

\bibitem[Ye et~al.(1997)Ye, Zhu, and Zhu]{Ye:1997a}
JJ~Ye, DL~Zhu, and Qiji~Jim Zhu.
\newblock Exact penalization and necessary optimality conditions for
  generalized bilevel programming problems.
\newblock \emph{SIAM Journal on optimization}, 7\penalty0 (2):\penalty0
  481--507, 1997.

\end{thebibliography}

\clearpage
\appendix

\section{Convergence of \ASID{} algorithm}\label{sec:proof_general_result}
In this section, we provide a proof of \cref{prop:general_error_bound_convex} as well as its corollaries  \cref{prop:rate_deterministic_strong_conv,prop:rate_sto_strong_conv_const_main,prop:rate_deterministic_non_conv,prop:rate_sto_non_conv_constant}. In  \cref{sec:proof_outline_main_thm}, we provide an outline of the proof \cref{prop:general_error_bound_convex} that states the main intermediary results needed for the proof   and provide explicit expressions for the quantities needed throughout the rest of the paper. \cref{sec:proof_main_thm,sec:corrs} provide the proofs of \cref{prop:general_error_bound_convex} and \cref{prop:rate_deterministic_strong_conv,prop:rate_sto_strong_conv_const_main,prop:rate_deterministic_non_conv,prop:rate_sto_non_conv_constant}. The proofs of the intermediary results are deferred to \cref{sec:proof_preliminary_res,sec:proof_general_analysis}. 

\subsection{Proof outline of \cref{prop:general_error_bound_convex}}\label{sec:proof_outline_main_thm}
The proof of \cref{prop:general_error_bound_convex} proceeds in 8 steps as discussed bellow.

{\bf Step 1: Smoothness properties.}
This step consists in characterizing the smoothness of $\nabla \mathcal{L}$, $y^{\star}$, $z^{\star}$ as well as the conditional expectation $\mathbb{E}[\hat{\psi}_k|x_k,y_k,z_k]$ knowing $x_k$, $y_k$ and $z_{k}$.  For this purpose, we consider the function $\Psi : \x\times \y\times \y \rightarrow \x$ defined as follows:
\begin{align}
	\Psi(x,y,z) := \partial_x f(x, y) + \partial_{xy} g(x,y)z.
\end{align}
Hence, by definition of $\hat{\psi}_k$, it is easy to see that $\mathbb{E}[\hat{\psi}_k|x_k,y_k,z_k] = \Psi(x_k,y_k,z_k)$. The following proposition controls the smoothness of $\nabla \mathcal{L}$, $y^{\star}$, $z^{\star}$ and $\Psi$ and is adapted from \cite[Lemma 2.2]{Ghadimi:2018}. We provide a proof in \cref{sec:smoothness} for completeness.
\begin{prop}\label{prop:smoothness}
	Under \cref{assump:lsmooth_g_strongly_convex_g,assump:lsmooth_fg,assump:smooth_jac}, $\mathcal{L}$, $\Psi$,  $y^{\star}$ and $z^{\star}$ satisfy:
		\begin{gather}
				\Verts{y^{\star}(x)-y^{\star}(x')} \leq L_y\Verts{x-x'},\qquad   \Verts{z^{\star}(x,y)-z^{\star}(x',y')}\leq L_z\parens{\Verts{x-x'} + \Verts{y-y'}} \\
						\Verts{\nabla\mathcal{L}(x)-\nabla\mathcal{L}(x')}\leq L\Verts{x-x'},\quad
		\Verts{\Psi(x,y,z)-\nabla\mathcal{L}(x)} \leq L_{\psi}\brackets{\Verts{y-y^{\star}(x)} + \Verts{z-z^{\star}(x,y)}  } \\
		\Verts{z^{\star}(x,y^{\star}(x))}\leq \mu_{g}^{-1}B,
		\end{gather}
where $L_y$, $L_z$, $L_{\psi}$ and $L$ are given by:
\begin{gather}\label{eq:smoothness_constants}
	L_y := \mu_g^{-1}L_g', \qquad L_z:= \mu_g^{-2}M_gB + \mu_g^{-1}L_f,\\
	L_{\psi} := \max\parens{\parens{L_f + M_g\mu_g^{-1}B + L_g'L_z}, L_g'}\\
	L := \parens{L_f + \mu_g^{-2}L_g'M_gB + \mu_g^{-1}\parens{L_g'L_f  + M_gB} }\parens{1+\mu_g^{-1}L_g'}
\end{gather} 
\end{prop}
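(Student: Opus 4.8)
The plan is to derive the five groups of estimates in the order in which they depend on one another, using throughout two elementary facts: (i) $\mu_g$-strong convexity of $g(x,\cdot)$ gives $\Verts{(\partial_{yy}g(x,y))^{-1}}_{op}\leq\mu_g^{-1}$; and (ii) the bound $\Verts{\partial_y g(x,y)-\partial_y g(x',y)}\leq L_g'\Verts{x-x'}$ in \cref{assump:smooth_jac} says that $x\mapsto\partial_y g(x,y)$ is $L_g'$-Lipschitz, hence $\Verts{\partial_{xy}g(x,y)}_{op}\leq L_g'$. I will also use that, by \cref{prop:expression_hypergrad}, the minimizer of $Q$ has the closed form $z^{\star}(x,y)=-\parens{\partial_{yy}g(x,y)}^{-1}\partial_y f(x,y)$; combined with $\Verts{\partial_y f}\leq B$ from \cref{assump:lsmooth_fg} and (i), this already gives $\Verts{z^{\star}(x,y)}\leq\mu_g^{-1}B$ for all $(x,y)$, which in particular yields the last displayed inequality with $y=y^{\star}(x)$.

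\textbf{Lipschitzness of $y^{\star}$.} First I would use $\mu_g$-strong convexity together with the optimality conditions $\partial_y g(x,y^{\star}(x))=0$, $\partial_y g(x',y^{\star}(x'))=0$ to write $\mu_g\Verts{y^{\star}(x)-y^{\star}(x')}^2\leq\langle\partial_y g(x,y^{\star}(x))-\partial_y g(x,y^{\star}(x')),y^{\star}(x)-y^{\star}(x')\rangle=-\langle\partial_y g(x,y^{\star}(x'))-\partial_y g(x',y^{\star}(x')),y^{\star}(x)-y^{\star}(x')\rangle$, and bound the right-hand side by $L_g'\Verts{x-x'}\Verts{y^{\star}(x)-y^{\star}(x')}$ via the last inequality of \cref{assump:smooth_jac}; dividing gives $L_y=\mu_g^{-1}L_g'$. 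For Lipschitzness of $z^{\star}$, I would expand $z^{\star}(x,y)-z^{\star}(x',y')$ from the closed form using $A^{-1}-B^{-1}=A^{-1}(B-A)B^{-1}$ into a term $[(\partial_{yy}g(x,y))^{-1}-(\partial_{yy}g(x',y'))^{-1}]\partial_y f(x',y')$, bounded by $\mu_g^{-2}M_gB\Verts{(x,y)-(x',y')}$ with \cref{assump:smooth_jac,assump:lsmooth_fg}, plus a term $(\partial_{yy}g(x,y))^{-1}[\partial_y f(x',y')-\partial_y f(x,y)]$, bounded by $\mu_g^{-1}L_f\Verts{(x,y)-(x',y')}$; since $\Verts{(x,y)-(x',y')}\leq\Verts{x-x'}+\Verts{y-y'}$, this gives $L_z=\mu_g^{-2}M_gB+\mu_g^{-1}L_f$.

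\textbf{$\Psi$ versus $\nabla\mathcal{L}$ and $L$-smoothness.} Since $\nabla\mathcal{L}(x)=\Psi(x,y^{\star}(x),z^{\star}(x,y^{\star}(x)))$, I split $\Psi(x,y,z)-\nabla\mathcal{L}(x)$ into $[\partial_x f(x,y)-\partial_x f(x,y^{\star}(x))]$, controlled by $L_f\Verts{y-y^{\star}(x)}$, and $[\partial_{xy}g(x,y)z-\partial_{xy}g(x,y^{\star}(x))z^{\star}(x,y^{\star}(x))]$. In the latter I insert $\pm\,\partial_{xy}g(x,y)z^{\star}(x,y)$ and $\pm\,\partial_{xy}g(x,y^{\star}(x))z^{\star}(x,y)$ and bound the three pieces using $\Verts{\partial_{xy}g}_{op}\leq L_g'$, the $M_g$-Lipschitzness of $\partial_{xy}g$ together with $\Verts{z^{\star}(x,y)}\leq\mu_g^{-1}B$, and $\Verts{z^{\star}(x,y)-z^{\star}(x,y^{\star}(x))}\leq L_z\Verts{y-y^{\star}(x)}$ from the previous step; collecting yields $\Verts{\Psi(x,y,z)-\nabla\mathcal{L}(x)}\leq(L_f+M_g\mu_g^{-1}B+L_g'L_z)\Verts{y-y^{\star}(x)}+L_g'\Verts{z-z^{\star}(x,y)}$, i.e. the constant $L_{\psi}$ in \cref{eq:smoothness_constants}. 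For $L$-smoothness of $\mathcal{L}$ I apply the same estimates along $x\mapsto(x,y^{\star}(x),z^{\star}(x,y^{\star}(x)))$: each of $x\mapsto\partial_x f(x,y^{\star}(x))$ and $x\mapsto\partial_{xy}g(x,y^{\star}(x))z^{\star}(x,y^{\star}(x))$ is Lipschitz with constant obtained by composing the relevant smoothness constants with the factor $1+L_y$ coming from the inner dependence through $y^{\star}$; adding and substituting $L_y=\mu_g^{-1}L_g'$ and $L_z=\mu_g^{-2}M_gB+\mu_g^{-1}L_f$ produces exactly $L=(1+\mu_g^{-1}L_g')(L_f+\mu_g^{-2}L_g'M_gB+\mu_g^{-1}(L_g'L_f+M_gB))$.

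The hard part is not depth but bookkeeping: in the $\Psi$-versus-$\nabla\mathcal{L}$ estimate one must route the perturbation through $z^{\star}(x,y)$ (not $z^{\star}(x,y^{\star}(x))$) so that the bound comes out in terms of $\Verts{z-z^{\star}(x,y)}$ as claimed, and one must consistently use the operator-norm bound $\Verts{\partial_{xy}g}_{op}\leq L_g'$ coming from the second inequality of \cref{assump:smooth_jac} rather than a generic gradient-smoothness constant, otherwise the constants will not match \cref{eq:smoothness_constants}. Everything else is a routine triangle-inequality/telescoping argument, essentially \cite[Lemma 2.2]{Ghadimi:2018}.
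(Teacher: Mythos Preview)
Your proposal is correct and follows essentially the same approach as the paper: the decompositions for $z^{\star}$, $\Psi-\nabla\mathcal{L}$, and $\nabla\mathcal{L}$ are identical in spirit and yield exactly the constants in \cref{eq:smoothness_constants}. The only cosmetic difference is that for the Lipschitzness of $y^{\star}$ the paper bounds the Jacobian $\nabla y^{\star}(x)=-\partial_{xy}g\,(\partial_{yy}g)^{-1}$ via the implicit function theorem, whereas you use the strong-monotonicity inequality directly; both routes give $L_y=\mu_g^{-1}L_g'$.
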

The expressions of $L_y$, $L_z$, $L_{\psi}$ and $L$ suggests the following dependence on the conditioning $\kappa_g$ of the inner-level problem which will be useful for the complexity analysis: $L_y = O(\kappa_g)$, $L_{\psi}=O(\kappa_g^{2})$, $L_{z}=O(\kappa_g^{2})$ and $L=O(\kappa_g^3)$.

{\bf Step 2: Convergence of the inner-level iterates.}
In this step, we control the mean squared errors $\mathbb{E}\brackets{\Verts{y_k-y^{\star}(x_k)}^2}$ and $\mathbb{E}\brackets{\Verts{z_k-z^{\star}(x_k,y_k)}^2}$ as stated in \cref{prop:error_yz}. In fact we prove a slightly stronger version stated below:
\begin{prop}\label{prop:error_yz_extended}
 Let  $\alpha_{k}$ and $\beta_k$ be two positive sequences  with $\alpha_k{\leq} L_g^{-1}$ and $\beta_k{\leq}\frac{1}{2L_g}\min\parens{1,\frac{2L_g}{\mu_g(1+\mu_g^{-2}\sigma_{g_{yy}}^2)}}$ and define $\Lambda_{k} {:=} \parens{1{-}\alpha_k\mu_g}^T$ and $\Pi_k {:=} \parens{1{-}\frac{\beta_k\mu_g}{2}}^N$.
Denote by $\bar{z}_k$ the conditional expectation of $z_k$ knowing $x_k$, $y_k$ and $z_{k}^0$. Let $R_k^y$ and $R_k^{z}$ be defined as: 	  
\begin{align}\label{eq:variance_iterates_yz}
	R_k^y :=2\alpha_k\mu_g^{-1}\sigma_g^{2},\quad
	R_k^z := \beta_kB^2\mu_{g}^{-3}\sigma_{g_{yy}}^2  + 3\mu_g^{-2}\sigma_f^2.
\end{align}
Then, under \cref{assump:lsmooth_g_strongly_convex_g,assump:bounded_var_grad_f,assump:bounded_var_hess_g} 
 the iterates $z_k$ and $\bar{z}_k$ satisfy:  
\begin{subequations}
\begin{gather}
	\mathbb{E}\brakets{\Verts{y_k- y^{\star}(x_k) }^2} \leq   \Lambda_{k}E_{k}^y  + R_{k}^y\\
	\mathbb{E}\brakets{\Verts{z_k- z^{\star}(x_k,y_k) }^2} \leq \Pi_k \mathbb{E}\brakets{\Verts{z_k^0- z^{\star}(x_k,y_k) }^2} + R_{k}^z,\\
	\mathbb{E}\brakets{\Verts{\bar{z}_k- z^{\star}(x_k,y_k) }^2 }  \leq   \Pi_k  \mathbb{E}\brakets{\Verts{z_k^0- z^{\star}(x_k,y_k) }^2} \label{eq:convergence_exp_b}\\
	\mathbb{E}\brakets{\Verts{z_k- \bar{z}_k }^2} \leq 4\sigma_g^2\mu_g^{-2}\Pi_k\mathbb{E}\brakets{\Verts{z_k^0- z^{\star}(x_k,y_k) }^2} + 2R_{k}^z.\label{eq:convergence_b_bar_b}
\end{gather}
\end{subequations}
\end{prop}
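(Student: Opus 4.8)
The plan is to treat \cref{alg:agd_inner} and \cref{alg:agd_implicit} as stochastic gradient iterations on the $\mu_g$-strongly convex, $L_g$-smooth objectives $y\mapsto g(x_k,y)$ and $z\mapsto Q(x_k,y_k,z)$, to establish a one-step contraction for each, to unroll it over the $T$ (resp.\ $N$) inner steps with the warm start as initialization, and only then to take the outer expectation. For $y_k$ this is routine: conditionally on $x_k$ and $y^0=y_{k-1}$, the $T$ SGD steps with step-size $\alpha_k\le L_g^{-1}$ use fresh unbiased gradients of variance $\le\sigma_g^2$ (\cref{assump:lsmooth_g_strongly_convex_g,assump:bounded_var_grad_f}); expanding $\Verts{y^t-y^\star(x_k)}^2$, using the transported co-coercivity of $\nabla_y g(x_k,\cdot)$ (which, together with $\alpha_k\le L_g^{-1}$, cancels the squared-gradient term) and strong convexity, gives the standard one-step bound $\mathbb{E}[\Verts{y^t-y^\star(x_k)}^2\mid y^{t-1}]\le(1-\alpha_k\mu_g)\Verts{y^{t-1}-y^\star(x_k)}^2+\alpha_k^2\sigma_g^2$. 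Unrolling over $t=1,\dots,T$, summing the geometric series of noise terms, and taking the full expectation yields the first claim, $\mathbb{E}\Verts{y_k-y^\star(x_k)}^2\le\Lambda_kE_k^y+R_k^y$.

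\textbf{The $z$-iteration.} Fix $x_k,y_k$, the warm start $z^0=z_{k-1}$, and the batch $\mathcal{D}_f$ (hence $v_k$); write $A=\partial_{yy}g(x_k,y_k)$ with $\mu_gI\preceq A\preceq L_gI$, $z^\star=z^\star(x_k,y_k)=-A^{-1}\partial_y f(x_k,y_k)$, $\xi=v_k-\partial_y f(x_k,y_k)$, and $F_2^n=\partial_{yy}\hat{g}(x_k,y_k,\mathcal{D}^{g_{yy}}_{n,k})-A$, which is a fresh zero-mean perturbation with $\Verts{\mathbb{E}[(F_2^n)^\top F_2^n]}_{op}\le\sigma_{g_{yy}}^2$ (\cref{assump:bounded_var_hess_g}). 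Using $Az^\star+\partial_y f(x_k,y_k)=0$, a step of \cref{alg:agd_implicit} rearranges to
\begin{align}
	z^n-z^\star = (I-\beta_k A)(z^{n-1}-z^\star) - \beta_k F_2^n z^{n-1} - \beta_k\xi .
\end{align}
Two consequences are extracted. (i) Taking the expectation conditional on $(x_k,y_k,z^0)$ kills both noise terms, since $F_2^n$ is (conditionally) zero-mean given $z^{n-1}$ and $\mathbb{E}\xi=0$, so the conditional mean $\bar z_k$ obeys the deterministic recursion $\bar z^n-z^\star=(I-\beta_kA)(\bar z^{n-1}-z^\star)$, giving $\Verts{\bar z_k-z^\star}\le(1-\beta_k\mu_g)^N\Verts{z^0-z^\star}$; since $(1-\beta_k\mu_g)^2\le1-\tfrac12\beta_k\mu_g$ whenever $\beta_k\mu_g\le1$, this is exactly \cref{eq:convergence_exp_b}. (ii) Squaring the displayed identity and taking the step-conditional expectation over $F_2^n$ gives $\mathbb{E}_n\Verts{z^n-z^\star}^2\le\Verts{(I-\beta_kA)(z^{n-1}-z^\star)-\beta_k\xi}^2+\beta_k^2\sigma_{g_{yy}}^2\Verts{z^{n-1}}^2$; bounding $\Verts{z^{n-1}}^2\le2\Verts{z^{n-1}-z^\star}^2+2\mu_g^{-2}B^2$ (using the a priori bound $\Verts{z^\star}\le\mu_g^{-1}B$, immediate from $\mu_gI\preceq A$ and $\Verts{\partial_y f}\le B$ as in \cref{prop:smoothness}), controlling the $\xi$ cross-term by Young's inequality, and choosing $\beta_k$ as in the hypothesis so that the amplified rate $1-\beta_k\mu_g+O(\beta_k^2\sigma_{g_{yy}}^2)$ is still $\le1-\tfrac12\beta_k\mu_g$, leaves a clean one-step recursion. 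Unrolling it over $n$, then taking the expectation over $\xi$ (with $\mathbb{E}\Verts{\xi}^2\le\sigma_f^2$, \cref{assump:bounded_var_grad_f}) and over $(x_k,y_k,z^0)$, produces $\mathbb{E}\Verts{z_k-z^\star}^2\le\Pi_k\mathbb{E}\Verts{z^0-z^\star}^2+R_k^z$.

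\textbf{Fluctuations around $\bar z_k$, and the weaker statement.} Subtracting the $\bar z^n$ recursion from the identity above gives $z^n-\bar z^n=(I-\beta_kA)(z^{n-1}-\bar z^{n-1})-\beta_kF_2^nz^{n-1}-\beta_k\xi$, which is unrolled just as in step (ii); the point is that, since the warm start is accurate, the multiplicative-noise terms sum to a product of two decaying geometric series and therefore stay proportional to $\Pi_k\Verts{z^0-z^\star}^2$ with prefactor $O(\beta_k\sigma_{g_{yy}}^2\mu_g^{-1})$, which yields \cref{eq:convergence_b_bar_b}. (A quicker but lossier route is the Pythagorean identity $\mathbb{E}\Verts{z_k-\bar z_k}^2=\mathbb{E}\Verts{z_k-z^\star}^2-\mathbb{E}\Verts{\bar z_k-z^\star}^2$ combined with the two previous bounds.) Finally, \cref{prop:error_yz} follows by specializing $z^0=z_{k-1}$, so that $\mathbb{E}\Verts{z^0-z^\star(x_k,y_k)}^2=E_k^z$, and noting that its step-size restriction is the same as the one assumed here.

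\textbf{Main obstacle.} The delicate part is the $z$-iteration, which is not ordinary SGD: the gradient estimate $\partial_{yy}\hat{g}\,z^{n-1}+v_k$ carries \emph{multiplicative} Hessian noise $F_2^n z^{n-1}$ whose magnitude grows with $\Verts{z^{n-1}}$, and the additive noise $\xi=v_k-\partial_y f$ is \emph{frozen} over the whole inner loop rather than refreshed at every step. The first feature forces the uniform a priori bound $\Verts{z^\star(x_k,y_k)}\le\mu_g^{-1}B$ and the precise tuning of $\beta_k$ that keeps the variance amplification below the contraction (hence the rate $1-\tfrac12\beta_k\mu_g$ appearing in $\Pi_k$ rather than the nominal $1-\beta_k\mu_g$); the second is exactly why the auxiliary iterate $\bar z_k$, obtained by averaging out \emph{all} randomness internal to $\mathcal{B}_k$ including that of $v_k$, has to be tracked in parallel, since it is what later lets the bias of $\hat{\psi}_k$ be separated from its variance in \cref{prop:error_inexact_grad}.
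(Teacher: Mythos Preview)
Your proposal is essentially correct and follows the same architecture as the paper's proof: a standard one-step SGD contraction for the $y$-iteration, and for the $z$-iteration a linear recursion with multiplicative Hessian noise $F_2^n z^{n-1}$ plus a frozen additive bias $\xi=v_k-\partial_y f(x_k,y_k)$, from which one extracts the deterministic recursion for $\bar z^n$ and then unrolls. You also correctly identify the two genuine obstacles (multiplicative noise forcing the $\tfrac12\beta_k\mu_g$ rate, and the non-refreshed $\xi$), and your Pythagorean remark is valid since $\bar z_k-z^\star$ is $(x_k,y_k,z^0)$-measurable.

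The one substantive difference is how the cross-term $-2\beta_k\langle (I-\beta_kA)(z^{n-1}-z^\star),\xi\rangle$ is handled. You propose Young's inequality; the paper instead invokes an explicit conditional-bias formula (its \cref{prop:bias_linear_sto}): conditioning on $\hat b$, one has $\mathbb{E}[z^{n-1}-z^\star\mid\hat b]=(I-\beta_kA)^{n-1}(z^0-z^\star)-\beta_kD_{n-1}(\hat b-b)$ with $D_m=\sum_{t<m}(I-\beta_kA)^t$, which makes the cross-term equal to $2\beta_k^2\,\mathbb{E}[(\hat b-b)^\top(I-\beta_kA)D_{n-1}(\hat b-b)]$ exactly. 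This is what yields the precise constant $3$ in $R_k^z=\beta_kB^2\mu_g^{-3}\sigma_{g_{yy}}^2+3\mu_g^{-2}\sigma_f^2$ without eroding the contraction rate further. Your Young's route would also close, but with a slightly worse constant and possibly a tighter step-size restriction; the paper packages the whole argument as a stand-alone result on stochastic linear systems with correlated additive noise (\cref{prop:error_linear_sto}) and then specializes. So: same route, but the paper's treatment of the frozen-$\xi$ term is sharper than what you sketch.
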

It is easy to see from the above expressions that $R_k^y = O\parens{\kappa_g\sigma_g^2}$ while $R_k^z = O\parens{\kappa_g^3\sigma_{g_yy}^2+\kappa_g^2\sigma_f^2}$ as stated in \cref{prop:error_yz}.  Controlling $\mathbb{E}\brackets{\Verts{y_k-y^{\star}(x_k)}}^2$ follows by standard results on SGD \citep[Corollary 31]{Kulunchakov:2020} since the iterates of \cref{alg:agd_inner} uses i.i.d. samples. The error terms $\mathbb{E}\brackets{\Verts{z_k-z^{\star}(x_k,y_k)}^2}$ is more delicate since \cref{alg:agd_implicit} uses the same sample $\partial_y \hat{f}(x_k,y_k)$ for updating the iterates, therefore introducing additional correlations between these iterates. We defer the proof of \cref{prop:error_yz_extended} to \cref{sec:proof_convergence_inner_iterates} which relies on  a general result for stochastic linear systems with correlated noise provided in \cref{sec:sto_linear_correlated}. 

{\bf Step 3: Controlling the bias and variance errors $V_k^{\psi}$ and $E_k^{\psi}$} is achieved by \cref{prop:error_inexact_grad}. The bias $E_k^{\psi}$ is controlled simply by using the smoothness of the potential $\Psi$ near the point $(x_k,y^{\star}(x_k), z^{\star}(x_k,y^{\star}(x_k))$ as shown in \cref{prop:smoothness}. The variance term $V_k^{\psi}$ is more delicate to control due to the multiplicative noise resulting from the Jacobian-vector product between $\partial_{xy}\hat{g}(x_k,y_k,\mathcal{D}_{g_{xy}})z_k$. We defer the proof to \cref{sec:proof_bias_variance} and provide below explicit expressions for the constants $\sigma_x^2$ and $w_x^2$:
\begin{subequations}\label{eq:variance_x}
\begin{align}
	w_x^2 :=& \parens{1+2L_g'\mu_g^{-1} + 6\parens{\sigma_{g_{xy}}^2 + (L_g')^2}\mu_g^{-2}}\sigma_f^2\label{eq:variance_x_1}\\
	&+ 2\parens{\sigma_{g_{xy}}^2 + (L_g')^2}B^2L_g^{-1}\mu_g^{-3}\sigma_{g_{yy}}^2 + 2B^2\mu_g^{-2}\sigma_{g_{xy}}^2.\\
	\sigma_x^2 :=&  2\sigma_{g_{xy}}^2 + 2(L_g')^2\mu_g^{-2}\sigma_{g_{yy}}^2.\label{eq:variance_x_2}
\end{align}
\end{subequations}
Note that $w_x^2 {=} O\parens{\kappa_g^{2}\parens{\sigma_f^2+\sigma_{g_{xy}}^2}{ +} \kappa_g^3\sigma_{g_{yy}}^2}$ and $\sigma_x^2 {=} O\parens{\sigma_{g_{xy}}^2 {+} \kappa_g^2\sigma_{g_{yy}}^2}$, as stated in \cref{prop:error_inexact_grad}.

{\bf Step 4: Outer-level error bound.}
This step consists in obtaining the inequality in \cref{prop:error_outer_convex} which extends the result of \cite[Proposition 1]{Kulunchakov:2020} to biased gradients and to the non-convex case. We defer the proof of such result to \cref{sec:proof_outer_loop}.

{\bf Step 5: Inner-level error bound.}
This step consists in proving \cref{prop:error_warm_start}. For clarity, we provide a second statement with explicit expressions for the quantities of interest:
\begin{prop}\label{prop:error_warm_start_bis}
Let $r_k$ and $\theta_k$ be two positive non-increasing sequences  no greater than $1$. For any $0\leq v\leq 1$, denote by $\phi_k$ and $\tilde{R}_k$ the following non-negative scalars:
\begin{gather}\label{eq:increment_params}
		 \phi_k :=  (1-v)L_g^2 T^2\alpha_k^2 +2v ,  \qquad
		\tilde{R}^y_k :=  2(1-v)L_g\mu_g^{-1} T^2\sigma_g^{2}\alpha_k^2 + vR_{k}^y\\
		\zeta_k := 2L\parens{\min\parens{ (1-u)^{-1},L\eta_{k-1}^{-1}}\mathds{1}_{\mu\geq 0} + \mathds{1}_{\mu< 0} }
\end{gather}
Finally, consider the following matrices and vectors:
\begin{align}\label{eq:def_matrices}
	\boldsymbol{P_k} := \begin{pmatrix}
		1+r_k, & 0\\
		16L_z^2\frac{\phi_k}{\theta_k}, &1+\theta_k
	\end{pmatrix},\quad
	\boldsymbol{U_k} := \begin{pmatrix}
		2L_y^2\frac{\gamma_k}{r_k}\\
		4L_z^2\frac{\gamma_k}{\theta_k}\parens{1+4L_y^2\frac{\phi_k}{r_k}}
	\end{pmatrix},\quad
	\boldsymbol{V_k}:= \tilde{R}^y_{k}\begin{pmatrix}
		0\\
8L_z^2\theta_k^{-1}
	\end{pmatrix}.
\end{align}
Then, under \cref{assump:lsmooth_g_strongly_convex_g,assump:lsmooth_fg,assump:smooth_jac,assump:bounded_var_grad_f,assump:bounded_var_hess_g}, the following holds:
\begin{align}\label{eq:ineq_warm_start_app}
	\begin{pmatrix}
		E_k^y\\
		E_k^z
	\end{pmatrix} \leq  \boldsymbol{P_k}	\begin{pmatrix}
		\Lambda_{k-1} E_{k-1}^y + R_{k-1}^y\\
		\Pi_{k-1} E_{k-1}^z + R_{k-1}^z
	\end{pmatrix} + 2\gamma_k\parens{ E_{k-1}^{\psi} + V_{k-1}^{\psi}+\zeta_k E_{k-1}^{x} }  \boldsymbol{U_k} + 
\boldsymbol{V_k}.
	\end{align}
\end{prop}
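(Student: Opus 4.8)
The plan is to track how the warm-start errors $E_k^y = \E[\Vert y_k^0 - y^\star(x_k)\Vert^2]$ and $E_k^z = \E[\Vert z_k^0 - z^\star(x_k,y_k)\Vert^2]$ propagate from step $k-1$ to step $k$. Since the algorithm uses warm start, $y_k^0 = y_{k-1}$ and $z_k^0 = z_{k-1}$, so $E_k^y = \E[\Vert y_{k-1} - y^\star(x_k)\Vert^2]$ and similarly for $E_k^z$. The core idea is a \emph{triangle-inequality decomposition followed by Young's inequality}: I split the displacement of the target value into (i) the change in the target caused by moving $x_{k-1}\to x_k$ (and, for $z$, also $y_{k-1}\to y_k$), which is controlled by the Lipschitz constants $L_y, L_z$ of $y^\star, z^\star$ from \cref{prop:smoothness}, and (ii) the residual error of the previous inner solve, i.e. $\Vert y_{k-1} - y^\star(x_{k-1})\Vert$ and $\Vert z_{k-1} - z^\star(x_{k-1},y_{k-1})\Vert$, which \cref{prop:error_yz_extended} bounds by $\Lambda_{k-1}E_{k-1}^y + R_{k-1}^y$ and $\Pi_{k-1}E_{k-1}^z + R_{k-1}^z$ respectively. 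The free parameters $r_k$ and $\theta_k$ are exactly the Young's-inequality weights that trade off these two contributions, which is why they appear as $1+r_k$ and $1+\theta_k$ on the diagonal of $\boldsymbol{P_k}$.

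Concretely, for the $y$-component I would write $\Vert y_{k-1} - y^\star(x_k)\Vert^2 \le (1+r_k)\Vert y_{k-1} - y^\star(x_{k-1})\Vert^2 + (1+r_k^{-1})\Vert y^\star(x_{k-1}) - y^\star(x_k)\Vert^2$, bound the second term by $L_y^2\Vert x_k - x_{k-1}\Vert^2 = L_y^2\gamma_k^2\Vert\hat\psi_{k-1}\Vert^2$, and then use $\E\Vert\hat\psi_{k-1}\Vert^2 \le 2(\E\Vert\Psi(x_{k-1},y_{k-1},z_{k-1})\Vert^2) + 2V_{k-1}^\psi$ together with $\Vert\Psi\Vert^2 \lesssim E_{k-1}^\psi + \Vert\nabla\mathcal{L}(x_{k-1})\Vert^2 \lesssim E_{k-1}^\psi + \zeta_k E_{k-1}^x$ (the last step using that $E_{k-1}^x$ dominates $\Vert\nabla\mathcal{L}\Vert^2$ up to the factor $\zeta_k$, in both the convex and non-convex branches of \cref{eq:def_E_x}). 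Taking $r_k\le 1$ so $1+r_k^{-1}\le 2r_k^{-1}$ produces the $\boldsymbol{U_k}$ first coordinate $2L_y^2\gamma_k/r_k$. For the $z$-component the decomposition has an extra layer: $\Vert z_{k-1} - z^\star(x_k,y_k)\Vert$ picks up both $L_z\Vert x_k - x_{k-1}\Vert$ and $L_z\Vert y_k - y_{k-1}\Vert$; the term $\Vert y_k - y_{k-1}\Vert$ is itself bounded through the $\mathcal{A}_k$ iteration (it is at most $\sum_t \alpha_k\Vert\partial_y\hat g\Vert$, giving the $L_g T\alpha_k$ and hence $\phi_k = (1-v)L_g^2T^2\alpha_k^2 + 2v$ prefactor after relating it back to $E_{k-1}^y$ and the variance $\sigma_g^2$), which explains the off-diagonal entry $16L_z^2\phi_k/\theta_k$ of $\boldsymbol{P_k}$, the second coordinate of $\boldsymbol{U_k}$, and the vector $\boldsymbol{V_k}$ carrying $\tilde R_k^y$. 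The parameter $v\in[0,1]$ toggles between bounding $\Vert y_k - y_{k-1}\Vert$ crudely via the raw gradient norm ($v=0$) versus via the contraction-plus-variance estimate ($v=1$).

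The step I expect to be the main obstacle is the careful handling of the $\Vert y_k - y_{k-1}\Vert$ term feeding into the $z$-recursion: one must unroll the $T$ inner SGD steps of $\mathcal{A}_k$, control the accumulated stochastic-gradient norms both in the "deterministic-increment" way ($\lesssim L_g T \Vert y_{k-1}-y^\star(x_k)\Vert + $ noise, giving $\phi_k$-type terms) and verify that after taking expectations the cross terms between the $\mathcal{A}_k$ noise and the warm-start error vanish (since $\mathcal{A}_k$ draws fresh i.i.d. batches $\mathcal{D}_{t,k}^g$), so that no uncontrolled correlation with $E_{k-1}^x$ or $E_{k-1}^\psi$ survives. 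Bookkeeping the many applications of Young's inequality so that the resulting constants are exactly $16$, $8$, $2$, $4$ as written — rather than merely $O(\cdot)$ — is tedious but routine; the conceptual content is entirely in the two triangle-inequality decompositions and in substituting \cref{prop:error_yz_extended} and \cref{prop:smoothness}. Once \cref{eq:ineq_warm_start_app} is established with these explicit constants, \cref{prop:error_warm_start} follows immediately by reading off $\boldsymbol{P_k},\boldsymbol{U_k},\boldsymbol{V_k}$ and $\zeta_k$.
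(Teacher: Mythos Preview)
Your proposal is correct and follows essentially the same route as the paper: the proof there splits $E_k^y$ and $E_k^z$ via Young's inequality with weights $r_k,\theta_k$, invokes the Lipschitz bounds on $y^\star,z^\star$ from \cref{prop:smoothness}, bounds $\mathbb{E}\Verts{\hat\psi_{k-1}}^2\le V_{k-1}^\psi+2E_{k-1}^\psi+2\zeta_k E_{k-1}^x$ exactly as you describe, and controls $\mathbb{E}\Verts{y_k-y_{k-1}}^2$ by the convex combination (parameter $v$) of the unrolled-SGD bound and the triangle-through-$y^\star(x_k)$ bound; the only minor slip in your sketch is writing $\Psi(x_{k-1},y_{k-1},z_{k-1})$ where the paper uses $\psi_{k-1}=\mathbb{E}_{k-1}[\hat\psi_{k-1}]$ (conditioning on $z_{k-2}$, not $z_{k-1}$), but this does not affect the argument. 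The substitution of the $E_k^y$ bound back into the $E_k^z$ inequality to eliminate $E_k^y$ in favor of $E_{k-1}^y$ is also done explicitly in the paper, matching what you anticipated.
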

We defer the proof of the above result to \cref{sec:proof_inner_loop}.

{\bf Step 6: General error bound.}
By combining  the inequalities in \cref{prop:error_outer_convex,prop:error_warm_start_bis} resulting from the analysis of both outer and inner levels, we obtain a general error bound on $E_k^{tot}$ in \cref{prop:general_bound_convex_intermediate} with a proof 
in \cref{sec:proof_general_error_bound}.
\begin{prop}\label{prop:general_bound_convex_intermediate}
Choose the step-sizes  $\alpha_k$ and $\beta_k$ such that they are non-increasing in $k$ and choose $r_k$ and $\theta_k$ such that $\delta_k r_k^{-1}$ and $\delta_k \theta_k^{-1}$ are non-increasing sequences. Choose the coefficients $a_k$ and $b_k$ defining $E_k^{tot}$ in \cref{eq:aggregate_error} to be of the form $a_k=\delta_kr_k^{-1}\Lambda_k^{s}$ and $b_k=\delta_k\theta_k^{-1}\Pi_k^{s}$ for some $0<s<1$. 
and fix a non-increasing sequence $0<\rho_k< 1$. Then, under \cref{assump:lsmooth_g_strongly_convex_g,assump:lsmooth_fg,assump:smooth_jac,assump:bounded_var_grad_f,assump:bounded_var_hess_g} $ E_k^{tot}$ satisfies:
\begin{align}
	F_k + E_k^{tot}\leq  \Verts{A_k}_{\infty}E_{k-1}^{tot} + V_k^{tot}.
\end{align}
where $V_k$ and $\boldsymbol{A_k}$ are given by:
\begin{align}\label{eq:def_vector_A_V}
A_k := & \begin{pmatrix}
	1-(1-\frac{1}{2}\rho_k)\delta_k + 2\zeta_k\lambda_k u_k^I \\
	\Lambda_k^{1-s}\parens{1+r_k\parens{ 1 + 16L_z^2 
	\Pi_k^{s}\theta_k^{-2}\phi_k
	+ 2L_{\psi}^2 \eta_k^{-1} \parens{2u_k^I+  s_k + \rho_k^{-1} }} }\\
	\Pi_k^{1-s}\parens{ 1+\theta_k\parens{1 +  \eta_k^{-1} \parens{2L_{\psi}^2 + \sigma_x^2}\parens{2u_k^I +  s_k + \rho_k^{-1}} }}\end{pmatrix}\\
V_k^{tot}:= 
& \delta_k\parens{ \Lambda_k^{s} (1+r_k^{-1}) +  16L_z^2 \phi_k\theta_k^{-2}  \Pi_k^s  + 2L_{\psi}^2 \eta_k^{-1} \parens{2u_k^I + s_k + \rho_k^{-1} } }R_{k-1}^y \\
&+ \delta_k\parens{(1+\theta_k^{-1})\Pi_k^{s} R_{k-1}^z + 8L_z^2\theta_k^{-2}\Pi_k^{s} \tilde{R}_k^y} + \gamma_k\parens{ s_k + u_k^I }w_x^2
\end{align} 
where we introduced $u_k^{I} = a_k U_k^{(1)} + b_k U_k^{(2)}$ for conciseness with $U_k^{(1)}$ and $U_k^{(2)}$ being the components of the vector $\boldsymbol{U_k}$ defined in \cref{prop:error_warm_start_bis}.
\end{prop}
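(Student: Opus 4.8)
The plan is to combine the three component bounds already established---the outer-level recursion \cref{prop:error_outer_convex}, the inner-level recursion \cref{prop:error_warm_start_bis}, and the bias/variance control \cref{prop:error_inexact_grad}---into a single scalar recursion on $E_k^{tot}$.

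First I would eliminate the gradient errors. In the outer bound \cref{eq:error_outer_convex} for $F_k+E_k^x$ and in the vector bound \cref{eq:ineq_warm_start_app} for $(E_k^y,E_k^z)$, I substitute \cref{eq:error_inexact_grad}, replacing $E_{k-1}^{\psi}$ by $2L_{\psi}^2(\Lambda_{k-1}E_{k-1}^y+\Pi_{k-1}E_{k-1}^z+R_{k-1}^y)$ and $V_{k-1}^{\psi}$ by $w_x^2+\sigma_x^2\Pi_{k-1}E_{k-1}^z$. This turns the outer bound into an affine inequality in $(E_{k-1}^x,E_{k-1}^y,E_{k-1}^z)$ and the inner bound into an affine vector inequality in the same three quantities, with all coefficients assembled from $\Lambda_{k-1},\Pi_{k-1},\phi_k,\zeta_k$, the residuals $R_{k-1}^y,R_{k-1}^z,\tilde R_k^y,w_x^2$, and the step-size and problem constants.

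Next I would aggregate: $F_k+E_k^{tot}=(F_k+E_k^x)+a_kE_k^y+b_kE_k^z$ is the $1,a_k,b_k$-weighted sum of the three scalar inequalities from the previous step (the vector inequality contributing its two rows with weights $a_k$ and $b_k$). Collecting the right-hand side by $E_{k-1}^x,E_{k-1}^y,E_{k-1}^z$ yields $F_k+E_k^{tot}\leq c_k^xE_{k-1}^x+c_k^yE_{k-1}^y+c_k^zE_{k-1}^z+V_k^{tot}$, where $V_k^{tot}$ gathers precisely the residual terms: the $R_{k-1}^y$ contributions come from the first row of $\boldsymbol P_k$ (weight $a_k(1+r_k)$), from the off-diagonal $16L_z^2\phi_k/\theta_k$ of the second row (weight $b_k$), and from the $2L_{\psi}^2$ bias term in both the outer recursion (weight $\gamma_k(s_k+\rho_k^{-1})$) and the inner one (weight $2\gamma_k u_k^I$); the $w_x^2$ term enters through $s_k$ in the outer recursion and through $u_k^I$ in the inner one; and $R_{k-1}^z,\tilde R_k^y$ come only through the $b_k$-weighted row. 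Using $a_k=\delta_k r_k^{-1}\Lambda_k^s$, $b_k=\delta_k\theta_k^{-1}\Pi_k^s$ and $\gamma_k=\delta_k\eta_k^{-1}$, the coefficient $c_k^x$ is exactly the first component of $\boldsymbol A_k$ and $V_k^{tot}$ is exactly the claimed expression.

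Finally I would pass to a scalar recursion by dividing $c_k^y$ by $a_{k-1}$ and $c_k^z$ by $b_{k-1}$ and bounding the ratios by $A_k^{(2)}$ and $A_k^{(3)}$. Here the hypotheses enter: since $\delta_kr_k^{-1}$ and $\delta_k\theta_k^{-1}$ are non-increasing, $\delta_k/\delta_{k-1}\leq r_k/r_{k-1}$ and $\delta_k/\delta_{k-1}\leq\theta_k/\theta_{k-1}$, while non-increasing step-sizes give $\Lambda_k\geq\Lambda_{k-1}$, $\Pi_k\geq\Pi_{k-1}$, hence $\Lambda_k^s\Lambda_{k-1}^{1-s}\leq\Lambda_k^{1-s}$ and likewise for $\Pi$ (using $\Lambda_k,\Pi_k<1$ and $0<s<1$); each piece of $c_k^y/a_{k-1}$ is then at most $\Lambda_k^{1-s}$ times a term of the bracket of $A_k^{(2)}$, the "$1+r_k$" from the $\boldsymbol P_k$-row supplying both the leading $1$ and the $1$ inside $r_k(1+\cdots)$, and the pieces reassemble into $A_k^{(2)}$; $c_k^z/b_{k-1}$ is handled identically with $\Pi_k^{1-s}$ and $A_k^{(3)}$, the only slack being $s_k\sigma_x^2\leq\sigma_x^2(s_k+\rho_k^{-1})$ needed to merge the $w_x$- and bias-type pieces. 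Hence $F_k+E_k^{tot}\leq\max(A_k^{(1)},A_k^{(2)},A_k^{(3)})(E_{k-1}^x+a_{k-1}E_{k-1}^y+b_{k-1}E_{k-1}^z)+V_k^{tot}=\Verts{\boldsymbol A_k}_{\infty}E_{k-1}^{tot}+V_k^{tot}$. The only real difficulty is the bookkeeping of the first two steps: $E_{k-1}^{\psi}$ feeds into both the outer recursion and, after the $(a_k,b_k)$-weighting, the inner one, which is what produces the combination $2u_k^I+s_k+\rho_k^{-1}$ in $\boldsymbol A_k$ and $V_k^{tot}$, and one must check that every $\Lambda_{k-1}/\Lambda_k$, $\Pi_{k-1}/\Pi_k$, $\delta_{k-1}/\delta_k$ and $r_{k-1}/r_k$ ratio lands on the correct side and that no cross term is dropped. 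Writing the first two steps as ``coefficient matrix times $(E^x,E^y,E^z)^{\top}$ plus vector'' and then left-multiplying by $(1,a_k,b_k)$ makes the accounting systematic.
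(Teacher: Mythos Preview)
Your proposal is correct and follows essentially the same route as the paper: substitute the bias/variance bounds of \cref{prop:error_inexact_grad} into the outer recursion \cref{prop:error_outer_convex} and the inner recursion \cref{prop:error_warm_start_bis}, take the $(1,a_k,b_k)$-weighted sum, and then use the monotonicity hypotheses to replace the ratios $\Lambda_{k-1}/a_{k-1}$, $\Pi_{k-1}/b_{k-1}$ by $\Lambda_k/a_k$, $\Pi_k/b_k$ before reading off the components of $A_k$. The paper packages the last two steps in matrix form, defining $\boldsymbol{S_k}=\mathrm{diag}(a_k,b_k)$, $\boldsymbol{E_k^I}=\boldsymbol{S_k}(E_k^y,E_k^z)^{\top}$ and the key inequality $a_{k-1}^{-1}\Lambda_{k-1}\leq a_k^{-1}\Lambda_k$, $b_{k-1}^{-1}\Pi_{k-1}\leq b_k^{-1}\Pi_k$ (which is exactly your monotonicity step), but the content is the same.
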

\cref{prop:general_bound_convex_intermediate} holds without  conditions on the error made by \cref{alg:agd_implicit,alg:agd_inner}. The general form of $a_k$ and $b_k$ allows to account for potentially decreasing step-sizes $\gamma_k$, $\alpha_k$ and $\beta_k$. However, in the present work, we will restrict to the constant step-size for ease of presentation as we discuss next.  

{\bf Step 7: Controlling  the precision of the inner-level algorithms. }
In this step, we provide conditions on $T$ and $N$ in \cref{prop:conditions_on_T_N}  bellow so that $\Verts{A_k}_{\infty}\leq 1- (1-\rho_k)\delta_k$ in the constant step-size case. These conditions are  expressed in terms of the following constants:
\begin{subequations}
\begin{align}
	C_1 :=&  1+2\log\parens{6 + 24L_{\psi}^2\eta_0^{-1} }\label{eq:A_1_bound_lambda_new}\\
	C_2 :=& 
	2\log\parens{1 + 4L_y^2L^{-2}\max\parens{\eta_0,8\zeta_0}}
\label{eq:A_2_bound_lambda_new}\\
C_3 :=& \max\parens{0,-2\log\parens{5L_{\psi}^2\eta_0^{-1}}, -2\log\parens{\frac{L}{4L_y^2}}}\label{eq:A_3_bound_lambda}\\
	C_1' :=&  1+2\log\parens{4+12\eta_0^{-1}(2L_{\psi}^2 + \sigma_x^2)} \label{eq:A_1_prime_bound_pi_new}\\
	C_2':=&  2\log\parens{1+2L_z^2L_y^{-2} \parens{1+16L_y^{2} }},
	\label{eq:A_5_bound_lambda_bis_new}\\
	C_3':=& \max\parens{0, -2\log\parens{\frac{L_{\psi}^2}{4L_z^2\eta_0}}, -2\log\parens{\gamma\parens{\sigma_{g_{xy}}^2 + (L_g')^2}} }\label{eq:A_6_bound_lambda_bis_new}
\end{align} 
\end{subequations}
\begin{prop}\label{prop:conditions_on_T_N}
Let \cref{assump:lsmooth_g_strongly_convex_g,assump:lsmooth_fg,assump:smooth_jac,assump:bounded_var_grad_f,assump:bounded_var_hess_g} hold. Choose $\rho_k=\frac{1}{2}$, the step-sizes to be constant: $\alpha_k{=}\alpha{\leq} \frac{1}{L_g}$, $\beta_k{=}\beta{\leq} \frac{1}{2L_g}$ and $\gamma_k{=}\gamma{\leq} \frac{1}{L}$ and choose the batch-size $\verts{\mathcal{D}_{g_{yy}}} =   \Theta\parens{\frac{\tilde{\sigma}^2_{g_{yy}}}{\mu_g L_g}}$. Moreover, set $s=\frac{1}{2}$, $r_k=\theta_k=1$. Finally, choose $T$ and $N$ as follows:
	\begin{gather}\label{eq:choice_of_T_N}
		T = \floor{ \alpha^{-1}\mu_g^{-1} \max\parens{C_1,C_2,C_3} }+1,\\ N = \floor{ 2\beta^{-1}\mu_g^{-1}\parens{ \max\parens{C_1,C_2,C_3} + \max\parens{C_1',C_2',C_3'}  } }+1 
	\end{gather}
with $C_1, C_2, C_3, C_1', C_2'$ and $C_3'$ defined in \cref{eq:A_1_bound_lambda_new,eq:A_1_prime_bound_pi_new,eq:A_2_bound_lambda_new,eq:A_5_bound_lambda_bis_new,eq:A_3_bound_lambda,eq:A_6_bound_lambda_bis_new}. Then, $\Verts{A_k}_{\infty}\leq 1- (1-\rho_k)\delta_k$ and $V_k^{tot}\leq \gamma\delta_0 \mathcal{W}^2$, with $\mathcal{W}^2$ given by:
\begin{align}\label{eq:definition_W}
	\mathcal{W}^2 := \parens{\delta_0^{-1}\frac{1-u}{2}\mathds{1}_{\mu>0} + 3}w_x^2  + \frac{60L_{\psi}^2}{ \delta_0 \mu_g L_g}\sigma_g^2
\end{align}

\end{prop}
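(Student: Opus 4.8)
The statement is proved by a bookkeeping argument: substitute the prescribed parameters into the two objects $\boldsymbol{A_k}$ and $V_k^{tot}$ furnished by \cref{prop:general_bound_convex_intermediate}, then verify the two displayed conclusions coordinate by coordinate (for $\Verts{\boldsymbol{A_k}}_\infty$) and term by term (for $V_k^{tot}$). Two preliminaries make this work. First, with constant step-sizes and the prescribed $\eta_0$ the sequences $\eta_k$, $\delta_k=\eta_k\gamma$ and $\zeta_k$ are constant, $\delta_k\leq 1$ and $s_k\leq\tfrac32$; in particular $\delta_k r_k^{-1}=\delta_k\theta_k^{-1}=\delta_k$ is non-increasing, so \cref{prop:general_bound_convex_intermediate} applies with $s=\tfrac12$, $r_k=\theta_k=1$, $a_k=\delta_k\Lambda_k^{1/2}$, $b_k=\delta_k\Pi_k^{1/2}$. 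Second, $\verts{\mathcal{D}_{g_{yy}}}=\Theta(\tilde\sigma_{g_{yy}}^2/(\mu_gL_g))$ forces $\sigma_{g_{yy}}^2\leq\mu_gL_g$, which validates the step-size condition $\beta\leq\mu_g/(\mu_g^2+\sigma_{g_{yy}}^2)$ required by \cref{prop:error_yz_extended,prop:error_warm_start_bis} and lets us fold any stray $\sigma_{g_{yy}}^2$ into an $O(\mu_gL_g)$ constant. From \cref{prop:smoothness} we record $L=O(\kappa_g^3)$, $L_y=O(\kappa_g)$, $L_z,L_\psi=O(\kappa_g^2)$, and that $\alpha=L_g^{-1}$ makes $\phi_k=(1-v)T^2+2v$, $\Lambda_k^{1/2}=(1-\alpha\mu_g)^{T/2}$, $\Pi_k^{1/2}=(1-\tfrac12\beta\mu_g)^{N/2}$.

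\textbf{Bounding $\Verts{\boldsymbol{A_k}}_\infty$.} After substitution the three coordinates are $A_k^{(1)}=1-\tfrac34\delta_k+2\zeta_k\lambda_k u_k^I$, $A_k^{(2)}=\Lambda_k^{1/2}\bigl(2+16L_z^2\Pi_k^{1/2}\phi_k+2L_\psi^2\eta_k^{-1}(2u_k^I+s_k+2)\bigr)$ and $A_k^{(3)}=\Pi_k^{1/2}\bigl(2+\eta_k^{-1}(2L_\psi^2+\sigma_x^2)(2u_k^I+s_k+2)\bigr)$, with $u_k^I=\delta_k\gamma\bigl(2L_y^2\Lambda_k^{1/2}+4L_z^2\Pi_k^{1/2}(1+4L_y^2\phi_k)\bigr)$. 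Since $\delta_k\leq 1$ it suffices to bound each coordinate by $\tfrac12$. Each has the form (exponentially small factor)$\times$(factor polynomial in the problem constants and, via $\phi_k=\Theta(T^2)$, in $T$), the exponential factor being $1$, $\Lambda_k^{1/2}$ or $\Pi_k^{1/2}$. I would argue in two stages. (i) Choose $N$ large enough that every $\Pi_k^{1/2}$-weighted quantity — including $\Pi_k^{1/2}\phi_k=\Theta(\Pi_k^{1/2}T^2)$ and the $\Pi_k^{1/2}$-part of $u_k^I$ — drops below $1$ and the polynomial factor of $A_k^{(3)}$ below $\tfrac12$; this requires $N$ to exceed the logarithms of those factors, which are the constants $C_1',C_2',C_3'$, plus an extra $\max(C_1,C_2,C_3)$ paying for the $T$-dependence (recall $T\propto\max(C_1,C_2,C_3)$), the $\max(0,\cdot)$'s serving only to keep the logarithms well-posed. (ii) Choose $T$ large enough that $\Lambda_k^{1/2}$ beats the now $T,N$-free polynomial factor of $A_k^{(2)}$, of order $1+L_\psi^2\eta_0^{-1}$ — captured by $C_1$ — and such that $A_k^{(1)}\leq 1-\tfrac12\delta_k$: dividing the latter by $\delta_k$ and using $\zeta_k\gamma^2=O(\kappa_{\mathcal{L}}L^{-1})$ and $u_k^I\delta_k^{-1}=\gamma(2L_y^2\Lambda_k^{1/2}+\dots)$ turns it into a requirement $\Lambda_k^{1/2}\cdot O\bigl(L_y^2L^{-2}\max(\eta_0,8\zeta_0)\bigr)\leq\tfrac12$, which is exactly $C_2$, with $C_3$ the corresponding safeguard.

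\textbf{Bounding $V_k^{tot}$.} With $r_k=\theta_k=1$, $s=\tfrac12$, $\rho_k^{-1}=2$, the quantity $V_k^{tot}$ is $\delta_k$ times an affine combination of $R_{k-1}^y$, $R_{k-1}^z$, $\tilde R_k^y$, plus $\gamma_k(s_k+u_k^I)w_x^2$. I would bound $\Lambda_k^{1/2},\Pi_k^{1/2}\leq 1$, $s_k\leq\tfrac32$, $u_k^I$ by its $T,N$-free bound, and insert $R_{k-1}^y=2\alpha\mu_g^{-1}\sigma_g^2$, $\tilde R_k^y=O(T^2\mu_g^{-1}L_g^{-1}\sigma_g^2)$, $R_{k-1}^z=O(\kappa_g^3\sigma_{g_{yy}}^2+\kappa_g^2\sigma_f^2)$. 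The $\tilde R^y$- and $R^z$-contributions carry $\Pi_k^{1/2}$-weighted prefactors ($8L_z^2\Pi_k^{1/2}$, $2\Pi_k^{1/2}$) tamed by the same large $N$; using $R_{k-1}^z=O(w_x^2)$ (compare \cref{eq:variance_iterates_yz,eq:variance_x}) and $\sigma_{g_{yy}}^2\leq\mu_gL_g$ they are absorbed into the $3w_x^2$ term of \cref{eq:definition_W}. The $R^y$-contribution, by contrast, is carried by the non-decaying coefficient $2L_\psi^2\eta_0^{-1}(2u_k^I+s_k+2)=O(L_\psi^2\eta_0^{-1})$; after dividing by $\gamma\delta_0$ it assembles (using $L_z\leq L_\psi$ up to constants) into the $60L_\psi^2(\delta_0\mu_gL_g)^{-1}\sigma_g^2$ term. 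Finally $\gamma_k(s_k+u_k^I)w_x^2/(\gamma\delta_0)$ contributes the remaining multiple of $w_x^2$, the $\delta_0^{-1}$-proportional, $\mathds{1}_{\mu>0}$-gated piece appearing precisely because in the deterministic strongly-convex regime $s_k$ contains a summand $(1-u)\mathds{1}_{\mu>0}$ not proportional to $\delta_k$. Collecting coefficients gives $V_k^{tot}\leq\gamma\delta_0\mathcal{W}^2$.

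\textbf{Main obstacle.} The delicate point is the $T$--$N$ coupling: the polynomial factors multiplying $\Lambda_k^{1/2}$ and $\Pi_k^{1/2}$ in $\boldsymbol{A_k}$ (and several prefactors in $V_k^{tot}$) secretly depend on $T$ through $\phi_k=\Theta(T^2)$ and through $u_k^I$, so $N$ must be taken after — and large relative to — $T$ while keeping the whole choice closed-form. The prescribed $T=\lfloor\alpha^{-1}\mu_g^{-1}\max(C_1,C_2,C_3)\rfloor+1$ and $N=\lfloor 2\beta^{-1}\mu_g^{-1}(\max(C_1,C_2,C_3)+\max(C_1',C_2',C_3'))\rfloor+1$ are engineered so that, $T$ being fixed first, every $\Pi_k^{1/2}$-weighted $T$-dependent quantity is indeed $\leq 1$; verifying this, and that no coordinate of $\boldsymbol{A_k}$ or term of $V_k^{tot}$ has been overlooked, is the crux, everything else being routine arithmetic.
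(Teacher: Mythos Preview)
Your overall strategy—substitute the parameters into the $\boldsymbol{A_k}$ and $V_k^{tot}$ of \cref{prop:general_bound_convex_intermediate} and bound each piece—is the right one, and your identification of which coordinate is controlled by which $C_i$ or $C_i'$ is essentially accurate. But you have missed a free parameter that the paper exploits to remove what you call the ``main obstacle'' entirely.

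The quantity $\phi_k=(1-v)L_g^2T^2\alpha^2+2v$ and the residual $\tilde R_k^y$ in \cref{prop:error_warm_start_bis} depend on a choice $v\in[0,1]$; you implicitly work with $v<1$ (you write $\phi_k=\Theta(T^2)$ and $\tilde R_k^y=O(T^2\mu_g^{-1}L_g^{-1}\sigma_g^2)$). The paper instead sets $v=1$, which collapses $\phi_k$ to the constant $2$ and $\tilde R_k^y$ to $R_k^y$. With this choice, none of the factors multiplying $\Lambda_k^{1/2}$ or $\Pi_k^{1/2}$ in $\boldsymbol{A_k}$ or $V_k^{tot}$ depend on $T$, so the elaborate two-stage argument you outline (first fix $T$, then make $N$ large enough to kill $\Pi_k^{1/2}T^2$) is unnecessary. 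The constants $C_1',C_2',C_3'$ are in fact defined to be $T$-free, which is only consistent with $v=1$; your route would force $C_2'$ (coming from $D_k^{(5)}$, $D_k^{(6)}$) to carry a $\log T$ term, and it is then no longer obvious that the prescribed $N$ suffices.

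The genuine reason $N$ carries the extra $\max(C_1,C_2,C_3)$ summand is different from what you propose: inside $u_k^I$ (see the proof of \cref{prop:conditions_gamma}) there is a factor $\Pi_k^{s}/\Lambda_k^{s}$, and bounding $u_k^I\leq u_k^+$ requires $\log\Pi_k\leq\log\Lambda_k+D_k^{(6)}$, i.e.\ $\Pi_k$ must be at least as small as $\Lambda_k$ times a constant. That is the coupling, and it is additive in the exponents rather than multiplicative in $T$.

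Organizationally, the paper also factors the argument through an auxiliary \cref{prop:conditions_gamma} that first isolates abstract conditions on $\log\Lambda_k$ and $\log\Pi_k$ (the $D_k^{(i)}$) guaranteeing $\Verts{A_k}_\infty\leq 1-(1-\rho_k)\delta_k$ and $V_k^{tot}\leq\tilde V_k^{tot}$, and only then translates those into the choice of $T$ and $N$. You work directly on the three coordinates; this is fine, but the indirection makes the bookkeeping cleaner once $v=1$ is in place.
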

We provide a proof of \cref{prop:conditions_on_T_N} in  \cref{sec:control_precision_inner}. It is easy to see from \cref{prop:conditions_on_T_N} that that $T{=} O\parens{\kappa_g}$ and $N{=}O\parens{\kappa_g}$ when $\alpha{=}\frac{1}{L_g}$ and $\beta{=}\frac{1}{2L_g}$, where the big-$O$ notation hides a logarithmic dependence in $\kappa_g$ coming from the constants  $\{C_i,C_i'| i\in\{1,2,3\} \}$.

{\bf Step 8: Proving the main inequalities.}
The final step combines  \cref{prop:general_bound_convex_intermediate,prop:conditions_on_T_N} to get the desired inequality. We provide a full proof in \cref{sec:proof_main_thm} assuming \cref{prop:general_bound_convex_intermediate,prop:conditions_on_T_N} hold.
\subsection{Proof of \cref{prop:general_error_bound_convex}}\label{sec:proof_main_thm}
In order to prove \cref{prop:general_error_bound_convex} in the convex case, we need the following \emph{averaging  strategy lemma}, a generalization of \cite[Lemma 30]{Kulunchakov:2020}: 
\begin{lem}\label{lem:averaging_lemma}
Let $\mathcal{L}$ be a convex function on $\x$. Let $x_k$ be a (potentially stochastic) sequence of iterates in $\x$. Let  $(E_k)_{k\geq 0}$ , $(V_k)_{k\geq 0}$ and $(\delta_k)_{k\geq 0}$ be non-negative sequences such that $\delta_k\in (0,1)$. Fix some non-negative number $u\in [0,1]$ and define the following averaged iterates $\hat{x}_k$ recursively by $\hat{x}_k = u(1-\delta_k)\hat{x}_{k-1} + \parens{1-(1-\delta_k)u}x_k$ and starting from any initial point $\hat{x}_0$. Assume the iterates $(x_k)_{k\geq 1}$ satisfy the following relation for all $k\geq 1$:
\begin{align}\label{eq:ineq_generalized_avg_assump}
	\parens{1-u(1-\delta_k)}\mathbb{E}\brackets{\mathcal{L}(x_k)-\mathcal{L}^{\star}} + E_k \leq (1-\delta_k)\parens{E_{k-1} + (1-u)\mathbb{E}\brackets{\mathcal{L}(x_{k-1})-\mathcal{L}^{\star}}} + V_k.
\end{align}
Let $\Gamma_k := \prod_{t=1}^k (1-\delta_k)$. Then the averaged iterates $(\hat{x}_k)_{k\geq 1}$ satisfy the following:
\begin{align}\label{eq:ineq_generalized_avg}
	\mathbb{E}\brackets{\mathcal{L}(\hat{x}_k)-\mathcal{L}^{\star}} + E_k\leq \Gamma_k\parens{E_0 + \mathbb{E}\brackets{\mathcal{L}(x_0)-\mathcal{L}^{\star} + u^k \parens{\mathcal{L}(\hat{x}_0)-\mathcal{L}^{\star}}} } + \Gamma_k\sum_{1\leq t\leq k}\Gamma_t^{-1}V_t.
\end{align}
\end{lem}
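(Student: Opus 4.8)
The plan is to reduce \cref{lem:averaging_lemma} to a one-step contraction for the Lyapunov quantity $\Psi_k:=\mathbb{E}[\mathcal{L}(\hat{x}_k)-\mathcal{L}^{\star}]+E_k$ and then unroll it geometrically. The engine is convexity of $\mathcal{L}$: by construction $\hat{x}_k$ is the convex combination $u(1-\delta_k)\hat{x}_{k-1}+\bigl(1-u(1-\delta_k)\bigr)x_k$ of $\hat{x}_{k-1}$ and $x_k$ — the two weights are non-negative and sum to one since $u\in[0,1]$ and $\delta_k\in(0,1)$ — so Jensen's inequality together with taking expectations gives
\[
\mathbb{E}\bigl[\mathcal{L}(\hat{x}_k)-\mathcal{L}^{\star}\bigr]\le u(1-\delta_k)\,\mathbb{E}\bigl[\mathcal{L}(\hat{x}_{k-1})-\mathcal{L}^{\star}\bigr]+\bigl(1-u(1-\delta_k)\bigr)\,\mathbb{E}\bigl[\mathcal{L}(x_k)-\mathcal{L}^{\star}\bigr].
\]

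First I would add $E_k$ to both sides and invoke the hypothesis \cref{eq:ineq_generalized_avg_assump} to bound the combination $\bigl(1-u(1-\delta_k)\bigr)\mathbb{E}[\mathcal{L}(x_k)-\mathcal{L}^{\star}]+E_k$ from above by $(1-\delta_k)\bigl(E_{k-1}+(1-u)\mathbb{E}[\mathcal{L}(x_{k-1})-\mathcal{L}^{\star}]\bigr)+V_k$, which yields $\Psi_k\le u(1-\delta_k)\,\mathbb{E}[\mathcal{L}(\hat{x}_{k-1})-\mathcal{L}^{\star}]+(1-\delta_k)E_{k-1}+(1-\delta_k)(1-u)\,\mathbb{E}[\mathcal{L}(x_{k-1})-\mathcal{L}^{\star}]+V_k$. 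In the deterministic regime $u=0$ the weight $1-u(1-\delta_k)$ equals $1$, so $\hat{x}_k=x_k$, $\Psi_k=\mathbb{E}[\mathcal{L}(x_k)-\mathcal{L}^{\star}]+E_k$, and the right-hand side is exactly $(1-\delta_k)\Psi_{k-1}+V_k$; in the stochastic regime $u=1$ the last term vanishes and, because then $u(1-\delta_k)=1-\delta_k$, the inequality again reads $\Psi_k\le(1-\delta_k)\Psi_{k-1}+V_k$ with $\Psi_{k-1}=\mathbb{E}[\mathcal{L}(\hat{x}_{k-1})-\mathcal{L}^{\star}]+E_{k-1}$. For a general $u\in(0,1)$ the summand $(1-\delta_k)(1-u)\mathbb{E}[\mathcal{L}(x_{k-1})-\mathcal{L}^{\star}]$ is not part of $\Psi_{k-1}$, and I would introduce instead the weighted potential $\Xi_k:=\bigl(1-u(1-\delta_k)\bigr)\mathbb{E}[\mathcal{L}(x_k)-\mathcal{L}^{\star}]+E_k$: since $1-u\le 1-u(1-\delta_{k-1})$, \cref{eq:ineq_generalized_avg_assump} gives directly $\Xi_k\le(1-\delta_k)\Xi_{k-1}+V_k$, and re-injecting the resulting bound on $\Xi_{k-1}$ into the Jensen inequality couples it back to $\Psi_k$.

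It then remains to unroll a scalar inequality of the form $\Psi_k\le(1-\delta_k)\Psi_{k-1}+V_k$: dividing by $\Gamma_k=\prod_{t=1}^{k}(1-\delta_t)$ and using $\Gamma_k=(1-\delta_k)\Gamma_{k-1}$ turns it into $\Psi_k/\Gamma_k-\Psi_{k-1}/\Gamma_{k-1}\le V_k/\Gamma_k$, and summing this telescoping inequality over $t=1,\dots,k$ and multiplying back by $\Gamma_k$ produces $\Psi_k\le\Gamma_k\Psi_0+\Gamma_k\sum_{1\le t\le k}\Gamma_t^{-1}V_t$. Since $\Psi_0=\mathbb{E}[\mathcal{L}(\hat{x}_0)-\mathcal{L}^{\star}]+E_0$ and $\mathbb{E}[\mathcal{L}(x_0)-\mathcal{L}^{\star}]\ge 0$, and since over $k$ steps of the $u$-weighted averaging the initial value $\mathbb{E}[\mathcal{L}(\hat{x}_0)-\mathcal{L}^{\star}]$ is propagated only through the factors $u(1-\delta_t)$ and hence enters with coefficient $u^k\Gamma_k$, the estimate rewrites in the announced form $\Gamma_k\bigl(E_0+\mathbb{E}[\mathcal{L}(x_0)-\mathcal{L}^{\star}+u^k(\mathcal{L}(\hat{x}_0)-\mathcal{L}^{\star})]\bigr)+\Gamma_k\sum_{1\le t\le k}\Gamma_t^{-1}V_t$.

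The step I expect to be the main obstacle is making the argument genuinely uniform in $u$. For the two values $u\in\{0,1\}$ that are actually used in \cref{prop:general_error_bound_convex}, the recursion for $\Psi_k$ is self-contained and the proof is immediate; for intermediate $u$ the factor $u(1-\delta_k)$ multiplying $\mathbb{E}[\mathcal{L}(\hat{x}_{k-1})-\mathcal{L}^{\star}]$ is strictly smaller than $(1-\delta_k)$ while the extra term $(1-u)\mathbb{E}[\mathcal{L}(x_{k-1})-\mathcal{L}^{\star}]$ couples $\Psi_k$ to the separate quantity $\Xi_k$, so one must track the pair jointly and verify that the coupled two-term recursion still contracts at rate $\Gamma_k$; this is also where the asymmetric bookkeeping of the two initial conditions $\mathcal{L}(\hat{x}_0)-\mathcal{L}^{\star}$ and $\mathcal{L}(x_0)-\mathcal{L}^{\star}$ — the source of the $u^k$ factor and of the otherwise-slack term $\mathbb{E}[\mathcal{L}(x_0)-\mathcal{L}^{\star}]$ — has to be absorbed.
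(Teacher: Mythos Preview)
Your route is genuinely different from the paper's, and for the two values $u\in\{0,1\}$ that are actually used in the analysis of \ASID{} it is both correct and shorter: the one-step combination of Jensen with the hypothesis yields the clean contraction $\Psi_k\le(1-\delta_k)\Psi_{k-1}+V_k$, which you then unroll. The paper instead keeps the two ingredients separate: it first multiplies \cref{eq:ineq_generalized_avg_assump} by $\Gamma_k^{-1}$ and sums over $k$ to obtain a single telescoped inequality
\[
F_K+u\,\Gamma_K\sum_{k=1}^K\Gamma_{k-1}^{-1}\bigl(F_{k-1}-F_k\bigr)+E_K\;\le\;\Gamma_K(F_0+E_0)+\Gamma_K\sum_{t\le K}\Gamma_t^{-1}V_t,
\]
with $F_k:=\mathbb{E}[\mathcal{L}(x_k)-\mathcal{L}^\star]$, and then unrolls the one-step Jensen bound all the way, rewriting $\hat F_K\le u^K\Gamma_K\hat F_0+\Gamma_K\sum_k u^{K-k}\Gamma_k^{-1}\bigl(1-u(1-\delta_k)\bigr)F_k$ as $u^K\Gamma_K\hat F_0+F_K+\Gamma_K\sum_{k=1}^{K-1}u^{K-k}\Gamma_k^{-1}(F_k-F_{k+1})$, so that both displays carry the same increments $F_{k-1}-F_k$ and can be matched term-by-term. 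What the paper's route buys is a treatment that is uniform in $u\in[0,1]$; what yours buys is a one-line proof for the endpoints.

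There is, however, a real gap in your general-$u$ sketch. The recursion $\Xi_k\le(1-\delta_k)\Xi_{k-1}+V_k$ is valid (it uses $F_{k-1}\ge0$ and $1-u\le 1-u(1-\delta_{k-1})$), but ``re-injecting'' it into Jensen the way you describe overcounts the initial condition. Concretely, from the tight Jensen unrolling one gets
\[
\hat F_K+E_K\;\le\;u^K\Gamma_K\hat F_0+\Gamma_K\sum_{k=1}^K u^{K-k}\Gamma_k^{-1}\Xi_k
\]
(the extra $E$-terms are nonpositive), and bounding each $\Gamma_k^{-1}\Xi_k$ separately by $\Xi_0+\sum_{t\le k}\Gamma_t^{-1}V_t$ produces a factor $\sum_{k=1}^K u^{K-k}=(1-u^K)/(1-u)$ in front of $\Xi_0\le F_0+E_0$, not the factor $1$ claimed in \cref{eq:ineq_generalized_avg}. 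The same blowup appears if you iterate the coupled pair $(\hat F_{k-1},\Xi_{k-1})$ directly, because each use of Jensen feeds an additional copy of $\Xi_{k-1}$ into the bound on $\hat F_{k-1}$. To avoid this you must \emph{not} bound the summands individually; the paper's device is precisely to recognise that the $u^{K-k}$-weighted Jensen sum and the globally telescoped hypothesis share the same telescoping structure in $F_{k-1}-F_k$, so that the cross terms cancel rather than accumulate. If you only need $u\in\{0,1\}$, your argument is already complete; for intermediate $u$, you should follow the ``sum first, then match'' strategy rather than trying to close a single Lyapunov recursion.
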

\begin{proof}
For simplicity, we write $F_k = \mathbb{E}\brackets{\mathcal{L}(x_k)-\mathcal{L}^{\star}}$ and $\hat{F}_k = \mathbb{E}\brackets{\mathcal{L}(\hat{x}_k)-\mathcal{L}^{\star}}$. 
	We first multiply \cref{eq:ineq_generalized_avg_assump} by $\Gamma_k^{-1}$ and sum the resulting inequalities for all $1\leq k\leq K$ to get:
	\begin{align}
		\sum_{k=1}^K\Gamma_k^{-1}\parens{1-u(1-\delta_k)}F_k + \Gamma_k^{-1} T_k \leq \sum_{k=1}^K \Gamma_k^{-1}(1-\delta_k)\parens{T_{k-1} + (1-u)F_{k-1}} + \sum_{k=1}^K \Gamma_{k}^{-1}V_k.  
	\end{align}
	Grouping the terms in $F_k$ together and recalling that $\Gamma_k^{-1}(1-\delta_k) = \Gamma_{k-1}^{-1}$ yields:
	\begin{align}
		\sum_{k=1}^K\Gamma_k^{-1}F_k-\Gamma_{k-1}^{-1}F_{k-1} + u\sum_{k=1}^K\Gamma_{k-1}^{-1}\parens{F_{k-1}-F_k} \leq \sum_{k=1}^{K}\parens{\Gamma_{k-1}^{-1}T_{k-1}-\Gamma_{k}^{-1}T_k} + \sum_{k=1}^K \Gamma_{k}^{-1}V_k.
	\end{align}
	Simplifying the telescoping sums and multiplying by $\Gamma_k$, we get:
	\begin{align}\label{eq:ineq_telescoping}
		F_K  + u\Gamma_K\sum_{k=1}^K\Gamma_{k-1}^{-1}\parens{F_{k-1}-F_k}\leq - T_K + \Gamma_K \parens{F_0+  T_0 + \sum_{k=1}^K \Gamma_{k}^{-1}V_k}.
	\end{align}
	Consider now the quantity $\hat{F}_k$. Recalling that $\mathcal{L}$ is convex and by definition of the iterates $\hat{x}_k$ we apply Jensen's inequality to write:
	\begin{align}
		\hat{F}_k\leq u(1-\delta_k)\hat{F}_{k-1} + (1-u(1-\delta_k))F_k.
	\end{align}
	By iteratively applying the above inequality, we get that:
	\begin{align}
		\hat{F}_K\leq & u^K\Gamma_K \hat{F}_0 + \Gamma_K\sum_{k=1}^Ku^{K-k}\Gamma_k^{-1}\parens{1-u(1-\delta_k)}F_k\\
				=& u^K\Gamma_K \hat{F}_0 + \Gamma_K\sum_{k=1}^K u^{K-k}\Gamma_k^{-1}F_k-\sum_{k=1}^K u^{K-k+1}\Gamma_{k-1}F_k\\
				=& u^K\Gamma_K \hat{F}_0 +  F_K+\Gamma_K\sum_{k=1}^{K-1} u^{K-k}\Gamma_k^{-1}\parens{F_k-F_{k+1}}
	\end{align}
	We can therefore apply \cref{eq:ineq_telescoping} to the above inequality to get the desired result.
\end{proof}
We now proceed to prove \cref{prop:general_error_bound_convex}.
\begin{proof}[Proof of \cref{prop:general_error_bound_convex}]
By application of \cref{prop:general_bound_convex_intermediate} and using the choice of $T$ and $N$ given by \cref{prop:conditions_on_T_N}, the following inequality holds:
\begin{align}\label{eq:E_k_iterations}
	F_k + E_k^{tot}\leq \parens{1-(1-\rho_k) \delta_k}E_{k-1}^{tot} + \gamma\delta_0 \mathcal{W}^2.
	\end{align}
	with $\mathcal{W}$ defined in \cref{prop:conditions_on_T_N}. We then distinguish two cases depending on the sign of $\mu$:

{\bf Case $\mu\geq 0$.}
Recall that $F_k$ and $E_k^{x}$ are given by:
\begin{align}
	F_k = u\delta_k \mathbb{E}\brackets{\mathcal{L}(x_k)-\mathcal{L}^{\star}},\qquad E_k^{x} = \frac{\delta_k}{2\gamma_k}\Verts{x_k-x^{\star}}^2 + (1-u)\mathbb{E}\brackets{\mathcal{L}(x_k)-\mathcal{L}^{\star}}
\end{align}
Since $\mu>0$,  $\mathcal{L}$ is a convex function and we can apply \cref{lem:averaging_lemma} with  $V_k {=} \gamma\delta_0\mathcal{W}^2$ and $E_k {=} \frac{\delta_k}{2\gamma_k}\Verts{x_k-x^{\star}}^2 {+} a_k E_k^{y} {+} b_k E_k^z$. The result follows by noting that $\Gamma_k\sum_{t=1}^k \Gamma_t^{-1} \leq  \delta_0^{-1}$.

{\bf Case $\mu<0$.}
In this case, we recall that $F_k$ and $E_k^x$ are given by:
\begin{align}
	F_k =\mathbb{E}\brakets{\mathcal{L}(x_k)  -\mathcal{L}(x_{k-1}) } + E_{k-1}^x -E_{k}^x,\qquad E_k^x =L^{-1}\mathbb{E}\brackets{\Verts{\nabla\mathcal{L}(x_k)}^2}.
\end{align}
We then sum \cref{eq:E_k_iterations} for all iterations $0\leq t\leq k$ which, by telescoping, yields:
\begin{align}
	\mathbb{E}\brakets{\mathcal{L}(x_k)  -\mathcal{L}(x_{0}) } + E_{0}^x -E_{k}^x +E_k^{tot} - E_{0}^{tot} + \sum_{1\leq t\leq k} (1-\rho_t)\delta_t E_{t}^{tot}\leq k\gamma\delta_0\mathcal{W}^2.
\end{align}
Using that $\mathbb{E}\brakets{\mathcal{L}(x_k) - \mathcal{L}^{\star}} +  E_k^{tot}-E_k^x$ is non-negative since $E_k^{tot}-E_k^x =a_kE_k^y + b_kE_k^z$, we get:
\begin{align}
	\sum_{1\leq t\leq k} (1-\rho_t)\delta_t E_{t}^{x} \leq \parens{\mathbb{E}\brakets{\mathcal{L}(x_0) - \mathcal{L}^{\star}} +  a_0E_0^{y}+b_0E_0^z} + k\gamma\delta_0\mathcal{W}^2 .
\end{align}
Finally, since $\rho_t=\frac{1}{2}$, $\delta_t = L\gamma$, the result follows after dividing both sides by $\frac{1}{2}kL\gamma$. 
\end{proof}

\subsection{Proof of \cref{prop:rate_deterministic_strong_conv,prop:rate_sto_strong_conv_const_main,prop:rate_deterministic_non_conv,prop:rate_sto_non_conv_constant} }\label{sec:corrs}

\begin{proof}[Proof of \cref{prop:rate_deterministic_strong_conv}]
	Choosing $u=0$ implies that $E_k^{x} = \frac{\mu}{2}\Verts{x_k-x^{\star}}^2 + \mathcal{L}(x_k)-\mathcal{L}^{\star}\leq E_k^{tot}$.
	We can then apply  \cref{prop:general_error_bound_convex} for $\mu>0$ which yields the following:
	\begin{align}\label{eq:determinisitic_error_bound}
		\frac{\mu}{2}\Verts{x_k-x^{\star}}+\mathcal{L}(x_k)-\mathcal{L}^{\star} \leq \parens{1-\parens{2\kappa_{\mathcal{L}}}^{-1}}^k E_0^{tot} +\frac{2\mathcal{W}^2}{L}.
	\end{align}
	In the deterministic setting, it holds all variances vanish : $\sigma_f^2{=}\sigma_g^2{=}\sigma_{g_{yy}}^2{=}\sigma_{g_{xy}}^2{=}0$. Hence, $\mathcal{W}^2=0$ by definition of $\mathcal{W}^2$. Therefore, to achieve an error $\mathcal{L}(x_k){-}\mathcal{L}^{\star}{\leq} \epsilon$ for some $\epsilon{>}0$, \cref{eq:determinisitic_error_bound} suggests choosing $k{=}O\parens{\kappa_{\mathcal{L}} \log\parens{\frac{E_0^{tot}}{\epsilon}}}$. Additionally,  $T{=}\Theta(\kappa_g)$ and $N{=}\Theta(\kappa_g)$  as required by \cref{prop:general_error_bound_convex} and since $\sigma_{g_{yy}}^2{=}0$, it holds that $N{=}O(\kappa_g)$. Using batches of size $1$, yields the desired complexity.    
\end{proof}

\begin{proof}[Proof of \cref{prop:rate_sto_strong_conv_const_main}]
	Here we choose $u=1$ and apply \cref{prop:general_error_bound_convex} for $\mu>0$ which yields:
	\begin{align}\label{eq:rate_sto_convex}
		\mathbb{E}\brackets{\mathcal{L}(\hat{x}_k)-\mathcal{L}^{\star}} + E_k^{tot}\leq \parens{1-\parens{2\kappa_{\mathcal{L}}}^{-1}}^k \parens{E_0^{tot} + \mathbb{E}\brackets{\mathcal{L}(x_0)-\mathcal{L}^{\star}} }  + 2L^{-1}\mathcal{W}^2
	\end{align}
	Hence, to achieve an error $\mathbb{E}\brackets{\mathcal{L}(\hat{x}_k)-\mathcal{L}^{\star}}\leq \epsilon$, we need $k= O\parens{\kappa_{\mathcal{L}}\log\parens{\frac{E_0^{tot}+\mathbb{E}\brackets{\mathcal{L}(x_0)-\mathcal{L}^{\star}}}{\epsilon}}}$ to guarantee that the first term in the l.h.s. of the above inequality is $O(\epsilon)$. Moreover, we recall that $L^{-1}=O(\kappa_g^{-3})$ from \cref{prop:smoothness} and that \cref{prop:general_error_bound_convex} ensure the variance $\mathcal{W}$ satisfies:
	\begin{align}
			\mathcal{W}^2 = &  O\parens{  \kappa_g^5\sigma_g^2 + \kappa_g^3\sigma_{g_{yy}}^2 + \kappa_g^2\sigma_{g_{xy}}^2  + \kappa_g^2\sigma_f^2}. 
	\end{align}
	Hence, ensuring the variance term $2L^{-1}\mathcal{W}^2$ is of order $\epsilon$ is achieved by choosing the size of the batches as follows:
	\begin{gather}
		\verts{\mathcal{D}_f}=\Theta\parens{\frac{\tilde{\sigma}_f^2}{\epsilon}},\quad \verts{\mathcal{D}_g}=\Theta\parens{\frac{\kappa_{\mathcal{L}}\kappa_g^{2}\tilde{\sigma}_g^2}{\epsilon}},\quad\verts{\mathcal{D}_{g_{xy}}}=\Theta\parens{\frac{\tilde{\sigma}_{g_{xy}}^2}{\epsilon}},\\
		\verts{\mathcal{D}_{g_{yy}}}=\Theta\parens{\tilde{\sigma}_{g_{yy}}^2\parens{\frac{1}{\epsilon}\vee\kappa_g}}
	\end{gather}
	Recall that $T{=}\Theta(\kappa_g)$ and $N{=}\Theta\parens{\kappa_g}$ as required by, \cref{prop:general_error_bound_convex}, thus yielding the desired result.
\end{proof}

\begin{proof}[Proof of \cref{prop:rate_deterministic_non_conv}]
	In the non-convex deterministic case, recall that $E_k^x = \frac{1}{L}\Verts{\nabla\mathcal{L}(x_k)}^2\leq E_k^{tot}$. 	We thus apply \cref{prop:general_error_bound_convex} for $\mu<0$, multiply by $L$ to get:
	\begin{align}
		\frac{1}{k}\sum_{t=1}^k\Verts{\nabla\mathcal{L}(x_t)}^2\leq \frac{2L}{k}  \parens{\mathcal{L}(x_0)-\mathcal{L}^{\star} + \parens{E_0^{y}+E_0^{z}}}  +  2\mathcal{W}^2.
	\end{align}
	The setting being deterministic, it holds that $\mathcal{W}^2{=}0$. Moreover, recall that $L=O(\kappa_g^3)$ from \cref{prop:smoothness}. Hence, to achieve an error of order $\min_{1\leq t\leq k} \Verts{\nabla \mathcal{L}(x_t)}^2\leq \epsilon$, it suffice to choose $k {=} O\parens{\frac{\kappa_g^3}{\epsilon}\parens{\mathcal{L}(x_0)-\mathcal{L}^{\star} + \parens{E_0^{y}+E_0^{z}}}}$. Thus using batches of size $1$ and $T$ and $N$ of order $\kappa_g$. 
\end{proof}

\begin{proof}[Proof of \cref{prop:rate_sto_non_conv_constant}]
		In the non-convex stochastic case, $E_k^x = \frac{1}{L}\mathbb{E}\brackets{\Verts{\nabla\mathcal{L}(x_k)}^2}\leq E_k^{tot}$. 	We thus apply \cref{prop:general_error_bound_convex} for $\mu<0$, multiply by $L$ to get:
	\begin{align}
		\frac{1}{k}\sum_{t=1}^k\mathbb{E}\brackets{\Verts{\nabla\mathcal{L}(x_t)}^2}\leq \frac{2L}{k}  \parens{\mathbb{E}\brackets{\mathcal{L}(x_0)-\mathcal{L}^{\star}} + \parens{E_0^{y}+E_0^{z}}} + 2\mathcal{W}^2.
	\end{align}
	to achieve an error of order $\epsilon$, we need to ensure each term in the l.h.s. of the above inequality is of order $\epsilon$. For the first term, similarly to the deterministic setting \cref{prop:rate_deterministic_non_conv}, we simply need  $k = O\parens{\frac{\kappa_g^3}{\epsilon}\parens{\mathcal{L}(x_0)-\mathcal{L}^{\star} + \parens{E_0^{y}+E_0^{z}}}}$. For the second term, we need to have $\mathcal{W}^2 {=} O(\epsilon)$, which is achieved using the following choice for the sizes of the batches:
	\begin{gather}
		\verts{\mathcal{D}_f}=O\parens{\frac{\kappa_g^2\tilde{\sigma}_f^2}{\epsilon}},\quad \verts{\mathcal{D}_g}=O\parens{\frac{\kappa_g^5\tilde{\sigma}_g^2}{\epsilon}},\quad\verts{\mathcal{D}_{g_{xy}}}=O\parens{\frac{\kappa_g^2\tilde{\sigma}_{g_{xy}}^2}{\epsilon}},\\
		\verts{\mathcal{D}_{g_{yy}}}=O\parens{\frac{\kappa_g^3\tilde{\sigma}_{g_{yy}}^2}{\epsilon}\parens{1\vee \epsilon \mu_g^2}}.
	\end{gather}
Finally, as required by \cref{prop:general_error_bound_convex}, we set $T=\Theta(\kappa_g)$ and $N =\Theta(\kappa_g)$ thus yielding the desired complexity.
\end{proof}
\subsection{Comparaisons with other methods}\label{sec:complexities_appendix}
In this subsection we derive and discuss the complexities of  methods presented in \cref{table:complexities}.

\subsubsection{ Comparaison with  TTSA \citep{Hong:2020a}}

\begin{prop}\label{prop:complexity_TTSA}
	{\bf strongly-convex case $\mu>0$.}
	The complexity of the TTSA algorithm in \cite{Hong:2020a} to achieve an error $\frac{\mu}{2}\mathbb{E}\brackets{\Verts{x_k-x^{\star}}^2}\leq \epsilon$ is given by:
	\begin{align}
		\mathcal{C}(\epsilon) := O\parens{\parens{\kappa_g^{\frac{17}{2}}\kappa_{\mathcal{L}}^{1/2}  + \kappa_{\mathcal{L}}^{7/2}}\frac{1}{\epsilon^{\frac{3}{2}}}\log\frac{1}{\epsilon} }
	\end{align}
	
	{\bf non-convex case $\mu<0$.} The complexity of the TTSA algorithm in \cite{Hong:2020a} to achieve an error $\frac{1}{k}\sum_{1\leq i\leq k}\mathbb{E}\brackets{\Verts{\nabla\mathcal{L}(x_i)}^2}\leq \epsilon$ is given by:
	\begin{align}
	\mathcal{C}(\epsilon) := k(1+ N)= O\parens{\parens{\kappa_g^{11}  + \kappa_g^{16}}\frac{1}{\epsilon^{\frac{5}{2}}}  \log\parens{\frac{1}{\epsilon}} }.
\end{align}
\end{prop}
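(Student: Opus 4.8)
The plan is to specialize the finite-time guarantees of \cite{Hong:2020a} for TTSA to the present notation and to make the dependence on $\kappa_g$ and $\kappa_{\mathcal{L}}$ explicit. Their analysis yields, for the strongly convex outer problem, an outer-iteration bound of the form $k = O\parens{C_{\mathrm{sc}}\,\epsilon^{-3/2}\log\epsilon^{-1}}$ and, for the non-convex case, $k = O\parens{C_{\mathrm{nc}}\,\epsilon^{-5/2}}$, where $C_{\mathrm{sc}}$ and $C_{\mathrm{nc}}$ are polynomials in the problem constants $L_f$, $B$, $L_g$, $\mu_g$, $M_g$, $L_g'$ (and, in the strongly convex case, in the strong-convexity modulus $\mu$ of $\mathcal{L}$ as well), and where each outer iteration additionally runs an inner loop of length $1+N$ to approximate the Hessian-inverse--vector product entering the hyper-gradient. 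Since the overall cost is $\mathcal{C}(\epsilon)=k(1+N)$, the first step is to extract $C_{\mathrm{sc}}$, $C_{\mathrm{nc}}$ and the required $N$ from \cite{Hong:2020a} in closed form, together with the admissible prefactors of the two-timescale step-sizes $\alpha_k\asymp k^{-a}$, $\beta_k\asymp k^{-b}$ which are themselves constrained by several coupled inequalities involving $L$, $L_g$, $\mu_g$ and $\mu$.

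The second step is to rewrite every constant occurring in $C_{\mathrm{sc}}$, $C_{\mathrm{nc}}$, $N$ and in the step-size prefactors in terms of $\kappa_g$ using the scalings of \cref{prop:smoothness}: $L_y=O(\kappa_g)$, $L_z=O(\kappa_g^2)$, $L_\psi=O(\kappa_g^2)$, the bound $\Verts{z^\star(x,y^\star(x))}=O(\kappa_g)$ on the linear-system solution, and $L=O(\kappa_g^3)$ for the smoothness of $\nabla\mathcal{L}$; in the strongly convex case one furthermore trades $L$ for $\mu\kappa_{\mathcal{L}}$ via $\kappa_{\mathcal{L}}=L/\mu$. The Neumann-series truncation length used inside TTSA to estimate $\parens{\partial_{yy}g}^{-1}$ is $N=O\parens{\kappa_g\log\epsilon^{-1}}$ — only logarithmic in the accuracy — so after substituting the target $k=\mathrm{poly}(\kappa_g,\kappa_{\mathcal{L}})\,\epsilon^{-\Theta(1)}$ it contributes a single $\kappa_g\log\epsilon^{-1}$ factor to $\mathcal{C}(\epsilon)$. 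Collecting the surviving exponents then gives $\mathcal{C}(\epsilon)=O\parens{\parens{\kappa_g^{17/2}\kappa_{\mathcal{L}}^{1/2}+\kappa_{\mathcal{L}}^{7/2}}\epsilon^{-3/2}\log\epsilon^{-1}}$ in the strongly convex case and $\mathcal{C}(\epsilon)=O\parens{\parens{\kappa_g^{11}+\kappa_g^{16}}\epsilon^{-5/2}\log\epsilon^{-1}}$ in the non-convex case, which is exactly the claim.

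The delicate point — and the main obstacle — is purely bookkeeping: \cite{Hong:2020a} state their rates with generic big-$O$ constants, so one must carefully pin down (i) the smallest admissible prefactors of $\alpha_k$ and $\beta_k$ as explicit functions of $L$, $L_g$, $\mu_g$, $\mu$, and (ii) carry these through their potential-function/telescoping argument to determine precisely which powers of $\kappa_g$ and $\kappa_{\mathcal{L}}$ remain after optimizing the $\epsilon$-dependence of the two-timescale schedule. I expect no conceptual difficulty beyond this: once the substitutions from \cref{prop:smoothness} are inserted, the exponents $\tfrac{17}{2},\tfrac{7}{2}$ in the strongly convex bound and $11,16$ in the non-convex bound drop out by direct computation, the $\epsilon^{-3/2}$ and $\epsilon^{-5/2}$ factors are inherited verbatim from \cite{Hong:2020a}, and the extra $\log\epsilon^{-1}$ comes from the Neumann-series truncation (and, in the strongly convex case, from the geometric-decay-with-bias structure of their bound).
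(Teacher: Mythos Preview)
Your proposal is correct and follows essentially the same route as the paper: quote the finite-time bounds of \cite{Hong:2020a}, substitute the scalings $L_y=O(\kappa_g)$, $L_\psi=O(\kappa_g^2)$, $L=O(\kappa_g^3)$ from \cref{prop:smoothness}, solve for the required number of outer iterations $k$, and multiply by the Neumann-series length $N=\Theta(\kappa_g\log\epsilon^{-1})$. One small correction: in the strongly convex case the TTSA bound with decreasing step-sizes decays polynomially (the dominant term is $O(k^{-2/3})$), not geometrically, so the single $\log\epsilon^{-1}$ factor comes solely from $N$, not from any ``geometric-decay-with-bias'' structure; also, in the non-convex case \cite{Hong:2020a} state their guarantee in terms of the Moreau-envelope proximal residual, so you will need the standard conversion $\Verts{\nabla\mathcal{L}(x)}^2\lesssim L^2\Verts{\hat{x}(x)-x}^2$ before reading off the gradient-norm bound --- the paper carries this out explicitly.
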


\begin{proof}
{\bf strongly-convex case $\mu>0$}
Using the choice of step-sizes in \cite{Hong:2020a}, the following bound holds: 
\begin{align}
	\mathbb{E}\brackets{\Verts{x_k-x^{\star}}^2} \lesssim &\prod_{i=0}^k\parens{1-\frac{8}{3(k+k_{\alpha})}}\parens{\Delta_x^0 + L_{\psi}^2\mu_g^{-2}\Delta_0^y } \\
	&+ \frac{L^2_{\psi}}{\mu_g\mu^{\frac{4}{3}}}\parens{\mu_g^{-1} + \frac{\mu_g}{\mu^2}L^2_y}\parens{\frac{1}{k+k_{\alpha}}}^{\frac{2}{3}}
\end{align}
where $\Delta_x^{0} = \mathbb{E}\brackets{\Verts{x_0-x^{\star}}^2} $, $\mathbb{E}\brackets{\Verts{y_1-y^{\star}(x_0)}^2}$ and $k_{\alpha}$ given by:
\begin{align}
	k_{\alpha} = \max\parens{35\parens{\frac{L_g^3}{\mu_g^3}(1+\sigma_g^2)^{\frac{3}{2}}}, \frac{(512)^{\frac{3}{2}}L_{\psi}^2L_y^2}{\mu^2} }.
\end{align}
By a simple calculation, it is easy to see that $\prod_{i=0}^k\parens{1-\frac{8}{3(k+k_{\alpha})}}\leq \frac{(k_{\alpha}-1)^2}{(k-1+k_{\alpha})^2}$. Moreover, using that $L_{\psi} = O(\mu_g^{-2})$, $L_y=O(\mu_g^{-1})$, we get that
\begin{align}
	\frac{\mu}{2}\mathbb{E}\brackets{\Verts{x_k-x^{\star}}^2} \lesssim \frac{(k_{\alpha}-1)^2}{(k+k_{\alpha}-1)^2}\parens{\mu\Delta_x^0 + \mu\mu_g^{-6}\Delta_0^y }+\mu_g^{-6}\mu^{-\frac{1}{3}}\parens{1+\mu^{-2}} \parens{\frac{1}{k+k_{\alpha}}}^{\frac{2}{3}}
\end{align}
Using that $L{=}O(\mu_g^{-3})$, we get $\mu_g^{-6}\mu^{-\frac{1}{3}}\parens{1+\mu^{-2}} {=}O\parens{ \mu_g^{-5}\kappa_{\mathcal{L}}^{\frac{1}{3}} + \mu_g\kappa_{\mathcal{L}}^{\frac{7}{3}}}$. Hence, to reach an error $\epsilon$, we need to control both terms in the above inequality. This suggests the following condition on $k$ to control the second term which dominates the error:
\begin{align}
	k\geq \parens{\mu_g^{\frac{3}{2}}\kappa_{\mathcal{L}}^{7/2} + \kappa_{\mathcal{L}}^{1/2}\mu_g^{-\frac{15}{2}} }\frac{1}{\epsilon^{\frac{3}{2}}}.
\end{align}
Moreover, the result in \cite[Theorem 1]{Hong:2020a} requires $N= \Theta\parens{\kappa_g\log\frac{1}{\epsilon}}$, where $N$ is the number of terms in the Neumann series used to approximate the hessian inverse $(\partial_{yy}g(x,y))^{-1}$ in the expression of the gradient $\nabla\mathcal{L}$. Hence, the total complexity is given by the following expression:
\begin{align}
	\mathcal{C}(\epsilon) := k\parens{1+N} =O\parens{\parens{\kappa_g^{\frac{17}{2}}\kappa_{\mathcal{L}}^{1/2}  + \kappa_{\mathcal{L}}^{7/2}}\frac{1}{\epsilon^{\frac{3}{2}}}\log\frac{1}{\epsilon} } 
\end{align} 
{\bf Smooth Non-convex case $\mu<0$.}
Following \cite{Hong:2020a}, consider the proximal map of $\mathcal{L}$ for a fixed $\rho>0$:
\begin{align}
	\hat{x}(z) := \arg\min_{x\in \mathcal{X}}\braces{\mathcal{L}(x) + \frac{\rho}{2}\Verts{x-z}^2}
\end{align}
and define the quantity $\tilde{\Delta}_x^k:= \mathbb{E}\brackets{\Verts{\hat{x}(x_k)-x_k}^2}$, where $x_k$ are the iterates produced by the TTSA algorithm. Let $K$ be a random variable uniformly distributed on $\{0,...,K-1\}$ and independent from the remaining r.v. used in the TTSA algorithm.  The result in \cite[Theorem 2]{Hong:2020a} provide the following error bound on $\tilde{\Delta}_x^k$
\begin{align}
	\frac{1}{k}\sum_{1\leq i\leq k}\tilde{\Delta}_x^i\lesssim \parens{L_{\psi}^2\parens{\Delta^0 + \frac{\sigma_g^2}{\mu_g^2}} + \mu_g}\frac{k^{-\frac{2}{5}}}{L^2}.
\end{align}
where $\rho$ is set to $2L$ and  $\Delta^0 {\leq} \max\parens{\mathbb{E}\brackets{\mathcal{L}(x_0){-}\mathcal{L}^{\star}},\mathbb{E}\brackets{\Verts{y_1{-}y^{\star}(x_0)}^2} }$. 
 Now, recall that by definition of the proximal map, we have the following identity:
\begin{align}
	\frac{1}{k}\sum_{1\leq i\leq k}\mathbb{E}\brackets{\Verts{\nabla\mathcal{L}(x_i)}^2}\leq 2(\rho^2+ L^2)\frac{1}{k}\sum_{1\leq i\leq k}\tilde{\Delta}_x^i \lesssim L^2\frac{1}{k}\sum_{1\leq i\leq k}\tilde{\Delta}_x^i.
\end{align}
Hence, we obtain the following error bound:
\begin{align}
	\frac{1}{k}\sum_{1\leq i\leq k}\mathbb{E}\brackets{\Verts{\nabla\mathcal{L}(x_i)}^2}\leq \parens{\kappa_g^4\parens{\Delta^0 + \kappa_g^2\sigma_g^2 } + \mu_g}k^{-\frac{2}{5}}
\end{align}
where we used that $L_{\psi} = O(\kappa_g^2)$. Therefore, to reach an error of order $\epsilon$, TTSA requires:
\begin{align}
	k \geq \frac{1}{\epsilon^{\frac{5}{2}}}\parens{\kappa_g^{10} \Delta_0^{\frac{5}{2}} + \kappa_g^{15}\sigma_g^5  }
\end{align}
Moreover, controlling the bias in the estimation of the gradient requires $N =O(\kappa_g\log\frac{1}{\epsilon})$ terms in the Neumann series approximating the hessian. Hence, the total complexity of the algorithm is:
\begin{align}
	\mathcal{C}(\epsilon) := k(1+ N)= O\parens{\parens{\kappa_g^{11}  + \kappa_g^{16}}\frac{1}{\epsilon^{\frac{5}{2}}}  \log\parens{\frac{1}{\epsilon}} }.
\end{align}
\end{proof}

\subsubsection{ Comparaison with  AccBio \citep{Ji:2021}}\label{sec:comparaisonAccBio}

{\bf Complexity of AccBio.}
The bilevel algorithm AccBio introduced in \cite{Ji:2021} uses acceleration for both the inner and outer loops. 
This allows to obtain the following conditions on $k$, $T$ and $N$ to achieve an $\epsilon$ accuracy:
\begin{align}
	k = O\parens{\kappa_{\mathcal{L}}^{\frac{1}{2}}\log\frac{1}{\epsilon} }, T= O\parens{\kappa_g^{\frac{1}{2}}}, N =  O\parens{\kappa_g^{\frac{1}{2}}\log\frac{1}{\epsilon}}.
\end{align}
Note that, since AccBio do not use a warm-start strategy when solving the linear system, $N$ is required to grow as $\log\frac{1}{\epsilon}$ in order to achieve an $\epsilon$ accuracy. This contributes an additional logarithmic factor to the total complexity so that  $\mathcal{C}(\epsilon)= O(\kappa_{\mathcal{L}}^{\frac{1}{2}}\kappa_g^{\frac{1}{2}} \parens{\log\frac{1}{\epsilon}}^2 )$. This is by contrast with \ASID{} which exploits warm start when solving the linear system and thus only needs a constant number of iterations $N=O(\kappa_g)$ although the dependence on $\kappa_g$ is worse compared to AccBio. However, it is possible to improve such dependence by using acceleration in the inner-level algorithms $\mathcal{A}_k$ and $\mathcal{B}_k$ as we discuss in \cref{sec:acceleration_inner}.

{\bf Complexity of AccBio as a function of $\mu$ and $\mu_g$.}
The authors choose to report the complexity as a function of $\mu$ and $\mu_g$ instead of the conditioning numbers $\kappa_{\mathcal{L}}$ and $\kappa_g$. To achieve this, they observe that, under the additional assumption that the hessian $\partial_{yy}g(x,y)$ is constant w.r.t. $y$, the Lipschitz constant $L$ has an improved dependence on $\mu_g$: $L=O(\mu_g^{-2})$ instead of $L=O(\mu_g^{-3})$ in the general case where $\partial_{yy}g(x,y)$ is only Lipschitz in $y$. This allows them to express $\kappa_{\mathcal{L}} = \frac{L}{\mu} = O(\mu_g^{-2}\mu^{-1})$ and to report the following complexities in terms of $\mu$ and $\mu_g$:
\begin{align}
	\mathcal{C}(\epsilon) = O\parens{\mu^{-\frac{1}{2}}\mu_g^{-\frac{3}{2}}\parens{\log\frac{1}{\epsilon}}^2}.
\end{align}
Note that, in the general case where $L=O(\mu_g^{-3})$, the complexity as a function of $\mu$ and $\mu_g$ becomes $O\parens{\mu^{-\frac{1}{2}}\mu_g^{-2}\parens{\log\frac{1}{\epsilon}}^2}$, while still maintaining the same expression in terms of $\kappa_{\mathcal{L}}$ and $\kappa_g$. Hence, using the expression in terms of conditioning allows a more general expression for the complexity that is less sensitive to the specific assumptions on $g$ and is therefore more suitable for comparaison with other results in the literature.

\subsection{Choice of the inner-level algorithms $\mathcal{A}_k$ and $\mathcal{B}_k$.}
The choice of $\mathcal{A}_k$ and $\mathcal{B}_k$ has an impact on the total complexity of the algorithm. 
We discuss two choices for $\mathcal{A}_k$ and $\mathcal{B}_k$ which improve the total complexity  of \ASID{}: Accelerated algorithms (in \cref{sec:acceleration_inner}) and variance reduced algorithms (in \cref{sec:variance_reduced}).

\subsubsection{ Acceleration of the inner-level for \ASID{}}\label{sec:acceleration_inner}
\ASID{} could benefit from acceleration in the inner-loop by using standard acceleration schemes \cite{Nesterov:2003}  for $\mathcal{A}_k$ and $\mathcal{B}_k$. As a consequence, and using analysis of accelerated algorithms \citep{Nesterov:2003} in the deterministic setting, the error of the inner-level iterates would satisfy:
\begin{align}
	\mathbb{E}\brackets{\Verts{y_k-y^{\star}(x_k)}^2}\leq \tilde{\Lambda}_kE_k^y ,\qquad \mathbb{E}\brackets{\Verts{y_k-y^{\star}(x_k)}^2}\leq \tilde{\Pi}_kE_k^z
\end{align} 
where $\tilde{\Lambda}_k$ and $\tilde{\Pi}_k$ are accelerated rates of the form $\tilde{\Lambda}_k=O((1-\sqrt{\kappa_{g}})^T)$ and $\tilde{\Pi}_k=O((1-\sqrt{\kappa_{g}})^N)$. The rest of the proofs are similar provided that $\Lambda_k$ and $\Pi_k$ are replaced by their accelerated rates  $\tilde{\Lambda}_k$ and $\tilde{\Pi}_k$. This implies that $T$ and $N$ need to be only of order $T=O(\sqrt{\kappa_g})$ and $N=O(\sqrt{\kappa_g})$ so that the final complexity becomes:
\begin{align}
	\mathcal{C}(\epsilon) := O\parens{\kappa_{\mathcal{L}}\kappa_g^{1/2}\log\frac{1}{\epsilon}}.
\end{align}
Note that using conjugate gradient for $\mathcal{B}_k$ also enjoys an accelerated convergence rate \cite{Shewchuk:1994}. This is confirmed in our experiments of \cref{fig:main_figures} where \ASID-{CG} enjoys the fastest convergence. 

In order to further improve the dependence on $\kappa_{\mathcal{L}}$ to $\kappa_{\mathcal{L}}^{1/2}$, one would need to use an accelerated scheme when updating the iterates $x_k$. The analysis of such scheme along with warm-start would be an interesting direction for future work.
\subsubsection{ Variance reduced algorithms for $\mathcal{A}_k$ and $\mathcal{B}_k$}\label{sec:variance_reduced}
When the inner-level cost function $g$ is a finite average of  functions $g(x,y) = \frac{1}{n}\sum_{1\leq i\leq n}g_i(x,y)$ empirical average, it is possible to use variance reduced algorithms such as SAG \citep{Schmidt:2017}. If every function $g_i$ is $L_g$-smooth,  then by \cite[Proposition 1]{Schmidt:2017},  the inner level error becomes:
\begin{align}
		\mathbb{E}\brackets{\Verts{y_k-y^{\star}(x_k)}^2}\lesssim \tilde{\Lambda}_k(3E_k^y + \frac{9}{4}L_g^{-2}\sigma_g^2),
\end{align}
with $\tilde{\Lambda}_k = \parens{1-\frac{\kappa_g}{8n}}^T$.
This has the advantage that the error due to the variance decays  exponentially with the number of iterations $T$. As a consequence, the dependence of the effective variance $\mathcal{W}^2$ on the conditioning numbers $\kappa_{\mathcal{L}}$ and $\kappa_g$ can be improved to:
\begin{align}
	\mathcal{W}^2 = &  O\parens{ \verts{\mathcal{D}_g}^{-1}\tilde{\sigma}_g^2 + \kappa_g^3\verts{\mathcal{D}_{g_{yy}}}^{-1}\tilde{\sigma}_{g_{yy}}^2 + \kappa_g^2\verts{\mathcal{D}_{g_{xy}}}^{-1}\tilde{\sigma}_{g_{xy}}^2  + \kappa_g^2\verts{\mathcal{D}_f}^{-1}\tilde{\sigma}_f^2}.
\end{align}
This can be achieved by taking $T=O(n\kappa_g)$ up to a logarithmic dependence on the condition numbers. As a consequence, the complexity in the strongly convex stochastic setting becomes: 
\begin{align}
\mathcal{C}(\epsilon)= O\parens{\kappa_{\mathcal{L}}\parens{n\tilde{\sigma}_g^2 + \kappa_g \parens{1\vee \epsilon\kappa_g}\tilde{\sigma}_{g_{yy}}^2 + \tilde{\sigma}_{g_{xy}}^2 + \tilde{\sigma}_f^2}\frac{1}{\epsilon}\log\parens{\frac{E_{0}^{tot} + \mathbb{E}\brackets{\mathcal{L}(x_0)-\mathcal{L}^{\star}}}{\epsilon}}}.
\end{align} 
In the non-convex setting, the complexity becomes:
\begin{align}
\mathcal{C}(\epsilon)= O\parens{ \frac{\kappa_g^4}{\epsilon^2}\parens{n\tilde{\sigma}_g^2 + \kappa_g^3 \parens{1\vee \epsilon\mu_g^2}\tilde{\sigma}_{g_{yy}}^2 + \kappa_g\tilde{\sigma}_{g_{xy}}^2+ \kappa_g\tilde{\sigma}_f^2}\parens{\mathbb{E}\brackets{\mathcal{L}(x_k)-\mathcal{L}^{\star}} + E_0^y + E_{0}^z}  }.
\end{align}
The downside of this approach is the dependence on the number $n$ of functions $g_i$ in the total complexity.  

\section{Preliminary results}\label{sec:proof_preliminary_res}
\subsection{Expression of the gradient}\label{sec:expression_grad}
We provide a proof of \cref{prop:expression_hypergrad} which shows that $\mathcal{L}$ is differentiable and provides an expression of its gradient.
\begin{proof}
	\cref{assump:lsmooth_g_strongly_convex_g} ensures that $y\mapsto g(x,y)$ admits a unique minimizer $y^{\star}(x)$ defined as the unique solution to the implicit equation $\partial_y g(x,y^{\star}(x)) = 0$. Moreover, since $g$ is twice continuously differentiable and  strongly convex, it follows that $\partial_{yy}g(x,y^{\star}(x))$ is invertible for any $x\in \x$. Therefore the implicit function theorem \cite[Theorem 5.9]{Lang:2012}, ensures that $x\mapsto y^{\star}(x)$ is continuously differentiable with Jacobian given by $\nabla y^{\star}(x) = -\partial_{xy}g(x,y^{\star}(x))\partial_{yy}g(x,y^{\star}(x))^{-1}$. Hence, by composition of differentiable functions, $\mathcal{L}$ is also differentiable with gradient given by:
	\begin{align}
		\nabla\mathcal{L}(x) = \partial_x f(x,y^{\star}(x))-\partial_{xy}g(x,y^{\star}(x))\partial_{yy}g(x,y^{\star}(x))^{-1}\partial_y f(x,y^{\star}(x)). 
	\end{align} 
We can thus define $z^{\star}(x,y) = -\partial_{yy}g(x,y)^{-1}\partial_y f(x,y)$ to get the desired expression for $\nabla\mathcal{L}(x)$ and note that $z^{\star}$ is the solution to \cref{eq:linear_b}. 
\end{proof} 
\subsection{Smoothness properties of $\mathcal{L}$, $\y^{\star}$, $z^{\star}$ and $\Psi$}\label{sec:smoothness}
\begin{proof}[Proof of \cref{prop:smoothness}]
{\bf Lipschitz continuity of $x\mapsto y^{\star}(x)$.}
By \cref{assump:smooth_jac,assump:lsmooth_g_strongly_convex_g}, the implicit function theorem \cite[Theorem 5.9]{Lang:2012} ensures $y^{\star}(x)$ is differentiable with Jacobian given by:
\begin{align}
	\nabla y^{\star}(x) = -\partial_{xy}g(x,y^{\star}(x))\parens{\partial_{yy} g(x,y^{\star}(x))}^{-1}.
\end{align}
Moreover, by \cref{assump:smooth_jac}, we know that $\partial_{y}g(x,y)$ is $L_g'$-Lipchitz in $x$ for any $y\in \y$, hence, $\Verts{\partial_{xy}g(x,y^{\star}(x))}_{op}$ is upper-bounded by $L_g'$. Moreover, by \cref{assump:lsmooth_g_strongly_convex_g}, $g$ is $\mu_g$-strongly convex in $y$ uniformly on $\x$. Therefore, it holds that $\Verts{\partial_{yy} g(x,y^{\star}(x))^{-1}}_{op}\leq \mu_g^{-1}$. This allows to deduce that $\Verts{\nabla y^{\star}(x)}_{op}\leq \mu_g^{-1}L_g'$, and by application of the fundamental theorem of calculus that:
\begin{align}
		\Verts{y^{\star}(x)-y^{\star}(x')}\leq  \mu_g^{-1}L_g'\Verts{x-x'}.
\end{align}
This shows that $y^{\star}$ is $L_y$-Lipschitz continuous with $L_y := \mu_g^{-1}L_g'$. 

{\bf Lipchitz continuity of $x\mapsto z^{\star}(x,y)$.}
Let $(x,y)$ and $(x',y')$ be two points in $\x\times \y$. Recalling the definition of $z^{\star}(x,y)$ in \cref{prop:expression_hypergrad}, it is easy to see that $z^{\star}(x,y)$ admits the following expression: 
\begin{align}\label{eq:expression_z}
	z^{\star}(x,y) = -\partial_{yy}g(x,y)^{-1}\partial_y f(x,y). 
\end{align}
Recalling the expression of $z^{\star}(x,y)$, the following holds:
\begin{align}
	z^{\star}(x,y)-z^{\star}(x',y') =&
	\partial_{yy}g(x',y')^{-1}\partial_y f(x',y')-\partial_{yy}g(x,y)^{-1}\partial_y f(x,y)\\
	=&\parens{\partial_{yy}g(x',y')^{-1}- \partial_{yy}g(x,y)^{-1}}\partial_y f(x',y') \\
	&+ \partial_{yy}g(x,y)^{-1}\parens{\partial_y f(x',y')-\partial_y f(x,y)}\\
	=& \partial_{yy}g(x',y')^{-1}\parens{\partial_{yy}g(x,y)- \partial_{yy}g(x',y')}\partial_{yy}g(x,y)^{-1}\partial_y f(x',y') \\
	&+ \partial_{yy}g(x,y)^{-1}\parens{\partial_y f(x',y')-\partial_y f(x,y)}
\end{align}
Hence, by taking the norm of the above quantity a triangular inequality followed by operator inequalities, it follows that:
\begin{align}
	\Verts{z^{\star}(x,y)-z^{\star}(x',y') } \leq & 
	\Verts{H_2^{-1}}_{op}\Verts{H_1-H_2}_{op}\Verts{H_1^{-1}}_{op}\Verts{\partial_y f(x',y')}\\
	 &+ \Verts{H_1^{-1}}\Verts{\partial_y f(x',y')-\partial_y f(x,y)}.
\end{align}
where we introduced $H_1 = \partial_{yy}g(x,y)$ and $H_2 = \partial_{yy}g(x',y')$ for conciseness.
Using \cref{assump:lsmooth_g_strongly_convex_g}, we can upper-bound  $\Verts{H_1^{-1}}_{op}$ and $\Verts{H_2^{-1}}_{op}$ by $\mu_g^{-1}$. By \cref{assump:smooth_jac}, we know that $\Verts{H_1-H_2}_{op}\leq M_g\Verts{(x,y)-(x',y')}$. Finally by \cref{assump:lsmooth_fg}, we also have that $\Verts{\partial_y f(x',y')-\partial_y f(x,y)}\leq L_f\Verts{(x,y)-(x',y')}$ and that $\Verts{\partial_y f(x',y')}\leq B$ ensuring that:
\begin{align}
	\Verts{z^{\star}(x,y)-z^{\star}(x',y') }\leq \parens{\mu_g^{-2}M_gB + \mu_g^{-1}L_f}\Verts{(x,y)-(x',y')}.
\end{align}
Hence, we conclude that $z^{\star}$ is $L_z$-Lipchitz continuous with $L_z$ defined as in \cref{eq:smoothness_constants}.

{\bf boundedness of $z^{\star}(x,y)$}
Recalling the expression of $z^{\star}$ in \cref{eq:expression_z}, it is easy to see that  $\Verts{z^{\star}(x,y)}$ is upper-bounded by $\mu_g^{-1}B$ since $\partial_{yy}g(x,y)$ is $\mu_g$-strongly convex in $y$ by \cref{assump:lsmooth_g_strongly_convex_g} and $\partial_y f(x,y)$ is bounded by $B$ by \cref{assump:lsmooth_fg}.

{\bf Regularity of $\Psi$.}
\begin{align}
	\Psi(x,y,z) -\Psi(x',y',z') =&
	\partial_x f(x,y)-\partial_x f(x',y') + \partial_{xy}g(x,y)z-
	\partial_{xy}g(x',y')z'\\
	=& \partial_x f(x,y)-\partial_x f(x',y') + \partial_{xy}g(x,y)\parens{z-z'} \\
	&+
	\parens{\partial_{xy}g(x,y)-\partial_{xy}g(x',y')}z'.	
\end{align}
By taking the norm of the above expression and applying a triangular inequality followed by operator inequalities, it follows that:
\begin{align}\label{eq:ineq_Psi}
	\Verts{\Psi(x,y,z) -\Psi(x',y',z')} \leq & \Verts{\partial_x f(x,y)-\partial_x f(x',y')}+\Verts{\partial_{xy}g(x,y)}_{op}\Verts{z-z'}\\
	&+ \Verts{\partial_{xy}g(x,y)-\partial_{xy}g(x',y')}_{op}\Verts{z'}\\
	\leq & L_f\parens{\Verts{x-x'} + \Verts{y-y'}} + L_g'\Verts{z-z'}\\
	&+ M_g \Verts{z'}\parens{\Verts{x-x'} + \Verts{y-y'}}.
\end{align} 
To get the first term of the last inequality above, we used that $\partial_x f$ is $L_f$-Lipschitz by  \cref{assump:lsmooth_fg}. To get the second term, we used that $\partial_{xy}g(x,y)$ is bounded since $\partial_y g(x,y)$ is $L_g'$-Lipschitz by  \cref{assump:smooth_jac}. Finally, for the last term, we used that $\partial_{xy}g(x,y)$ is $M_g$-Lipschitz by  \cref{assump:smooth_jac}. 

By choosing $x'=x$, $y'=y^{\star}(x)$ and $z'=z^{\star}(x,y^{\star}(x))$, it is easy to see from \cref{prop:expression_hypergrad} that $\Psi(x,y^{\star}(x),z^{\star}(x,y^{\star}(x)))=\nabla\mathcal{L}(x)$. Hence, applying the above inequality yields:
\begin{align}
	\Verts{\Psi(x,y,z) - \nabla \mathcal{L}(x)} \leq &
	\parens{L_f+M_g \Verts{z^{\star}(x,y^{\star}(x))}}\Verts{y-y^{\star}(x)} + L_g'\Verts{z-z^{\star}(x,y^{\star}(x))}\\
	\leq & 
	\parens{L_f+M_g \Verts{z^{\star}(x,y^{\star}(x))}}\Verts{y-y^{\star}(x)} + L_g'\Verts{z-z^{\star}(x,y)} \\
	&+ L_g'\Verts{z^{\star}(x,y)-z^{\star}(x,y^{\star}(x))}
\end{align} 
As shown earlier, $\Verts{z^{\star}(x,y^{\star}(x))}$ is upper-bounded by $\mu_g^{-1}B$, while $\Verts{z^{\star}(x,y)-z^{\star}(x,y^{\star}(x))}$ is bounded by $L_z\Verts{y-y^{\star}(x)}$. This allows to conclude that $\Verts{\Psi(x,y,z) - \nabla \mathcal{L}(x)} \leq  L_{\psi} $ with $L_{\psi}$ defined in \cref{eq:smoothness_constants}.

{\bf Lipschitz continuity of $x\mapsto \nabla\mathcal{L}(x)$.}
We apply \cref{eq:ineq_Psi} with $(y,z)= (y^{\star}(x),z^{\star}(x,y^{\star}(x)))$ and $(y',z')= (y^{\star}(x'),z^{\star}(x,y^{\star}(x')))$ which yields:
\begin{align}
\Verts{\nabla\mathcal{L}(x) - \nabla\mathcal{L}(x')}	\leq & \parens{L_f + M_g\Verts{z^{\star}(x',y^{\star}(x'))} }\parens{\Verts{x-x'} + \Verts{y^{\star}(x)- y^{\star}(x')}}\\
	&+ L_g'\Verts{z^{\star}(x,y^{\star}(x))-z^{\star}(x',y^{\star}(x'))}\\
	\leq &
	\parens{L_f + M_g\mu_{g}^{-1}B + L_g'L_z}\parens{1 + L_y}\Verts{x-x'},
\end{align} 
where we used that $\Verts{z^{\star}(x',y^{\star}(x'))}$ is upper-bounded by $\mu_g^{-1}B$, $z^{\star}$ is $L_z$-Lipschitz and $y^{\star}$ is $L_y$-Lipschitz. 
Hence, $\nabla\mathcal{L}$ is $L$-Lipschitz continuous, with $L$ as given by \cref{eq:smoothness_constants}.
\end{proof}
\subsection{Convergence of the iterates of algorithms $\mathcal{A}_k$ and $\mathcal{B}_k$}\label{sec:proof_convergence_inner_iterates}
\begin{proof}
	{\bf Controlling the iterates $y^t$ of $\mathcal{A}_k$.}
	
	Consider a new batch $\mathcal{D}_{g}$ of samples $\xi$. We have by definition of the update equation of $y^{t}$ that:
	\begin{align}
		\Verts{y^t-y^{\star}(x_k)}^2 
		=& \Verts{y^{t-1}-y^{\star}(x_k)}^2+\alpha_k^2\Verts{\partial_y \hat{g}\parens{x_k,y^{t-1},\mathcal{D}_{g} } }^2\\
		& -  2\alpha_k \partial_y \hat{g}\parens{x_k,y^{t-1},\mathcal{D}_{g} }^{\top}\parens{y^{t-1}-y^{\star}(x_k)} 
	\end{align}
	Taking the expectation conditionally on $x_k$ and $y^{t-1}$,  we get:
	\begin{align}
		\mathbb{E}\brackets{\Verts{y^t-y^{\star}(x_k)}^2 \middle | x_k,y^{t-1}} 
		= &
		(1-\alpha_k \mu_g)\Verts{y^{t-1}-y^{\star}(x_k)}^2 \\ 
		&+\alpha_k^2\mathbb{E}\brakets{\Verts{\partial_y \hat{g}\parens{x_k,y^{t-1},\mathcal{D}_{g} }-\partial_y g(x_k,y^{t-1}) }^2\middle | x_k,y^{t-1} }\\
		 & -  2\alpha_k \partial_y g\parens{x_k,y^{t-1} }^{\top}\parens{y^{t-1}-y^{\star}(x_k)- \frac{\alpha_k}{2}  \partial_y g(x_k,y^{t-1})}\\ 
		\leq &
		(1-\alpha_k\mu_g)\Verts{y^{t-1}-y^{\star}(x_k)}^2 + 2\alpha_k^2\sigma_g^{2}
	\end{align}
The first line uses that $\partial_y \hat{g}\parens{x_k,y^{t-1},\mathcal{D}_{g}}$ is an unbiased estimator of $\partial_y g(x_k,y^{t-1})$. For the second line, we use \cref{assump:bounded_var_grad_f} which allows to upper-bound the variance of $\partial_y \hat{g}$ by $\sigma_g^{2}$. Moreover, since $g$ is convex and $L_g$-smooth and since $\alpha_k\leq L_g^{-1}$, it follows that the last term in the above inequality is non-positive and can thus be upper-bounded by $0$. By unrolling the resulting inequality recursively for $1<t\leq k$, we obtain the desired result.

{\bf Controlling the iterates $z^n$ of $\mathcal{B}_k$.} 
The poof follows by direct application of \cref{prop:error_linear_sto} with $\beta=\beta_k$ and the following choices for $A_n$, $A$, $\hat{b}$, $b$:
\begin{align}
	A_n =&  \partial_{yy} \hat{g}(x_k,y_k,\mathcal{D}_{g_{yy}}),\qquad &\hat{b}=& \partial_y \hat{f}(x_k,y_k,\mathcal{D}_{f})\\
	A =&  \partial_{yy} g(x_k,y_k)\qquad  &b =& \partial_y f(x_k,y_k).
\end{align}
This directly yields the following inequalities:
\begin{align}
	\mathbb{E}\brackets{\Verts{z_k-z^{\star}(x_k,y_k)}^2}\leq & \tilde{\Pi}_k\mathbb{E}\brackets{\Verts{z_{k}^0-z^{\star}(x_k,y_k)}^2} + \tilde{R}_k^z,\\ 
	\mathbb{E}\brackets{\Verts{\bar{z}_k-z^{\star}(x_k,y_k)}^2}\leq  &\tilde{\Pi}_k\mathbb{E}\brackets{\Verts{z_{k}^0-z^{\star}(x_k,y_k)}^2}.
\end{align}
where $\tilde{\Pi}_k$ and $\tilde{R}_k^z$ are given by:
\begin{align}
\tilde{\Pi}_k := (1-\beta_k\mu_g)^N,\qquad \tilde{R}_k^z := \beta_k^2\parens{\sigma_g^2\mathbb{E}\brackets{\Verts{z^{\star}(x_k,y_k)}^2} + 3\parens{N\wedge\frac{1}{\beta_k\mu_g}}\sigma_f^2 }\parens{N\wedge\frac{1}{\beta_k\mu_g}}.
\end{align}
First we have that $\tilde{\Pi}_k\leq \Pi_k$. Moreover, 
\cref{prop:smoothness}, we have that $\Verts{z^{\star}(x_k,y_k)} \leq B\mu_g^{-1}$  hence, $\tilde{R}_k^z\leq R_k^z$ thus yielding the desired inequalities. Finally \cref{eq:convergence_b_bar_b} also follows similarly using \cref{eq:error_z_bar_z} from \cref{prop:error_linear_sto}.
\end{proof}

\subsection{Controlling the bias and variance $E_k^{\psi}$ and $V_k^{\psi}$}\label{sec:proof_bias_variance}

\begin{proof}[Proof of \cref{prop:error_inexact_grad} ] 
Recall that the expressions of  $E_k^{\psi}$ and $V_k^{\psi}$ in \cref{eq:bias_variance} involves the conditional expectation $\mathbb{E}_k\brackets{\hat{\psi}_k}$ knowing $x_k$, $y_k$ and $z_{k-1}$. This can be also expressed using $\Psi$ as follows:
\begin{align}
	\mathbb{E}_k\brackets{\hat{\psi}_k} = & \mathbb{E}\brackets{ \mathbb{E}\brackets{\hat{\psi}_k|x_k,y_k,z_k} |x_k,y_k,z_{k-1}}\\
	=& \mathbb{E}\brackets{ \Psi(x_k,y_k,z_k) |x_k,y_k,z_{k-1}}\\
	=& \mathbb{E}\brackets{ \Psi(x_k,y_k,\mathbb{E}\brackets{z_k||x_k,y_k,z_{k-1}}) }
\end{align} 
where we used the tower property for conditional expectations in the first line, then the fact that the expectation of $\psi_k$ conditionally on $x_k$, $y_k$ and $z_k$ is simply $\Psi(x_k,y_k,z_k)$. Finally, for the last line, we use the independence of the noise and the linearity of $\Psi$ w.r.t. the last variable. In all what follows, we write $\bar{z}_k = \mathbb{E}\brackets{z_k|x_k,y_k,z_{k-1}}$ which is the same object as defined in \cref{prop:error_yz_extended}. We then treat $E_k^{\psi}$ and $V_k^{\psi}$ separately.
 
{\bf Bounding $E_{k}^{\psi}$.}  Using \cref{prop:error_yz_extended,prop:smoothness} we directly get the desired inequality:
\begin{align}
	E_{k}^{\psi}\leq & 
	2L_{\psi}^2 \parens{ \mathbb{E}\brakets{\Verts{y_k-y^{\star}(x_{k})}^2} +  \mathbb{E}\brakets{\Verts{\bar{z}_k-z^{\star}(x_k,y_k) }^2}  }  \\
	\leq &
	2L_{\psi}^2\parens{  \Lambda_{k} E_{k}^y + \Pi_{k}E_{k}^z + R_{k}^y }
\end{align}
{\bf Bound on $V_k^{\psi}$.} We decompose $V_k^{\psi}$ into a sum of three terms $W_k$, $W_k'$ and $W_k''$ given by:
\begin{gather}
	W_k := \mathbb{E}\brakets{\Verts{ \partial_x \hat{f}(x_k,y_k,\mathcal{D}_{f})-\partial_x f(x_k,y_k)  }^2}
,\\ W_k' := \mathbb{E}\brackets{ \Verts{\partial_{xy} \hat{g}\parens{x_k,y_k,\tilde{\xi}_{N+1,k}}z_k -  \partial_{xy} g\parens{x_k,y_k}\bar{z}_k }^2}\\
W_k'': = \mathbb{E}\brackets{\parens{\partial_{x} \hat{f}(x_k,y_k,\mathcal{D}_{f})-\partial_{x} f(x_k,y_k)}^{\top}\partial_{xy}g\parens{x_k,y_k}\parens{z_k -\bar{z}_k}}.
\end{gather}
where we used that $\tilde{\xi}_{N+1,k}$ is independent from $z_k$ and $\mathcal{D}_{f}$ to get the last term. Hence, using \cref{assump:bounded_var_grad_f} to bound the first term of the above relation, we get $V_k^{\psi}\leq \sigma_f^2 + W_k' +2 W_k''$. Thus, it remains to control each of $W_k'$ and $W_k''$.
 
{\bf Bound on $W_k''$.} Using that $\mathcal{D}_{f}$ is independent from $\tilde{\xi}_{n,k}$, we can apply \cref{prop:bias_linear_sto} to write:
\begin{align}
	W_k'' = \beta_k\mathbb{E}\brackets{\partial_{x} (\hat{f}-f)(x_k,y_k,\mathcal{D}_{f})^{\top}\partial_{xy}g\parens{x_k,y_k}\parens{\sum_{t=1}^{N} \parens{I-\beta_k A}^{N-t} }\partial_{y} (\hat{f}-f)(x_k,y_k,\mathcal{D}_{f})}
\end{align}
where we used the simplifying notion $(\hat{f}-f)(x_k,y_k,\mathcal{D}_{f}) = \hat{f}(x_k,y_k,\mathcal{D}_{f})-f(x_k,y_k)$. Using \cref{assump:smooth_jac} to bound $\Verts{\partial_{xy}g\parens{x_k,y_k}}_{op}$ by $L_g'$, \cref{assump:lsmooth_g_strongly_convex_g} to  upper-bound $\Verts{\parens{\sum_{t=1}^{N} \parens{I-\beta_k A}^{N-t} }}_{op}$ by $\parens{\sum_{t=1}^{N} \parens{1-\beta_k \mu_g}^{N-t} }$ we get
\begin{subequations}
\begin{align}
	\verts{W_k''}\leq & \beta_k L_g'\sum_{t=0}^{N-1}(1-\beta_k\mu_g)^{t}\mathbb{E}\brackets{\verts{\partial_{x} (\hat{f}-f)(x_k,y_k,\mathcal{D}_{f})^{\top} \partial_{y} (\hat{f}-f)(x_k,y_k,\mathcal{D}_{f})}}\\
	 \leq & L_g'\mu_g^{-1}\mathbb{E}\brackets{\verts{\partial_{x} (\hat{f}-f)(x_k,y_k,\mathcal{D}_{f})^{\top} \partial_{y} (\hat{f}-f)(x_k,y_k,\mathcal{D}_{f})}}\\
	 \leq & L_g'\mu_g^{-1} \mathbb{E}\brackets{\Verts{\partial_{x} (\hat{f}-f)(x_k,y_k,\mathcal{D}_{f})^{2}}}^{\frac{1}{2}} \mathbb{E}\brackets{\Verts{\partial_{y} (\hat{f}-f)(x_k,y_k,\mathcal{D}_{f})}^2}^{\frac{1}{2}}\leq &  L_g'\mu_g^{-1} \sigma_f^2\label{eq:inequ_W_k_2}
\end{align}
\end{subequations}
where we used that $\sum_{t=0}^{N-1} (1-\beta_k \mu_g)^\leq \frac{1}{\beta_k\mu_g}$ for the second line,  Cauchy-Schwarz inequality to get the third line and \cref{assump:bounded_var_grad_f} to get the last line.

{\bf Bound on $W_k'$} Using that  $\tilde{\xi}_{N+1,k}$ is independent from $z_k$, we write: 
\begin{subequations}
\begin{align}
	W_k' =& \mathbb{E}\brackets{ \Verts{\partial_{xy} (\hat{g}-g)\parens{x_k,y_k,\tilde{\xi}_{N+1,k}}z_k  }^2} + \mathbb{E}\brackets{ \Verts{ \partial_{xy} g\parens{x_k,y_k}\parens{z_k-\bar{z}_k }}^2}\\
	\numrel{\leq}{{a_sigma_z}} &
	\sigma_{g_{xy}}^2 \mathbb{E}\brackets{\Verts{z_k}^2 } + (L_g')^2 \mathbb{E}\brackets{\Verts{z_k-\bar{z}_k}^2}\\
	\numrel{\leq}{{b_sigma_z}} &
	2\sigma_{g_{xy}}^2 \parens{\mathbb{E}\brackets{\Verts{z_k-z^{\star}(x_k,y_k)}^2 }+ \mathbb{E}\brackets{\Verts{z^{\star}(x_k,y_k)}^2 }} +(L_g')^2 \mathbb{E}\brackets{\Verts{z_k-\bar{z}_k}^2}\\
	\numrel{\leq}{{c_sigma_z}} & 
	2\sigma_{g_{xy}}^2 \mathbb{E}\brackets{\Verts{z_k-z^{\star}(x_k,y_k)}^2 } +(L_g')^2 \mathbb{E}\brackets{\Verts{z_k-\bar{z}_k}^2} + 2\sigma_{g_{xy}}^2B^2\mu_g^{-2}\\
	\numrel{\leq}{{d_sigma_z}} &
	2\sigma_{g_{xy}}^2\parens{\Pi_k\mathbb{E}\brackets{\Verts{z_k^0-z^{\star}(x_k,y_k)}^2 + R_k^z}}\\
	&+ (L_g')^2\parens{4\sigma_{g_{yy}}^2\mu_g^{-2}\Pi_k\mathbb{E}\brakets{\Verts{z_k^0- z^{\star}(x_k,y_k) }^2} + 2R_{k}^z}
	+2\sigma_{g_{xy}}^2B^2\mu_g^{-2}\\
	\leq &
	2\parens{\sigma_{g_{xy}}^2 + 2(L_g')^2\mu_g^{-2}\sigma_{g_{yy}}^2}\Pi_k\mathbb{E}\brakets{\Verts{z_k^0- z^{\star}(x_k,y_k) }^2}\\
	&+2\parens{\sigma_{g_{xy}}^2 + (L_g')^2}R_k^z 
	 + 2\sigma_{g_{xy}}^2B^2\mu_g^{-2} \label{eq:inequ_W_k_1}
\end{align}
\end{subequations}
(\ref{a_sigma_z}) follows from \cref{assump:bounded_var_hess_g,assump:smooth_jac}, (\ref{b_sigma_z}) uses  that $\Verts{z_k}^2\leq 2\parens{\Verts{z_k-z}^2 + \Verts{z}^2}$, (\ref{c_sigma_z})  uses that  $ \Verts{z^{\star}(x_k,y_k)}\leq  B\mu_g^{-1}$ by \cref{prop:smoothness}. Finally (\ref{d_sigma_z}) follows by application of \cref{prop:error_yz_extended}. We further have by definition of $R_k^z$ that:
\begin{align}\label{eq:bound_R_k_z_var}
	R_k^z \leq B^2L_g^{-1}\mu_{g}^{-3}\sigma_{g_{yy}}^2  + 3\mu_g^{-2} \sigma_f^2
\end{align}
Combining the inequalities on $W_k'$, $W_k''$ and \cref{eq:bound_R_k_z_var}, we get that $V_k^{\psi}{\leq} w_x^2  {+} \sigma_x^2\Pi_k E_k^z$, with $w_x^2$ and $\sigma_x^2$ given by \cref{eq:variance_x}.
\end{proof}

\section{General analysis of \ASID}\label{sec:proof_general_analysis}
\subsection{Analysis of the outer-loop}\label{sec:proof_outer_loop}
\begin{proof}[Proof of \cref{prop:error_outer_convex}]
We treat both cases $\mu\geq 0$ and $\mu<0$ separately. For simplicity we denote by $\mathbb{E}_k$ the conditional expectation knowing the iterates $x_k$, $y_k$ and $z_{k-1}$ and  write $\psi_k = \mathbb{E}_k\brackets{\hat{\psi}_k}$. 
 
{\bf Case $\mu \geq 0$.} Recall that $E_k^x$ is given by:
\begin{align}
	E_k^x = \frac{\eta_k}{2}\mathbb{E}\brakets{\Verts{x_k - x^{\star}}^2}  + (1-u)\mathbb{E}\brakets{\mathcal{L}(x_k)-\mathcal{L}^{\star}}.
\end{align}
For simplicity define $\epsilon_k = u\delta_k +(1-u)$,  $e_k = (1-u)\parens{\mathcal{L}(x_k)- \mathcal{L}^{\star}} + \frac{\eta_k}{2}\Verts{x_k-x^{\star}}^2$ and $e'_k = u\delta_k\parens{\mathcal{L}(x_k)-\mathcal{L}^{\star}}$. It is then easy to see that $\mathbb{E}\brackets{e_k}$ is equal to the l.h.s of \cref{eq:error_outer_convex}, i.e. $\mathbb{E}\brackets{e_k} = E_k^x$. We will start by bounding the difference between two successive iterates of $e_k$:
\begin{align}
		e_k'+e_k-e_{k-1} 
		\leq  & u\delta_k\parens{\mathcal{L}(x_k)-\mathcal{L}^{\star}}+ (1-u)\parens{\mathcal{L}(x_k)-\mathcal{L}(x_{k-1})} \\
		&+ \frac{\eta_k}{2}\Verts{x_k-x^{\star}}^2-\frac{\eta_{k-1}}{2}\Verts{x_{k-1}-x^{\star}}^2\\
		\numrel{\leq}{{b_ek}} &
		u\delta_k\parens{\mathcal{L}(x_k)-\mathcal{L}^{\star}}+ (1-u)\parens{\mathcal{L}(x_k)-\mathcal{L}(x_{k-1})}\\
		&- \frac{\delta_k\eta_{k-1}}{2}\Verts{x_{k-1}-x^{\star}}^2 + \delta_k\parens{ \frac{\mu}{2}\Verts{x_{k-1}-x^{\star}}^2-  \nabla\mathcal{L}(x_{k-1})^{\top}\parens{x_{k-1}-x^{\star}} }\\
		&+\frac{\eta_k}{2}\parens{ \gamma_k^2\Verts{\hat{\psi}_{k-1}}^2-2\gamma_k \parens{\hat{\psi}_{k-1}-\nabla\mathcal{L}(x_{k-1})}^{\top}\parens{x_{k-1}-x^{\star}}  }\\
		\numrel{\leq}{{c_ek}} &
				u\delta_k\parens{\mathcal{L}(x_k)-\mathcal{L}^{\star}}+ (1-u)\parens{\mathcal{L}(x_k)-\mathcal{L}(x_{k-1})} \\
		&- \frac{\delta_k\eta_{k-1}}{2}\Verts{x_{k-1}-x^{\star}}^2  - \delta_k\parens{\mathcal{L}(x_{k-1})-\mathcal{L}^{\star}} \\
		&+\frac{\eta_k}{2}\parens{ \gamma_k^2\Verts{\hat{\psi}_{k-1}}^2-2\gamma_k \parens{\hat{\psi}_{k-1}-\nabla\mathcal{L}(x_{k-1})}^{\top}\parens{x_{k-1}-x^{\star}}  }\\
		\leq &
		 \parens{u\delta_k +(1-u)}\parens{\mathcal{L}(x_k)-\mathcal{L}(x_{k-1})} \\
		 & - \frac{\delta_k\eta_{k-1}}{2}\Verts{x_{k-1}-x^{\star}}^2 - \delta_k(1-u)\parens{\mathcal{L}(x_{k-1})-\mathcal{L}^{\star}}  \\
		 &+\frac{\eta_k}{2}\parens{ \gamma_k^2\Verts{\hat{\psi}_{k-1}}^2-2\gamma_k \parens{\hat{\psi}_{k-1}-\nabla\mathcal{L}(x_{k-1})}^{\top}\parens{x_{k-1}-x^{\star}}  }\\
	 	\leq &
				\epsilon_k\parens{\mathcal{L}(x_k)-\mathcal{L}(x_{k-1})} -\delta_k e_{k-1} \\
		&+\frac{\eta_k}{2}\parens{ \gamma_k^2\Verts{\hat{\psi}_{k-1}}^2-2\gamma_k \parens{\hat{\psi}_{k-1}-\nabla\mathcal{L}(x_{k-1})}^{\top}\parens{x_{k-1}-x^{\star}}  }\\
		\numrel{\leq}{{e_ek}} &
		-\delta_k e_{k-1} +  \epsilon_k\nabla\mathcal{L}(x_{k-1})^{\top}\parens{x_{k}-x_{k-1}} + \frac{\epsilon_k L}{2}\Verts{x_{k}-x_{k-1}}^2 \\
		&+\frac{\eta_k}{2}\parens{ \gamma_k^2\Verts{\hat{\psi}_{k-1}}^2-2\gamma_k \parens{\hat{\psi}_{k-1}-\nabla\mathcal{L}(x_{k-1})}^{\top}\parens{x_{k-1}-x^{\star}}  }\\
		= &
		- \delta_k e_{k-1} -\gamma_k\epsilon_k \nabla\mathcal{L}(x_{k-1})^{\top}\hat{\psi}_{k-1} + \frac{\epsilon_k\gamma_k^2L}{2}\Verts{\hat{\psi}_{k-1}}^2\\
		&+\frac{\eta_k}{2}\parens{ \gamma_k^2\Verts{\hat{\psi}_{k-1}}^2-2\gamma_k \parens{\hat{\psi}_{k-1}-\nabla\mathcal{L}(x_{k-1})}^{\top}\parens{x_{k-1}-x^{\star}}  }.
	\end{align}
(\ref{b_ek}) follows from the update expression $x_k = x_{k-1}-\gamma_k \hat{\psi}_{k-1}$, (\ref{c_ek}) follows from the convexity of $\mathcal{L}$ and (\ref{e_ek}) follows by $L$-smoothness of $\mathcal{L}$.
Taking the expectation conditionally on the randomness at iteration $k-1$ and using that $\mathbb{E}_{k-1}\brakets{\hat{\psi}_{k-1}} = \psi_{k-1}$, we therefore get
\begin{align}
	\mathbb{E}_{k-1}\brakets{e_k'+e_k - e_{k-1}}\leq & -\delta_k e_{k-1}- \gamma_k\epsilon_k\nabla\mathcal{L}(x_{k-1})^{\top}\psi_{k-1} +\frac{\gamma_k}{2}\parens{\delta_k+\epsilon_k}\mathbb{E}_{k-1}\brakets{\Verts{\hat{\psi}_{k-1}}^2}\\
	&-\delta_k\parens{\psi_{k-1}- \nabla\mathcal{L}(x_{k-1}) }^{\top}\parens{x_{k-1}-x^{\star}}\\
	=&
	-\delta_k e_{k-1} +\gamma_k s_k\parens{ \mathbb{E}_{k-1}\brackets{\Verts{\hat{\psi}_{k-1}-\psi_{k-1}}}^2 + \Verts{\psi_{k-1}- \nabla \mathcal{L}(x_{k-1})}^2 } \\
	&-\delta_k\parens{\psi_{k-1}- \nabla \mathcal{L}(x_{k-1})}^{\top}\parens{x_{k-1}-\gamma_k\nabla\mathcal{L}(x_{k-1})-x^{\star}}\\
	&-\frac{\gamma_k}{2}\parens{\epsilon_k-\delta_k}\Verts{\nabla\mathcal{L}(x_{k-1})}^2\\
	 \numrel{\leq}{{e_Ek}}&
	-\delta_k e_{k-1} +\gamma_k s_k\parens{ \mathbb{E}_{k-1}\brackets{\Verts{\hat{\psi}_{k-1}-\psi_{k-1}}}^2 + \Verts{\psi_{k-1}- \nabla \mathcal{L}(x_{k-1})}^2 } \\
	&-\delta_k\parens{\psi_{k-1}- \nabla \mathcal{L}(x_{k-1})}^{\top} \parens{x_{k-1}-\gamma_k\nabla\mathcal{L}(x_{k-1})-x^{\star}}\\\end{align}
where (\ref{e_Ek}) follows from  $ \delta_k\leq \epsilon_k$ since by construction $\delta_k\leq 1$. Taking the expectation w.r.t. all the randomness and applying Cauchy-Schwarz inequality to the last term yields the following inequality:
\begin{align}\label{eq:intermediate_ineq_1}
u\delta_k\parens{\mathcal{L}(x_k)-\mathcal{L}^{\star}}+	E_{k}^x\leq & (1-\delta_k)E_{k-1}^x + \gamma_k s_k\parens{V_{k-1}^{\psi} + E_{k-1}^{\psi}}\\
	&+\delta_k \parens{E_{k-1}^{\psi}}^{\frac{1}{2}}\mathbb{E}\brakets{\Verts{x_{k-1}-\gamma_k\nabla\mathcal{L}(x_{k-1})-x^{\star}}^2}^{\frac{1}{2}}.
\end{align}
Since $\mathcal{L}$ is convex, we have the inequality: $\Verts{x_{k-1}-\gamma_k\nabla \mathcal{L}(x_{k-1})- x^{\star}}^2\leq \Verts{x_{k-1}-x^{\star}}^2$. Hence, we can deduce that:
\begin{align}
	\delta_k\Verts{x_{k-1}-\gamma_k\nabla\mathcal{L}(x_{k-1})-x^{\star}}^2\leq \delta_k\Verts{x_{k-1}-x^{\star}}^2\leq 2\gamma_k \eta_k\eta_{k-1}^{-1}E_{k-1}^x\leq 2\gamma_k E_{k-1}^x,
\end{align}
where  we used that $\eta_k$ is non-increasing by construction. Combining the above inequality with \cref{eq:intermediate_ineq_1} yields:
\begin{align}
	F_k+	E_{k}^x\leq & (1-\delta_k)E_{k-1}^x + \gamma_k s_k\parens{V_{k-1}^{\psi} + E_{k-1}^{\psi}} +\sqrt{2}\gamma_k^{\frac{1}{2}}\delta_k^{\frac{1}{2}} \parens{E_{k-1}^{\psi}}^{\frac{1}{2}}\parens{E_{k-1}^x}^{\frac{1}{2}}.
\end{align}

{\bf Case $\mu<0$.} Recall that for $\mu<0$, we set $E_k^x = \frac{1}{L}\mathbb{E}\brackets{\Verts{\nabla \mathcal{L}(x_k)}^2}$. Using that $\mathcal{L}$ is $L$-smooth, we have that:
\begin{align}
	\mathcal{L}(x_k)-\mathcal{L}(x_{k-1}) \leq &\nabla \mathcal{L}(x_{k-1})^{\top}\parens{x_{k}-x_{k-1}} + \frac{L}{2}\Verts{x_{k}-x_{k-1}}^2\\
		\leq &-\gamma_k\nabla\mathcal{L}(x_{k-1})^{\top}\hat{\psi}_{k-1} + \frac{L\gamma_k^2}{2}\Verts{\hat{\psi}_{k-1}}^2\\
		\leq & -\gamma_k\Verts{\nabla\mathcal{L}(x_{k-1})}^2 - \gamma_k\nabla\mathcal{L}(x_{k-1})^{\top}\parens{\hat{\psi}_{k-1}- \nabla\mathcal{L}(x_{k-1}) }\\
		&+ \frac{L\gamma_k^2}{2}\parens{\Verts{\hat{\psi}_{k-1}-\psi_{k-1}}^2 + 2\parens{\hat{\psi}_{k-1}-\psi_{k-1}}^{\top}\psi_{k-1} + \Verts{\psi_{k-1}}^2 }.  
\end{align}
Taking the expectation w.r.t. all randomness in the algorithm in the above inequality, we get:
\begin{align}
	\mathbb{E}\brakets{\mathcal{L}(x_k)-\mathcal{L}(x_{k-1})} 
	\leq & - \gamma_k\mathbb{E}\brakets{\Verts{\nabla \mathcal{L}(x_{k-1})}^2} - \gamma_k \mathbb{E}\brakets{\nabla\mathcal{L}(x_{k-1})^{\top}\parens{\psi_{k-1}-\nabla\mathcal{L}(x_{k-1})}}\\
	&+ \frac{L\gamma_k^2}{2}\parens{\mathbb{E}\brackets{\parens{\Verts{\hat{\psi}_{k-1}-\psi_{k-1}}^2}} + \mathbb{E}\brakets{\Verts{\psi_{k-1}}^2}}\\
	= & 
	- \gamma_k(1 - \frac{L\gamma_k}{2})\mathbb{E}\brakets{\Verts{\nabla \mathcal{L}(x_{k-1})}^2}	 + \frac{L\gamma_k^2}{2}\parens{V_{k-1}^{\psi} + E_{k-1}^{\psi}}\\
	&-  \gamma_k(1 - L\gamma_k)
	   \mathbb{E}\brakets{\nabla\mathcal{L}(x_{k-1})^{\top}\parens{\psi_{k-1}-\nabla\mathcal{L}(x_{k-1})} } \\
	 \leq &
	-\frac{\gamma_k}{2}\mathbb{E}\brakets{\Verts{\nabla \mathcal{L}(x_{k-1})}^2} + \frac{L\gamma_k^2}{2}\parens{V_{k-1}^{\psi} + E_{k-1}^{\psi}}\\
	&+ \gamma_k\mathbb{E}\brakets{\Verts{\nabla \mathcal{L}(x_{k-1})}^2}^{\frac{1}{2}}\parens{E_{k-1}^{\psi}}^{\frac{1}{2}}.\\
	=& -\delta_k E_{k-1}^x + \frac{\delta_k\gamma_k}{2}\parens{V_{k-1}^{\psi}+ E_{k-1}^{\psi}} + \sqrt{2}\delta_k^{\frac{1}{2}}\gamma_k^{\frac{1}{2}}\parens{E_{k-1}^x}^{\frac{1}{2}}\parens{E_{k-1}^{\psi}}^{\frac{1}{2}}.
\end{align}
where we used that $1-\frac{L\gamma_k}{2}\geq \frac{1}{2}$ and $0\leq 1-L\gamma_k\leq 1$ to get the last inequality.
Using the definition of $F_k$ yields an  inequality of the form:
\begin{align}
	F_k+	E_{k}^x\leq & (1-\delta_k)E_{k-1}^x + \gamma_k s_k\parens{V_{k-1}^{\psi} + E_{k-1}^{\psi}} +\sqrt{2}\gamma_k^{\frac{1}{2}}\delta_k^{\frac{1}{2}} \parens{E_{k-1}^{\psi}}^{\frac{1}{2}}\parens{E_{k-1}^x}^{\frac{1}{2}}.
\end{align}

Hence, in both cases $\mu\geq 0$ and $\mu<0$ we get an inequality of the of the same form, but with different expressions for $F_k$ and $s_k$. We get the desired result using Young's inequality, to upper-bound the last term in the r.h.s. of the above inequality. More precisely, we use that for any $0<\rho_k<1$:
\begin{align}
	\sqrt{2}\gamma_k^{\frac{1}{2}}\delta_k^{\frac{1}{2}}\parens{E_{k-1}^{\psi}}^{\frac{1}{2}} \parens{E_{k-1}^x}^{\frac{1}{2}}\leq \frac{1}{2}\rho_k \delta_k E_{k-1}^x + \rho_k^{-1} \gamma_k E_{k-1}^{\psi}.
\end{align} 
\end{proof}

\subsection{Inner-level error bound}\label{sec:proof_inner_loop}
In this section we prove \cref{prop:error_warm_start} which controls the evolutions of the warm-start errors $E_{k}^y$ and $E_k^z$. As a first step, in \cref{prop:increment_xy}, we provide a result controlling the mean squared error between two successive iterates $x_{k-1}$, $x_k$ and $y_{k-1}$, $y_k$ which will be used in the proof of \cref{prop:error_warm_start}.
\begin{prop}[Control of the increments of $x_k$ and $y_k$]\label{prop:increment_xy}
Consider $\zeta_k$, $\phi_k$ and $\tilde{R}^y_k$ as defined in \cref{prop:error_warm_start_bis} for some fixed $0\leq v\leq 1$. Then, the following holds:
	\begin{align}
		\gamma_k^2 \mathbb{E}\brakets{\Verts{\hat{\psi}_{k-1}}^2} = \mathbb{E}\brakets{\Verts{x_k-x_{k-1}}^2}\leq &  \gamma_k^2 \parens{  V_{k-1}^{\psi} + 2E_{k-1}^{\psi} + 2\zeta_k E_{k-1}^x }\\
		\mathbb{E}\brakets{\Verts{y_k-y_{k-1}}^2} \leq &  2\phi_k E_k^y + 2\tilde{R}_k^y
	\end{align}
\end{prop}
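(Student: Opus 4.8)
The plan is to handle the two increments separately: the bound on $\mathbb{E}\brakets{\Verts{x_k-x_{k-1}}^2}$ is essentially immediate once one controls $\mathbb{E}\brakets{\Verts{\nabla\mathcal{L}(x_{k-1})}^2}$ by $\zeta_kE_{k-1}^x$, while the bound on $\mathbb{E}\brakets{\Verts{y_k-y_{k-1}}^2}$ will be obtained by proving the two endpoint inequalities corresponding to $v=1$ and $v=0$ and then taking a convex combination in $v$.

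\textbf{Increment of $x_k$.} Since $x_k=x_{k-1}-\gamma_k\hat{\psi}_{k-1}$ we have the exact identity $\Verts{x_k-x_{k-1}}^2=\gamma_k^2\Verts{\hat{\psi}_{k-1}}^2$, so it suffices to bound $\mathbb{E}\brakets{\Verts{\hat{\psi}_{k-1}}^2}$. Writing $\psi_{k-1}:=\mathbb{E}_{k-1}\brakets{\hat{\psi}_{k-1}}$ and using that the cross term vanishes under the conditional expectation gives $\mathbb{E}\brakets{\Verts{\hat{\psi}_{k-1}}^2}=V_{k-1}^{\psi}+\mathbb{E}\brakets{\Verts{\psi_{k-1}}^2}$; a split $\Verts{a+b}^2\leq 2\Verts{a}^2+2\Verts{b}^2$ around $\nabla\mathcal{L}(x_{k-1})$ then yields $\mathbb{E}\brakets{\Verts{\psi_{k-1}}^2}\leq 2E_{k-1}^{\psi}+2\mathbb{E}\brakets{\Verts{\nabla\mathcal{L}(x_{k-1})}^2}$. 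The remaining ingredient is $\mathbb{E}\brakets{\Verts{\nabla\mathcal{L}(x_{k-1})}^2}\leq \zeta_kE_{k-1}^x$, which I would prove by cases: for $\mu<0$ one has $E_{k-1}^x=L^{-1}\mathbb{E}\brakets{\Verts{\nabla\mathcal{L}(x_{k-1})}^2}$ and $\zeta_k=2L$, so it is trivial; for $\mu\geq 0$ one combines the two standard consequences of $L$-smoothness (with convexity), $\Verts{\nabla\mathcal{L}(x)}^2\leq L^2\Verts{x-x^{\star}}^2$ and $\Verts{\nabla\mathcal{L}(x)}^2\leq 2L(\mathcal{L}(x)-\mathcal{L}^{\star})$, with the two summands of $E_{k-1}^x=\tfrac{\eta_{k-1}}{2}\mathbb{E}\brakets{\Verts{x_{k-1}-x^{\star}}^2}+(1-u)\mathbb{E}\brakets{\mathcal{L}(x_{k-1})-\mathcal{L}^{\star}}$, noting that the first summand suffices when $\eta_{k-1}\geq L$ or $u=1$ (producing the factor $2L^2\eta_{k-1}^{-1}$) and the second when $\eta_{k-1}<L$ and $u=0$ (producing the factor $2L(1-u)^{-1}$) — exactly the two arguments of the $\min$ in $\zeta_k$, with the convention $(1-u)^{-1}=+\infty$ when $u=1$. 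Multiplying by $\gamma_k^2$ gives the first displayed inequality.

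\textbf{Increment of $y_k$.} For $v=1$ (so $\phi_k=2$, $\tilde R^y_k=R^y_k$): the triangle inequality gives $\mathbb{E}\brakets{\Verts{y_k-y_{k-1}}^2}\leq 2\mathbb{E}\brakets{\Verts{y_k-y^{\star}(x_k)}^2}+2\mathbb{E}\brakets{\Verts{y_{k-1}-y^{\star}(x_k)}^2}$; the last term equals $2E_k^y$ since $y^0=y_{k-1}$, and the first is at most $2(\Lambda_kE_k^y+R_k^y)\leq 2E_k^y+2R_k^y$ by \cref{prop:error_yz} (using $\Lambda_k\leq 1$), hence $\mathbb{E}\brakets{\Verts{y_k-y_{k-1}}^2}\leq 4E_k^y+2R_k^y$. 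For $v=0$: write $y_k-y_{k-1}=-\alpha_k\sum_{t=1}^{T}\partial_y\hat{g}(x_k,y^{t-1},\mathcal{D}^g_{t,k})$, apply Cauchy--Schwarz to obtain $\Verts{y_k-y_{k-1}}^2\leq T\alpha_k^2\sum_{t=1}^T\Verts{\partial_y\hat{g}(x_k,y^{t-1},\mathcal{D}^g_{t,k})}^2$, condition on $(x_k,y^{t-1})$ to get $\mathbb{E}\brakets{\Verts{\partial_y\hat{g}(x_k,y^{t-1},\mathcal{D}^g_{t,k})}^2\mid x_k,y^{t-1}}\leq\Verts{\partial_y g(x_k,y^{t-1})}^2+\sigma_g^2\leq L_g^2\Verts{y^{t-1}-y^{\star}(x_k)}^2+\sigma_g^2$, and bound $\mathbb{E}\brakets{\Verts{y^{t-1}-y^{\star}(x_k)}^2}\leq E_k^y+R_k^y$ by unrolling the one-step SGD contraction established in the proof of \cref{prop:error_yz}. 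Using $\alpha_k\leq L_g^{-1}$ and $\mu_g\leq L_g$ to absorb constants, this gives $\mathbb{E}\brakets{\Verts{y_k-y_{k-1}}^2}\leq 2L_g^2T^2\alpha_k^2E_k^y+4L_g\mu_g^{-1}T^2\sigma_g^2\alpha_k^2$, i.e. the claimed bound at $v=0$. Finally, since $\phi_k$ and $\tilde R^y_k$ are affine in $v$ with these exact endpoint values, forming $v\cdot(\text{bound at }v{=}1)+(1-v)\cdot(\text{bound at }v{=}0)$ yields $\mathbb{E}\brakets{\Verts{y_k-y_{k-1}}^2}\leq 2\phi_kE_k^y+2\tilde R^y_k$ for every $v\in[0,1]$.

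\textbf{Main obstacle.} The proposition is largely mechanical; the only step requiring genuine care is the bound $\mathbb{E}\brakets{\Verts{\nabla\mathcal{L}(x_{k-1})}^2}\leq\zeta_kE_{k-1}^x$, where one must observe that $E_{k-1}^x$ carries precisely the two quantities ($\Verts{x_{k-1}-x^{\star}}^2$ and $\mathcal{L}(x_{k-1})-\mathcal{L}^{\star}$) needed to dominate $\Verts{\nabla\mathcal{L}(x_{k-1})}^2$ through the two faces of $L$-smoothness, and that the $\min$ in the definition of $\zeta_k$ automatically selects whichever term is present given the values of $u$ and $\eta_{k-1}$; the constant bookkeeping in the $v=0$ bound (to land exactly on $\phi_k$ and $\tilde R^y_k$) is the other mildly delicate point.
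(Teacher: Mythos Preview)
Your proposal is correct and follows essentially the same approach as the paper: the $x_k$-increment is handled by the same case analysis on $\mu$ (using $\Verts{\nabla\mathcal{L}}^2\leq 2L(\mathcal{L}-\mathcal{L}^{\star})$ and $\Verts{\nabla\mathcal{L}}^2\leq L^2\Verts{x-x^{\star}}^2$ to match the two summands of $E_{k-1}^x$), and the $y_k$-increment is obtained by proving the two endpoint bounds $v=0,1$ and combining. The only cosmetic difference is that for $v=0$ you use Cauchy--Schwarz on the finite sum ($\Verts{\sum_t a_t}^2\leq T\sum_t\Verts{a_t}^2$) whereas the paper uses Minkowski's inequality in $L^2$ and then bounds each term uniformly before squaring; both routes land on the same constants.
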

\begin{proof}{Proof of \cref{prop:increment_xy} }
We prove each inequality separately.

{\bf Increments of $x_k$.} By the update equation, we have that $x_k = x_{k-1}-\gamma_k \hat{\psi}_{k-1}$, hence we only need to control $\mathbb{E}\brakets{\Verts{\hat{\psi}_{k-1}}^2}$. We have the following:
	\begin{align}
			\mathbb{E}\brakets{\Verts{\hat{\psi}_{k-1}}^2} 
			\leq &
			\mathbb{E}\brackets{\Verts{\hat{\psi}_{k-1}- \psi_{k-1}}^2} + 2\mathbb{E}\brackets{\Verts{\psi_{k-1}-\nabla\mathcal{L}(x_{k-1})}^2} + 2 \mathbb{E}\brackets{\Verts{\nabla\mathcal{L}(x_{k-1})}^2} \\
			=&
			V_{k-1}^{\psi} + 2E_{k-1}^{\psi} + 2\mathbb{E}\brackets{\Verts{\nabla\mathcal{L}(x_{k-1})}^2}.
	\end{align}
	In the case $(\mu<0)$, we have $E_{k-1}^x = \frac{1}{2L}\mathbb{E}\brakets{\Verts{\nabla \mathcal{L}(x_{k-1})}^2  }$, hence by setting $\zeta_k := 2L$, we get the desired inequality.
	In the convex case $(\mu\geq 0)$, since $\mathcal{L}$ is $L$-smooth, we have that:
	\begin{align}
		\Verts{\nabla \mathcal{L}(x_{k-1})}^2\leq 2L \parens{\mathcal{L}(x_{k-1})-  \mathcal{L}^{\star} }\leq 2L(1-u)^{-1}E_{k-1}^x,  
	\end{align}
	provided that $u<1$. We also have that $\parens{\mathcal{L}(x_{k-1})-  \mathcal{L}^{\star} }\leq \frac{L}{2}\Verts{x_{k-1}-x^{\star}}^2\leq L\eta_{k-1}^{-1}E_{k-1}^x$ which yields $\Verts{\nabla \mathcal{L}(x_{k-1})}^2\leq 2L^2\eta_{k-1}^{-1}E_{k-1}^{x}$. Hence, we can set $\zeta_k = 2L\min\parens{(1-u)^{-1},L\eta_{k-1}^{-1}}$.
	 
{\bf Increments of $y_k$.} Denoting by $\mathcal{D}_g^t$ a batch of samples at time iteration $t$ of algorithm $\mathcal{A}_k$ and using the update equation of $y^{t}$ we get the following inequality by application of the triangular inequality:
	\begin{align}
		\mathbb{E}\brakets{\Verts{y_{k}-y_{k-1}}^2}^{\frac{1}{2}}\leq &
		\alpha_{k}\sum_{t=0}^{T-1}\mathbb{E}\brakets{\Verts{\partial_y \hat{g}(x_{k},y^t,\mathcal{D}_{g}^{t})  }^2}^{\frac{1}{2}}
		\leq 
		\alpha_k\sum_{t=0}^{T-1}\parens{ \sigma_g^2 + L_g^2 \mathbb{E}\brakets{\Verts{y^t-y^{\star}(x_k)}^2}}^{\frac{1}{2}} \\
		\leq &
		\alpha_k \sum_{t=0}^{T-1}\parens{\sigma_g^2 + L_g^2R_{k} + L_g^2\Lambda_{t,k}E_{k}^y }^{\frac{1}{2}}
		\leq 
		\alpha_k T\parens{\sigma_g^2\parens{1 + 2L_g^2\alpha_k\mu_g^{-1}}+ L_g^2E_{k}^y }^{\frac{1}{2}}
	\end{align}
	where we applied \cref{prop:error_yz_extended} for every $0{<}t{\leq} T-1$ to get the second line with  $\Lambda_{t,k} {:=} \parens{1-\alpha_k\mu_g}^{t}$. This directly implies the following bound:
	\begin{align}\label{eq:ineq_iterate_y_1}
		\mathbb{E}\brakets{\Verts{\tilde{y}_{k}-y_{k-1}}^2}\leq 2\alpha_k^2 T^2 \parens{\sigma_g^2\parens{1+2L_g^2\mu_g^{-1}\alpha_{k}}  +  L_g^2 E_{k}^y }.
	\end{align}
On the other hand, using a triangular inequality and applying \cref{prop:error_yz_extended}, we also have that:
	\begin{align}\label{eq:ineq_iterate_y_2}
		\mathbb{E}\brakets{\Verts{y_{k}-y_{k-1}}^2}\leq  2\mathbb{E}\brakets{\Verts{y_{k}-y^{\star}(x_k)}^2} + 2 \mathbb{E}\brakets{\Verts{y_{k-1}-y^{\star}(x_k)}^2}\leq \parens{  4E_k^y + 2R_{k}^y  }.
	\end{align}
The result follows by combining  \cref{eq:ineq_iterate_y_1,eq:ineq_iterate_y_2} using coefficients $1{-}v$ and $v$.
\end{proof}

\subsection{Proof of \cref{prop:error_warm_start}}\label{sec:proof_error_warm_start}
\begin{proof}[Proof of \cref{prop:error_warm_start} ]
We will control each of $E_k^y$ and $E_k^z$ separately.

{\bf Upper-bound on $E_k^y$.}
	Let $r_k$ be a non-increasing sequences between $0$ and $1$. The following holds:
\begin{subequations}
  	\begin{align}
		E_k^y = & \mathbb{E}\brackets{\Verts{y_{k-1}- y^{\star}(x_{k-1}) }^2} + \mathbb{E}\brackets{\Verts{y^{\star}(x_{k}- y^{\star}(x_{k-1}) }^2}\\
		&+ 2\mathbb{E}\brackets{\parens{y_{k-1}- y^{\star}(x_{k-1})}^{\top}\parens{y^{\star}(x_{k}- y^{\star}(x_{k-1})}}\\
		\numrel{\leq}{{bound_y_1}} &
		\parens{1+r_k} \mathbb{E}\brackets{\Verts{y_{k-1}- y^{\star}(x_{k-1}) }^2} + \parens{1+r_k^{-1}} \mathbb{E}\brackets{\Verts{y^{\star}(x_{k}- y^{\star}(x_{k-1}) }^2}\\
		\numrel{\leq}{{bound_y_2}} &
		\parens{1+r_k} \parens{\Lambda_{k-1} E_{k-1}^y + R_{k-1}^y } + 2r_k^{-1} \mathbb{E}\brackets{\Verts{y^{\star}(x_{k})- y^{\star}(x_{k-1}) }^2}\\
		\numrel{\leq}{{bound_y_3}} &
				\parens{1+r_k} \parens{\Lambda_{k-1} E_{k-1}^y + R_{k-1}^y } + 2L_y^2r_k^{-1} \mathbb{E}\brackets{\Verts{x_{k}- x_{k-1} }^2}\\
		\numrel{\leq}{{bound_y_4}} &
		\parens{1+r_k} \parens{\Lambda_{k-1} E_{k-1}^y + R_{k-1}^y } + 2L_y^2r_k^{-1} \gamma_k^2\mathbb{E}\brackets{\Verts{\hat{\psi}_{k-1}}^2}\label{eq:bound_y_proof}
	\end{align}
	\end{subequations}
(\ref{bound_y_1})  follows by Young's inequality, 	(\ref{bound_y_2}) uses \cref{prop:error_yz_extended} to bound the first term and that $(1+r_k^{-1})\leq 2r_k^{-1}$ for the second term, 
(\ref{bound_y_3}) uses that $y^{\star}$ is $L_y$-Lipschitz by \cref{prop:smoothness} and (\ref{bound_y_4})  uses the update equation $x_k = x_{k-1}-\gamma_k \hat{\psi}_{k-1}$.

{\bf Upper-bound on $E_k^z$.} Similarly, for a non-increasing sequence $0<\theta_k\leq 1$, we have that:
\begin{subequations}
		\begin{align}
		E_k^z = &
		\mathbb{E}\brackets{\Verts{z_{k-1}- z^{\star}\parens{x_{k-1},y_{k-1}} }^2} + \mathbb{E}\brackets{\Verts{z^{\star}\parens{x_{k},y_{k}}- z^{\star}\parens{x_{k-1},y_{k-1}} ^2}}\\
		&+
		2\mathbb{E}\brackets{\parens{z_{k-1}- z^{\star}\parens{x_{k-1},y_{k-1}}}^{\top}\parens{z^{\star}\parens{x_{k},y_{k}}- z^{\star}\parens{x_{k-1},y_{k-1}}}}\\
		\numrel{\leq}{{bound_b_1}} &
		(1+\theta_k)\mathbb{E}\brackets{\Verts{z_{k-1}- z^{\star}\parens{x_{k-1},y_{k-1}} }^2} + (1+\theta_k^{-1})\mathbb{E}\brackets{\Verts{z^{\star}\parens{x_{k},y_{k}}- z^{\star}\parens{x_{k-1},y_{k-1}} }^2}\\
		\numrel{\leq}{{bound_b_2}} &
		(1+\theta_k)\parens{\Pi_{k-1}E_{k-1}^z + R_{k-1}^z} + 2\theta_k^{-1}\mathbb{E}\brackets{\Verts{z^{\star}\parens{x_{k},y_{k}}- z^{\star}\parens{x_{k-1},y_{k-1}} }^2}\\
		\numrel{\leq}{{bound_b_3}} &
		(1+\theta_k)\parens{\Pi_{k-1}E_{k-1}^z + R_{k-1}^z} + 4L_z^2\theta_k^{-1}\parens{\mathbb{E}\brackets{\Verts{x_k-x_{k-1} }^2 + \Verts{y_k-y_{k-1}}^2}}\\
		\numrel{\leq}{{bound_b_4}} &
		(1+\theta_k)\parens{\Pi_{k-1}E_{k-1}^z + R_{k-1}^z} + 4L_z^2\theta_k^{-1}\parens{ \gamma_k^2\mathbb{E}\brackets{\Verts{\hat{\psi}_{k-1} }^2 } +  2\phi_k E_{k}^y + 2\tilde{R}_k^y }	\label{eq:bound_b_proof}	
	\end{align}
	\end{subequations}

(\ref{bound_b_1}) follows by Young's inequality, 
(\ref{bound_b_2}) uses \cref{prop:error_yz_extended} to bound the first term and that $(1+\theta_k^{-1})\leq 2\theta_k^{-1}$ for the second term, 
(\ref{bound_b_3}) uses that $z^{\star}(x,y)$ is $L_z$-Lipschitz in $x$ and $y$ by \cref{prop:smoothness} and, finally, 
(\ref{bound_b_4}) uses the update equation $x_k = x_{k-1}-\gamma_k \hat{\psi}_{k-1}$ for the term $\mathbb{E}\brackets{\Verts{x_k-x_{k-1} }^2}$ and  \cref{prop:increment_xy} to control the increments $\mathbb{E}\brackets{\Verts{y_k-y_{k-1} }^2}$. 

In order to express the upper-bound on $E_k^{z}$ in terms of $E_{k-1}^y$ instead of $E_k^y$, we substitute $E_k^y$ in \cref{eq:bound_b_proof} by its upper-bound in \cref{eq:bound_y_proof} and use that $(1+r_k)\leq 2$ to write:
\begin{align}
	E_k^z\leq &
			(1+\theta_k)\parens{\Pi_{k-1}E_{k-1}^z + R_{k-1}^z} + 4L_z^2\theta_k^{-1} \gamma_k^2\parens{ 1+ 4L_y^2\phi_k r_k^{-1}}\mathbb{E}\brackets{\Verts{\hat{\psi}_{k-1} }^2 } 	\\
			&+8L_z^2\theta_k^{-1}\parens{  2\phi_k\parens{\Lambda_{k-1} E_{k-1}^y + R_{k-1}^y}+\tilde{R}_k^y}\label{eq:bound_b_proof_2}
\end{align}
We can then express \cref{eq:bound_y_proof,eq:bound_b_proof_2} jointly in matrix form as follows:
\begin{align}
	\begin{pmatrix}
		E_k^y\\
		E_k^z
	\end{pmatrix} \leq \boldsymbol{P_k}	\begin{pmatrix}
		\Lambda_{k-1} E_{k-1}^y + R_{k-1}^y\\
		\Pi_{k-1} E_{k-1}^z + R_{k-1}^z
	\end{pmatrix} + \gamma_k\mathbb{E}\brakets{\Verts{\hat{\psi}_{k-1}}^2} \boldsymbol{U_k} + \boldsymbol{V_k}
	\end{align}
where the $\boldsymbol{P_k}$ is  a $2\times 2$ matrix and $\boldsymbol{U_k}$ and $\boldsymbol{V_k}$ are $2$-dimensional vectors given by \cref{eq:def_matrices}. 
The desired result follows directly by substituting $\mathbb{E}\brakets{\Verts{\hat{\psi}_{k-1}}^2}$ by its upper-bound from \cref{prop:increment_xy} in the above inequality.
\end{proof}

\subsection{General error bound}\label{sec:proof_general_error_bound}
\begin{proof}[Proof of \cref{prop:general_bound_convex_intermediate}]
First note that, by assumption, we have that $\delta_k r_{k}^{-1}\leq \delta_{k-1} r_{k-1}^{-1}$  and $\delta_k \theta_{k}^{-1}\leq \delta_{k-1} \theta_{k-1}^{-1}$. Moreover, since $\alpha_k$ and $\beta_k$ are non-increasing, we also have that $\Lambda_{k-1}\leq \Lambda_{k}$ and $\Pi_{k-1}\leq \Pi_{k}$. This implies the following inequalities which will be used in the rest of the proof:
\begin{align}\label{eq:decreasing_a_b}
	a_{k-1}^{-1}\Lambda_{k-1}\leq a_k^{-1}\Lambda_k,\qquad b_{k-1}^{-1}\Pi_{k-1}\leq b_k^{-1}\Pi_k.
\end{align}
Now, let   $\rho_k$ be a non-increasing sequence  with $0<\rho_k<1$. By \cref{prop:error_outer_convex}, it follows that $E_k^x$ satisfies the inequality:
\begin{align}\label{eq:ineq_x_2}
	F_k + E_k^{x}\leq \parens{1-\parens{1-\frac{1}{2}\rho_k}\delta_k} E_{k-1}^x + \gamma_k s_k V_{k-1}^{\psi} + \gamma_k\parens{s_k+ \rho_k^{-1}}E_{k-1}^{\psi} 
\end{align}
On the other hand, by \cref{prop:error_warm_start} we know that $E_k^y$ and $E_k^z$ satisfy:
\begin{align}\label{eq:ineq_y_z}
	\begin{pmatrix}
		E_k^y\\
		E_k^z
	\end{pmatrix} \leq  \boldsymbol{P_k}	\begin{pmatrix}
		\Lambda_{k-1} E_{k-1}^y + R_{k-1}^y\\
		\Pi_{k-1} E_{k-1}^z + R_{k-1}^z
	\end{pmatrix} + \gamma_k\parens{V_{k-1}^{\psi} + 2E_{k-1}^{\psi} + 2\zeta_k E_{k-1}^x} \boldsymbol{U_k} + 
\boldsymbol{V_k}
	\end{align}
where the $\boldsymbol{P_k}$, $\boldsymbol{U_k}$ and $\boldsymbol{V_k}$ are defined in \cref{eq:def_matrices}.
For conciseness, define  $\boldsymbol{S_k}$ and  $\boldsymbol{E_k^{I}}$ to be:
	\begin{align}
		\boldsymbol{S_k} := \begin{pmatrix}
			a_k &  0\\
			0 &  b_k		\end{pmatrix},\qquad \boldsymbol{E_k^{I}} = \boldsymbol{S_k}\begin{pmatrix}
			E_k^y\\
			E_k^z
		\end{pmatrix}
	\end{align}
By \cref{eq:decreasing_a_b}, we directly have that:
\begin{align}\label{eq:ineq_P_tilde_P}
	\boldsymbol{S_k}\boldsymbol{P_k}\boldsymbol{S_{k-1}^{-1}}\begin{pmatrix}
\Lambda_{k-1}&0\\
0& \Pi_{k-1}	
\end{pmatrix}\leq \boldsymbol{S_k}\boldsymbol{P_k}\boldsymbol{S_{k}^{-1}}\begin{pmatrix}
\Lambda_{k}&0\\
0& \Pi_{k}	
\end{pmatrix}:=\boldsymbol{\tilde{P}_k},
\end{align}
where the inequality in \cref{eq:ineq_P_tilde_P} holds component-wise. Therefore, multiplying \cref{eq:ineq_y_z} by $\boldsymbol{S_k}$  and using \cref{eq:ineq_P_tilde_P} yields:
\begin{align}\label{eq:ineq_y_z_3}
	\boldsymbol{E_k^I} \leq & \boldsymbol{\tilde{P}_k}\boldsymbol{E_{k-1}^I} + \boldsymbol{S_k}\parens{\boldsymbol{P_k}\begin{pmatrix}
		R_{k-1}^y\\
		R_{k-1}^z
	\end{pmatrix}+ \gamma_k\parens{V_{k-1}^{\psi} + 2E_{k-1}^{\psi} + 2\zeta_k E_{k-1}^x} \boldsymbol{U_k} +  \boldsymbol{V_k}}
\end{align}
Furthermore, by \cref{prop:error_inexact_grad} we can bound $E_{k-1}^{\psi}$ and $V_{k-1}^{\psi}$ as follows:
\begin{align}\label{eq:bound_E_V_psi_2}
		E_{k-1}^{\psi}
		\leq & 
		2 L_{\psi}^2\parens{\Lambda_{k}\parens{a_k}^{-1}a_{k-1}E_{k-1}^y + \Pi_{k}\parens{b_k}^{-1}b_{k-1}E_{k-1}^z  } + 2L_{\psi}^2R_{k-1}^y\\
		V_{k-1}^{\psi} \leq & w_x^2 + \sigma_x^2\Pi_k \parens{b_k}^{-1}b_{k-1}E_{k-1}^z,
\end{align}
where we used \cref{eq:decreasing_a_b} a second time to replace $\Lambda_{k-1}\parens{a_{k-1}}^{-1}$ and $\Pi_{k-1}\parens{b_{k-1}}^{-1}$ by $\Lambda_{k}\parens{a_k}^{-1}$ and $\Pi_{k}\parens{b_k}^{-1}$. By summing both inequalities \cref{eq:ineq_y_z_3,eq:ineq_x_2} and substituting all terms $E_{k-1}^{\psi}$ and $V_{k-1}^{\psi}$ by their upper-bounds we obtain an inequality of the form:  
\begin{align}
	F_k + E_{k}^{tot}\leq & A_k^x E_{k-1}^x +A_k^y a_{k-1}E_{k-1}^y 	 +A_k^z b_{k-1} E_{k-1}^z + V_k^{tot}
\end{align}
where $A_k^x$, $A_k^y$ , $A_k^z$ are the components of the vector $A_k$ defined  in \cref{eq:def_vector_A_V} and $V_k^{tot}$ is the variance term also defined in \cref{eq:def_vector_A_V}. The desired inequality follows by upper-bounding $A_k^x$, $A_k^y$ , $A_k^z$ by their maximum value $\Verts{A_k}_{\infty}$ .
\end{proof} 

\section{Controlling  the precision of the inner-level algorithms.}\label{sec:control_precision_inner}
In this section, we prove \cref{prop:conditions_on_T_N}. To achieve this, we first provide general conditions on $\Lambda_k$ and $\Pi_k$ for controlling the rate $\Verts{A_k}_{\infty}$ and  which hold regardless of the choice of step-sizes. This is achieved in \cref{prop:conditions_gamma} of \cref{sec:general_cond_lambda_pi}. Then we prove \cref{prop:conditions_on_T_N} in \cref{sec:controlling_N_T}. 

\subsection{Controlling $\Pi_k$ and $\Lambda_k$.}\label{sec:general_cond_lambda_pi}
We introduce the following quantities:
\begin{subequations}\label{ineq_lambda_pi}
	\begin{align}
		D_k^{(1)} := &\frac{1}{1-s}\brackets{\log\brackets{\frac{1-(1-\rho_k) \delta_k}{ 1+2r_k\brackets{1 + 2L_{\psi}^2\gamma_k\delta_k^{-1}\brackets{s_k + 1 +\brackets{2\rho_k}^{-1}  }  } } } }\label{def_ineq_lambda_1}\\
		D_k^{(2)}:= & \frac{1}{s}\log\parens{(16L_y^2)^{-1}\rho_k \zeta_k^{-1} \gamma_k^{-2}r_k^2}\label{def_ineq_lambda_2}\\
	D_k^{(3)}:= & -\frac{1}{s}\log\parens{4L_y^2\delta_k\gamma_k r_k^{-2}}\label{def_ineq_lambda_3} \\
	D_k^{(4)} := & \frac{1}{1-s}\log\brackets{\frac{1-(1-\rho_k) \delta_k}{1+\theta_k\brackets{ 1+2\gamma_k\delta_k^{-1} \parens{2L_{\psi}^2+ \sigma_x^2}\brackets{s_k + 1 + \parens{2\rho_k}^{-1}   }  }} } \label{def_ineq_pi_1}\\
		D_k^{(5)}  := & -\frac{1}{s}\log\parens{16L_z^2\theta_k^{-2}\phi_k}\label{def_ineq_pi_2}\\
	D_k^{(6)} := & - \frac{1}{s}\log\parens{2L_z^2L_y^{-2} \theta_k^{-2}r_k^{2}\parens{1+4L_y^{2}r_k^{-1}\phi_k}}\label{def_ineq_pi_3}
	\end{align}
\end{subequations}
\begin{prop}\label{prop:conditions_gamma}
Let $\rho_k$ be a non-increasing sequence of positive numbers smaller than $1$.
Consider  $\Lambda_k$ and $\Pi_k$ so that:
\begin{subequations}\label{ineq_lambda_pi}
	\begin{align}
		\log \Lambda_k \leq & \min\parens{D_k^{(1)},D_k^{(2)},D_k^{(3)}},\label{ineq_lambda}\\
	\log \Pi_k\leq & \min\parens{D_k^{(1)},D_k^{(2)},\log\Lambda_k + D_k^{(3)}}.\label{ineq_pi}
	\end{align}
\end{subequations}
Then, the following inequalities holds:
\begin{align}
	\Verts{A_k}_{\infty} \leq \parens{1-(1-\rho_k) \delta_k},\qquad  V_k^{tot}\leq  \tilde{V}_k^{tot}.
\end{align}
where $A_k$ and $V_k^{tot}$ are defined in \cref{eq:def_vector_A_V} of \cref{prop:general_bound_convex_intermediate} and $\tilde{V}_k^{tot}$ is defined as:
\begin{align}
		\tilde{V}_k^{tot}
		 :=&\delta_k\parens{ 3r_k^{-1} + 2L_{\psi}^2 \gamma_k\delta_k^{-1} \parens{2 + (s_k + \rho_k^{-1}) } }R_{k-1}^y \\
&+ \delta_k\Pi_k^{s}\parens{2\theta_k^{-1} R_{k-1}^z + 8L_z^2\theta_k^{-2} \tilde{R}_k^y} + \gamma_k\parens{ s_k + 4L_y^{2}\Lambda_k^s\delta_k \gamma_k r_k^{-2} }w_x^2
\end{align}

\end{prop}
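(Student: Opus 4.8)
The plan is to bound each of the three coordinates of the vector $A_k$ from \eqref{eq:def_vector_A_V} separately by $1-(1-\rho_k)\delta_k$, and then to bound $V_k^{tot}$ termwise against $\tilde V_k^{tot}$. Throughout I will use $\delta_k=\eta_k\gamma_k$ (hence $\eta_k^{-1}=\gamma_k\delta_k^{-1}$), the elementary facts $\Lambda_k,\Pi_k,r_k,\theta_k,\rho_k\le 1$ and $s_k\ge 0$, and I will substitute $a_k=\delta_kr_k^{-1}\Lambda_k^{s}$, $b_k=\delta_k\theta_k^{-1}\Pi_k^{s}$ (as in \cref{prop:general_bound_convex_intermediate}) together with the entries of $\boldsymbol U_k$ from \eqref{eq:def_matrices}, so that $u_k^I=a_kU_k^{(1)}+b_kU_k^{(2)}=2L_y^2\Lambda_k^{s}\delta_k\gamma_kr_k^{-2}+4L_z^2\Pi_k^{s}\delta_k\gamma_k\theta_k^{-2}\bigl(1+4L_y^2\phi_kr_k^{-1}\bigr)$. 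The only hypotheses used will be the logarithmic bounds \eqref{ineq_lambda}--\eqref{ineq_pi}.

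\emph{Step 1 (a hierarchy of smallness estimates).} Rewriting each logarithmic hypothesis as an exponential inequality, the second bound on $\log\Lambda_k$ gives $16L_y^2\zeta_k\gamma_k^2r_k^{-2}\Lambda_k^{s}\le\rho_k$, the third gives $4L_y^2\delta_k\gamma_kr_k^{-2}\Lambda_k^{s}\le 1$, the second bound on $\log\Pi_k$ gives $16L_z^2\theta_k^{-2}\phi_k\Pi_k^{s}\le 1$, and the third bound on $\log\Pi_k$ (the one of the form $\log\Pi_k\le\log\Lambda_k+(\mathrm{const}_k)$) gives $\Pi_k^{s}\le\Lambda_k^{s}\bigl(2L_z^2L_y^{-2}\theta_k^{-2}r_k^2(1+4L_y^2\phi_kr_k^{-1})\bigr)^{-1}$. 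Inserting this last estimate into the expression for $b_kU_k^{(2)}$ collapses it to $b_kU_k^{(2)}\le 2L_y^2\Lambda_k^{s}\delta_k\gamma_kr_k^{-2}=a_kU_k^{(1)}$, whence $u_k^I\le 2a_kU_k^{(1)}=4L_y^2\Lambda_k^{s}\delta_k\gamma_kr_k^{-2}$, and this is $\le 1$ by the third bound on $\log\Lambda_k$. I expect this coupling estimate $b_kU_k^{(2)}\le a_kU_k^{(1)}$ to be the only genuinely non-routine point: the feedback term $u_k^I$ occurs in all three coordinates of $A_k$ and inside $V_k^{tot}$, and controlling it is precisely why the hypothesis on $\log\Pi_k$ has to be stated \emph{relative} to $\log\Lambda_k$ rather than as a standalone bound.

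\emph{Step 2 (the three coordinates of $A_k$).} For the first coordinate, Step 1 yields $2\gamma_k\zeta_ku_k^I\le 8L_y^2\zeta_k\Lambda_k^{s}\delta_k\gamma_k^2r_k^{-2}\le\tfrac12\rho_k\delta_k$, so it is at most $1-(1-\tfrac12\rho_k)\delta_k+\tfrac12\rho_k\delta_k=1-(1-\rho_k)\delta_k$. For the second coordinate I replace $16L_z^2\Pi_k^{s}\theta_k^{-2}\phi_k$ by $1$ and $2u_k^I$ by $2$ (both from Step 1); using $s_k\ge 0$ to get $2+s_k+\rho_k^{-1}\le 2\bigl(s_k+1+(2\rho_k)^{-1}\bigr)$, the coordinate is then $\le\Lambda_k^{1-s}\bigl[1+2r_k\bigl(1+2L_\psi^2\gamma_k\delta_k^{-1}(s_k+1+(2\rho_k)^{-1})\bigr)\bigr]$, which is $\le 1-(1-\rho_k)\delta_k$ by the first bound on $\log\Lambda_k$. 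The third coordinate is handled identically: replacing $2u_k^I$ by $2$ and again using $s_k\ge 0$ reduces it to $\Pi_k^{1-s}\bigl[1+\theta_k\bigl(1+2\gamma_k\delta_k^{-1}(2L_\psi^2+\sigma_x^2)(s_k+1+(2\rho_k)^{-1})\bigr)\bigr]$, which is $\le 1-(1-\rho_k)\delta_k$ by the first bound on $\log\Pi_k$. Taking the maximum of the three coordinates yields $\|A_k\|_\infty\le 1-(1-\rho_k)\delta_k$.

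\emph{Step 3 (the variance term).} Finally I compare $V_k^{tot}$ from \eqref{eq:def_vector_A_V} with $\tilde V_k^{tot}$ group by group. In the $R_{k-1}^y$-coefficient, $\Lambda_k^{s}(1+r_k^{-1})\le 2r_k^{-1}$ and $16L_z^2\phi_k\theta_k^{-2}\Pi_k^{s}\le 1\le r_k^{-1}$ (Step 1), while $2L_\psi^2\eta_k^{-1}(2u_k^I+s_k+\rho_k^{-1})\le 2L_\psi^2\gamma_k\delta_k^{-1}(2+s_k+\rho_k^{-1})$ since $u_k^I\le 1$; in the $R_{k-1}^z$- and $\tilde R_k^y$-coefficients, $1+\theta_k^{-1}\le 2\theta_k^{-1}$ and the $\tilde R_k^y$-term is already in the target form; and in the $w_x^2$-coefficient, $u_k^I\le 4L_y^2\Lambda_k^{s}\delta_k\gamma_kr_k^{-2}$ by Step 1. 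Adding these termwise inequalities gives $V_k^{tot}\le\tilde V_k^{tot}$, which completes the argument. In summary the proof runs in the order: convert \eqref{ineq_lambda}--\eqref{ineq_pi} into exponential estimates $\Rightarrow$ derive $u_k^I\le 2a_kU_k^{(1)}\le 1$ $\Rightarrow$ bound the three coordinates of $A_k$ $\Rightarrow$ bound $V_k^{tot}$; apart from the Step 1 coupling estimate, every step is an elementary absorption using $\Lambda_k,\Pi_k,r_k,\theta_k,\rho_k\le 1$ and $s_k\ge 0$.
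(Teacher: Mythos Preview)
Your proposal is correct and follows essentially the same route as the paper's own proof: first establish the auxiliary bounds $u_k^I\le u_k^+:=4L_y^2\Lambda_k^{s}\delta_k\gamma_kr_k^{-2}\le 1$ and $16L_z^2\theta_k^{-2}\phi_k\Pi_k^{s}\le 1$ from the logarithmic hypotheses, then bound the three coordinates of $A_k$ one by one, and finally compare $V_k^{tot}$ with $\tilde V_k^{tot}$ termwise. You have also correctly read the second line of \eqref{ineq_lambda_pi} as referring to $D_k^{(4)},D_k^{(5)},D_k^{(6)}$ (rather than $D_k^{(1)},D_k^{(2)},D_k^{(3)}$ as literally printed), which is indeed what the paper intends and uses in its proof.
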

\begin{proof}
We first prove that $u_k^{I}{\leq} u_k^{+}{\leq} 1$ and $p_k^{+}{\leq} 1$ under \cref{ineq_pi,ineq_lambda} with $u_k^{+}$, $p_k^{+}$ given by: 
	\begin{align}
		u_k^{+} := 4L_y^{2}\delta_k \gamma_k r_k^{-2}\Lambda_k^s, \qquad p_k^{+} = 16L_z^2\Pi_k^s\theta_k^{-2}\phi_k.
	\end{align}
A direct calculation shows $u_k^{+}{\leq} 1$ 
whenever \cref{ineq_lambda} holds. Moreover, recall that $u_k^{I} {=} a_k U_k^{(1)} {+} b_k U_k^{(2)} $ with $U_k^{(1)}$ and $U_k^{(2)}$ being the components of the vector $\boldsymbol{U_k}$  defined in \cref{eq:def_matrices}. Thus by direct substitution, we get the following  expression for  $u_k^{I}$:
\begin{align}
		u_k^I = 2L_y^{2}\delta_k\gamma_k r_k^{-2}\Lambda_k^s\parens{1+2L_z^2L_y^{-2}  \theta_k^{-2}r_k^{2}\parens{1+4L_y^{2}r_k^{-1}\phi_k}\frac{\Pi_k^s}{\Lambda_k^s}}.
\end{align} 
Therefore, \cref{ineq_pi} suffices to ensure that $u_k^{I}\leq u_k^{+}$. Finally, \cref{ineq_pi} implies directly that 	$p_k^{+}\leq 1$.	

We will control each component $A_k^{x}$, $A_k^{y}$ and $A_k^{z}$ 
	of the vector $A_k$ separately.
	
	{\bf Controlling $A_k^{x}$.} Recalling the expression of $A_k^x$, the first component of $A_k$ in \cref{eq:def_vector_A_V}, it holds that: 
	\begin{align}
		A_k^{x}  =  1-\parens{1-\frac{1}{2}\rho_k}\delta_k  + 2\zeta_k\lambda_k u_k^{I} \numrel{\leq}{{A_k_x_1}} &
		1-\parens{1-\frac{1}{2}\rho_k}\delta_k  + 2\zeta_k\gamma_k u_k^{+}\\
		\numrel{\leq}{{A_k_x_2}} &
		1-\parens{1-\frac{1}{2}\rho_k}\delta_k  + \frac{1}{2}\rho_k\delta_k = 1-(1-\rho_k)\delta_k.
	\end{align}
(\ref{A_k_x_1}) holds since $u_k^{I}{\leq} u_k^{+}$ while (\ref{A_k_x_2}) follows from \cref{ineq_lambda} which ensures that $2\zeta_k\gamma_k u_k^{+}\leq \frac{1}{2}\rho_k\delta_k$.

	{\bf  Controlling $A_k^{y}$.} Recall the expression of second component of $A_k^{y}$ in \cref{eq:def_vector_A_V}, we have:
	\begin{align}
		A_k^{y} = & \Lambda_k^{1-s}\parens{1+r_k\parens{ 1 + 16L_z^2 
	\Pi_k^{s}\theta_k^{-2}\phi_k
	+ 2L_{\psi}^2\gamma_k\delta_k^{-1} \parens{2u_k^I+  (s_k + \rho_k^{-1}) }} }\\
		\numrel{\leq}{{A_k_y_2}} &
		\parens{1+2r_k\parens{1 + 2L_{\psi}^2\gamma_k\delta_k^{-1}(s_k + 1 + \parens{2\rho_k}^{-1}  )  }}\Lambda_k^{1-s}\numrel{\leq}{{A_k_y_3}} 
		1-(1-\rho_k)\delta_k.
	\end{align}
	(\ref{A_k_y_2}) holds since $p_k^{+}:= 16L_z^2 
	\Pi_k^{s}\theta_k^{-2}\phi_k\leq 1$ and $u_k^{I}\leq 1$ while (\ref{A_k_y_3}) is a consequence of \cref{ineq_lambda}.

	{\bf Controlling $A_k^{z}$.} Similarly, recalling the expression of the third component of $A_k$ we get that:
	\begin{align}
		A_k^{z} = & \Pi_k^{1-s}\parens{ 1+\theta_k\parens{1 +  \gamma_k\delta_k^{-1}\parens{2L_{\psi}^2 + \sigma_x^2}\parens{2u_k^I +  (s_k + \rho_k^{-1})} }}\\
		\numrel{\leq}{{A_k_z_1}} &
		\Pi_k^{1-s}\parens{ 1+\theta_k\parens{1 + 2\gamma_k\delta_k^{-1}\parens{2L_{\psi}^2 + \sigma_x^2}\parens{1 +  (s_k + (2\rho_k)^{-1})} }} 
		\numrel{\leq}{{A_k_z_2}} 
		1-(1-\rho_k)\delta_k,
	\end{align}
	where (\ref{A_k_z_1}) uses that $u_k^{I}\leq 1$  and  (\ref{A_k_z_2}) follows from \cref{ineq_pi}.
	 
	 {\bf Controlling $V_k^{tot}$.} Recalling the expression of $V_k^{tot}$ from \cref{eq:def_vector_A_V}, we have that:
	 \begin{align}
	 		V_k^{tot} := 
	 		& \delta_k\parens{ \Lambda_k^{s} (1+r_k^{-1}) +  16L_z^2 \phi_k\theta_k^{-2}  \Pi_k^s  + 2L_{\psi}^2 \gamma_k\delta_k^{-1} \parens{2u_k^I + (s_k + \rho_k^{-1}) } }R_{k-1}^y \\
&+ \delta_k\parens{(1+\theta_k^{-1})\Pi_k^{s} R_{k-1}^z + 8L_z^2\theta_k^{-2}\Pi_k^{s} \tilde{R}_k^y} + \gamma_k\parens{ s_k + u_k^I }w_x^2\\
			\leq & \delta_k\parens{ 3r_k^{-1} + 2L_{\psi}^2 \gamma_k\delta_k^{-1} \parens{2 + (s_k + \rho_k^{-1}) } }R_{k-1}^y \\
&+ \delta_k\parens{2\theta_k^{-1}\Pi_k^{s} R_{k-1}^z + 8L_z^2\theta_k^{-2}\Pi_k^{s} \tilde{R}_k^y} + \gamma_k\parens{ s_k + u_k^+ }w_x^2 = \tilde{V}_k^{tot}.
	 \end{align}
	 where  we use $16L_z^2  \phi_k\theta_k^{-2}  \Pi_k^s\leq 1$ and $u_k^{I}\leq 1$ for the first line and   $u_k^{I}\leq u_k^{+} $  for the last line. 
\end{proof}

\subsection{Controlling the number of inner-level iterations}\label{sec:controlling_N_T}
We provide now a proof of  \cref{prop:conditions_on_T_N} which is a consequence of  \cref{prop:conditions_gamma}. 
\begin{proof}[Proof of \cref{prop:conditions_on_T_N}.]
We first provide conditions on the number of iterations $T$ and $N$ of algorithms $\mathcal{A}_k$ and $\mathcal{B}_k$ to control the rate $\Verts{A_k}_{\infty}$ and then provide an upper-bound on  $V_k^{tot}$.

{\bf Conditions on  $T$ and $N$.}
We consider the setting with constant step-size $\gamma_k{=}\gamma$, $\alpha_k{=}\alpha$ and $\beta_k{=}\beta$ and choose $r_k{=}\theta_k=1$ and  $\delta_k {=} \delta_0$ for some $0{<}\delta_0{<}1$. We also take $v{=}1$ so that $\phi_k{=}2$ and $\tilde{R}_k^y{=}R_k^y$. By direct substitution of the parameters $r_k$, $\theta_k$, $\phi_k$, $\gamma_k$, $\delta_k$, $\rho_k$ and $\zeta_k$, in the expressions of $D_k^{(1)}$, $D_k^{(2)}$, $D_k^{(3)}$, $D_k^{(4)}$, $D_k^{(5)}$ and $D_k^{(6)}$ defined in \cref{def_ineq_lambda_1,def_ineq_lambda_2,def_ineq_lambda_3,def_ineq_pi_1,def_ineq_pi_2,def_ineq_pi_3},  we verify that: 
\begin{align}
	-C_1\leq D_k^{(1)},\qquad -C_1'\leq D_k^{(4)}, \qquad -C_2\leq \min\parens{D_k^{(2)},D_k^{(3)}},\qquad -C_2'\leq \min\parens{D_k^{(5)},D_k^{(6)}}. 
\end{align}
Hence, we can ensure the conditions of \cref{prop:conditions_gamma} hold by choosing $T$ and $N$ so that:
\begin{align}
	\log\Lambda_k\leq -\max\parens{1,C_1,C_2},\qquad \log \Pi_k \leq - \log \Lambda_k -\max\parens{1,C_1',C_2'}.
\end{align} 
This is achieved by for the following choice:
\begin{gather}\label{eq:choice_T_N}
		T = \floor{ \alpha^{-1}\mu_g^{-1} \max\parens{C_1,C_2,C_3} }+1,\\ N = \floor{ 2\beta^{-1}\mu_g^{-1}\parens{ \max\parens{C_1,C_2,C_3} + \max\parens{C_1',C_2',C_3'}  } }+1 
\end{gather}
Hence, for such choice, we are guaranteed by \cref{prop:conditions_gamma} that $\Verts{A_k}_{\infty}\leq 1-(1-\rho_k)\delta_k$.

{\bf Bound on the variance $V_k^{tot}$. }
By choosing $T$ and $N$ as in \cref{eq:choice_T_N}, we know that $\Lambda_k$ and $\Pi_k$ satisfy \cref{ineq_lambda_pi}, 
so that the variance term $V_k^{tot}$ is upper-bounded by $\tilde{V}_k^{tot}$. Moreover, by direct substitution of the sequences appearing in the expression of $\tilde{V}_k^{tot}$ by their values, we get:
\begin{align}
	V_k^{tot}\leq \tilde{V}_k^{tot} = & \delta_k\parens{ \Lambda_k^{s} (1+r_k^{-1}) +  16L_z^2 \phi_k\theta_k^{-2}  \Pi_k^s  + 2L_{\psi}^2 \eta_k^{-1} \parens{2u_k^I + (s_k + \rho_k^{-1}) } }R_{k-1}^y \\
&+ \delta_k\parens{(1+\theta_k^{-1})\Pi_k^{s} R_{k-1}^z + 8L_z^2\theta_k^{-2}\Pi_k^{s} \tilde{R}_k^y} + \gamma_k\parens{ s_k + u_k^I }w_x^2\\
\leq & \delta_0\parens{\eta_0^{-1}\frac{1-u}{2} + \parens{\frac{1+u}{2}\gamma + 4L_y^2\Lambda_k^s\gamma^2  }}w_x^2\\
&+\delta_0\parens{ 2\Lambda_k^{s}+  32L_z^2\Pi_k^s  + 10L_{\psi}^2 \eta_0^{-1} }R_{k-1}^y + \delta_0\parens{2\Pi_k^{s} R_{k-1}^z + 8L_z^2\Pi_k^{s} \tilde{R}_k^y}\\ 
\end{align}
Furthermore, by definition of $R_{k-1}^y$ and $R_{k-1}^z$, we   have that:
\begin{align}
	R_{k-1}^y \leq 2 \mu_g^{-1}L_g^{-1} \sigma_g^{2},\qquad
	R_{k-1}^z\leq \mu_g^{-3}\parens{B^2L_g^{-1}\sigma_{g_{yy}}^2 + 3\mu_g\sigma_f^2}.
\end{align}
Moreover, recall that  $\tilde{R}^y_k {=} R_k^{y}$ since we chose $v{=}1$. Thus $\tilde{R}^y_k\leq 2 \mu_g^{-1}\alpha \sigma_g^{2}$. This implies that:
\begin{align}\label{eq:V_tot_main_upper_bound}
	V_k^{tot}\delta_0^{-1}\gamma^{-1}\leq &
	\parens{\delta_0^{-1}\frac{1-u}{2} + \parens{1 + 4L_y^2\Lambda_k^s\gamma  }}w_x^2 + 2\Pi_k^{s}\gamma^{-1}\mu_g^{-3}\parens{B^2\beta \sigma_{g_{yy}}^2 + 3\mu_g \sigma_f^2}\\
&+\parens{ 4\Lambda_k^{s}+  80L_z^2\Pi_k^s  + 20L_{\psi}^2 \eta_0^{-1} }\mu_g^{-1}\alpha\gamma^{-1}\sigma_g^2\\
\end{align}
By choosing $T$ and $N$ as in \cref{eq:choice_T_N}, the following conditions hold:
\begin{gather}
	\Lambda_k^s\leq \frac{L}{4L_y^2},\qquad \Lambda_k^s\leq 5L_{\psi}^2\eta_0^{-1},\qquad \Pi_k^s\leq \gamma\parens{\sigma_{g_{xy}}^2 + (L_g')^2},\qquad
	 \Pi_k^s\leq \frac{1}{4L_z^2}L_{\psi}^2\eta_0^{-1}.
\end{gather}
By applying these inequalities in \cref{eq:V_tot_main_upper_bound}, we get: 
\begin{align}
	V_k^{tot} \delta_0^{-1}\gamma^{-1}\leq &
	\parens{\delta_0^{-1}\frac{1-u}{2} + 2}w_x^2 + 2\mu_g^{-3}\parens{\sigma_{g_{xy}}^2 + (L_g')^2}\parens{B^2\beta \sigma_{g_{yy}}^2 + 3\mu_g \sigma_f^2}\\
&+60L_{\psi}^2 \eta_0^{-1} \mu_g^{-1}L_g^{-1}\gamma^{-1}\sigma_g^2\\
\leq &\parens{\delta_0^{-1}\frac{1-u}{2} + 3}w_x^2  + \frac{60L_{\psi}^2}{ \delta_0 \mu_g L_g}\sigma_g^2=\mathcal{W}^2
\end{align}
where we used that $2\mu_g^{-3}\parens{\sigma_{g_{xy}}^2 + (L_g')^2}\parens{B^2\beta \sigma_{g_{yy}}^2 + 3\mu_g \sigma_f^2}\leq w_x^2$ by definition of $w_x^2$ in \cref{eq:variance_x_1}. Therefore, we have shown that $V_k^{tot}\leq \gamma \delta_0 \mathcal{W}^2$, with $\mathcal{W}^2$ given by \cref{eq:definition_W}. 
\end{proof}

\section{Stochastic Linear dynamical system with correlated noise}\label{sec:sto_linear_correlated}
Let $A$ be a positive definite matrix in $\R^d\times \R^d$ satisfying  $0<\mu_g\geq \verts{\sigma_i(A)}\leq L_g$ and $b$ a  vector in $\mathbb{R}^d$. We denote by $z^{\star} = -A^{-1}b$. Consider $A_m$ be a sequence of i.i.d. positive symmetric matrices in $\R^d\times \R^d$ such that $\mathbb{E}[A_m] = A$, and $\hat{b}$ a random vector in $\R^d$ such that $\mathbb{E}\brakets{\hat{b}}= b$, with $A_m$ and $\hat{b}$ being mutually independent.  Define $\Sigma_A = \mathbb{E}\brakets{\parens{A_n-A}^{\top}\parens{A_n-A}}$ and denote by $\sigma_A$  and $L_A$ the largest singular values of $\Sigma_A$ and $A^{-1}\Sigma_A A^{-1}$. 
Let $\beta$ be such that $\beta\leq \frac{1}{L_g}$. 
Finally let $\sigma_c^2$ be an upper-bound on $\mathbb{E}\brackets{\Verts{\hat{b}-b}^2}$. Let $z$ and $z'$ be two vectors in $\mathbb{R}^d$ and define the  iterates  $z^n$  and $\bar{z}^n$ such that $z^0=z$ and $\bar{z}^{0}=z'$ and using the  recursion:
	\begin{align}\label{eq:linear_recursion}
		z^n = (I -\beta A_n ) z^{n-1}-\beta \hat{b}, \bar{z}^n = (I -\beta A ) \bar{z}^{n-1}-\beta b.
	\end{align}
Hence, from the definition of $z^n$ and $\bar{z}^n$ we directly have that:
	\begin{align}
		z^n = \prod_{t=1}^n(I-\beta A_n )z - \sum_{t=1}^n \prod_{j=t+1}^n \beta(I-\beta A_j)\hat{b},\quad
		\bar{z}^n = \prod_{t=1}^n(I-\beta A )z' -\sum_{t=1}^n \prod_{j=t+1}^n \beta(I-\beta A)b.
	\end{align}
The next proposition computes the bias $\mathbb{E}\brackets{z^n-\bar{z}^n\middle |\hat{b}}$.
\begin{prop}\label{prop:bias_linear_sto}
	The following identities hold:
	\begin{align}
		\mathbb{E}\brackets{z^n-\bar{z}^n\middle| \hat{b}}= &(I-\beta A)^n(z-z') - \beta\parens{\sum_{t=1}^n (I-\beta A)^{n-t} }\parens{\hat{b}-b}\\
		\mathbb{E}\brackets{z^n-\bar{z}^n}=& (I-\beta A)^n(z-z')
	\end{align}
\end{prop}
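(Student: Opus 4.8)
The plan is to reduce everything to a single deterministic affine recurrence by taking conditional expectations given $\hat b$. First I would set $m_n := \mathbb{E}\brackets{z^n \mid \hat b}$ and note that $z^{n-1}$ is a (measurable) function of the fixed vector $z$, of $\hat b$, and of $A_1,\dots,A_{n-1}$, while $A_n$ is independent of the whole collection $(\hat b, A_1,\dots,A_{n-1})$. Hence, conditionally on $\hat b$, the matrix $A_n$ is still independent of $z^{n-1}$, and the tower property gives $\mathbb{E}\brackets{(I-\beta A_n)z^{n-1}\mid \hat b} = (I-\beta A)\,\mathbb{E}\brackets{z^{n-1}\mid \hat b}$. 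Plugging this into the recursion $z^n = (I-\beta A_n)z^{n-1} - \beta\hat b$ and using that $\hat b$ is $\sigma(\hat b)$-measurable, I obtain the affine recursion
\begin{align}
m_n = (I-\beta A)\, m_{n-1} - \beta\hat b,\qquad m_0 = z.
\end{align}

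Next, observe that $\bar z^n$ is itself non-random once $z'$ is fixed and satisfies $\bar z^n = (I-\beta A)\bar z^{n-1} - \beta b$ with $\bar z^0 = z'$. Subtracting the two recursions, the difference $d_n := m_n - \bar z^n$ satisfies $d_n = (I-\beta A)d_{n-1} - \beta(\hat b - b)$ with $d_0 = z - z'$. I would then unroll this recurrence over $n$ steps, i.e. iterate it to obtain $d_n = (I-\beta A)^n d_0 - \beta\sum_{t=1}^{n}(I-\beta A)^{n-t}(\hat b - b)$, which is exactly the claimed formula for $\mathbb{E}\brackets{z^n - \bar z^n \mid \hat b}$. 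For the second identity I take the expectation of the first over $\hat b$; since $z$ and $z'$ are deterministic and $\mathbb{E}\brackets{\hat b - b} = 0$, the forcing term drops out and $\mathbb{E}\brackets{z^n - \bar z^n} = (I-\beta A)^n(z - z')$.

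The only step requiring any care is the conditional-independence identity $\mathbb{E}\brackets{(I-\beta A_n)z^{n-1}\mid \hat b} = (I-\beta A)\,\mathbb{E}\brackets{z^{n-1}\mid \hat b}$: it relies on $A_n$ being independent of $(\hat b, A_1,\dots,A_{n-1})$ jointly (so that conditioning on $\hat b$ does not create dependence between $A_n$ and $z^{n-1}$), together with $A_m$ being independent of $\hat b$, both of which are hypotheses of the setup. Everything else is the elementary unrolling of a one-dimensional affine recursion, so I do not anticipate a genuine obstacle.
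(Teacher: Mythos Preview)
Your proposal is correct and follows exactly the approach the paper intends: the paper's own proof is the single sentence ``The proof is a consequence of $A_n$ and $\hat{b}$ being i.i.d.\ and unbiased estimates of $A$ and $b$,'' and your argument is precisely the careful unpacking of that sentence via the tower property and unrolling the resulting affine recursion. There is nothing to add.
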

\begin{proof}
	The proof is a consequence of $A_n$ and $\hat{b}$ being i.i.d. and unbiased estimates of $A$ and $b$.
\end{proof}
The next proposition controls the mean squared errors $\mathbb{E}\brakets{\Verts{z^{n}-z^{\star} }^2}$ and $\mathbb{E}\brakets{\Verts{z^{n}-\bar{z}^n }^2}$.  
\begin{prop}\label{prop:error_linear_sto}
Define $n^{\star}(n,\beta)= \min(n,\frac{1}{\beta\mu_g})$. Let $\beta$ such that:
	\begin{align}
		\beta \leq \frac{1}{2L_g}\min\parens{1, \frac{2L_g}{\mu_g\parens{1+\mu_g^{-2}\sigma_A^2}}}
	\end{align}
Then, the following inequalities holds:
\begin{align}
	&&\mathbb{E}\brakets{\Verts{z^{n}-z^{\star} }^2} \leq & (1-\beta\mu_g)^n \Verts{z-z^{\star}}^2 +  
			\beta^2\parens{ \sigma_A^2\Verts{z^{\star}}^{2} + 3\parens{n\wedge\frac{1}{\beta\mu_g}}\sigma_c^2 }\parens{n\wedge\frac{1}{\beta\mu_g}},\\
		&&\Verts{\bar{z}^n-z^{\star}}^2 \leq & (1-\beta\mu_g)^n\Verts{z'-z^{\star}}^2.
\end{align}
Moreover, if $z'=z$, then we have:
	\begin{align}\label{eq:error_z_bar_z}
			\mathbb{E}\brakets{\Verts{z^{n}-\bar{z}^n }^2} \leq & 4\mu_g^{-2}\sigma_A^2(1-\frac{\beta\mu_g}{2})^{n}\Verts{z-z^{\star}}^2\\
			&+2\beta^2\parens{\sigma_A^2\Verts{z^{\star}}^{2}
			 + 3\parens{n\wedge\frac{1}{\beta\mu_g}}\sigma_c^2 }\parens{n\wedge\frac{1}{\beta\mu_g}}.
	\end{align}
\end{prop}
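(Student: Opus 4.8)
The plan is to exploit that $z^{\star}$ is the fixed point of the \emph{noise-free} recursion $z\mapsto(I-\beta A)z-\beta b$, i.e.\ $z^{\star}=(I-\beta A)z^{\star}-\beta b$ (immediate from $Az^{\star}=-b$). Subtracting this identity from the two updates in \cref{eq:linear_recursion} turns everything into a contraction perturbed by noise, and the whole proof reduces to bookkeeping of that noise. For the second inequality this is trivial: $\bar z^{n}-z^{\star}=(I-\beta A)(\bar z^{n-1}-z^{\star})$, and since $0\prec A\preceq L_gI$ and $\beta\le L_g^{-1}$ the symmetric matrix $I-\beta A$ has spectrum in $[0,1-\beta\mu_g]$, so $\Verts{I-\beta A}_{op}\le 1-\beta\mu_g$; iterating and using $(1-\beta\mu_g)^{2n}\le(1-\beta\mu_g)^{n}$ gives the claim.

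For the first inequality I would set $e^{n}:=z^{n}-z^{\star}$ and, using the fixed-point identity and the rewriting $(I-\beta A_{n})e^{n-1}-\beta(A_{n}-A)z^{\star}=(I-\beta A)e^{n-1}-\beta(A_{n}-A)z^{n-1}$ (with $e^{n-1}+z^{\star}=z^{n-1}$), derive the recursion
\[
 e^{n}=(I-\beta A)e^{n-1}-\beta(A_{n}-A)z^{n-1}-\beta(\hat b-b).
\]
With $\mathcal F_{n-1}:=\sigma(A_{1},\dots,A_{n-1},\hat b)$, the term $A_{n}-A$ is independent of $\mathcal F_{n-1}$ with mean $0$ while $\hat b$ is $\mathcal F_{n-1}$-measurable, so taking the conditional second moment leaves the Pythagorean split
\[
 \mathbb E\!\brackets{\Verts{e^{n}}^{2}\given\mathcal F_{n-1}}=\Verts{(I-\beta A)e^{n-1}-\beta(\hat b-b)}^{2}+\beta^{2}\,(z^{n-1})^{\top}\Sigma_{A}z^{n-1}.
\]
Expanding and writing $z^{n-1}=e^{n-1}+z^{\star}$, the part quadratic in $e^{n-1}$ is $(e^{n-1})^{\top}\brackets{(I-\beta A)^{2}+\beta^{2}\Sigma_{A}}e^{n-1}$, and this is where the stronger step-size restriction enters: $\beta\le\mu_g(\mu_g^{2}+\sigma_{A}^{2})^{-1}$ is exactly the condition under which $(1-\beta\mu_g)^{2}+\beta^{2}\sigma_{A}^{2}\le 1-\beta\mu_g$, hence (together with $\beta\le L_g^{-1}$) $(I-\beta A)^{2}+\beta^{2}\Sigma_{A}\preceq(1-\beta\mu_g)I$, which provides the contraction. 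The leftover cross-terms $\beta(e^{n-1})^{\top}(I-\beta A)(\hat b-b)$ and $\beta^{2}(e^{n-1})^{\top}\Sigma_{A}z^{\star}$, together with the genuine noise $\beta^{2}\Verts{\hat b-b}^{2}+\beta^{2}(z^{\star})^{\top}\Sigma_{A}z^{\star}$, are absorbed by Young's inequality, spending a controlled fraction of the contraction margin, to reach $\mathbb E[\Verts{e^{n}}^{2}\given\mathcal F_{n-1}]\le(1-c\beta\mu_g)\Verts{e^{n-1}}^{2}+O(\beta^{2})(\sigma_{A}^{2}\Verts{z^{\star}}^{2}+\Verts{\hat b-b}^{2})$; taking total expectations, using $\mathbb E\Verts{\hat b-b}^{2}\le\sigma_{c}^{2}$ and unrolling with $\sum_{t\le n}(1-c\beta\mu_g)^{n-t}\le n\wedge(c\beta\mu_g)^{-1}\asymp n^{\star}(n,\beta)$, yields a bound of the stated shape. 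The sharper $3\,n^{\star}\sigma_{c}^{2}$ refinement (rather than an $O(\sigma_{c}^{2}/\mu_g)$ term) is obtained by \emph{not} treating $\hat b-b$ as fresh per-step noise but as a frozen vector, exactly as in the next paragraph.

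For the third inequality I would take $z'=z$, set $d^{n}:=z^{n}-\bar z^{n}$, and split $d^{n}=\mathbb E[d^{n}\given\hat b]+g^{n}$. By \cref{prop:bias_linear_sto} with $z'=z$ one has $\mathbb E[d^{n}\given\hat b]=-\beta\bigl(\sum_{t=1}^{n}(I-\beta A)^{n-t}\bigr)(\hat b-b)$, whose norm is at most $\beta\,n^{\star}(n,\beta)\,\Verts{\hat b-b}$ since $\Verts{(I-\beta A)^{j}}_{op}\le(1-\beta\mu_g)^{j}$ and $\sum_{j\ge 0}(1-\beta\mu_g)^{j}\le(\beta\mu_g)^{-1}$; this produces the $\beta^{2}(n^{\star})^{2}\sigma_{c}^{2}$ term. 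The fluctuation satisfies $g^{n}=(I-\beta A)g^{n-1}-\beta(A_{n}-A)z^{n-1}$ with $g^{0}=0$, so $\mathbb E[\Verts{g^{n}}^{2}\given\mathcal F_{n-1}]\le(1-\beta\mu_g)^{2}\Verts{g^{n-1}}^{2}+\beta^{2}\sigma_{A}^{2}\Verts{z^{n-1}}^{2}$; plugging in the already-established bound on $\mathbb E\Verts{z^{n-1}}^{2}\le 2\mathbb E\Verts{z^{n-1}-z^{\star}}^{2}+2\Verts{z^{\star}}^{2}$ and summing the resulting product of two geometric factors gives the $4\mu_g^{-2}\sigma_{A}^{2}(1-\tfrac{\beta\mu_g}{2})^{n}\Verts{z-z^{\star}}^{2}$ leading term, the $\tfrac12$ in the rate coming from the elementary estimate $\sum_{t\le n}(1-\beta\mu_g)^{2(n-t)}(1-\beta\mu_g)^{t-1}\lesssim(\beta\mu_g)^{-1}(1-\tfrac{\beta\mu_g}{2})^{n}$, together with the analogous treatment of the variance remainder.

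The main obstacle is the coexistence of two noise sources with different temporal structure that also interact multiplicatively with the iterate. The Hessian noise $A_{n}-A$ is refreshed each step, giving the usual geometric accumulation, but its conditional variance is $\beta^{2}(z^{n-1})^{\top}\Sigma_{A}z^{n-1}$, so it feeds back on the current iterate and forces the contraction to be kept tight — precisely the requirement $(I-\beta A)^{2}+\beta^{2}\Sigma_{A}\preceq(1-\beta\mu_g)I$, i.e.\ the stated bound on $\beta$. The vector noise $\hat b-b$, by contrast, is drawn once and reused, so it accumulates \emph{linearly}, which is what produces the $(n^{\star})^{2}$ factor and the $\mu_g^{-2}$ (rather than $\mu_g^{-1}$) dependence; isolating its deterministic-given-$\hat b$ effect via \cref{prop:bias_linear_sto} is what keeps this term at the right order. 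Everything else is careful but routine Young-inequality and geometric-series bookkeeping.
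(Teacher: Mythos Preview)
Your proposal is essentially sound and pinpoints the right structural ingredients: the operator contraction $(I-\beta A)^{2}+\beta^{2}\Sigma_{A}\preceq(1-\beta\mu_g)I$ under the stated step-size, the need to treat the frozen vector $\hat b-b$ differently from the fresh matrix noise $A_{n}-A$, and the $n(1-\beta\mu_g)^{n-1}\lesssim\mu_g^{-2}\beta^{-2}(1-\tfrac{\beta\mu_g}{2})^{n}$ estimate.  However, your route differs from the paper's and has one loose end.

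\textbf{Where the paper differs.}  Rather than treating the three inequalities separately, the paper derives a \emph{single} recursion for $\mathbb{E}\Verts{z^{n}-\bar z^{n}}^{2}$ valid for arbitrary $z'$, and then specialises: $z'=z^{\star}$ gives the first inequality (since then $\bar z^{n}\equiv z^{\star}$), while $z'=z$ gives the third.  The crucial device is that the cross term $-2\beta\,\mathbb{E}[(z^{n-1}-\bar z^{n-1})^{\top}(I-\beta A)(\hat b-b)]$ is computed \emph{exactly} via \cref{prop:bias_linear_sto}: conditioning on $\hat b$, the deterministic part of $z^{n-1}-\bar z^{n-1}$ is orthogonal to $\hat b-b$, and the remaining part yields $2\beta^{2}\mathbb{E}[(\hat b-b)^{\top}(I-\beta A)D_{n}(\hat b-b)]$, which is nonnegative and, combined with $\beta^{2}\Verts{\hat b-b}^{2}$, gives the $3n^{\star}\beta^{2}\sigma_{c}^{2}$ term.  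Because this cross term is handled exactly rather than via Young, \emph{no contraction margin is spent on it}, so the full rate $(1-\beta\mu_g)^{n}$ survives.  For the third inequality the paper then bounds $\Verts{\bar z^{t-1}}^{2}\le 2(1-\beta\mu_g)^{t-1}\Verts{z-z^{\star}}^{2}+2\Verts{z^{\star}}^{2}$ (a deterministic quantity) and invokes the lemma $n\beta^{2}\mu_g^{2}(1-\beta\mu_g)^{n-1}\le(1-\tfrac{\beta\mu_g}{2})^{n-1}$.

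\textbf{What your route buys and what it costs.}  Your orthogonal split $d^{n}=\mathbb{E}[d^{n}\mid\hat b]+g^{n}$ for the third inequality is a genuinely different and clean decomposition; the recursion $g^{n}=(I-\beta A)g^{n-1}-\beta(A_{n}-A)z^{n-1}$ is correct and the resulting geometric-sum estimate reproduces the $4\mu_g^{-2}\sigma_{A}^{2}(1-\tfrac{\beta\mu_g}{2})^{n}$ term.  The cost is circularity: your $g^{n}$-bound needs $\mathbb{E}\Verts{z^{n-1}}^{2}$, hence the first inequality, whereas the paper's recursion only needs the \emph{deterministic} $\Verts{\bar z^{n-1}}^{2}$.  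More importantly, your Young-inequality treatment of the first inequality cannot deliver the stated $(1-\beta\mu_g)^{n}$ prefactor: the $\hat b$ cross term $-2\beta(e^{n-1})^{\top}(I-\beta A)(\hat b-b)$ is $O(\beta)$, so absorbing it costs a fixed fraction of the $\beta\mu_g$ margin and downgrades the rate to $(1-c\beta\mu_g)^{n}$ with $c<1$ (your per-step noise estimate ``$O(\beta^{2})\Verts{\hat b-b}^{2}$'' is off for the same reason --- it should be $O(\beta/\mu_g)$).  You correctly flag this and point to the frozen-$\hat b$ idea, but the concrete fix is precisely the paper's: use \cref{prop:bias_linear_sto} to \emph{evaluate} (not bound) that cross term, so no Young penalty is incurred.
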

\begin{proof}
It is straightforward to see that:
	\begin{align}
		\Verts{\bar{z}^n-z^{\star}}^2 \leq (1-\beta\mu_g)\Verts{z'-z^{\star}}^2.
	\end{align}
	Now, let's control  $\mathbb{E}\brakets{\Verts{z^{n}-\bar{z}^{n}}^2}$. The following identity holds by definition of $z^n$ and $\bar{z}^n$:
	\begin{align}
	\mathbb{E}\brakets{\Verts{z^{n}-\bar{z}^{n}}^2}=&
		 \mathbb{E}\brakets{ \parens{z^{n-1}- \bar{z}^{n-1}}^{\top}\parens{(I-\beta A)^2+\beta^2\Sigma_A}\parens{z^{n-1}- \bar{z}^{n-1}} }\\
		&-2\beta\mathbb{E}\brakets{(z^{n-1}-\bar{z}^{n-1})^{\top}(I-\beta A)(\hat{b}-b)}\\
		&+ \beta^2\parens{\mathbb{E}\brakets{\Verts{\hat{b}-b}^2} + \mathbb{E}\brakets{\parens{\bar{z}^{n-1}}^{\top}\Sigma_A\bar{z}^{n-1}}} \\
		=& \mathbb{E}\brakets{\parens{z^{n-1}- \bar{z}^{n-1}}^{\top}\parens{(I-\beta A)^2+ \beta^2 \Sigma_A}
		 \parens{z^{n-1}- \bar{z}^{n-1}}  }   \\
		 &+\beta^2 \parens{\parens{\bar{z}^{n-1}}^{\top}\Sigma_A\bar{z}^{n-1}+ \mathbb{E}\brakets{\parens{\hat{b}-b}^{\top}\parens{ I + 2(I-\beta A)D_n}\parens{\hat{b}-b}}}  
	\end{align}
	where $\Sigma_A = \mathbb{E}\brakets{\parens{A_n-A}^{\top}\parens{A_n-A}}$ and $D_n =  \sum_{t=0}^{n-1} (I-\beta A)^{t} $.
	By simple calculation we can upper-bound the last term by:
	\begin{align}
		\beta^2 \mathbb{E}\brakets{\parens{\hat{b}-b}^{\top}\parens{ I + 2(I-\beta A)D_n}\parens{\hat{b}-b}}\leq 3\parens{n\wedge\frac{1}{\beta\mu_g}} \beta^2\mathbb{E}\brakets{\Verts{\hat{b}-b}^2}.
	\end{align}
	Moreover, provided that $\beta \leq \frac{1}{L_g\parens{1+ L_A}}$, where $L_A$ is the highest eigenvalue of $A^{-1}\Sigma_A A^{-1}$, then we have the following:
	\begin{align}
		\mathbb{E}\brakets{\Verts{z^{n}-\bar{z}^{n}}^2}\leq &
		\parens{1-\beta\mu_g}\mathbb{E}\brakets{\Verts{z^{n-1}-\bar{z}^{n-1}}^2} \\
		&+ \beta^2\parens{\parens{\bar{z}^{n-1}}^{\top}\Sigma_A\bar{z}^{n-1} + 3\parens{n\wedge\frac{1}{\beta\mu_g}}\mathbb{E}\brakets{\Verts{\hat{b}-b}^2}}\\
		\leq &
		\parens{1-\beta\mu_g}\mathbb{E}\brakets{\Verts{z^{n-1}-\bar{z}^{n-1}}^2} + 
		\beta^2\parens{\sigma_A^2\Verts{\bar{z}^{n-1}}^{2} + 3\parens{n\wedge\frac{1}{\beta\mu_g}}\sigma_c^2},
		\end{align}
		Unrolling the recursion, it follows that:
		\begin{align}\label{eq:general_ineq_linear_sto}
			\mathbb{E}\brakets{\Verts{z^{n}-\bar{z}^n }^2} \leq & (1-\beta\mu_g)^n \Verts{z-z'}^2 +  
			\beta^2\sum_{t=1}^{n} (1-\beta\mu_g)^{n-t}\parens{ \sigma_A^2\Verts{\bar{z}^{n-1}}^{2} + 3\parens{n\wedge\frac{1}{\beta\mu_g}}\sigma_c^2 }
		\end{align}
		In particular, if $z^{'}= z^{\star}$, then $\bar{z}^{n} = z^{\star}$ and we get:
		\begin{align}
			\mathbb{E}\brakets{\Verts{z^{n}-z^{\star} }^2} \leq & (1-\beta\mu_g)^n \Verts{z-z^{\star}}^2 +  
			\beta^2\parens{\sigma_A^2\Verts{z^{\star}}^{2} + 3\parens{n\wedge\frac{1}{\beta\mu_g}} \sigma_c^2 }\parens{n\wedge\frac{1}{\beta\mu_g}}.
		\end{align}
		To get the last inequality, we  simply choose $z'=z$ and recall that:
				\begin{align}
			\Verts{\bar{z}^{t-1}}\leq 2\parens{(1-\beta\mu_g)^{t-1}\Verts{z'-z^{\star}} + \Verts{z^{\star}}}. 
		\end{align}
	Using the above in 	in  \cref{eq:general_ineq_linear_sto} yields:
	\begin{align}
			\mathbb{E}\brakets{\Verts{z^{n}-\bar{z}^n }^2} \leq & 2n\beta^2\sigma_A^2(1-\beta\mu_g)^{n-1}\Verts{z-z^{\star}}^2+2\beta^2\parens{\sigma_A^2\Verts{z^{\star}}^{2} + 3\parens{n\wedge\frac{1}{\beta\mu_g}}\sigma_c^2 }\parens{n\wedge\frac{1}{\beta\mu_g}}.
	\end{align}
	Moreover, by \cref{lem:inequality_step_size_iteration} we know that $n\beta^2\mu_g^2(1-\beta\mu_g)^{n-1}\leq (1-\frac{\beta\mu_g}{2})^{n-1}$ and since $\beta\mu_g\leq 1$, we have that $(1-\frac{\beta\mu_g}{2})^{-1}\leq 2$ so that $(1-\frac{\beta\mu_g}{2})^{n-1}\leq 2(1-\frac{\beta\mu_g}{2})^{n}$. Hence, we can write:
	\begin{align}
			\mathbb{E}\brakets{\Verts{z^{n}-\bar{z}^n }^2} \leq & 4\mu_g^{-2}\sigma_A^2(1-\frac{\beta\mu_g}{2})^{n}\Verts{z-z^{\star}}^2+2\beta^2\parens{\sigma_A^2\Verts{z^{\star}}^{2} + 3\parens{n\wedge\frac{1}{\beta\mu_g}}\sigma_c^2 }\parens{n\wedge\frac{1}{\beta\mu_g}}.
	\end{align}
\end{proof}

\begin{lem}
	Let $A$ and $\Sigma_A$ be  symmetric positive matrix in $\R^d\times \R^d$ with $\sigma_A^2$ its largest singular value of $\Sigma_A$ and $0<\mu_g\leq \sigma_i(A)\leq L_g$. Let $\beta$ be a positive number such that:
	\begin{align}
		\beta \leq \frac{1}{2L_g}\min\parens{1, \frac{2L_g}{\mu_g\parens{1+\mu_g^{-2}\sigma_A^2}}}
	\end{align}
	Then the following holds:
	\begin{align}
		\Verts{(I-\beta A)^2 + \beta^2 \Sigma_A}_{op}\leq 
			1-\beta\mu_g.
	\end{align} 
\end{lem}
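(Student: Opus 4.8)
The plan is to reduce the operator-norm bound to a one-variable scalar inequality. First I would note that $M := (I-\beta A)^2 + \beta^2 \Sigma_A$ is symmetric and positive semidefinite (a square of a symmetric matrix plus a PSD matrix), so $\Verts{M}_{op}$ equals its largest eigenvalue, i.e. $\Verts{M}_{op} = \max_{\Verts{v}=1} v^\top M v$. Splitting the quadratic form as $v^\top M v = v^\top (I-\beta A)^2 v + \beta^2 v^\top \Sigma_A v$ and bounding each summand by the largest eigenvalue of the corresponding matrix yields $\Verts{M}_{op} \le \lambda_{\max}\big((I-\beta A)^2\big) + \beta^2 \sigma_A^2$, where I used that $\sigma_A^2$ is the largest singular value, hence the largest eigenvalue, of the symmetric PSD matrix $\Sigma_A$.

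Next I would control $\lambda_{\max}\big((I-\beta A)^2\big)$. Since the eigenvalues of $A$ lie in $[\mu_g, L_g]$ and $\beta \le \tfrac{1}{2L_g}$, the eigenvalues of $I-\beta A$ are of the form $1-\beta\lambda$ with $\lambda \in [\mu_g, L_g]$, hence lie in $[1-\beta L_g,\, 1-\beta\mu_g] \subseteq [\tfrac12, 1)$; in particular $I-\beta A \succ 0$ and $\Verts{I-\beta A}_{op} = 1-\beta\mu_g$, so $\lambda_{\max}\big((I-\beta A)^2\big) = (1-\beta\mu_g)^2$. Combining the two steps gives $\Verts{M}_{op} \le (1-\beta\mu_g)^2 + \beta^2 \sigma_A^2$.

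It then remains to verify the elementary inequality $(1-\beta\mu_g)^2 + \beta^2 \sigma_A^2 \le 1-\beta\mu_g$. Expanding the square and cancelling a factor of $\beta$, this is equivalent to $\beta(\mu_g^2 + \sigma_A^2) \le \mu_g$, i.e. $\beta \le \tfrac{\mu_g}{\mu_g^2 + \sigma_A^2} = \tfrac{1}{\mu_g(1+\mu_g^{-2}\sigma_A^2)}$. The hypothesis on $\beta$ reads $\beta \le \tfrac{1}{2L_g}\cdot\tfrac{2L_g}{\mu_g(1+\mu_g^{-2}\sigma_A^2)} = \tfrac{1}{\mu_g(1+\mu_g^{-2}\sigma_A^2)}$, which is exactly this bound, completing the proof.

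I expect no genuine obstacle here: the only point requiring a little care is justifying that $\Verts{I-\beta A}_{op}^2 = (1-\beta\mu_g)^2$ rather than something involving $1-\beta L_g$, which is precisely why the $\tfrac{1}{2L_g}$ branch of the $\min$ (forcing $I-\beta A$ to be a positive contraction) is invoked; the other branch of the $\min$ is what makes the noise term $\beta^2\sigma_A^2$ small enough to be absorbed. Everything else is a single quadratic manipulation.
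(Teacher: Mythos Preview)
Your proof is correct and follows essentially the same approach as the paper: use positivity of $I-\beta A$ (from $\beta \le 1/L_g$, implied by the first branch of the $\min$) to get $\Verts{(I-\beta A)^2 + \beta^2 \Sigma_A}_{op} \le (1-\beta\mu_g)^2 + \beta^2\sigma_A^2$, then use the second branch of the $\min$ to verify the scalar inequality $(1-\beta\mu_g)^2 + \beta^2\sigma_A^2 \le 1-\beta\mu_g$. Your write-up is more explicit about why $\lambda_{\max}((I-\beta A)^2) = (1-\beta\mu_g)^2$ rather than $(1-\beta L_g)^2$, but the argument is the same.
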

\begin{proof}
First note that $\beta\leq \frac{1}{L_g}$, so that $I-\beta A$ is positive. Now, we observe that $\Verts{(I-\beta A)^2 + \beta^2 \Sigma_A}_{op}\leq (1-\beta\mu_g )^2 + \beta^2\sigma_A^2$ which holds since $I-\beta A$ is positive. And since $\beta\leq \frac{\mu_g}{\mu_g^2+\sigma_A^2}$, we further have $(1-\beta\mu_g )^2 + \beta^2\sigma_A^2\leq 1-\beta\mu_g$, which yields the desired result.
\end{proof}

\begin{lem}\label{lem:inequality_step_size_iteration}
Let $0\leq b< 1$ and $n\geq 1$, then the following inequality holds:
\begin{align}
	nb^2(1-b)^{n-1}\leq (1-\frac{b}{2})^{n-1}.
\end{align}
\end{lem}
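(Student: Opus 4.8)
The plan is to rationalize the inequality by a change of variable that turns both $1-b$ and $1-\tfrac b2$ into ratios of linear factors, reducing the claim to an elementary polynomial inequality that Bernoulli's inequality dispatches. Concretely, for $b\in[0,1)$ I would set $x:=\frac{b}{2(1-b)}\ge 0$, so that $b=\frac{2x}{1+2x}$, $1-b=\frac{1}{1+2x}$ and $1-\tfrac b2=\frac{1+x}{1+2x}$, whence $\frac{1-b}{1-b/2}=\frac{1}{1+x}$. Dividing the target inequality by $(1-\tfrac b2)^{n-1}>0$ gives
\[
\frac{nb^2(1-b)^{n-1}}{(1-\tfrac b2)^{n-1}}=\frac{4nx^2}{(1+2x)^2(1+x)^{n-1}},
\]
so the lemma is equivalent to $4nx^2\le(1+2x)^2(1+x)^{n-1}$ for all $x\ge 0$ and integers $n\ge 1$.

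The second step is to prove this polynomial inequality. Since $x\ge 0$ and $n-1\ge 0$, Bernoulli's inequality yields $(1+x)^{n-1}\ge 1+(n-1)x$, hence $(1+2x)^2(1+x)^{n-1}\ge(1+4x+4x^2)(1+(n-1)x)$. Expanding the right-hand side gives $1+(n+3)x+4nx^2+4(n-1)x^3$, which is at least $4nx^2$ because the leftover terms $1+(n+3)x+4(n-1)x^3$ are nonnegative for $x\ge 0$, $n\ge 1$. The edge case $b=0$ (equivalently $x=0$) is contained in the same computation ($0\le 1$), so no separate argument is needed.

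The real work is in locating the right reduction rather than in any calculation afterward. A direct induction on $n$ does not close: the induction step reduces to $\frac{n+1}{n}(1-b)\le 1-\tfrac b2$, i.e. $b\ge\frac{2}{n+2}$, which fails for small $b$; and coarse estimates such as $1-b\le e^{-b}$ are too lossy to absorb the $nb^2$ prefactor at small $n$ (e.g. $n=2,3$). Once the substitution $x=\frac{b}{2(1-b)}$ is chosen, the remaining steps are a single application of Bernoulli and a short expansion, so I anticipate no serious obstacle beyond identifying this change of variables and verifying that the resulting cubic has nonnegative coefficients.
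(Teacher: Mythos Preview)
Your proof is correct and takes a genuinely different route from the paper's. The paper argues by calculus: it sets $h(n,b)=(n-1)\log\frac{1-b/2}{1-b}-\log(nb^2)$, computes $\partial_n h$, locates the minimizer $n^\star=\bigl(\log\frac{1-b/2}{1-b}\bigr)^{-1}$, and then splits into the cases $n^\star\le 1$ and $n^\star>1$, the latter requiring a further auxiliary analysis to show $n^\star b^2\le 1$. Your substitution $x=\frac{b}{2(1-b)}$ rationalizes both $1-b$ and $1-\tfrac b2$ simultaneously, reducing the claim to $4nx^2\le(1+2x)^2(1+x)^{n-1}$, which Bernoulli plus a one-line expansion handles. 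This is considerably shorter and avoids any case analysis or derivative computation.

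One small caveat worth noting: your invocation of Bernoulli, $(1+x)^{n-1}\ge 1+(n-1)x$, is valid for $n-1=0$ (equality) and for $n-1\ge 1$, hence for all integers $n\ge 1$ as you state; but for real $n\in(1,2)$ the inequality reverses, so your argument as written does not cover non-integer $n$ in that range. The paper's calculus proof does handle all real $n\ge 1$. Since the lemma is only applied with $n$ an iteration count (an integer), your restriction is harmless for the paper's purposes, but it is worth flagging that the two proofs do not have identical scope.
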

\begin{proof}
	We consider the function $h(n,b)$ defined by:
	\begin{align}
		h(n,b):= (n-1)\log\parens{\frac{1-\frac{b}{2}}{1-b}} -\log(nb^2).
	\end{align}
	We need to show that $h(n,b)$ is non-negative for any $n\geq 1$ and $0\leq b<1$.  For this purpose, we fix $b$ and consider the variations of $h(n,b)$ in $n$:
	\begin{align}
		\partial_n h(n,b) = \log\parens{\frac{1-\frac{b}{2}}{1-b}}- \frac{1}{n}.
	\end{align}
	$\partial_n h(n,b)$ is non-negative for $n\geq n^{\star} =  \log\parens{\frac{1-\frac{b}{2}}{1-b}}^{-1}$ and non-positive for all $n\leq n^{\star}$. Hence, $h(n,b)$ achieves its minimum value in $n^{\star}$ over the $(0,+\infty)$. We distinguish two case depending on whether $n^{\star}$ is greater of smaller than $1$.
	
	{\bf Case $n^{\star}\leq 1$.} In this case $n\mapsto h(n,b)$ is increasing on the interval $[1,+\infty)$ since $\partial_n h(n,b)\geq 0$ for $n\geq  n^{\star}$. Hence, $h(n,b)\geq h(1,b)$ for all $n\geq 1$. Moreover, since $h(1,b)= -\log(b^2)\geq 0$ the result follows directly.
	
	{\bf Case $n^{\star}> 1$.} In this case we still have $h(n,b)\geq h(n^{\star},b)$ for all $n\geq 1$, since $n^{\star}$ achieves the minimum value of $h$. Thus we only need to show that $h(n^{\star},b)\geq 0$. Using the expression of $n^{\star}$, we have:
	\begin{align}
		h(n^{\star},b) =& 1-\log\parens{\frac{1-\frac{b}{2}}{1-b}} - \log\parens{n^{\star}b^2}\\
		=& 1-\frac{1}{n^{\star}} - \log\parens{n^{\star}b^2}
	\end{align}
	Since $n^{\star}>1$,  the first term $1-\frac{1}{n^{\star}}$ is non-negative, thus we only need to show that $n^{\star}b^2\leq 1$ so that the last term is also non-negative. It is easy to see that $n^{\star}b^2\leq 1$ is equivalent to having $\tilde{h}(b) \geq 0$, where we define the function $\tilde{h}(b)$ as:
	\begin{align}
		\tilde{h}(b) = \log\frac{1-\frac{b}{2}}{1-b}-b^2.
	\end{align}
	We can analyze the variations of $\tilde{b}$ be computing its derivative which is given by:
	\begin{align}
		\partial_b\tilde{h}(b) = \frac{1}{(1-b)(2-b)}-2b.
	\end{align}
	 Hence, we have the following equivalence:
	 \begin{align}
	 	\partial_b\tilde{h}(b)\geq 0\iff 2b(1-b)(2-b)\leq 1
	 \end{align}
	 This is always true for $0\leq b<1$ since $b(1-b)\leq \frac{1}{4}$ so that $2b(1-b)(2-b)\leq \frac{2-b}{2}\leq 1$. Thus we have shown that $\tilde{h}$ is increasing over $[0,1)$ so that $\tilde{h}(b)\geq \tilde{h}(0)=0$. As discussed above, this is equivalent to having $n^{\star}b^2\leq 1$, so that $h(n^{\star},b)\geq 0$ which concludes the proof. 
	 
\end{proof}

\section{Experiments}\label{sec:appendix_experiments}

\subsection{Details of the synthetic example}\label{sec:synthetic_appendix}
We choose the functions $f$ and $g$ to be of the form: $f(x,y) := \frac{1}{2}x^{\top}A_f x + y^{\top}C_f$ and $g(x,y) := \frac{1}{2} y^{\top}A_g y + y^{\top}B_g x$
where $A_f$ and $A_g$ are symmetric definite positive matrices of size $d_x\times d_x$ and $d_y\times d_y$,  $B_g$ is a $d_y\times d_x $ matrix and $C_f$ is a $d_y$ vector with $d_x=2000$ and $d_y=1000$. 

We generate the parameters of the problem so that the smoothness constants $L$ and $L_g$ are fixed to $1$, $\kappa_{\mathcal{L}}{=}10$ and $\kappa_g$ taking values in $\{10^{i}, i\in \{0,..,7\}\}$. 
We then solve each problem using different methods and  perform a grid-search on the number of iterations  $T$ and $M$ of algorithms $\mathcal{A}_k$ and $\mathcal{B}_k$ .

We fix the step-sizes to $\gamma_k {=} 1/L$ and $\alpha_k{=}\beta_k {=} 1/L_g$ and perform a grid-search on the number of iterations  $T$ and $M$ of algorithms $\mathcal{A}_k$ and $\mathcal{B}_k$ from $\{10^i, i\in {0,1,2,3}\}$. 
For AID methods without warm-start in $\mathcal{B}_k$, we consider an additional setting where $M$ increases logarithmically with $k$, as suggested in \cite{Ji:2021a}, with $M {=} \floor{10^{3}\log(k)}$. Similarly, for (ITD) and (Reverse), we additionally use an increasing $T$ of the same form. 

\subsection{Experimental details for Logistic regression}\label{sec:details_logistic}
The inner-level and outer-level cost functions for such task take the following form:
\begin{align}
	f(x,y) = \frac{1}{\verts{\mathcal{D}_{val}}} \sum_{\xi\in \mathcal{D}_{val}} L(y,\xi),\qquad g(x,y) = \frac{1}{\verts{\mathcal{D}_{tr}}} \sum_{\xi\in \mathcal{D}_{tr}} L(y,\xi) + \frac{1}{pd}\sum_{i=1}\exp(x_i)\Verts{y_{.,i}}^2
\end{align}
For the \emph{default} setting, we use the well-chosen parameters reported in \cite{Grazzi:2020,Ji:2021a} where $\alpha_k{=}\gamma_k {=}100$, $\beta_k{=}0.5$, and $T{=}N{=}10$. 
For the grid-search setting, we select the best performing parameters   $T$, $M$ and $\beta_k$ from a grid $\{10,20 \}\times \{5,10\}\times \{0.5,10\}$, while the batch-size (chosen to be the same for all steps of the algorithms) varies from $10*\{0.1,1,2,4\}$.
We also compared with VRBO \citep{Yang:2021} using the implementation available online and noticed instabilities for large values of $T$ and $N$, as reported by the authors, but also a drop in performance compared to \verb+stocBiO+ for smaller $T$ and $N$ due to inexact estimates of the gradient.

\begin{figure}
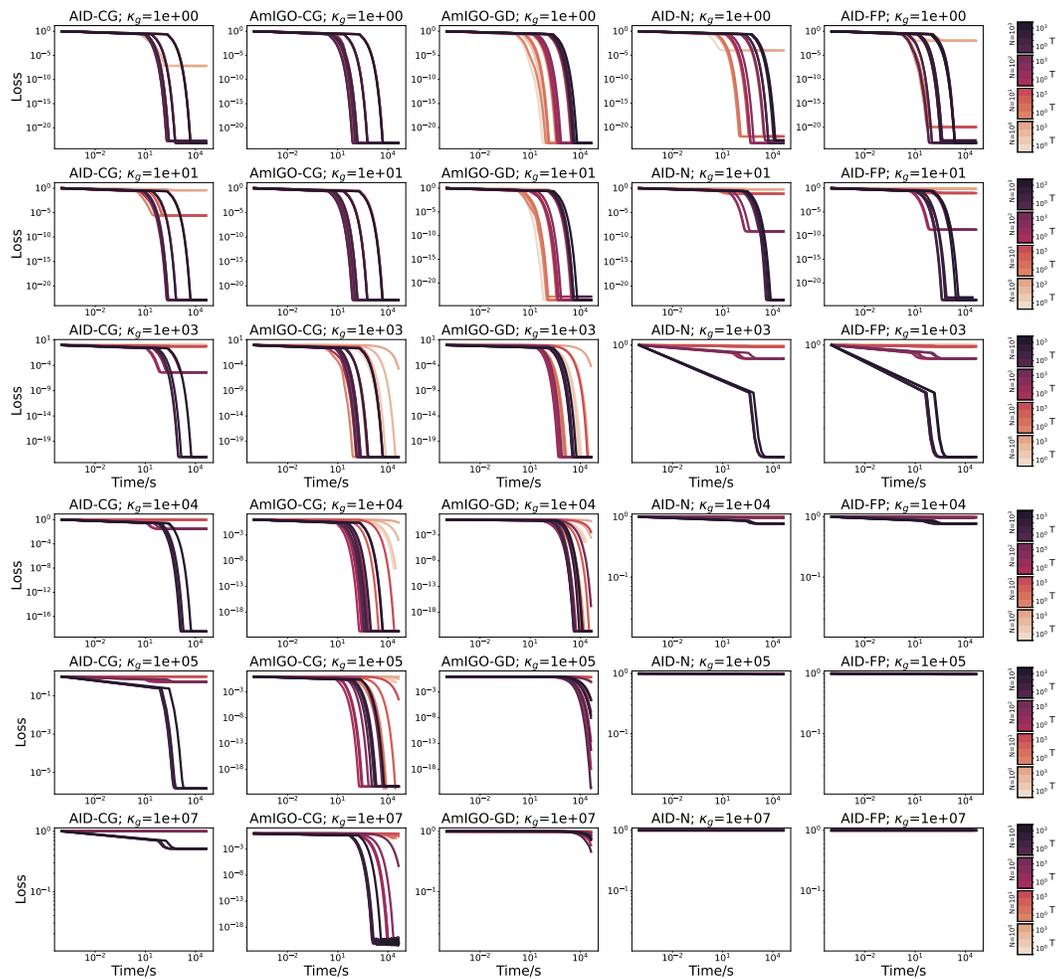

	\includegraphics[width=1.\linewidth]{figures/AID_vs_T_N_low_cond}
	\includegraphics[width=1.\linewidth]{figures/AID_vs_T_N_high_cond}
\caption{Evolution of the relative error vs. time in seconds for different AID based methods on the synthetic example. Each column corresponds to a method (AID-CG, \ASID-CG, \ASID-GD, AID-N, AID-FP) and each row corresponds to a choice of the conditioning number $\kappa_g$. For each method we consider $T$ and $N$ from a grid $\{1,10,10^2,10^3\}\times \{1,10,10^2,10^3\}$. Lightest colors corresponds to smaller values of $N$ while nuances within each color correspond to increasing values of $T$.}
\label{fig:vary_T_N_low_cond}
\end{figure}

\begin{figure}
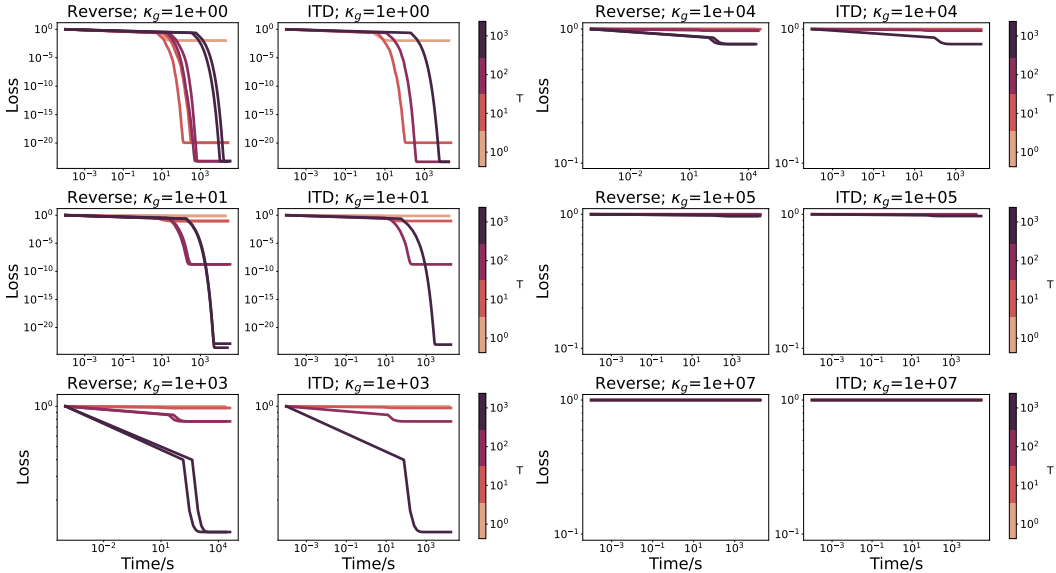

	\includegraphics[width=.5\linewidth]{figures/ITD_vs_T_low_cond}
	\includegraphics[width=.5\linewidth]{figures/ITD_vs_T_high_cond}
\caption{Evolution of the relative error vs. time in seconds for different ITD based methods on the synthetic example. 
From the left to the right, the first two columns correspond to Reverse and ITD method small conditioning numbers $\kappa_g\in \{1,10,10^3\}$, last two column are for higher conditioning numbers $\kappa_g\in \{10^4,10^5,10^7\}$. For each method we consider $T\in\{1,10,10^2,10^3\}$. Lightest colors correspond to smaller values of $T$.}
\label{fig:ITD_vary_T_N_high_cond}
\end{figure}

\begin{figure}
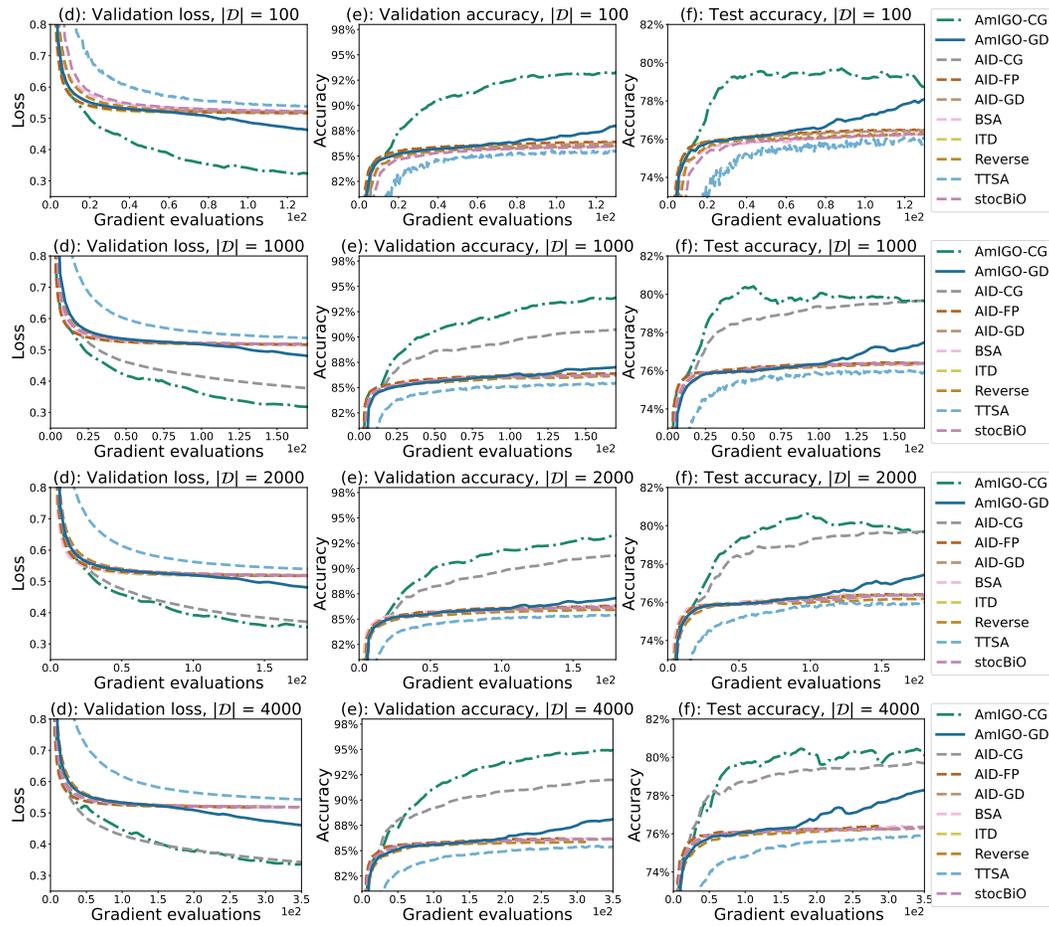

\includegraphics[width=1.\linewidth]{figures/regression_20Newsgroup_100}
\includegraphics[width=1.\linewidth]{figures/regression_20Newsgroup_1000}
\includegraphics[width=1.\linewidth]{figures/regression_20Newsgroup_2000}
\includegraphics[width=1.\linewidth]{figures/regression_20Newsgroup_4000}	
\caption{Evolution of the validation loss (left column), validation accuracy (middle column) and test accuracy (right column) in time (s) for different methods on the logistic regression task. Each row correspond to different choices for the size of the batch $\verts{\mathcal{D}}\in \{100,1000,2000,4000\}$ chosen to be the same for all gradient, Hessian and Jacobian-vector products evaluations. Time is reported in seconds.}
\label{fig:regression_vary_batch_size}
\end{figure}

\subsection{Dataset distillation}\label{sec:dataset_distillation}
Dataset distillation \citep{Wang:2018a,Lorraine:2020} consists in learning a small synthetic dataset such that a model trained on this dataset achieves a small error on the training set. Specifically, we consider a classification problem of $C$ classes using a linear model and a training dataset $\mathcal{D}_{tr}$ where each training point $\xi\in \mathcal{D}_{tr}$ is a $d$-dimensional vector with a class $c_{\xi}\in \{1,...,C\}$. The linear model is represented by a matrix $y\in \R^{c\times d}$ multiplying a data point $y\xi$ and providing the logits of each class. The dataset distillation can be cas as a bilevel problem of the form:
\begin{align}
	\min_{x\in \R^{c\times d},\lambda\in\R^{d}}& \frac{1}{\verts{\mathcal{D}_{tr}}}\sum_{\xi\in \mathcal{D}_{tr}}CE(y^{\star}(x,\lambda)\xi, c_{\xi} ),\\
	 &y^{\star}(x,\lambda)\in \arg\min_{y\in \R^{c\times d}} \frac{1}{C}\sum_{c=1}^C CE(yx_{c},c) + \frac{1}{Cd}\sum_{i=1}^d \exp(\lambda_i)\Verts{y_{.,i}}^2.
\end{align}
where $\lambda\in \R^d$ is a vector of hyper-parameter for regularizing the inner-level problem which we found beneficial to add. 

{\bf Experimental setup.} 
We perform the distillation task on MNIST dataset. We set the step-sizes $\alpha_k{=}\beta_k{=}0.1$ and $T{=}N{=}10$. We perform a grid-search on the outer-level step-size $\gamma_k {\in} \{0.01,0.001,0.0001\}$ and run the algorithms for $k=10000$ iterations.

\begin{figure}
\center
	\includegraphics[width=1.\linewidth]{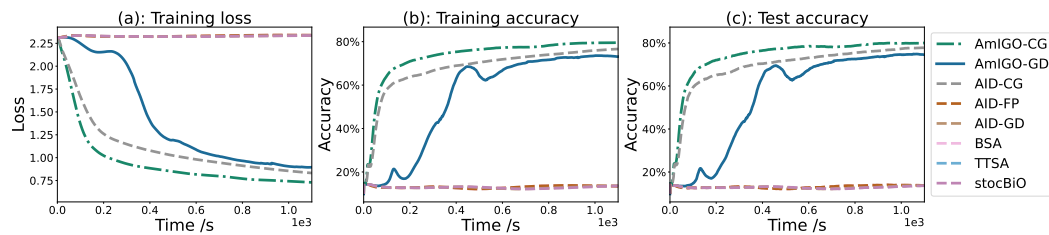}
\caption{Performance of various bi-level algorithms on the dataset distillation task on MNIST dataset.}
\label{fig:distill} 
\end{figure}

\end{document}